\edef\Gin@extensions{\Gin@extensions,.mps}
\DeclareMathAlphabet{\mathpzc}{OT1}{pzc}{m}{it}
\newtheorem{theorem}{Theorem}
\newtheorem{lemma}{Lemma}[section]
\newtheorem{corollary}{Corollary}
\newtheorem{proposition}{Proposition}[section]
\newtheorem*{claim*}{Claim}
\newtheorem*{theorem*}{Theorem}
\newtheorem*{corollary*}{Corollary}
\theoremstyle{definition}
\newtheorem{definition}{Definition}[section]
\newtheorem{example}{Example}[section]
\newtheorem{notation}{Notation}[section]
\theoremstyle{remark}
\newtheorem{remark}{Remark}
\newtheorem{thmx}{Theorem}
\definecolor{DarkBlue}{rgb}{0,0.1,0.55}
\numberwithin{equation}{section}
\newcommand {\hide}[1]{}
\newcommand {\sign} {\mathrm{sign}}
\newcommand {\junk}[1]{}
\newcommand {\R} {\mathrm{R}}
\newcommand {\D}     {\mbox{\rm D}}
\newcommand {\C}     {\mathrm{C}}
\newcommand {\Z}  {\mathbb{Z}}
\newcommand {\ZZ} {{\rm Z}}
\newcommand {\RR} {{\mathcal R}}
\newcommand {\la}   {{\langle}}
\newcommand {\ra}   {{\rangle}}
\newcommand {\eps} {{\varepsilon}}
\newcommand {\E} {{\rm Ext}}
\newcommand {\PP}     {\mathbb{P}} %projective space
\newcommand{\card}{\mathrm{card}}
\newcommand{\rank}{\mathrm{rank}}
\def\addots{\mathinner{\mkern1mu
		\raise1pt\vbox{\kern7pt\hbox{.}}
		\mkern2mu\raise4pt\hbox{.}\mkern2mu
		\raise7pt\hbox{.}\mkern1mu}}
\newcommand{\HH}  {\mbox{\rm H}}
\newcommand{\x}{\mathbf{x}}
\newcommand{\size}{\mathrm{size}}
\newcommand{\Cc}{\mathrm{Cc}}
\algnewcommand\algorithmicinput{\textbf{Input:}}
\algnewcommand\INPUT{\item[\algorithmicinput]}
\algnewcommand\algorithmicoutput{\textbf{Output:}}
\algnewcommand\OUTPUT{\item[\algorithmicoutput]}
\algnewcommand\algorithmicproc{\textbf{Procedure:}}
\algnewcommand\PROCEDURE{\item[\algorithmicproc]}
\algnewcommand\algorithmiccomplexity{\textbf{Complexity:}}
\algnewcommand\COMPLEXITY{\item[\algorithmiccomplexity]}
\newcolumntype{P}[1]{>{\centering\arraybackslash}p{#1}}
\newcommand{\End}{\mathrm{End}}
\newcommand{\sgn}{\mathrm{sgn}}
\newcommand{\Sing}{\mathrm{Sing}}
\newcommand{\mult}{\mathrm{mult}}
\newcommand{\mmrank}{\mathrm{mmrank}}
\newcommand{\Gr}{\mathrm{Gr}}
\renewcommand\p@enumii{}
\title
[Bounds on zero-nonzero patterns and sign conditions]
{
Bounds on the realizations of zero-nonzero patterns and 
sign conditions of polynomials restricted to varieties
and applications 
}
\author{Saugata Basu}
\address{Department of Mathematics,
Purdue University, West Lafayette, IN 47906, U.S.A.}
\email{sbasu@math.purdue.edu}
\urladdr{www.math.purdue.edu/~sbasu}
\author{Laxmi Parida}
\address{
%%Computational Genomics,\\
IBM T.J. Watson Research Center,
Yorktown Heights, NY 10598.
}
\email{parida@us.ibm.com}
\urladdr{researcher.ibm.com/person/us-parida}
\begin{document}
%%\linenumbers

\begin{abstract}
We obtain upper bounds, independent of the ambient dimension, for the number of realizable
zero-nonzero patterns and (over ordered fields) sign conditions of a finite family of polynomials
$\mathcal P$ restricted to an algebraic subset $V$ of affine or projective space.
The bounds depend only on $\card(\mathcal P)$ and the degrees of the polynomials in $\mathcal P$,
together with $\deg(V)$ and $\dim(V)$, and not on the dimension of the space in which $V$ is embedded.
This feature is particularly useful when $V$ has small intrinsic dimension but is presented in a very
high-dimensional ambient space.

We describe several applications.  First, we extend existing results on bounding the
$\varepsilon$-entropy of real algebraic varieties.  Second, we derive lower bounds (in terms of the number
of connected components) for membership testing in semi-algebraic sets in the algebraic computation tree
model.  Finally, motivated by quantum complexity theory, we introduce additive and multiplicative notions
of \emph{relative rank} in finite-dimensional vector spaces and algebras with respect to a fixed algebraic
subset, generalizing the classical notion of tensor rank.  We prove a general lower bound on the maximum
relative rank of finite subsets with respect to algebraic sets of bounded degree and dimension that is
again independent of the ambient dimension.  As an illustration, we obtain a quantum analogue of Shannon's
classical lower bound: almost all Boolean functions require classical circuits of size
$\Omega(2^n/n)$, even in the presence of a quantum oracle specified by an algebraic subset of fixed degree and dimension.
\end{abstract}

\subjclass[2000]{Primary 68Q12; Secondary 14P10, 81P68}
\keywords{Sign conditions, zero-nonzero patterns, Betti numbers, semi-algebraic sets, ranks, quantum circuits}
\maketitle
\tableofcontents

\section{Introduction}
Let $k$ be a field. For $x \in k$, we will denote 
\[
\sgn(x) = \begin{cases}
                0 \text{ if } x=0, \\
                1 \text{ else},
        \end{cases}
\]
and for any set of polynomials $\mathcal{P} \subset k[X_1,\ldots,X_N]$
we will call an element of $\{0,1\}^{\mathcal{P}}$ to be a 
\emph{zero-nonzero pattern} on $\mathcal{P}$.

If $k$ is an ordered field we will denote for $x \in k$, 
\[
\sign(x) = \begin{cases}
                0 \text{ if } x=0, \\
                1 \text{ if } x>0, \\
                -1 \text{ if } x < 0,
        \end{cases}
\]
and for any set of polynomials $\mathcal{P} \subset k[X_1,\ldots,X_N]$
we will call an element of $\{0,1,-1\}^{\mathcal{P}}$ to be a 
\emph{sign condition} on $\mathcal{P}$.

For any finite set $\mathcal{P}$ of polynomials in $k[X_1,\ldots,X_N]$, $V\subset k^N$, and
$\pi\in \{0,1\}^{\mathcal{P}}$, we denote by 
\[
\RR(\pi,V) = \{x \in V \mid \sgn
(P(x)) = \pi(P), P \in \mathcal{P}\}.
\]

In the case $k$ is an ordered field,
$S \subset k^N$, and
$\sigma \in \{0,1,-1\}^{\mathcal{P}}$, we denote by 
\[
\RR(\sigma,S) = \{x \in S  \mid \sign(P(x)) = \sigma(P), P \in \mathcal{P}\}.
\]

\subsection{Background and prior work}
Bounds on the number (or other topological invariants of the realizations like the number of connected components etc.) 
of \emph{realizable} zero-nonzero patterns or sign conditions of finite sets of polynomials have had many important applications in mathematics and theoretical computer science. 
To pick a few notable ones: 
they play a key role in bounding the number of point configurations  as well as combinatorial types of polytopes  in $\mathbb{R}^d$ \cite{GP1986}, in bounding the number of geometric permutations induced by transversals to convex subsets of $\mathbb{R}^d$ \cite{GPW1996}, bounding the number (also called the speed) of algebraically defined classes of graphs \cite{Alon-Pach-et-al,Sauermann}, in recent applications of the polynomial method
to incidence combinatorics \cite{Basu-Sombra, Walsh1, Walsh2, Tao-survey, Sheffer-book, Guth-book},
in proving lower bounds for testing  membership in semi-algebraic sets using algebraic computation trees \cite{Ben-Or,Yao,GV2017},
in proving lower bounds on projective dimensions of graphs and span programs
\cite{RBG01},
bounding the $\eps$-entropy of and volumes of tubes around algebraic varieties
\cite{Basu-Lerario2023,Zhang-Kileel2023},
and a host of other applications (including some new ones introduced in this paper).
The following two results maybe considered as starting points.

\begin{thmx}\cite{RBG01,Jeronimo-Sabia}
\label{thmx:RBG}
    Let $k$ be a field and $\mathcal{P} \subset k[X_1,\ldots,X_N]_{\leq d}$ be a finite set of polynomials of degree at most $d$. Then, the number of realizable zero-nonzero patterns on $\mathcal{P}$
    (i.e. elements of $\{0,1\}^{\mathcal{P}}$) is bounded by 
    $(O(s d))^N$, where $s = \card(\mathcal{P})$. (We say a zero-nonzero pattern $\pi \in \{0,1\}^{\mathcal{P}}$ is realizable if there exists $x \in k^N$, such that $\sgn(P(x)) = \pi(P)$ for all 
    $P \in \mathcal{P}$.)
\end{thmx}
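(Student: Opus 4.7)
The plan is to pass from a count over zero-nonzero patterns to a count over irreducible components of intersections of the hypersurfaces $V(P) = \{x : P(x) = 0\}$, where Bezout-type degree bounds can be brought to bear. Since a pattern realizable over $k$ is a fortiori realizable over the algebraic closure $\bar{k}$, I may assume that $k$ is algebraically closed. For a realizable pattern $\pi$, set $I_\pi = \{P \in \mathcal{P} : \pi(P) = 0\}$; then $\pi$ is realized exactly on the (nonempty) locally closed set
$$S_\pi \;=\; \bigcap_{P \in I_\pi} V(P) \;\setminus\; \bigcup_{P \notin I_\pi} V(P).$$
So the task reduces to counting the subsets $I \subset \mathcal{P}$ for which the corresponding $S_\pi$ is nonempty.

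For each realizable $\pi$ I would pick an irreducible component $W_\pi$ of the Zariski closure $\overline{S_\pi}$. By construction $W_\pi \subset V(P)$ if and only if $P \in I_\pi$, so the assignment $\pi \mapsto W_\pi$ is injective into the collection of irreducible subvarieties of $\bar{k}^N$ that appear as an irreducible component of some intersection $V_J = \bigcap_{P \in J} V(P)$ with $J \subset \mathcal{P}$. Bounding the number of realizable patterns is thereby reduced to bounding this collection.

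A dimension-theoretic argument shows that every such $W_\pi$ already arises as an irreducible component of some $V_J$ with $|J| \leq N$: removing polynomials from $I_\pi$ one at a time decreases the codimension by at most one each step, so after stopping at $|J| = \mathrm{codim}(W_\pi) \leq N$ the variety $W_\pi$ persists as a component of $V_J$. For each such $J$, the affine Bezout inequality bounds the number of irreducible components of $V_J$ by $d^{|J|}$, and summing over subsets of size at most $N$ yields
$$\card\{\text{realizable } \pi\} \;\leq\; \sum_{k=0}^{N} \binom{s}{k} d^k \;=\; (O(sd))^N$$
by the standard binomial estimate $\binom{s}{k} \leq (es/k)^k$ and optimizing over $k$.

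The main obstacle is the dimension-theoretic reduction to $|J| \leq N$, together with ensuring that the injection $\pi \mapsto W_\pi$ really does land in the components of these small intersections without multiplicity issues. Handled naively this can fail when the hypersurfaces $V(P)$ are not in sufficiently generic position; this is precisely the subtle point treated carefully in the Ronyai-Babai-Ganapathy and Jeronimo-Sabia arguments, typically via a generic linear change of coordinates, a perturbation of the $P$'s, or an induction on $N$ combined with a fibration argument. Once this reduction is secured, the Bezout estimate delivers the stated bound.
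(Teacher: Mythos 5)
Your overall strategy (pass to the algebraic closure, attach to each realizable pattern an irreducible component $W_\pi$ of $\overline{S_\pi}$, and count components via Bezout) is the right one, and your injectivity claim for $\pi \mapsto W_\pi$ is fine. But the step you call the ``dimension-theoretic argument'' --- that each $W_\pi$ is an irreducible component of $V_J=\bigcap_{P\in J}V(P)$ for some $J\subset I_\pi$ with $\card(J)\leq \mathrm{codim}(W_\pi)\leq N$ --- is false, and it is exactly the point on which your count rests. Take $N=2$ and
\[
P_1=X_1X_2,\qquad P_2=X_2(X_2-X_1),\qquad P_3=X_1(X_2-X_1).
\]
Then $V(P_1,P_2)$ is the line $X_2=0$, $V(P_1,P_3)$ is the line $X_1=0$, and $V(P_2,P_3)$ is the line $X_2=X_1$, while $V(P_1,P_2,P_3)$ is the single point at the origin. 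So for the all-zero pattern $\pi$ one has $W_\pi=\{0\}$, of codimension $2=N$, yet $W_\pi$ is not an irreducible component of $V_J$ for \emph{any} $J$ with $\card(J)\leq N$: removing even one polynomial makes the component through the origin jump to a line. Hence your map does not land in the set you propose to count, and ``removing polynomials one at a time'' cannot be orchestrated to stop at $\card(J)=\mathrm{codim}(W_\pi)$. The caveat you add at the end (generic change of coordinates, perturbing the $P$'s, induction) does not repair this: the hypersurfaces are given, not generic, and perturbing them changes which zero-nonzero patterns are realizable, so the count is not preserved.

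The paper's argument (the proof of Theorem~\ref{thm:main:algebraic}, following \cite{Jeronimo-Sabia} and \cite{BPR-tight}) gets around precisely this obstruction by never asserting that $W_\pi$ is a component of a small intersection $V_J$. Instead, in Lemma~\ref{lem:isolatedpoints} one builds a \emph{nested chain} $V=V_0\supset V_1\supset\cdots\supset V_p=\{x\}$ in which each $V_j$ is an irreducible component of $\ZZ(P_{i_j},V_{j-1})$ --- i.e.\ at each step one intersects the previously chosen \emph{component} (not the full intersection of the chosen polynomials) with the next hypersurface; in the example above the origin is indeed a component of $(\text{the line }X_2=0)\cap \ZZ(P_3,k^2)$ even though it is not a component of $V(P_1,P_3)$. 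The combinatorial factor $\binom{s}{i}$ then counts which polynomials produce the dimension drops along the chain, and the generalized Bezout inequality (Theorem~\ref{thmx:bezout}) bounds the total degree of all components at each level of this iterated intersection by $d^{\,i}$ (times $\deg V$ in the relative setting). Positive-dimensional components are handled, much as you intend, by cutting with a generic affine subspace (Lemma~\ref{lem:positive}). So your final bound $\sum_{i\leq N}\binom{s}{i}d^{\,i}=(O(sd))^N$ is the correct one, but the argument needs this chain-of-components structure; as written, your key reduction has a genuine counterexample.
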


\begin{thmx}\cite{Warren}
\label{thmx:Warren}
    Let $k$ be an ordered field and $\mathcal{P} \subset k[X_1,\ldots,X_N]_{\leq d}$ be a finite set of polynomials of degree at most $d$. Then, the number realizable sign conditions
    on $\mathcal{P}$
    is bounded by 
    $(O(s d))^N$, where $s = \card(\mathcal{P})$.
    (We say a sign condition $\sigma \in \{0,1,-1\}^{\mathcal{P}}$ is realizable if there exists $x \in k^N$, such that $\sign(P(x))  = \sigma(P)$ for all  $P \in \mathcal{P}$.)
\end{thmx}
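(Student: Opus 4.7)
The plan is to reduce Warren's theorem to a bound on the number of semi-algebraically connected components of the complement of a single real algebraic hypersurface, and then invoke the classical Oleinik-Petrovsky-Milnor-Thom bound on Betti numbers of real algebraic sets.

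First, I would eliminate the possibility of polynomials vanishing identically on a sign condition. To do so, I would pass to the non-archimedean ordered extension $k\la\eps\ra$ of $k$ (Puiseux series in a positive infinitesimal $\eps$) and replace $\mathcal{P}$ by $\mathcal{P}^{*} = \{P + \eps,\ P - \eps : P \in \mathcal{P}\}$, a set of $2s$ polynomials of degree at most $d$. Any $x \in k^N$ realizing a sign condition $\sigma \in \{-1,0,1\}^{\mathcal{P}}$ realizes a unique strict sign condition $\sigma^{*} \in \{-1,1\}^{\mathcal{P}^{*}}$: if $\sigma(P) \neq 0$ then $P(x)+\eps$ and $P(x)-\eps$ both inherit the sign $\sigma(P)$, whereas if $\sigma(P)=0$ then $P(x)+\eps > 0$ and $P(x)-\eps < 0$ infinitesimally. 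The assignment $\sigma \mapsto \sigma^{*}$ is clearly injective, so it suffices to bound the number of realizable strict sign conditions on $\mathcal{P}^{*}$.

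Next, I would note that the realization in $k\la\eps\ra^N$ of a strict sign condition on $\mathcal{P}^{*}$ is a semi-algebraic open set on which none of the polynomials of $\mathcal{P}^{*}$ vanishes, and hence is a union of semi-algebraically connected components of $k\la\eps\ra^N \setminus V(Q)$, where $Q = \prod_{P \in \mathcal{P}^{*}} P$ has degree at most $2sd$. Consequently the number of realizable strict sign conditions on $\mathcal{P}^{*}$ is bounded above by the number of semi-algebraically connected components of the complement of $V(Q)$ in $k\la\eps\ra^N$. Applying the Oleinik-Petrovsky-Milnor-Thom bound, this number is at most $O(\deg Q)^N \leq (O(sd))^N$, which is the required estimate.

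The main obstacle is to justify the Milnor-Thom bound on the complement over an arbitrary ordered field, since the classical statement concerns $\mathbb{R}$. The cleanest route is to pass to the real closure of $k\la\eps\ra$ and invoke Tarski-Seidenberg transfer: the number of semi-algebraically connected components of a semi-algebraic set is a first-order definable quantity that is preserved under passage to the real closure, so any bound valid over $\mathbb{R}$ yields the same bound over any real closed field. The underlying geometric estimate itself requires a Morse-theoretic perturbation argument on a smooth level set $\{Q = t\} \cup \{Q = -t\}$ of $Q$, together with a Bezout-type bound on the number of critical points of a generic linear form, and this is the technically most delicate ingredient.
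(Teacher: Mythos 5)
Theorem~\ref{thmx:Warren} is quoted in the paper as background from Warren's work and is not proved there, so there is no internal proof to compare against; judged on its own, your argument is correct and is essentially the classical route, which also mirrors the perturbation paradigm this paper uses for its new results (compare Lemma~\ref{lem:main:ordered:4}, Lemma~\ref{lem:main:ordered:8}, and the use of the Ole{\u\i}nik--Petrovski{\u\i} inequality in Lemma~\ref{lem:main:ordered:3}). Replacing $\mathcal{P}$ by $\{P \pm \eps\}$ turns sign conditions realized at points of $k^N$ injectively into strict sign conditions (the sign pairs $(1,1)$, $(1,-1)$, $(-1,-1)$ encode $\sign P(x) = 1,0,-1$, using that $\eps$ is smaller than every nonzero $|P(x)|$ with $P(x) \in k$), each nonempty strict realization is a union of semi-algebraically connected components of the complement of the degree-$2sd$ hypersurface $\ZZ(Q,\cdot)$ with $Q$ the product of the perturbed polynomials, and the Ole{\u\i}nik--Petrovski{\u\i}--Milnor--Thom bound gives $(O(sd))^N$. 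Two points to tighten. First, for an arbitrary ordered field $k$ you should pass to the real closure $R$ of $k$ before adjoining $\eps$ (realizability over $k^N$ trivially implies realizability over $R^N$); the algebraic Puiseux series construction produces a real closed field, and semi-algebraic connectivity arguments make sense, only over a real closed base. Second, the transfer step needs a little more than saying the number of components is ``first-order definable'': what one uses is that, for sets defined by formulas of a fixed format, the number of semi-algebraically connected components is uniformly bounded and expressible by a first-order condition on the coefficients (definability of connected components in families), after which Tarski--Seidenberg applies; alternatively, the critical-point/Morse argument on the level sets $\{Q = \pm t\}$ can be carried out directly over any real closed field, as is done in the standard references, avoiding transfer from $\mathbb{R}$ altogether.
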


The above theorems have been generalized in many different directions.
For example, if $k = \R$ is a real closed field (for example the field $\mathbb{R}$ of real numbers), it is possible to obtain bounds on the higher Betti numbers of the realizations of sign conditions (see Theorem~\ref{thmx:BPR} below). 
Motivated by applications in discrete geometry (bounding the number of geometric permutations, as well as incidences), there was a need for more precise bounds on the number of realizable sign conditions: for example, upper bounds on the number of realizable sign conditions or even on the number of different connected components of the realizations restricted to an algebraic subset of the ambient space which may be of much smaller dimension (see for example \cite{GPW1996} for one such application where the dimension of the algebraic subset plays a key role).

Before proceeding further we recall some definitions from algebraic geometry 
and introduce some useful notations.
\subsection{
Basic definitions and notation
}
Let $k$ denote an algebraically closed field. 
An \emph{algebraic subset}  $V$ of $k^N$ 
\footnote{
In the scheme-theoretic language, $k^N$ is identified with the set of closed $k$-points of the $n$-dimensional affine scheme $\mathbb{A}_k^N$ \cite{Hartshorne}. It comes equipped with the Zariski topology. 
However, we will avoid schematic language in this paper and use $k^n$ and $\mathbb{A}_k^N$ interchangeably.  
Similar, remark applies to the projective scheme $\PP_k^N$ as well. 
}
is the set of common zeros of a set of polynomials in $k[X_1,\ldots,X_N]$.

An algebraic subset admits a decomposition (in the Zariski topology) into a finite union of irreducible ones which are called \emph{varieties}. 
The \emph{dimension} of a variety $V$, denoted $\dim V$,  
is the Krull dimension of its
coordinate ring $k[V] = k[X_1,\ldots,X_n]/I(V)$,
where $I(V)$ is the defining ideal of $V$ i.e. $I(V) =  \{f \in k[X_1,\ldots,X_N] \;\mid\; f(x) = 0, x \in V \}$.
%%The dimension of $V$, denoted $\dim V$, 
%%is the degree of the Hilbert
%%polynomial of $k[V]$.
%%The degree of $V$, denoted $\deg(V)$, is $(\dim V)!$ times the leading
%%coefficient of $H_{I(V)}$. 
The \emph{degree of $V$}, denoted $\deg V$, 
is  equal to the number of points in the intersection $V \cap L$ for a generically chosen affine subspace $L$ of dimension  $N-\dim V$ in $k^N$.

Finally, we define the degree, $\deg(V)$, 
of an algebraic set $V$ to be  the sum of the degrees of its irreducible components, and 
the dimension, $\dim(V)$, of $V$ to be the maximum of the dimensions of its irreducible components.

A subset of $S \subset k^N$ is called a \emph{constructible subset}
if it is defined by a Boolean formula with atoms of the form
$P = 0$ for $P \in k[X_1,\ldots,X_N]$. 
A subset of $S \subset \PP_k^N$ is called a \emph{constructible subset}
if it is defined by a Boolean formula with atoms of the form
$P = 0$ for $P \in k[X_0,X_1,\ldots,X_N]$, with $P$ homogeneous. 

If $\R$ is a real closed field (for example, $\mathbb{R}$),
a subset of $S \subset \R^N$ is called a \emph{semi-algebraic subset}
if it is defined by a Boolean formula with atoms of the form
$P = 0, P > 0$ for $P \in \R[X_1,\ldots,X_N]$. 
We call a subset of $\mathbb{A}_\R^N$ or $\PP_\R^N$ to be $\mathcal{P}$-semi-algebraic,
for a finite set of polynomials $\mathcal{P}$, if every $P$ in the 
Boolean formula defining the set belongs to $\mathcal{P}$.

If $\C = \R[i]$ 
(which is the algebraic closure of $\R$), then a constructible subset of 
$\C^N$ is also semi-algebraic subset of $\R^{2N} = \C^N$.

\begin{notation}
In case, $k = \C = \R[i]$, where $\R$ is a real closed field, 
for any locally closed semi-algebraic set $S$ we denote by 
    \[
    b_i(S) = \dim_{\mathbb{Z}/2\mathbb{Z}} \HH_i(S,\mathbb{Z}/2\mathbb{Z}),
    \]
    where $\HH_i(S,\mathbb{Z}/2\mathbb{Z})$ denotes the $i$-th homology group of $S$ with coefficients in $\mathbb{Z}/2\mathbb{Z}$ (see for example, \cite[Chapter 6, Section 3]{BPRbook2}) and call $b_i(S)$
    the $i$-th Betti number of $S$ (with coefficients in $\mathbb{Z}/2\mathbb{Z}$). 
    
    Note that $b_0(S)$ is just the number of semi-algebraically connected components of $S$.
    \footnote{We say ``semi-algebraically connected'' rather than connected since $\R$ is assumed to be an arbitrary real closed field, possibly non-archimedean. ``Semi-algebraically connected''  means ``semi-algebraically path connected'' (see for example \cite[Chapter 3, Section 2]{BPRbook2}), i.e. every two points in the set can be joined by a semi-algebraic path whose image is contained in the set. If $\R = \mathbb{R}$, then being semi-algebraically connected and being connected are equivalent properties of semi-algebraic sets.}
\end{notation}

We can now state several generalizations of the two basic theorems mentioned before.
The following theorem
extends Theorem~\ref{thmx:Warren} in 
%%both these 
several directions.

\begin{thmx}\cite{BPR02}
\label{thmx:BPR}
Let $\R$ be a real closed field and $V \subset \R^N$ be an algebraic subset defined by polynomials of degrees bounded by $d$ with $\dim_\R V = p$, and $\mathcal{P} \subset \R[X_1,\ldots,X_N]_{\leq d}$. Then, for each $i, 0 \leq i \leq p$, the sum of the $i$-th Betti numbers of the realizations in $V$ of all sign conditions on $\mathcal{P}$ is bounded by 
\[
s^{p-i} \cdot O(d)^N,
\]
where $s = \card(\mathcal{P})$.
\end{thmx}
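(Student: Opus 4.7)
The plan is to bound the sum of the $i$-th Betti numbers of realizations of sign conditions on $V$ by the sum of the $i$-th Betti numbers of certain algebraic subsets of $V$ cut out by at most $p-i$ polynomials from $\mathcal{P}$, and then apply the classical Oleinik--Petrovsky--Thom--Milnor bound on Betti numbers of real algebraic varieties.

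First, I would replace each sign condition by a \emph{closed} one via an infinitesimal perturbation. Introduce an infinitesimal $\eps > 0$, extend scalars to $\R\la\eps\ra$, and replace each $P \in \mathcal{P}$ by the pair $P + \eps$, $-P + \eps$. The realization of any sign condition $\sigma$ on $\mathcal{P}$ inside $V$ is then semi-algebraically homotopy equivalent to a union of closed realizations of associated closed sign conditions on the enlarged family (restricted to the base extension $V\la\eps\ra$), and the Betti numbers are preserved under extension of scalars. The advantage is that closed realizations are closed semi-algebraic subsets of $V\la\eps\ra$, to which Mayer--Vietoris type inequalities for sums of Betti numbers apply.

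Next, I would organize the sum of the $i$-th Betti numbers across all closed realizations by a Mayer--Vietoris / inclusion-exclusion argument. Each closed realization is an intersection of sets of the form $V \cap \{P_j \geq 0\}$ and $V \cap \{P_j = 0\}$, and the standard spectral sequence bound yields
\[
\sum_\sigma b_i(\Rec(\sigma,V)) \;\leq\; \sum_{J \subseteq \mathcal{P}} c_{i,|J|} \cdot b_i\bigl(V \cap \ZZ(\mathcal{P}_J,\R)\bigr),
\]
where $\mathcal{P}_J$ is the subfamily indexed by $J$ and $c_{i,|J|}$ are explicit integer coefficients. The crucial point is a dimension-theoretic vanishing: if $|J| > p-i$, then after a further generic infinitesimal perturbation of the polynomials in $\mathcal{P}_J$, the intersection $V \cap \ZZ(\mathcal{P}_J,\R)$ has real dimension strictly less than $i$, so its $i$-th Betti number vanishes. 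Consequently only subsets $J$ with $|J| \leq p-i$ contribute, and there are at most $\binom{2s}{p-i} \leq (2s)^{p-i}$ such subsets (the factor $2$ coming from the doubling of $\mathcal{P}$).

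Finally, each surviving term $b_i(V \cap \ZZ(\mathcal{P}_J,\R))$ is the $i$-th Betti number of a real algebraic subset of $\R^N$ defined by the polynomials of $V$ together with at most $p-i$ polynomials from $\mathcal{P}$, all of degree at most $d$. The Oleinik--Petrovsky--Thom--Milnor bound shows that the sum of its Betti numbers is at most $O(d)^N$, absorbing the (bounded) number of defining equations into the $O(\cdot)$ constant. Multiplying by the combinatorial factor $(2s)^{p-i}$ from the previous step and adjusting the $O(\cdot)$ constant yields the claimed bound $s^{p-i} \cdot O(d)^N$. The main technical obstacle is the vanishing step: one must justify that the generic perturbation can be carried out while only increasing (or at least not decreasing) the sum of the relevant Betti numbers, on a variety $V$ which need not be smooth, irreducible, or equidimensional; this is handled by a careful infinitesimal deformation argument combined with Alexander duality / local semialgebraic triviality for constructible sets.
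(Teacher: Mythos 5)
The paper itself does not prove Theorem~\ref{thmx:BPR}: it is quoted as background from \cite{BPR02}, so there is no internal proof to compare against. The closest in-paper arguments are the proofs of Theorems~\ref{thm:main:ordered:a} and \ref{thm:BPR+}, which follow exactly the paradigm you describe (perturb to closed conditions, Mayer--Vietoris descent to algebraic sets, kill large subfamilies by a dimension count, then the Ole{\u\i}nik--Petrovski{\u\i} bound), and your overall plan is indeed the standard route to the stated bound.

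The genuine gap is in your vanishing step. You perturb every $P$ by the \emph{same} infinitesimal $\eps$ and then invoke ``a further generic infinitesimal perturbation'' of $\mathcal{P}_J$ when $\card(J) > p-i$. With a single shared $\eps$ the codimension count is simply false: if several polynomials restrict to the same function on $V$ (for instance $V = \{X_2 = 0\} \subset \R^2$ and $P_j = X_1 + jX_2$), then all the hypersurfaces $\{P_j = \eps\}$ cut $V$ in the same set, so $V \cap \ZZ(\mathcal{P}_J,\R^N)$ does not have dimension $p - \card(J)$; arbitrarily large $J$ then contribute, and the combinatorial factor $s^{p-i}$ degenerates to something like $2^{O(s)}$. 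Nor can the perturbation be postponed: once $\sum_\sigma b_i$ has been bounded by the Betti numbers of these specific algebraic sets, perturbing them afterwards changes their topology with no inequality available in the direction you need. The correct fix --- the one used in \cite{BPR02} and in this paper's own proofs --- is to build \emph{independent} perturbations into the family from the start, one fresh infinitesimal (or generic constant) per polynomial and two scales $\delta \ll \eps$, exactly as in the family $\widetilde{\mathcal{P}} = \bigcup_i \{P_i \pm \eps\gamma_i,\ P_i \pm \delta\gamma_i\}$ of Section~\ref{sec:proof}; then Lemma~\ref{lem:main:ordered:5} and Remark~\ref{rem:lem:main:ordered:5} give the dimension drop $p-\card(J)$ because the $\gamma_i$ are independent transcendentals, while the homotopy comparison with the original realizations still holds. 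Two smaller points: your closing appeal to Alexander duality and local triviality to handle singular $V$ is off-target --- $V$ is never perturbed in this argument, only the polynomials of $\mathcal{P}$, and duality is unavailable anyway since $V(\R)$ need not be a manifold; and the Mayer--Vietoris bookkeeping produces Betti numbers of various indices (not only the $i$-th) of the sets $V \cap \ZZ(\mathcal{P}_J,\R^N)$, which is harmless only because Ole{\u\i}nik--Petrovski{\u\i} bounds their \emph{total} Betti number by $O(d)^N$, so you should state that step accordingly.
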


However, the above result is still insufficient for certain modern applications involving applications of the polynomial partitioning technique, where one needs more refined results where the dependence
of the bound on the degree of variety $V$ and degrees of the polynomials
in $\mathcal{P}$ are separated out. 

Suppose that we have a sequence of algebraic sets $V_0,\ldots,V_\ell \subset \R^N$
cut out by a sequence of real polynomials of degrees
$d_1,d_2, \ldots,d_\ell$, with real dimensions
$p_1 \geq p_2 \geq \cdots \geq p_\ell$,
and the degrees of the polynomials in $\mathcal{P}$ are bounded by $d \geq d_\ell$.

The following result in this direction is proved in \cite{Barone-Basu2},
and has been applied in the context of incidence geometry \cite{Basu-Sombra, Walsh1, Walsh2}.

\begin{thmx} \cite[Theorem 4]{Barone-Basu2}
\label{thmx:BB}
Suppose that
\[ 2 \leq d_1 \leq d_2 \leq \frac{1}{N + 1} d_3 \leq \frac{1}{(N + 1)^2} d_4
   \leq \cdots \leq \frac{1}{(N + 1)^{\ell - 2}} d_{\ell},\]
   and $d_\ell \leq \frac{1}{N+1} d$.
   Then, the number of semi-algebraically connected components of the realizations of all sign conditions on $\mathcal{P}$ restricted to $V = V_\ell$ is bounded by
 \[
  O (N)^{2 N} (s d)^{p_{\ell}}  \left( \prod_{1 \leq j \leq \ell}
    d_j^{p_{j - 1} - p_j} \right) .
\]
\end{thmx}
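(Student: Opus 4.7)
The overall strategy is to reduce the counting problem to a Bezout-style bound on an algebraic set obtained by deforming $V_{\ell}$ with a carefully chosen hierarchy of infinitesimals, and then to invoke a Theorem~\ref{thmx:BPR}-type bound on sign conditions on this deformed set. The guiding principle is that the product $\prod_{j=1}^{\ell} d_{j}^{p_{j-1}-p_{j}}$ should appear as the \emph{degree} of a generic complete intersection cutting out a set of the same dimension as $V_{\ell}$, after which the remaining factor $(sd)^{p_{\ell}}$ comes from the standard sign-condition bound on a variety of dimension $p_{\ell}$, with $O(N)^{2N}$ absorbing combinatorial overhead.

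First I would perform the standard reduction from counting connected components of realizations of all sign conditions of $\mathcal{P}$ on $V_{\ell}$ to counting $b_{0}$ of a single family of closed semi-algebraic sets cut out by $\mathcal{P}$ on $V_{\ell}$: replace each $P\in\mathcal{P}$ by perturbations $P\pm\eps$ so that strict sign conditions become closed, and package all weak sign conditions together using the \emph{level sets} trick of Basu--Pollack--Roy. This step converts the problem into estimating $b_{0}$ of intersections of $V_{\ell}$ with basic closed sets defined by at most $N+1$ sign conditions drawn from $\mathcal{P}$, at the cost of a factor $\binom{s}{\leq p_{\ell}} \cdot 2^{p_{\ell}} = O(sd)^{p_{\ell}}$-friendly overhead (this is where $s^{p_{\ell}}$ enters).

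Next I would introduce an ordered chain of infinitesimals $1 \gg \eps_{1} \gg \eps_{2} \gg \cdots \gg \eps_{\ell} > 0$ over $\R$, and replace each defining polynomial of $V_{j}$ by a generic perturbation of the appropriate degree $d_{j}$, modulo $\eps_{j}$. The growth conditions $d_{j+1} \geq (N+1) d_{j}$ are precisely what is needed to apply an effective Noether-type normalization / generic-position argument guaranteeing that a generic collection of polynomials of degrees $d_{j}$ cuts out inside $V_{j-1}$ a smooth complete intersection of the expected dimension $p_{j}$ and degree bounded by $\deg(V_{j-1}) \cdot d_{j}^{p_{j-1}-p_{j}}$. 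Iterating Bezout's theorem along the chain $V_{0}\supset V_{1}\supset\cdots\supset V_{\ell}$ then yields
\[
\deg(\widetilde{V}_{\ell}) \;\leq\; \prod_{j=1}^{\ell} d_{j}^{\,p_{j-1}-p_{j}},
\]
where $\widetilde{V}_{\ell}$ is the deformed, smooth complete intersection replacing $V_{\ell}$. A standard semi-algebraic limit argument (taking Hausdorff limits as $\eps_{\ell}\to 0$, then $\eps_{\ell-1}\to 0$, etc., in the real closure $\R\langle\eps_{1},\ldots,\eps_{\ell}\rangle$) shows that every semi-algebraically connected component of a sign-condition realization on $V_{\ell}$ lifts to at least one connected component on $\widetilde{V}_{\ell}$, so it suffices to bound the count on $\widetilde{V}_{\ell}$.

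Finally, with $\widetilde{V}_{\ell}$ a smooth variety of dimension $p_{\ell}$ and degree at most $\prod_{j} d_{j}^{p_{j-1}-p_{j}}$, one applies the sign-condition bound on a variety (a refinement of Theorem~\ref{thmx:BPR}) whose dependence on the ambient dimension has been completely funneled into the degree of the variety, giving a bound of the form
\[
O(N)^{2N} \cdot \deg(\widetilde{V}_{\ell}) \cdot (sd)^{p_{\ell}}.
\]
Substituting the Bezout estimate yields the claimed bound. The condition $d_{\ell} \leq d/(N+1)$ enters here: it ensures that the polynomials in $\mathcal{P}$ can be placed in sufficiently generic position relative to the defining equations of $\widetilde{V}_{\ell}$ so that the sign-condition bound on the complete intersection applies without loss. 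I expect the main obstacle to be the third step---verifying that the hierarchy of infinitesimals truly produces a complete intersection of the expected dimension at every stage (this is where the precise growth condition $d_{j+1} \geq (N+1)^{j-1}d_{2}$ is used, via an effective Noether-normalization plus a dimension count) and that the Hausdorff limit bookkeeping does not lose any components. Once that is established, the Bezout telescoping and the final application of the sign-condition bound are routine.
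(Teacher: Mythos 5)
This statement (Theorem~\ref{thmx:BB}) is not proved in the paper at all: it is quoted verbatim from \cite[Theorem 4]{Barone-Basu2} as background, so there is no internal proof to compare against, and your sketch must stand on its own as a reconstruction of the Barone--Basu argument. As such it has genuine gaps. First, the ``lifting'' step is unjustified and in general false: if you replace the defining polynomials of $V_j$ by \emph{generic} perturbations of the same degrees, the real points of the variety need not survive (the real locus is not stable under generic perturbation --- think of $X^2+Y^2=0$), so there is no reason that every semi-algebraically connected component of a realization on $V_\ell(\R)$ is captured by a component on the deformed $\widetilde{V}_\ell$. The deformations actually used in \cite{Barone-Basu2} are of a very specific non-generic form (sums of squares shifted by infinitesimals, so that the original real set is recovered as a limit), and the counting is done via critical points of generic projections restricted to these deformed real sets, not by replacing $V_\ell$ with a smooth complete intersection. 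Second, the Bezout telescoping conflates real and complex dimensions: $V_j$ is obtained from $V_{j-1}$ by adjoining polynomials of degree $d_j$, and complex Bezout controls the degree by the product of \emph{all} the $d_j$'s raised to complex codimensions; the exponents $p_{j-1}-p_j$ in the target bound are drops in \emph{real} dimension, which need not coincide with complex codimensions, so the estimate $\deg(\widetilde{V}_\ell)\leq \prod_j d_j^{\,p_{j-1}-p_j}$ does not follow from the argument you give. In \cite{Barone-Basu2} these exponents arise from a multi-degree bound on the number of critical points of projections on the real parts, which is the technical heart of their proof and is absent from your sketch.

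Third, your final step invokes ``a sign-condition bound on a variety whose dependence on the ambient dimension has been completely funneled into the degree,'' of the shape $O(N)^{2N}\cdot\deg(\widetilde{V}_\ell)\cdot(sd)^{p_\ell}$. No such off-the-shelf result exists at that point: Theorem~\ref{thmx:BPR} carries a factor $O(d)^N$, and producing bounds controlled by the degree of the variety rather than the ambient dimension is precisely the contribution of the present paper (and of \cite{Barone-Basu2} itself), so quoting it makes the argument circular. The degree hypotheses $d_{j+1}\geq (N+1)d_j$ and $d_\ell\leq d/(N+1)$ are likewise not used for a Noether-normalization/complete-intersection dimension count as you suggest; in the actual proof they serve to absorb lower-order terms in the critical-point estimates. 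To repair the proposal you would essentially have to reproduce the deformation-plus-critical-point machinery of \cite{Barone-Basu2} rather than reduce to a Bezout degree computation followed by a black-box sign-condition bound.
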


The following related result which has applications in incidence geometry via the polynomial partitioning technique (see for example \cite{Guth-book,Sheffer-book} was obtained by  by M. Walsh \cite{Walsh2}. We denote by $\ZZ(P,V)$ the set of zeros of a polynomial restricted to a
set $V$.

\begin{thmx} \cite[Theorem 2.12]{Walsh2}
\label{thmx:Walsh}
Let $\R$ be a real closed field, $\C = \R[i]$ the algebraic closure of $\R$, and $V \subset \C^N$ be an algebraic variety with $\dim V = p$, and 
$P \in \R[X_1,\ldots,X_n]$.
Then the number of semi-algebraically connected components $C$  of 
$\R^N - \ZZ(P,\R^N))$ such that $C \cap V(\R) \neq \emptyset$ is bounded by
$ C(N) \cdot \deg(V) \cdot \deg(P)^p$ where the constant $C(N)$ depends on $N$
(in an unspecified way), and $V(\R)$ denotes the $\R$-points of $V$.
\end{thmx}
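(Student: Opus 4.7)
The plan is to reduce the ambient counting problem to a Morse-theoretic count on $V(\R)$ itself, and then to extract the sharp factor $\deg(V) \cdot \deg(P)^p$ from a Bezout computation performed intrinsically on $V$ rather than from the degrees of a system of defining equations for $V$.

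First I would reduce to counting connected components inside $V(\R)$. Since $V(\R) \setminus \ZZ(P, V(\R))$ is contained in $\R^N \setminus \ZZ(P,\R^N)$, each semi-algebraically connected component $D$ of the former lies entirely inside a unique connected component $C(D)$ of the latter. The assignment $D \mapsto C(D)$ is surjective onto the set of components $C$ of $\R^N \setminus \ZZ(P,\R^N)$ that meet $V(\R)$, so the quantity to be bounded is at most $b_0(V(\R) \setminus \ZZ(P, V(\R))) = b_0(V(\R) \cap \{P > 0\}) + b_0(V(\R) \cap \{P < 0\})$, and by symmetry it is enough to bound one summand.

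Next I would carry out a Morse-theoretic count on the variety. After a small generic perturbation (formalized by passing to $\R\la\eps\ra$ in the style of Milnor and Barone-Basu), I may assume $V$ is smooth of dimension $p$, that $V \cap \{P = 0\}$ is a smooth transverse hypersurface of $V$, and that a chosen generic real linear function $\ell$ is Morse on $V(\R)$ and on $V(\R) \cap \{P=0\}$. Compactifying $V(\R) \cap \{P \geq \delta\}$ by intersecting with a large closed ball, standard Morse theory bounds $b_0$ of the resulting compact manifold with corners by the total count of critical points of $\ell$ on the interior and on each boundary stratum. Each such critical locus is the real trace of a zero-dimensional complex subvariety of $V$ cut out by the vanishing of appropriate Jacobian minors of $\ell$ and $P$ restricted to $V$.

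The factor $\deg(V)$ is then obtained by carrying out the final Bezout count intrinsically on $V$. Choose a generic Noether-normalization projection $\pi : V \to \C^p$ of degree $\deg V$; the critical-point loci appearing above push forward to zero-dimensional subschemes of $\C^p$ defined by $p$ polynomial conditions of degree $O(\deg P)$, so Bezout in $\C^p$ multiplied by the sheet count $\deg V$ yields the bound $C(N) \cdot \deg(V) \cdot \deg(P)^p$. The principal obstacle is precisely this last step: a naive perturbation of the defining ideal of $V$ followed by a Milnor-Thom estimate in $\R^N$ would give only the weaker bound of the form $d_{\mathrm{def}}^{N-p} \deg(P)^p$, where $d_{\mathrm{def}}$ is the maximum degree of a defining equation of $V$. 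Retaining the geometric degree $\deg(V)$ requires intrinsic Morse-Bezout bookkeeping on $V$ together with careful control of the excess contributions that would otherwise arise at the singular locus of $V$ after perturbation; it is in this bookkeeping that the unspecified constant $C(N)$ is absorbed.
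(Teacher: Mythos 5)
First, a point of comparison: the paper does not prove this statement at all --- it is quoted verbatim from Walsh \cite{Walsh2} as background (Theorem~\ref{thmx:Walsh}), so there is no internal proof to measure your argument against; your proposal has to stand on its own.

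It does not, and the failure is already at your first reduction. You replace the quantity in the theorem (the number of connected components of $\R^N - \ZZ(P,\R^N)$ that \emph{meet} $V(\R)$) by the larger quantity $b_0(V(\R) - \ZZ(P,V(\R)))$. The surjectivity you invoke is correct, but it only gives the inequality in the direction you use it, and the bound $C(N)\cdot \deg(V)\cdot \deg(P)^p$ is simply false for the enlarged quantity: many components of $V(\R)-\ZZ(P,\R^N)$ can sit inside one and the same ambient component, and Walsh's theorem must count them only once. The paper itself supplies a counterexample to your reduced claim in Remark~\ref{rem:tight:ordered}: there $V \subset \C^2$ is a non-singular curve of degree $2D$ whose real points form $D^2$ disjoint ovals. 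Taking $P$ of degree $1$ (or a nowhere-vanishing $P$, as in the remark following Theorem~\ref{thmx:Walsh}), one gets $b_0(V(\R)-\ZZ(P,V(\R))) \geq D^2$, while your reduced target would have to be $O(D)$ with $N=2$, $p=1$ fixed; the actual quantity in the theorem is at most $2$ (resp.\ $1$). So no amount of Morse theory or intrinsic Bezout counting can rescue the argument after that first step: any correct proof must exploit the fact that one needs only a single witness point of $V(\R)$, with controllable algebraic properties (e.g.\ a suitable critical or extremal point of $P$ or of a polar-variety construction on $V$), in each relevant ambient sign component, and then bound the number of such witnesses by $\deg(V)\cdot\deg(P)^p$. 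As a secondary issue, your appeal to ``a small generic perturbation'' making $V$ smooth is also not innocuous: one is not given defining equations of $V$ of controlled degree, perturbing them need not preserve $\deg V$, and the real trace $V(\R)$ (which may lie entirely in the singular locus) can change drastically under such a perturbation --- precisely the obstacle the present paper highlights in Section~\ref{subsec:contrib}.
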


\begin{remark}
    Note that the quantity being bounded in Theorem~\ref{thmx:Walsh} is 
    a priori smaller than the number of semi-algebraically connected components of $V(\R) - \ZZ(P,\R^N)$. In particular, one cannot derive any useful bound on the number of semi-algebraically connected components of $V(\R)$ itself from Theorem~\ref{thmx:Walsh} (by letting $P$  to be a non-zero constant polynomial).
\end{remark}

\begin{remark}
Notice also that 
while Theorem~\ref{thmx:BPR} has better dependence in the combinatorial part (the part depending on $s$) on the dimension of the variety $V$, the algebraic part (the part depending on the degree) is oblivious 
of the dimension of the variety and depends exponentially on $N$ the dimension of the ambient space. The bounds in Theorems~\ref{thmx:BB} and \ref{thmx:Walsh} are better but they still depend on $N$ (through the quantity $C = C(N)$, presumably exponential,  in the case of Theorem~\ref{thmx:Walsh}). 
\end{remark}

In many applications, the dimension of $V$ may be much smaller than $N$.
For instance, consider the Pl\"{u}cker embedding of the Grassmannian variety
$\mathrm{Gr}(m,M)$ of $m$-dimensional subspaces in $k^{M}$ in the 
$\binom{M}{m} -1$ dimensional projective space. It is well known that 
$\dim \mathrm{Gr}(m,M) = m(M-m)$. Thus, if $m = \lceil M/2 \rceil$,
while the dimension of the variety $V = \mathrm{Gr}(m,M)$ is polynomial in $M$, the dimension $N$ of the ambient projective space is exponentially large in $M$. In such a situation, the bounds in Theorems~\ref{thmx:BB}
and \ref{thmx:Walsh} which have exponential dependence on $N$, would produce a bound doubly exponential in $M$.

Laszlo and Viterbo \cite{Laszlo-Viterbo} proved the following:

\begin{thmx} \cite[Theorem 1.2]{Laszlo-Viterbo}
\label{thmx:LV}
    The sum of all $\Z/2\Z$-Betti numbers of $V(\R)$, where $V$ is a non-singular real variety of degree $D$ and dimension $p$, is bounded by 
    \[
    2^{p^2+2} D^{p+1}.
    \]
\end{thmx}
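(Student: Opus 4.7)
The plan is to combine Smith's inequality with a reduction to hypersurfaces via generic projection, and then apply a Thom--Milnor type estimate.

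Since $V$ is defined over $\mathbb{R}$, complex conjugation acts on $V(\mathbb{C})$ as a $\mathbb{Z}/2\mathbb{Z}$-action with fixed-point set exactly $V(\mathbb{R})$. Because $V$ is nonsingular and projective, $V(\mathbb{C})$ is a compact smooth manifold, so Smith's inequality applies and yields
\[
\sum_{i \geq 0} b_i(V(\mathbb{R}); \mathbb{Z}/2\mathbb{Z}) \;\leq\; \sum_{i \geq 0} b_i(V(\mathbb{C}); \mathbb{Z}/2\mathbb{Z}).
\]
This converts the problem into bounding the total $\mathbb{Z}/2\mathbb{Z}$-Betti number of a smooth complex projective variety of dimension $p$ and degree $D$ in a manner that is independent of the ambient dimension $N$.

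Next, I would eliminate the dependence on $N$ by choosing a generic linear projection of $V$ onto a hypersurface $W \subset \mathbb{P}^{p+1}$ of degree $D$; such a projection exists by standard dimension-counting and is birational onto its image. The total $\mathbb{Z}/2\mathbb{Z}$-Betti numbers of $V(\mathbb{C})$ and of $W(\mathbb{C})$ differ only by contributions from the projection's indeterminacy locus and from the singular locus of $W$, each of which is itself a variety of dimension strictly less than $p$ whose degree is controlled in terms of $D$ (via polar class formulas of Pl\"{u}cker type). An induction on $p$ then reduces the problem to bounding the Betti numbers of a single hypersurface of degree $D$ in $\mathbb{P}^{p+1}$, which by covering its real (or complex) locus with two affine charts and applying the Oleinik--Petrovsky--Thom--Milnor inequality is bounded by $O(D^{p+1})$.

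The main obstacle is the inductive bookkeeping in the projection step: in order for $p$ iterations to produce the stated constant $2^{p^2+2}$, each inductive pass can lose at most a factor of roughly $2^{2p+2}$. Controlling the interplay between resolution of the indeterminacy and singular loci, the Mayer--Vietoris contributions of the exceptional strata, and the degree growth of successive polar varieties in a way that keeps this factor sharp is the technical heart of the argument. An alternative route that might give a cleaner accounting is a direct Morse-theoretic proof on $V(\mathbb{R})$: pick a generic quadratic function on the affine cone, count its critical points via B\'{e}zout applied to the Lagrange multiplier system attached to the defining equations of $V$, and bound the count by the degree of a generic polar variety of $V$ (again $O(D^{p+1})$), with the factor $2^{p^2+2}$ absorbing the loss incurred when one separates critical points by Morse index and pays for the passage from affine to projective via a Mayer--Vietoris covering.
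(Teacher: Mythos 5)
This statement is an imported result (Theorem~\ref{thmx:LV} is quoted from Laszlo--Viterbo); the paper itself gives no proof and notes in Remark~\ref{rem:LV0} that the actual argument is a characteristic-class computation on the smooth manifold underlying $V$. So your route is necessarily different from the source, and the question is whether it closes; as sketched, it does not. The first step (Smith's inequality, reducing to a bound on the total $\mathbb{Z}/2\mathbb{Z}$-Betti number of the smooth complex variety $V(\mathbb{C})$) is fine and is the same move the paper makes elsewhere (Lemma~\ref{lem:main:ordered:c:5}). The gap is in the projection step. The generic image $W\subset \mathbb{P}^{p+1}$ is a \emph{singular} hypersurface, and the claim that $b(V(\mathbb{C}))$ and $b(W(\mathbb{C}))$ differ only by lower-dimensional ``polar'' contributions is unsubstantiated: for a finite birational morphism the Betti numbers of the image are not controlled by those of the source plus a boundary term without substantial extra work, and making that comparison quantitative is exactly the hard content that the cited proof handles with characteristic classes. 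Worse, the final estimate does not come out right: if you apply the Ole\u{\i}nik--Petrovski\u{\i}--Thom--Milnor bound to $W(\mathbb{C})$ viewed as a real algebraic set in two affine charts, both the ambient real dimension and the degree double, giving $O(D)^{2p+2}$ rather than $O(D)^{p+1}$; if instead you apply it to the real projected hypersurface $\pi(V)(\mathbb{R})$, then Smith's inequality has bought you nothing and you must recover the Betti numbers of $V(\mathbb{R})$ from those of its singular projection --- a generic projection is injective only off $\mathrm{Sing}(\pi(V))$, and this recovery is precisely what the paper's own Lemmas~\ref{lem:main:ordered:0}--\ref{lem:main:ordered:3} manage, but only for $b_0$ and only under a local-dimension hypothesis, not for all Betti numbers.

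The constant $2^{p^2+2}$ is also not obtained: you acknowledge the ``inductive bookkeeping'' is open, but with the issues above the per-step loss is not even of a determinate size. The alternative Morse-theoretic sketch has an analogous defect: counting critical points of a generic function by B\'ezout applied to a Lagrange-multiplier system built from defining equations of $V$ reintroduces dependence on $N$ and on the degrees of those equations, which are not controlled by $\deg V$ alone; replacing this by a bound on the degree of a generic polar variety of $V$ is possible, but proving that polar-degree bound is essentially the characteristic-class computation of Laszlo--Viterbo, i.e.\ the part of the argument you have deferred rather than supplied.
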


\begin{remark}
\label{rem:LV0}
    The proof of Theorem~\ref{thmx:LV} relies on sophisticated calculation of characteristic classes of the underlying smooth manifold of $V$ and hence cannot be applied to singular varieties.
\end{remark}

\begin{remark}
\label{rem:LV}
    The bound in Theorem~\ref{thmx:LV} in particular gives a bound on the number of semi-algebraically connected components of $V$ which just depends on the degree of $V$ and independent of the dimension of ambient space and is closest in spirit to the results in this paper. 
    This bound was further improved to $D^{p+1}$ in \cite{Kharlamov}. However, these bounds only apply to non-singular real projective varieties -- while in the results about sign conditions proved in this paper (see Theorem~\ref{thm:main:ordered:a} and Remark~\ref{rem:thm:main:ordered:a} below) 
    the hypothesis on $V$ is much weaker -- namely that $V$ only needs to be a union of  
    affine varieties $V_i$, such that each $V_i$ has the same real dimension at all points of $V_i(\R)$. Many naturally occurring real varieties, such as Schubert subvarieties of flag varieties,
    rank subvarieties in spaces of matrices, as well as other determinantal varieties (see \cite{Manivel} for definitions), are often singular but satisfies the real equi-dimensionality condition mentioned above. 
\end{remark}

\subsection{Summary of main contributions}
\label{subsec:contrib}
In this paper we consider varieties and polynomials over real and complex numbers (in fact more generally over arbitrary real closed and algebraically closed fields). We prove bounds on the number of realizable zero-nonzero patterns and sign conditions  
(and also on the number of connected components of their realizations which is a priori bigger number)
which are \emph{independent of $N$} and depend only on the number and degrees of the polynomials, 
and the degree (of the embedding of $V$ in $\mathbb{A}_k^N$ or in $\PP_k^N$) and the dimension of $V$. 
These appear as Theorems~\ref{thm:main:algebraic}, %%\ref{thm:main:algebraic:projective}, 
\ref{thm:main:ordered:a}, 
\ref{thm:main:ordered:a:projective}, \ref{thm:main:ordered:c} and
\ref{thm:main:ordered:c'} in Section~\ref{sec:main} below.

The independence of these bounds from the dimension $N$ of the ambient space is a new feature of these bounds (see Remark~\ref{rem:minimal-degree} about the existence of such bounds).
This feature is potentially useful in any application where one is interested in the number of zero-nonzero patterns (resp. sign conditions) restricted to algebraic subsets of dimension much smaller than that of the ambient dimension. 
We discuss examples of such applications below in Section~\ref{subsubsec:applications}.

\subsubsection{Methods}
Our techniques for dealing with zero-nonzero patterns and sign conditions are different. To prove the upper bound on the number of zero-nonzero patterns of a family of polynomials restricted to an algebraic subset $V$ of 
$\mathbb{A}_k^N$, we closely follow the proof given in \cite{Jeronimo-Sabia,BPR-tight}, 
with the modification that all algebraic subsets that are considered 
in the proof are contained in $V$ and we can bound their degrees in terms of $\deg V$ and the degrees of the given polynomials. 
We prove an upper bound on the sum of the degrees of the Zariski closure
of the realizations of all zero-nonzero patterns with non-empty realizations on $V$, which in turn gives an upper bound on the number
of realized zero-nonzero patterns.
An important role in the argument is played by the generalized Bezout inequality (Theorem~\ref{thmx:bezout}). 

For the problem of bounding realizable sign conditions over ordered fields we
assume without loss of generality that the field  $k = \R$ is a real closed field. The main technique that is used for proving bounds on the number of realizable sign conditions (or more generally bounding the Betti numbers of these realizations) is via a technique of infinitesimal perturbations
(see for example, \cite{BPR02,BPRbook2}). The idea is to perturb the given set of polynomials using infinitesimal elements in some real closed extension of $\R$, so that:
\begin{enumerate}[(a)]
\item
The polynomials in the new set have the same degrees as ones in the given set, but in addition have better algebraic properties -- like being in general position. These algebraic properties help control the combinatorial part of the bound.
\item 
The realizations of the various realizable sign conditions are related to those of the given set in some way, so that bounding the number (or more generally the Betti numbers)  of 
realizations of the sign conditions of the new set of polynomials, suffices to bound the corresponding quantity for the original set. 
\item Finally, the problem of bounding the number of realizable sign conditions (or more generally the Betti numbers of the realizations) can be reduced to bounding the corresponding quantity for all the non-empty algebraic subsets that one can define using the perturbed set of polynomials.  
\end{enumerate}
Our technique for proving upper bounds on the number of realizable 
sign conditions follow a similar paradigm but meets two important obstacles.

\begin{enumerate}[(a)]
    \item Since we are not given the polynomials that define $V$, we are not able to deform $V$ infinitesimally maintaining its degree;
    \item as a result while we are able to reduce using usual techniques to the case bounding the number of semi-algebraically connected components of certain algebraic subsets of $V(\R)$, all existing bounds for bounding the number of semi-algebraically connected components of possibly singular real algebraic sets depend exponentially on the dimension of the ambient space.
\end{enumerate}

We overcome these problems by first reducing the problem to the problem of bounding the number of semi-algebraically connected components of an appropriate number of real algebraic subsets $X(\R) \subset V(\R)$,
of co-dimension one in $V$.
then after taking the image under a generic projection $\pi$ to $\R^p$ (where 
$p= \dim V$), to the problem of bounding the number of semi-algebraically connected components of the semi-algebraic set consisting of the $\R$-points of $\pi(X) - \Sing(\pi(X))$. We can use the classical bound due to 
Ole{\u\i}nik and Petrovski{\u\i} \cite{OP} to bound the latter quantity. This bound depends exponentially on the dimension of the ambient space, but since  this is $p$ in this situation we can tolerate the exponential dependence
on $p$. The technique described above for obtaining bounds on the number of semi-algebraically connected components of a real variety only in terms of its degree is new and in our opinion the most important technical contribution of the paper.

\subsubsection{Tightness}
\label{subsubsec:tightness}
Our bound (on the sum of the degrees of the realizations) in the algebraically closed case is tight  (see Remark~\ref{rem:tight:algebraic}).
In the real closed case, our main result applies to a certain slightly restricted class of algebraic subsets of $\R^N$  
that includes most real algebraic sets that occur in practice. 
The bound is tight in this case as well up to a factor of $O(1)^{\dim V}$
(see Remark~\ref{rem:tight:ordered}).

\subsubsection{Applications}
\label{subsubsec:applications}
We give several applications of our results. Some of these results 
improve or generalize prior results leveraging our new bounds.
These include upper bounds on 
the $\eps$-entropy  of real varieties (Theorem~\ref{thm:entropy}),
and lower bounds on the depths of algebraic computations trees 
testing membership in a semi-algebraic set (Theorem~\ref{thm:lower-bound}). In each of these cases the new bounds depend only on the degree and dimension of the relevant algebraic subset and independent of
(or in the $\eps$-entropy case very weakly dependent on) the ambient dimension.

We also introduce a notion of additive (resp. multiplicative) rank of elements of a vector space (resp. algebra) relative to some algebraic 
subset $\Delta$ (Definitions~\ref{def:rank:vector} and \ref{def:rank:algebra}). 
This is motivated from quantum complexity theory, where 
$\Delta$ can be thought of as an ``quantum oracle'' (i.e. a set of unitary operators operating on the Hilbert space associated to a 
$n$-qubit quantum register each having unit cost). We extend the definition of ranks to subsets of the vector space (resp. algebra), and prove lower bounds on the rank of subsets which have some structure
(Theorems~\ref{thm:rank:vector:algebraic}, \ref{thm:rank:vector:ordered},  \ref{thm:rank:algebra:algebraic} and \ref{thm:rank:algebra:ordered}).
As a consequence, finite subsets which are ``grid''-like (i.e. has some algebraic structure) must contain elements of relatively large rank. 
As an application of the rank lower bound theorems  we show how to recover Shannon's lower bound theorem for classical circuits in the case of quantum circuits equipped with an algebraic oracle $\Delta$ (Theorems~\ref{thm:quantum1} and \ref{thm:quantum2}).

The rest of the paper is organized as follows. We state our mathematical results in the following Section~\ref{sec:main}. In Section~\ref{sec:applications}
we describe some of the geometric applications of theorems stated in Section~\ref{sec:main}.
These include:
\begin{enumerate}[(a)]
\item bounding the $\eps$-entropy of real varieties (Section~\ref{subsec:entropy}); 
\item an improvement of known lower bound results for algebraic computation trees (Section~\ref{subsec:tree}).
\end{enumerate}
Section~\ref{sec:proof} is devoted to the proofs of all the results mentioned above.

In Section~\ref{subsec:rank}, we prove 
lower bounds on the relative ranks, additive as well as multiplicative 
in vector spaces and algebras respectively, 
and over algebraically closed as well as real closed fields;
and show the lower bound on ranks of subsets in an algebra imply
lower bounds on quantum circuit complexity with oracle (Section~\ref{subsec:quantum}).

Finally, in Section~\ref{sec:conclusion} we state some open problems for future research.

\section{Upper bounds on zero-nonzero patterns and sign conditions}
\label{sec:main}

\subsection{Bound on the number of realizable  zero-nonzero patterns}
We will denote
\[
\Pi(\mathcal{P},V) = \{\pi \in \{0,1\}^{\mathcal{P}} \;\mid\; \RR(\pi,V) \neq \emptyset \}
\]
and call $\Pi(\mathcal{P},V)$ the set of \emph{realizable zero-nonzero patterns of $\mathcal{P}$ on $V$}.

We are now in a position to state our first result.

\begin{theorem}
\label{thm:main:algebraic}
    Let $V \subset k^N$ (resp. $V \subset \PP_k^n$) be an algebraic subset 
    with $\deg V = D$, $\dim V = p$, and 
    $\mathcal{P} \subset k[X_1,\ldots,X_N]_{\leq d}$ (resp. $\mathcal{P} \subset k[X_0,X_1,\ldots,X_N]_{\leq d}$)   be a finite set of polynomials (resp. homogeneous polynomials), with $\card(\mathcal{P}) = s$. Then,
    
    \[
    \card(\Pi(\mathcal{P},V)) \leq
        \sum_{\pi \in \{0,1\}^{\mathcal{P}}} \deg \overline{(\RR(\pi,V))}         \leq 
        D \cdot \sum_{i=0}^p \binom{s}{i} \cdot  d^{i} 
        %%= (p+1)\cdot (sd)^p \cdot D
    \]
    (where $\overline{(\RR(\pi,V))}$ denotes the Zariski-closure of
    $\RR(\pi,V)$).
\end{theorem}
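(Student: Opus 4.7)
The plan is to bound the sum $\sum_{\pi} \deg \overline{\RR(\pi,V)}$ by decomposing each Zariski closure into irreducible components, associating to each such component a small subset of $\mathcal{P}$, and then applying the generalized Bezout inequality (Theorem~\ref{thmx:bezout}). The first inequality is immediate: for every realizable pattern, $\overline{\RR(\pi,V)}$ is a nonempty algebraic set, hence has degree at least $1$, so $\card(\Pi(\mathcal{P},V)) \le \sum_\pi \deg \overline{\RR(\pi,V)}$.

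For the second inequality, the starting observation is that an irreducible component $W$ of $\overline{\RR(\pi,V)}$ determines $\pi$ uniquely: since $\RR(\pi,V) \cap W$ is Zariski dense in $W$, one has $W \subseteq \ZZ(P)$ if and only if $\pi(P)=0$. Writing $Z_\pi = \{P \in \mathcal{P} : \pi(P) = 0\}$, each such $W$ is in fact an irreducible component of the algebraic set $V \cap \bigcap_{P \in Z_\pi}\ZZ(P)$ that is not contained in any $\ZZ(P)$ with $P \notin Z_\pi$, and distinct patterns produce disjoint collections of subvarieties of $V$. Therefore
\[
\sum_{\pi \in \Pi(\mathcal{P},V)} \deg \overline{\RR(\pi,V)} \;=\; \sum_W \deg W,
\]
where the sum is over all irreducible subvarieties $W$ of $V$ that arise in this way.

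The technical heart of the argument is the next step. Fix one such $W$, let $V_j$ be an irreducible component of $V$ containing $W$, and set $c = \dim V_j - \dim W$, so $c \le p$. Since $W$ is an irreducible component of $V_j \cap \bigcap_{P \in Z_\pi}\ZZ(P)$ of codimension exactly $c$ in $V_j$, a standard prime-avoidance argument in the coordinate ring $k[V_j]$ (equivalently, the Krull height theorem) produces polynomials $P_1,\ldots,P_c \in Z_\pi$ such that $W$ is already an irreducible component of codimension $c$ in $V_j \cap \ZZ(P_1) \cap \cdots \cap \ZZ(P_c)$. In particular $W$ appears among the codimension-$c$ components of $V \cap \bigcap_{P \in I}\ZZ(P)$ for some subset $I \subseteq \mathcal{P}$ with $|I| = c \le p$.

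It then remains to apply the generalized Bezout inequality: for each $I \subseteq \mathcal{P}$ with $|I|=i$, the sum of the degrees of the irreducible components of $V \cap \bigcap_{P \in I}\ZZ(P)$ of codimension exactly $i$ in $V$ is at most $D \cdot d^i$. Summing over all subsets $I$ with $i = |I| \le p$ (this may overcount some $W$'s but can only weaken the inequality) yields
\[
\sum_W \deg W \;\le\; \sum_{i=0}^{p} \binom{s}{i}\, D\, d^i,
\]
as required. The projective case follows by the same argument with the projective version of the Bezout inequality. The main obstacle is the prime-avoidance step: it is precisely what allows each component $W$ to be cut out by at most $\dim V$ polynomials from $\mathcal{P}$ rather than by polynomials from $k[X_1,\ldots,X_N]$, and this is what makes the final bound independent of the ambient dimension $N$.
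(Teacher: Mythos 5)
Your reduction of the second inequality to bounding $\sum_W \deg W$, where $W$ runs over the irreducible components of the sets $\overline{\RR(\pi,V)}$, is fine, as is the observation that each such $W$ is a component of $V_j\cap\bigcap_{P\in Z_\pi}\ZZ(P)$ and determines $\pi$. The gap is the step you call prime avoidance: it is \emph{not} true that one can always choose $c=\dim V_j-\dim W$ polynomials \emph{from the prescribed set} $Z_\pi$ so that $W$ is an irreducible component of $V_j\cap\ZZ(P_1)\cap\cdots\cap\ZZ(P_c)$. The standard height-theorem/prime-avoidance statement produces $c$ elements of the \emph{ideal} generated by $Z_\pi$ (suitable combinations of the generators), not $c$ members of the generating set, and the distinction is exactly what your charging scheme needs, since the factor $\binom{s}{i}$ and the degree bound $d$ require the cutting polynomials to belong to $\mathcal{P}$. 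A concrete counterexample: in $k^3$ let $W=\ZZ(\{x,y\},k^3)$ be the line $x=y=0$ and $Z_\pi=\{xy,\; y(x-y),\; x(x-y)\}$. Then the common zero set of $Z_\pi$ is exactly $W$ (indeed $x^2$ and $y^2$ lie in the ideal), so $W$ is a component of codimension $2$; but for each of the three $2$-element subsets the intersection is one of the planes $\ZZ(y,k^3)$, $\ZZ(x,k^3)$, $\ZZ(x-y,k^3)$, each of which strictly contains $W$, so $W$ is a component of none of them. Taking $V=k^3$, $\mathcal{P}=Z_\pi$ and $\pi\equiv 0$, this $W$ genuinely occurs as $\overline{\RR(\pi,V)}$, and your accounting never charges it to any subset $I$, so the proof as written does not establish the bound.

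The theorem is of course true, and your outline is otherwise close to the paper's argument; the correct bookkeeping is the one the paper uses. Instead of asking that $W$ be a component of $V\cap\bigcap_{P\in I}\ZZ(P)$ for some $c$-subset $I$, one charges $W$ to a chain of \emph{iterated} components $V_j=Y_0\supset Y_1\supset\cdots\supset Y_c=W$, where each $Y_t$ is an irreducible component of $Y_{t-1}\cap\ZZ(P_{i_t},k^N)$ containing $W$ with $\dim Y_t=\dim Y_{t-1}-1$; such a chain always exists (process all of $Z_\pi$, keeping the indices where the dimension of the chosen component drops, as in the paper's lemma on isolated points, with the positive-dimensional case reduced to that one by a generic linear section). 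Then an induction using the generalized Bezout inequality bounds the total degree of \emph{all} iterated components arising from a fixed tuple $P_{i_1},\ldots,P_{i_c}$ by $D\cdot d^{c}$, and summing over the $\binom{s}{c}$ tuples, $c\le p$, gives the stated bound. In the counterexample above, $W$ is indeed an iterated component (a component of the intersection of a component of $\ZZ(xy,k^3)$ with $\ZZ(y(x-y),k^3)$) even though it is not a component of $\ZZ(\{xy,y(x-y)\},k^3)$; this is precisely the subtlety the paper flags when it notes that the dimension of an algebraic set need not drop upon intersection with a further hypersurface.
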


\begin{remark} [Tightness]
\label{rem:tight:algebraic}
The bound in (the second inequality of)
Theorem~\ref{thm:main:algebraic} is tight. To see this let $N > 2p$, and $V$ be a union of 
$D$ disjoint affine subspaces of dimension $p$ in $k^N$. Let 
$\mathcal{P} = \{P_1,\ldots,P_s\} \subset k[X_1,\ldots,X_N]$, where each $P_i = \prod_{j=1}^{d} L_{i,j}$, with  $L_{i,j} \in k[X_1,\ldots,X_N], \deg(L_{i,j}) =1$. 
Also suppose that the polynomials 
$L_{i,j}$'s are generic (of degree $1$). It is easy to see that with these choices,

\[
\sum_{\pi \in \{0,1\}^{\mathcal{P}}} \deg \overline{(\RR(\pi,V))}         = 
D \cdot \sum_{i=0}^p \binom{s}{i} \cdot  d^{i}.
\]
\end{remark}

\subsection{Bound on the number of realizable sign conditions}
Now suppose $\R$ is a real closed field and let $\C = \R[i]$.
For an algebraic subset $V \subset \C^N$, we denote by $V(\R) = V \cap \R^N$
the set of its real points. We say that $V$ is a \emph{real algebraic set} (or a \emph{real variety} if $V$ is irreducible),
if $V$ is closed under conjugation taking $z$ to $\bar{z}$. It is easy to see that a real algebraic set is the vanishing locus of a finite number of polynomials in $\R[X_1,\ldots,X_N]$.

For $x \in V(\R)$, we will denote by $\dim_x V(\R)$ to be the local 
real dimension (see for example \cite[page 191]{BPRbook2})
of $V(\R)$ at $x$  (as a semi-algebraic set). If $V$ is a non-singular variety of dimension $p$, then $\dim_x V(\R) = p$ for every $x \in V(\R)$. 

We will denote
\[
\Sigma(\mathcal{P},S) = \{\sigma \in \{0,1,-1\}^{\mathcal{P}} \;\mid\; \RR(\sigma,S) \neq \emptyset \},
\]
and call $\Sigma(\mathcal{P},S)$ the set of \emph{realizable sign conditions of $\mathcal{P}$ on $S$}.

\begin{notation}
    For $S$ a semi-algebraic set, we will denote by $\Cc(S)$ the set
    of semi-algebraically connected components of $S$. 
\end{notation}

\begin{remark}
    Note that 
    $\card(\Cc_0(S)) = b_0(S)$.
\end{remark}

We can now state our main theorems on bounding the number of semi-algebraically connected components of the realizations of realizable sign conditions of a finite set of polynomials restricted to the real points of a given algebraic set $V$.

\begin{theorem}
\label{thm:main:ordered:a}
Let $V=V_1\cup\cdots\cup V_m\subset \C^N$ be a union of real varieties.  Set
\[
\deg(V)=D,\qquad \dim(V)=p,
\]
and assume that
\[
\dim_x V_i(\R)=\dim V_i\qquad \text{for all }i\in[1,m]\text{ and all }x\in V_i(\R).
\]
Let $\mathcal P\subset \R[X_1,\ldots,X_N]_{\le d}$ be a finite set of polynomials, with $\card(\mathcal P)=s$.
Then
\begin{equation}\label{eq:thm-main-ordered-a}
\card(\Sigma(\mathcal P,V(\R)))
\;\le\;
\sum_{\sigma\in\{0,1,-1\}^{\mathcal P}} b_0\bigl(\RR(\sigma,V(\R))\bigr)
\;\le\;
\mathsf{B}(D,d,s,p),
\end{equation}
where
\[
\mathsf{B}(D,d,s,p)
:=
28\,Dd\,(8Dd-1)^{p-1}\!\left(2\sum_{j=1}^{p-1}4^j\binom{s}{j}+ 4^p\binom{s}{p}\right)
\;+\;
14\,D\,(4D-1)^p .
\]
In particular,
\[
\sum_{\sigma\in\{0,1,-1\}^{\mathcal P}} b_0\bigl(\RR(\sigma,V(\R))\bigr)
\;\le\;
\bigl(O(1)\bigr)^p \cdot D^p \cdot  \bigl((sd)^p + D\bigr)
\]
(where the implied constant in the $O(\cdot)$ notation is an absolute constant).
\end{theorem}

\begin{remark}
\label{rem:thm:main:ordered:a}
    \label{rem:thm:main:ordered:a:2}
    The hypothesis on $V$ is clearly satisfied if each $V_i$ is non-singular of dimension $p$ (see also Remark~\ref{rem:LV}).
\end{remark}

\begin{remark}[Tightness]
\label{rem:tight:ordered}
We give examples showing that the dependence on $D$ and $sd$ in
Theorem~\ref{thm:main:ordered:a} is essentially sharp when $p=1$, and discuss the
situation for $p>1$.

%%%%%%%%%%%%%%%%%%%%%%%%%%%%%%%%%%%%%%%%%%%%%%%%%%%%%%%%%%%%%%%%%%%%%%%%%%
\begin{figure}[t]
\centering
\begin{tikzpicture}[scale=0.9]
% --- parameters (edit if desired) ---
\def\D{5}                  % D x D ovals
\def\ovalw{0.45}           % x-radius of each oval (keep < 0.5)
\def\ovalh{0.30}           % y-radius of each oval (keep < 0.5)

% --- draw D^2 disjoint ovals centered at (i,j), i,j=0,...,D-1 ---
\foreach \i in {0,...,4} {
  \foreach \j in {0,...,4} {
    \draw (\i,\j) ellipse [x radius=\ovalw, y radius=\ovalh];
  }
}

% --- draw spaced vertical lines x = delta_{i,j} within the i=0 column ---
% (here: 8 lines, e.g. s d = 8; adjust the list as needed)
\foreach \x in {0.05,0.10,0.15,0.20,0.25,0.30,0.35,0.40} {
  \draw (\x,-0.7) -- (\x,4.7);
}

% --- labels ---
%%\node[anchor=west] at (0,4.85) {column $i=0$: ovals $V_{0,j}$};
%%\node[anchor=west] at (0.05,-0.95) {vertical lines $X_1=\delta_{i,j}$ (spaced out)};

\end{tikzpicture}
\caption{Schematic for the example with $p=1$ and $N=2$: $V(\R)$ consists of $D^2$ disjoint ovals $V_{i,j}$, and
$\ZZ(\mathcal P,\R^2)$ is a collection of vertical lines $X_1=\delta_{i,j}$ intersecting each oval $V_{0,j}$ in $2sd$ points.}
\label{fig:example-p1-N2}
\end{figure}

%%%%%%%%%%%%%%%%%%%%%%%%%%%%%%%%%%%%%%%%%%%%%%%%%%%%%%%%%%%%%%%%%%%%%%%%%%

\smallskip
\noindent\textbf{The case $p=1$, $N=2$.}
Let $V\subset \C^2$ be the hypersurface defined by $Q=0$, where
\[
Q \;=\; \sum_{i=1}^{2}\ \prod_{j=0}^{D-1}(X_i-j)^2\;-\;\varepsilon,
\qquad \varepsilon>0 .
\]
Then $\dim V=1$ and $\deg(V)=2D$. For all sufficiently small real $\varepsilon>0$,
the variety $V$ is non-singular and $V(\R)$ consists of $D^2$ pairwise disjoint ovals
\[
V_{i,j},\qquad 0\le i,j\le D-1,
\]
with $(i,j)$ contained in the bounded component of $\R^2\setminus V_{i,j}$.

Let $\mathcal P=\{P_1,\dots,P_s\}$, where
\[
P_i \;=\; \prod_{j=1}^{d}(X_1-\delta_{i,j}).
\]
Choose parameters satisfying
\[
0<\delta_{1,1}<\delta_{1,2}<\cdots<\delta_{1,d}<\delta_{2,1}<\cdots<\delta_{s,d}
\ \ll\ \varepsilon\ \ll\ 1,
\]
(where $\ll$ means ``sufficiently small'').  Then for each $i$,
$\ZZ(P_i,\R^2)$ is the union of $d$ vertical lines, and each such line meets every oval
$V_{0,0},\dots,V_{0,D-1}$ in exactly two distinct points
(see Figure~\ref{fig:example-p1-N2}).  Consequently, the arrangement
$\ZZ(\mathcal P,\R^2)$ meets each oval $V_{0,j}$ in $2sd$ distinct points, and hence
cuts $V_{0,j}$ into $2sd$ disjoint arcs.  Summing over $j=0,\dots,D-1$ and accounting
for the remaining ovals, one obtains
\begin{align*}
\sum_{\sigma\in\{0,1,-1\}^{\mathcal P}} b_0\bigl(\RR(\sigma,V(\R))\bigr)
&= 4sd\,D + D(D-1) \\
&= D(4sd + D - 1) \\
&= \Omega(1)\,D\,(sd + D),
\end{align*}
which matches the upper bound in Theorem~\ref{thm:main:ordered:a} in the case $p=1$.

\smallskip
\noindent\textbf{The case $p>1$.}
For $p>1$ the situation is more subtle.  Let $N=p+1$ and take $V\subset \C^N$ to be the
union of
\begin{itemize}
\item $D$ real affine hyperplanes in $\C^N$, together with
\item the hypersurface in $\C^N$ defined by
\[
Q \;=\; \sum_{i=1}^{N}\ \prod_{j=0}^{D-1}(X_i-j)^2\;-\;\varepsilon,
\qquad 0<\varepsilon\ll 1.
\]
\end{itemize}
With this choice, $V(\R)$ is the union of  $D^{p+1}$ bounded semi-algebraically connected
components coming from zeros of $Q$, as well as $D$ hyperplanes. Moreover,
$\dim V = p$, and $\deg(V) = D + 2 D = 3 D$. 

Let $\mathcal P=\mathcal P_1\cup\mathcal P_2\subset \R[X_1,\dots,X_N]$, where
$\mathcal P_1$ consists of $s$ polynomials, each of which is a product of $d$
distinct linear forms, and the resulting collection of $sd$ linear factors is chosen
generically.  Let $\mathcal P_2=\{P_1,\dots,P_s\}$ with
\[
P_i \;=\; \prod_{j=1}^{d}(X_1-\delta_{i,j}),
\]
for parameters satisfying
\[
0<\delta_{1,1}<\delta_{1,2}<\cdots<\delta_{1,d}<\delta_{2,1}<\cdots<\delta_{s,d}
\ \ll\ \varepsilon\ \ll\ 1.
\]
In this regime, one checks that the contribution from the hyperplane part of $V(\R)$
and from the hypersurface part combine to give the lower bound
\[
\sum_{\sigma\in\{0,1,-1\}^{\mathcal P}} b_0\bigl(\RR(\sigma,V(\R))\bigr)
\;\ge\;
\Omega(1)^p\Bigl((sd)^p D \;+\; (sd)\,D^p \;+\; D^{p+1}\Bigr).
\]

At present we do not know an example that matches the upper bound in
Theorem~\ref{thm:main:ordered:a} for $p>1$ even up to a factor $\Omega(1)^p$.
Closing this gap remains an interesting open problem.

\end{remark}

Theorem~\ref{thm:main:ordered:a} may be interesting even in the case
$\mathcal{P} = \emptyset$. In this case we just obtain a bound on the number of semi-algebraically connected components of $V$ having the top real dimension.
In order to state this formally we introduce a new notation.
\begin{notation}
\label{not:S_p}
For any semi-algebraic set $S$ and $p \geq 0$, we denote 
\[
    S_p =  \{x \in S \mid \dim_x S \geq p\}.
\]
%%and 
%%\[
%%\Cc_p(S) = \Cc(S_p).
%%\]
\end{notation}

We have the following slight variant of Theorem~\ref{thm:main:ordered:a}
in the case of algebraic sets that will be useful later.

\begin{theorem}
\label{thm:main:ordered:a'}
    Let $V \subset \C^N$ be a real algebraic set, with $\deg(V) =D$, and
    $\dim V = p$.
   %% suppose that $\dim_\R C = p$ for all $C \in \Cc(V(\R))$.
    Then,
    \begin{equation}
    \label{eqn:thm:main:ordered:a'}
     %%sb
     %%b_0(V(\R)_p) \leq  2^{2p+3} \cdot D^{p+1} =  (O(D))^{p+1}.   
     b_0(V(\R)_p) \leq  14 \cdot D \cdot (4D-1)^{p} \leq 2^{2p+4} \cdot D^{p+1}.
     %%=  (O(D))^{p+1}.
    \end{equation}
    In particular,
    if every semi-algebraically connected component of $V(\R)$ has (real) dimension equal to $p$, 
    then
    \[
    b_0(V(\R)) \leq %% 2^{2p+3} \cdot D^{p+1} =  (O(D))^{p+1}.
    14 \cdot D \cdot (4D-1)^{p} \leq 2^{2p+4} \cdot D^{p+1}.
    %%=  (O(D))^{p+1}.
    \]
\end{theorem}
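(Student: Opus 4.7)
The plan is to reduce to the case of a single irreducible variety of pure top dimension, project it birationally onto a real hypersurface in $\C^{p+1}$, and then invoke the classical Ole{\u\i}nik--Petrovski{\u\i}/Thom--Milnor estimate for the number of connected components of real hypersurfaces. First I decompose $V = V_1 \cup \cdots \cup V_m$ into irreducible components. A point of local real dimension $p$ cannot come from an irreducible component of complex dimension $<p$, so set-theoretically $V(\R)_p = \bigcup_{i:\,\dim V_i = p} V_i(\R)_p$. Hence $b_0(V(\R)_p) \leq \sum_i b_0(V_i(\R)_p)$, and the elementary inequality $\sum_i D_i^{p+1} \leq (\sum_i D_i)^{p+1} = D^{p+1}$ reduces the proof to the case of a single irreducible $V$ of dimension $p$ with $\dim_\R V(\R) = p$ (otherwise $V(\R)_p$ is empty).

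For such $V$ of degree $D$, I would choose a sufficiently generic $\R$-linear projection $\pi : \C^N \to \C^{p+1}$. Standard arguments imply that $\pi|_V : V \to W := \pi(V)$ is birational and that $W \subset \C^{p+1}$ is an irreducible real hypersurface of degree $D$ defined by a single real polynomial $Q \in \R[Y_1,\dots,Y_{p+1}]$. The classical estimate of Ole{\u\i}nik--Petrovski{\u\i} (or Thom--Milnor) applied to the real affine hypersurface $W(\R) = \{Q=0\} \cap \R^{p+1}$ of degree $D$ yields
\[
b_0(W(\R)) \;\leq\; \tfrac{1}{2}D(2D-1)^p \;\leq\; 2^p D^{p+1}.
\]

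The remaining step --- and the heart of the argument --- is to transfer this estimate back to $b_0(V(\R)_p)$. Let $E \subsetneq V$ denote the (complex) exceptional set of the birational map $\pi|_V$, of complex dimension at most $p-1$; then $\pi$ restricts to an isomorphism $V \setminus E \xrightarrow{\sim} W \setminus \pi(E)$, and the induced map on real points is a semi-algebraic homeomorphism $V(\R)\setminus E(\R) \xrightarrow{\sim} W(\R)\setminus \pi(E)(\R)$. I plan to establish $b_0(V(\R)_p) \leq 2^{p+3}\, b_0(W(\R))$ by induction on $p$, controlling the number of components that appear or disappear when $E(\R)$ is deleted by applying the inductive bound to the real algebraic set $E$, whose dimension is strictly less than $p$ and whose degree admits a Bertini-type bound in terms of $D$ for a generic $\pi$. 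Combined with the hypersurface estimate, this yields $b_0(V(\R)_p) \leq 2^{2p+3}\, D^{p+1}$, as claimed.

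The main obstacle is precisely this last transfer. Removing a closed subset of real dimension $<p$ from a pure $p$-dimensional semi-algebraic set can split a single connected component into many (think of two smooth surfaces glued along a curve, which becomes two open pieces once the curve is excised); dually, a single connected component of $W(\R)$ may receive several components of $V(\R)_p$ that disagree only over the exceptional locus. The homeomorphism $V(\R)\setminus E(\R) \cong W(\R)\setminus \pi(E)(\R)$ therefore does not directly transport $b_0$. Running the induction on $p$ applied to $E$ is what controls both sources of discrepancy, and the accumulated constant from this induction is what produces the factor $2^{p+3}$ in the transfer, hence the $2^{2p+3}$ in the final bound.
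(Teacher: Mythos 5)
The fatal step is exactly the one you flag as ``the heart of the argument'': the proposed transfer inequality $b_0(V(\R)_p) \leq 2^{p+3}\, b_0(W(\R))$ is false, and no constant depending only on $p$ can make it true. Take $V \subset \C^3$ to be a union of $k$ pairwise disjoint real conics (circles of radius $1$ lying in the planes $z=j$, with centers drifting slightly in the $x$-direction), so $D = 2k$, $p=1$ and $b_0(V(\R)_1) = k$. For projections $\pi$ in an open set near the $z$-direction the $k$ image ovals overlap pairwise, so their union is connected; the extra intersection points of the image conics have non-real preimages and are themselves non-real, so $b_0(W(\R)) = 1$ while the generic conditions you need (birationality, nodal image) still hold on a dense open subset meeting this open set. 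Thus distinct components of $V(\R)_p$ get merged in $W(\R)$ through the image of the non-injectivity locus, and $b_0(W(\R))$ simply does not see them. Your fallback---an inductive correction coming from the exceptional set $E$---does not rescue the statement either: for a generic projection the double-point locus has degree on the order of $D^2$, so feeding it back into the theorem being proved yields exponents like $D^{2p}$ rather than $D^{p+1}$; moreover the number of components created by cutting $W(\R)$ along $\pi(E)(\R)$ is not controlled by $b_0$ of $E(\R)$ alone in dimension $p\geq 2$, and the gluing can occur along strata of $E(\R)$ of less than top dimension, which the statement under proof (which only bounds $b_0(E(\R)_{\dim E})$) does not control.

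The paper's proof uses the same generic projection $\pi_\C : \C^N \to \C^{p+1}$ but bounds a different quantity: $b_0(W(\R) - \Sing(W))$, not $b_0(W(\R))$. The point (Lemmas~\ref{lem:main:ordered:0}, \ref{lem:main:ordered:1} and \ref{lem:main:ordered:2'}) is that for a generic projection every point of $W$ with more than one preimage in $V$ is a singular point of $W$ (a multiplicity argument), and a real point of $W$ with a unique preimage has a real preimage since the fiber is conjugation-stable; consequently every semi-algebraically connected component $C$ of $V(\R)_p$ contains the full preimage of some component $D$ of $W(\R) - \Sing(W)$, and the assignment $C \mapsto D$ is injective. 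This kills both failure modes in one stroke (real points of $W$ glued through nodes, and real points of $W$ with only non-real preimages), with no exceptional-locus bookkeeping and no induction on $p$. The price is that one must bound $b_0$ of the \emph{nonsingular} real part of a hypersurface $\{Q=0\}$ of degree $D$ in $\R^{p+1}$, which cannot be done by quoting Ole\u{\i}nik--Petrovski\u{\i} for $\{Q=0\}$ directly; the paper does it (Lemma~\ref{lem:main:ordered:3}) by intersecting with $\bigl\{\sum_i (\partial Q/\partial Y_i)^2 \geq \eps\bigr\}$ and a large ball and applying the Ole\u{\i}nik--Petrovski\u{\i} bound to the handful of algebraic sets arising from that description, giving $14\, D\,(4D-1)^{p}$, which is where the constant $2^{2p+3}$ comes from. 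Your outline would need to be rebuilt around this ``discard the singular locus of the image'' mechanism to be salvageable.
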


\begin{remark}
\label{rem:minimal-degree}
Note that if $V \subset \PP_\C^N$ is a variety with $\dim V = p$, such that 
$V$ is not contained in any hyperplane of $\PP_\C^N$. Then
$N \leq \deg(V) + p - 1$ (see for example \cite[Proposition 4.5.6]{Beltrametti-et-al}) 
. 
\footnote{We thank Greg Blekherman and Rainer Sinn for pointing out this connection with the theory of minimal varieties.} 
Now suppose $V$ is a real variety. Then $V$ can be cut out by real  polynomials
of degree at most $D$. 
Now 
using the basic inequality on the Betti numbers of real algebraic sets due to Ole{\u\i}nik and Petrovski{\u\i} \cite{OP}
    (see for example \cite[Proposition 7.28]{BPRbook2}),
we would obtain  the bound:
\begin{eqnarray}
\nonumber
b_0(V(\R))
&\leq&
O(D)^N\\
\label{eqn:rem:minimal-degree}
&\leq&
O(D)^{D+p-1}.
\end{eqnarray}

Then for $D$ large enough,
the bound in inequality \eqref{eqn:rem:minimal-degree} is worse than the bound in Theorem~\ref{thm:main:ordered:a'}.
Similar remarks apply to Theorems~\ref{thm:main:ordered:a}, \ref{thm:main:ordered:a:projective} and \ref{thm:BPR+} as well.  
\end{remark}

\begin{remark}
As remarked earlier, many naturally occurring real varieties, such as Schubert subvarieties of flag varieties,
    rank subvarieties in spaces of matrices, as well as other determinantal varieties, are not non-singular but satisfies the hypothesis of Theorem~\ref{thm:main:ordered:a}. These varieties in 
    general will not satisfy the hypothesis of Theorem~\ref{thmx:LV} because of the presence of singularities.

However, there exist real varieties for which the hypothesis of
Theorem~\ref{thm:main:ordered:a} is not satisfied.

    \begin{enumerate}[(a)]
        \item The singular curve $V \subset \C^2$ in the plane defined by 
        $Y^2 = X^2(X-1)$. In this case the $\dim V = 1$, but $V(\R)$ contains the isolated point $x = (0,0)$, and $\dim_x V(\R) = 0 < 1$. Thus, $V$ does not satisfy the hypothesis of  Theorem~\ref{thm:main:ordered:a}.
        
        \item 
        Another interesting example
        is the well-known Whitney umbrella, $V \subset \C^3$,  defined by the equation $X^2 - Z Y^2 = 0$. In this case $V(\R)$ has only one connected component but every point $x \in V(\R)$ on the ``stick'' of the umbrella (namely the points satisfying  $Z < 0$) has local dimension $\dim_x V(\R) = 1 < 2 = \dim V$. 
        The real variety $V$ does not satisfy the 
        hypothesis of Theorem~\ref{thm:main:ordered:a} (because of the lower dimensional stick of the umbrella).
    \end{enumerate}
\end{remark}

We also prove a projective version of Theorem~\ref{thm:main:ordered:a}.
It is no longer meaningful to talk about the sign of a (homogeneous)
polynomial in a real projective space. However, one still has the notion of zero-nonzero patterns. We bound the number of semi-algebraically connected components of the realizations on 
$V(\R)$ of all zero-nonzero patterns  of a finite set of homogeneous polynomials, where $V$ is a non-singular real projective variety.

\begin{theorem}[Projective case] 
 \label{thm:main:ordered:a:projective}
 Let $V = V_1 \cup \cdots \cup V_m \subset \PP_\C^N$ be a union of real varieties, with $\deg(V) = D$, $\dim V = p$, and such that for each $i, 1 \leq i \leq m$, and $x \in V_i(\R)$, 
    $\dim_x V_i(\R) = \dim V_i $,  and let 
    $\mathcal{P} \subset \R[X_0,\ldots,X_N]_{\leq d}$ be a finite set of homogeneous polynomials, with $\card(\mathcal{P}) = s$. Then,
    \[ \card(\Pi(\mathcal{P},V(\R))) \leq
        \sum_{\pi \in \{0,1\}^{\mathcal{P}}} b_0(\RR(\pi,V(\R)))
        %%= 
        %%\sum_{\sigma \in \{0,1,-1\}^{\mathcal{P}}}
        %%\card(\Cc(\RR(\sigma,V(\R))))
        \leq 
        %%\sum_{0 \leq i \leq p} 4^i \cdot \binom{s}{i} \cdot (2Dd^i)^{p-i+1} = 
        O(1)^p \cdot D^p \cdot ((s d)^p + D).
    \]   
\end{theorem}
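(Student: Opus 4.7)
The plan is to reduce the projective statement to the affine Theorem~\ref{thm:main:ordered:a} by covering $V$ with $p+1$ affine charts arising from a generic real linear change of coordinates. After such a change, one may assume that the codimension-$(p+1)$ linear subspace $L = \{X_0 = X_1 = \cdots = X_p = 0\} \subset \PP_\C^N$ is disjoint from $V$. Such coordinates exist because the set of coordinate changes for which $L \cap V = \emptyset$ is a non-empty Zariski-open subset of $\GL_{N+1}$ (non-empty by the dimension count $\dim V + \dim L = p + (N-p-1) < N$), is defined over $\R$, and hence contains $\R$-points by density of $\R$-points in non-empty Zariski-open subsets of irreducible real varieties. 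After this change, the affine charts $U_i = \{X_i \neq 0\} \cong \mathbb{A}_\R^N$, for $i = 0, \ldots, p$, cover $V$.

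In each chart, $V \cap U_i$ is an affine algebraic set of degree at most $D$ and dimension at most $p$, and it inherits the real equi-dimensionality hypothesis from $V$: for any irreducible component $V_j$ not contained in $\{X_i = 0\}$ and any point $x \in (V_j \cap U_i)(\R)$, local real dimension is preserved under restriction to the open subset $U_i$, so $\dim_x (V_j \cap U_i)(\R) = \dim_x V_j(\R) = \dim V_j = \dim(V_j \cap U_i)$. Dehomogenizing each $P \in \mathcal{P}$ with respect to $X_i$ produces an affine polynomial of degree at most $d$ whose zero set on $U_i(\R)$ coincides with that of $P$, so zero-nonzero patterns are preserved.

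To pass from a bound on each chart to a bound on $V$, observe that every connected component $C$ of $\RR(\pi, V(\R))$ meets at least one $U_i(\R)$, and $C \cap U_i(\R)$ is open in $C$ and a disjoint union of connected components of $\RR(\pi, V(\R) \cap U_i(\R))$. This yields
\[
b_0(\RR(\pi, V(\R))) \leq \sum_{i=0}^{p} b_0(\RR(\pi, V(\R) \cap U_i(\R))).
\]
Summing over $\pi$ and decomposing each $\RR(\pi, \cdot)$ as a topological disjoint union of $\RR(\sigma, \cdot)$ over sign conditions $\sigma$ refining $\pi$, we obtain
\[
\sum_\pi b_0(\RR(\pi, V(\R))) \leq \sum_{i=0}^{p} \sum_\sigma b_0(\RR(\sigma, V(\R) \cap U_i(\R))).
\]
Applying Theorem~\ref{thm:main:ordered:a} in each of the $p+1$ affine charts bounds the inner sum by $O(D)^p((sd)^p + D)$, so the total is at most $(p+1) \cdot O(D)^p((sd)^p + D)$, and the factor $p+1 \leq 2^p$ is absorbed into the exponential $O(D)^p$.

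The main technical obstacle is confirming that the real equi-dimensionality hypothesis persists in each affine chart $V \cap U_i$; this is resolved by openness of $U_i$ in $\PP^N_\C$, which preserves local real dimension. A secondary point is the existence of a real generic change of coordinates, handled by density of $\R$-points in non-empty Zariski-open subsets of $\GL_{N+1}$.
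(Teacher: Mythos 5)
Your argument is correct, but it is a genuinely different route from the paper's. The paper's proof picks a single generically chosen real hyperplane $H \subset \PP_\C^N$ and writes
\[
\sum_{\pi} b_0(\RR(\pi,V(\R))) \;\leq\; \sum_{\pi} b_0(\RR(\pi,V(\R)\setminus H)) \;+\; \sum_{\pi} b_0(\RR(\pi,(V\cap H)(\R))),
\]
bounding the first term by the affine Theorem~\ref{thm:main:ordered:a} (since $\PP^N\setminus H$ is an affine chart) and the second term by induction on dimension, using that $\dim (V\cap H)=p-1$, $\deg(V\cap H)\leq D$, and that a generic real hyperplane section again satisfies the local real equi-dimensionality hypothesis. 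You instead make one generic real linear change of coordinates so that the codimension-$(p+1)$ coordinate subspace misses $V$, cover $V$ by the $p+1$ charts $U_0,\ldots,U_p$, verify that the hypotheses of Theorem~\ref{thm:main:ordered:a} pass to each chart (which is immediate, since local real dimension is a local invariant and degree does not increase under dehomogenization), and sum over the charts, absorbing the factor $p+1$ into $O(D)^p$. What your version buys is that you avoid the inductive step entirely, and in particular you do not need to check that the equi-dimensionality hypothesis is preserved under generic hyperplane sections -- a point the paper's induction relies on but does not elaborate; the price is the genericity argument over $\R$ for the coordinate change (your appeal to ``density of $\R$-points in non-empty Zariski-open subsets of irreducible real varieties'' should be phrased for $\GL_{N+1}$ specifically, where it is clear since $\GL_{N+1}(\R)$ is Zariski dense in $\GL_{N+1}(\C)$, rather than as a general principle, which fails for real varieties without real points). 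Both routes need the same component-counting principle (each semi-algebraically connected component of $\RR(\pi,V(\R))$ contains a component of its intersection with a chart, respectively with the affine complement of $H$), and both yield the stated bound $O(D)^p((sd)^p+D)$.
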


\begin{remark}
    \label{rem:Grassmannian}
    Aside from the other applications that we discuss in the next section
    Theorem~\ref{thm:main:ordered:a:projective} can be used to simplify 
    proofs of certain quantitative results in discrete geometry. For example in \cite{GPW1996}, the authors prove a bound on the number of geometric permutations induced by $k$-dimensional transversals to a finite family of convex sets in $\R^d$. A key role is played by a bound on the number of realizable sign conditions of a certain finite set of polynomials restricted to the Grassmannian $\Gr(k,d)$ (more accurately, the affine Grassmannian but this is unimportant). In order to get a tight bound on this quantity that does not depend on the dimension of the Pl\"{u}cker emdedding the  
    authors utilize a fact from real algebraic geometry -- that the Grassmannian varieties, $\Gr(m,N)$ can be embedded semi-algebraically (not scheme theoretically) as an algebraic subset of $\R^{N^2}$ \cite[Theorem 3.4.4]{BCR}. This last fact is somewhat remarkable as there is no such affine embedding of the complex Grassmannian (indeed the complex Grassmannian is compact as a topological space, while every complex affine 
    algebraic set of positive dimension is necessarily non-compact). However, using Theorem~\ref{thm:main:ordered:a:projective} (noting that the Grassmannians are all non-singular projective varieties) 
    and the known expressions for the degree and the dimension of $\Gr(m,N)$
    (see \eqref{eqn:deg:grassmannian}) one can obtain a similar bound
    as in \cite[Theorem 2]{GPW1996} without resorting to the clever non-standard embedding of the real Grassmannian, and the new bound is a slight improvement over the original result when $k$ is close to $d$.
\end{remark}

The bound in Theorem~\ref{thm:main:ordered:a} is worse than in Theorem~\ref{thm:main:algebraic} (in the dependence on $D$ and $d$). We can prove a bound with a better dependence on $D$ 
but with a more stringent assumption on $V$.

\begin{theorem}
\label{thm:main:ordered:c}
Let $\overline{V} \subset \PP_\C^N$ be a non-singular complete intersection real variety 
with $\deg \overline{V} = D$ and $\dim \overline{V} = p$. Let $V = \overline{V} \cap \C^N$ denote the corresponding affine variety.

  Let
    $\mathcal{P} \subset \R[X_1,\ldots,X_N]_{\leq d}$ be a finite set of polynomials with $d \geq D$ and  $\card(\mathcal{P}) = s$. 
    
    Then, for each $i, 0 \leq i \leq p$,
    \[
        \sum_{\sigma \in \{0,1,-1\}^{\mathcal{P}}} b_i(\RR(\sigma,V(\R)))
        \leq 
        O(1)^p \cdot s^{p-i} \cdot D^2 \cdot d^{p}.
    \]  
    In particular,
    \[
        \sum_{\sigma \in \{0,1,-1\}^{\mathcal{P}}} b_0(\RR(\sigma,V(\R)))
        \leq 
        O(1)^p \cdot D^2 \cdot (s d)^{p}.
    \]  
    (The constant in the $O(1)$ is absolute.)
\end{theorem}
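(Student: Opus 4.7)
The proof adapts the infinitesimal perturbation paradigm of Theorem~\ref{thmx:BPR} (\cite[Theorem 15]{BPR02}), replacing only the final counting step where the ambient dimension $N$ enters the bound in \cite{BPR02}.  First, I would introduce two infinitesimals $\eps, \delta$ and work in the real closed extension $\R' = \R\la\eps,\delta\ra$. For each $P \in \mathcal{P}$, define perturbed polynomials $P_{\pm} = P \pm \eps$, and apply the standard Mayer--Vietoris / spectral sequence reduction (as in \cite[Section 7.1]{BPRbook2}) to express the quantity $\sum_\sigma b_i(\RR(\sigma,V(\R)))$ (up to combinatorial factors of $s$ absorbing the choice of which polynomials appear) as a sum of $b_0$'s of certain real algebraic subsets of $V(\R')$ of the form
\[
Y_{J,\epsilon'} \;=\; V \cap \bigcap_{j \in J} \ZZ(P_j - \epsilon'(j)\,\eps,\,\C^N),
\]
with $|J| = k+1$ and $k$ ranging over $0, \ldots, p-i$.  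The total combinatorial weight is $\sum_{k=0}^{p-i} \binom{s}{k+1}\, 2^{k+1}$, which is bounded by $O(s^{p-i})$.

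Second, and this is where the complete intersection hypothesis on $\overline{V}$ is used in a new way, I would bound $b_0(Y_{J,\epsilon'}(\R'))$ by an expression depending only on $D$, $d$, and $p$ (not on $N$).  For generic $\delta$-perturbation, the projective closure $\overline{Y}_{J,\epsilon'} \subset \overline{V} \subset \PP_\C^N$ is itself a non-singular complete intersection of dimension $p - k - 1 \geq i - 1$ in $\PP_\C^N$, cut out by the $N-p$ equations defining $\overline{V}$ together with $k+1$ homogenizations of $P_j - \epsilon'(j)\eps$ (each of degree at most $d$).  Choosing a generic linear projection $\pi: \overline{V} \to \PP_\C^p$ (which is a finite morphism of degree $D$, since $\overline{V}$ has dimension $p$), one counts real semi-algebraically connected components of $Y_{J,\epsilon'}(\R')$ by pulling back a Morse function (generic linear form on $\R^p$) and estimating the number of critical points of its restriction to $\overline{Y}_{J,\epsilon'}$.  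The critical-point count is a global intersection number on the smooth complete intersection $\overline{V}$, and an application of the generalized Bezout inequality (Theorem~\ref{thmx:bezout}) bounds it by the product of the degree $D$ of $\overline{V}$ with $O(d)^p$, while an additional factor of $D$ arises from passing from the $\R^p$-count back to the count on $Y_{J,\epsilon'}$ via the degree-$D$ covering $\pi$. This yields
\[
b_0\bigl(Y_{J,\epsilon'}(\R')\bigr) \;\leq\; D^2 \cdot O(d)^p.
\]
Combining, the stated bound $s^{p-i} \cdot D^2 \cdot O(d)^p$ follows.  The analogous bound for $b_i$ (rather than just $b_0$) follows by the same Mayer--Vietoris reduction where, for each $i$, the index of the critical points counted has to be tracked, and the $s^{p-i}$ weight corresponds to the $(p-i+1)$-element subsets of $\mathcal{P}$.

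The main obstacle is the smooth-complete-intersection Morse/Bezout step: establishing that a real non-singular complete intersection in $\PP_\C^N$ of dimension $q$ and projective degree determined by $D$ and $d$ has total Betti numbers (or at least $b_0$) bounded independently of $N$ by a polynomial in $D$ and $d$, and pinning down the exact powers ($D^2$ rather than a larger power).  This requires careful handling of (i) the transversality of the $\delta$-perturbation to ensure $\overline{Y}_{J,\epsilon'}$ is itself a smooth complete intersection, (ii) the generic choice of projection $\pi$ so that critical-point counts of a generic height function are bounded via Bezout applied in $\PP_\C^N$ using the complete intersection structure, and (iii) the behavior at infinity when passing from $\overline{V}$ to the affine variety $V$, which can be handled by standard homogenization arguments but contributes no new factor beyond what is already accounted for.
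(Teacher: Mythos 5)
Your reduction step (perturb, then control $\sum_\sigma b_i$ by Betti numbers of algebraic subsets of $V$ cut out by small subfamilies of the perturbed polynomials) is broadly in the spirit of what the paper does, but the step you yourself flag as ``the main obstacle'' is exactly the point of the theorem, and your proposal does not close it. A generic-projection/Morse-count argument with ``an application of the generalized Bezout inequality'' does not yield $b(\overline{Y}_{J,\epsilon'}) \leq D^2\cdot O(d)^p$: the critical-point locus of a linear form on a complete intersection is a Jacobian degeneracy locus, and bounding it via Bezout forces you either to use maximal minors (whose degrees involve the sum of all the defining degrees, so the product of degrees is not $D\cdot O(d)^p$) or to use the Thom--Porteous/Chern-class count, which carries combinatorial factors in the number of defining equations, i.e.\ factors like $\binom{N}{p}$ or $2^{N}$ that are exactly the $N$-dependence you are trying to kill. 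The paper eliminates this dependence by a preliminary observation your proposal never makes: since $\overline{V}$ is a complete intersection of degree $D$, at most $\log_2 D$ of its defining forms have degree $>1$, so $V$ lies in an affine subspace of dimension $p+\log_2 D$ and one may assume $N\leq p+\log_2 D$ from the start. After that reduction, the paper does not use Morse theory at all: it homogenizes, uses Lefschetz duality on the compact nonsingular manifold $\overline{V}(\R)$ and a Mayer--Vietoris spectral sequence to reduce to nonsingular projective complete intersections, bounds their \emph{complex} total Betti numbers by the classical Chern-class formula (which gives $2^{N+1}D d^{p}$, i.e.\ $O(1)^p D^2 d^p$ once $2^{N+1}\leq 2^{p+1}\cdot 2^{\log_2 D}$), and transfers to the real parts by the Smith--Thom inequality. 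That is where both powers of $D$ come from; your ``one $D$ from Bezout, one $D$ from the degree-$D$ covering'' accounting is not backed by an actual computation.

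Two further (more repairable) issues. First, for $i>0$ you cannot reduce $\sum_\sigma b_i(\RR(\sigma,V(\R)))$ to a sum of $b_0$'s of algebraic subsets; the standard reduction needs higher (or total) Betti numbers of those intersections -- which again lands you on the unproved bound above -- and your combinatorial count $\sum_{k=0}^{p-i}\binom{s}{k+1}2^{k+1}$ is $O(s^{p-i+1})$, not $O(s^{p-i})$. Second, perturbing only by constants $P\pm\eps$ does not change the top-degree forms, so the projective closures $\overline{Y}_{J,\epsilon'}$ need not be nonsingular complete intersections (the failure happens at infinity); the paper instead perturbs by generic positive polynomials $H_i$ of the same degree as $P_i$, precisely to get transversality including on the hyperplane at infinity, and also has to add the ball polynomial $\sum X_i^2-R$ to dispose of unbounded components before invoking duality on $\overline{V}(\R)$.
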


\begin{remark}
Note that in Theorem~\ref{thm:main:ordered:c} we can assume that
$V$ is contained in an affine subspace in $\C^N$ of dimension $\leq p + \log_2 D$
(see Lemma~\ref{lem:CI}),
and this removes the dependence on $N$ from the final bound.
\end{remark}

We are able to remove the non-singularity restriction 
in Theorem~\ref{thm:main:ordered:c}.
However, the bound that we prove is on a priori smaller quantity which nevertheless arises in applications
(see for example Theorem~\ref{thmx:Walsh} which plays an important role in applications of polynomial partitioning technique in problems of incidence geometry).

\begin{notation}
    For $S, V \subset \R^N$ semi-algebraic subsets, we will denote
    \[
    \Cc(S,V) = \{C \in \Cc(S) \mid C \cap V \neq \emptyset\}.
    \]
    (Note that $\Cc(S) = \Cc(S,\R^N)$.)
\end{notation}

\begin{theorem}
\label{thm:main:ordered:c'}
Let $V \subset \C^N$ be a real complete intersection variety (not necessarily non-singular)
with $\deg(V) = D$ and $\dim(V) = p$. 
  Let
    $\mathcal{P} \subset \R[X_1\ldots,X_N]_{\leq d}$ be a finite set of polynomials, with $d \geq D$ and $\card(\mathcal{P}) = s$. 
    
    Then,
    \[
        \sum_{\sigma \in \{0,1,-1\}^{\mathcal{P}}} 
        \card(\Cc(\RR(\sigma,\R^{N}),V(\R)))
        \leq 
        O(1)^p \cdot D^4 \cdot (s d)^{2p}.
    \]  
\end{theorem}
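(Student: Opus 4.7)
For each sign condition $\sigma$ one has
\[
\card(\Cc(\RR(\sigma,\R^N),V(\R))) \le b_0(\RR(\sigma,V(\R))),
\]
since any connected component $C$ of $\RR(\sigma,\R^N)$ meeting $V(\R)$ contributes at least one connected component to $\RR(\sigma,V(\R))$, and distinct outer components $C$ contribute disjoint inner ones (otherwise a path inside $\RR(\sigma,V(\R)) \subseteq \RR(\sigma,\R^N)$ would link two distinct outer components). It therefore suffices to bound $\sum_\sigma b_0(\RR(\sigma,V(\R)))$ by $D^4 \cdot (O(sd))^{2p}$.

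The shape of the target bound --- precisely that of Theorem~\ref{thm:main:ordered:c} with $D$ replaced by $D^2$ and $p$ replaced by $2p$ --- suggests a product-variety construction. Let $Q_1,\ldots,Q_r$ (with $r = N-p$ and $\prod_i \deg Q_i = D$) be defining polynomials of $V$, and set $W = V \times V \subset \C^{2N}$, a complete intersection of degree $D^2$ and dimension $2p$ cut out by the $2r$ equations $Q_i(X) = 0$ and $Q_i(Y) = 0$. Although $W$ inherits the singularities of $V$, one can smooth it by a generic infinitesimal perturbation: for generic polynomials $\widetilde Q_i(X,Y)$, $\widehat Q_i(X,Y)$ of the same degrees as $Q_i$ and a positive infinitesimal $\eps$ in a real closed extension $\R\la\eps\ra$, set
\[
W_\eps = \{Q_i(X) - \eps\widetilde Q_i(X,Y) = 0,\ Q_i(Y) - \eps\widehat Q_i(X,Y) = 0 : 1 \le i \le r\}.
\]
Bertini's theorem ensures $W_\eps$ is non-singular, of degree $D^2$ and dimension $2p$. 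Theorem~\ref{thm:main:ordered:c} applied to $W_\eps$ with the polynomials $\mathcal{P}$ pulled back via the first projection (still of cardinality $s$ and degree at most $d$) then gives
\[
\sum_{\sigma} b_0(\RR(\sigma, W_\eps(\R\la\eps\ra))) \le (D^2)^2 \cdot (O(sd))^{2p} = D^4 \cdot (O(sd))^{2p}.
\]
Since under this pullback $\RR(\sigma, W(\R)) = \RR(\sigma, V(\R)) \times V(\R)$ and $b_0(V(\R)) \ge 1$ whenever $V(\R) \ne \emptyset$, one has $b_0(\RR(\sigma, V(\R))) \le b_0(\RR(\sigma, W(\R)))$.

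The principal obstacle is then to establish $\sum_\sigma b_0(\RR(\sigma, W(\R))) \le \sum_\sigma b_0(\RR(\sigma, W_\eps(\R\la\eps\ra)))$, that is, that the infinitesimal smoothing does not destroy or merge components of sign-condition realizations across the singular locus of $W$. I would approach this via a semi-algebraic Hardt-type triviality argument on the deformation family $\{W_t\}_{t \in [0,\eps]}$, together with a careful stratification near $\Sing(W)$, ensuring that the specialization map $\R\la\eps\ra \to \R$ carries distinct components of $\RR(\sigma, W_\eps(\R\la\eps\ra))$ to distinct components of $\RR(\sigma, W(\R))$. A minor technical point is that the hypothesis $d \ge D^2$ required by Theorem~\ref{thm:main:ordered:c} on $W_\eps$ may fail in the regime $D \le d < D^2$; this is handled by a case analysis or an adjustment of the perturbation degrees, and does not affect the overall strategy.
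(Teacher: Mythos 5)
Your opening reduction already commits you to a stronger statement than the theorem: bounding $\sum_\sigma b_0(\RR(\sigma,V(\R)))$ rather than $\sum_\sigma \card(\Cc(\RR(\sigma,\R^N),V(\R)))$. The paper deliberately bounds only the second (a priori smaller) quantity, and the step you yourself flag as ``the principal obstacle'' is exactly where the stronger route breaks down. The inequality you need, $\sum_\sigma b_0(\RR(\sigma,W(\R))) \leq \sum_\sigma b_0(\RR(\sigma,W_\eps(\R\la\eps\ra)))$, is simply not available for a generic smoothing of a singular real variety: real points of the special fibre need not be limits of real points of the smoothed fibre. For instance, the quadric cone $X_1^2+X_2^2+X_3^2=0$ is an irreducible complete intersection whose real locus is a single point, and for (roughly half of all) generic perturbations of its equation the perturbed variety has empty real locus, so the special fibre has strictly more connected components than the generic one. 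Hardt triviality cannot rescue this: it gives semi-algebraic triviality only over a partition of the parameter space, hence over the punctured interval $(0,\eps]$, and says nothing about how components of the fibre at $t=0$ inject into components of nearby fibres; the specialization map $\lim_\eps$ carries bounded components of $W_\eps(\R\la\eps\ra)$ into $W(\R)$ but need not hit components sitting in or near $\Sing(W)(\R)$ (isolated real points, lower-dimensional ``sticks'' as in the Whitney umbrella). So the middle inequality is a genuine gap, not a technical point, and it is precisely the phenomenon the theorem's formulation is designed to avoid.

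The paper's proof circumvents this as follows, and the contrast is instructive. One first perturbs the polynomials in $\mathcal{P}$ (not the variety) so that only \emph{strict} sign conditions matter; their realizations are then open subsets of $\C^N=\R^{2N}$. A component $C$ of such an open realization that meets $V$ must also meet the infinitesimally perturbed \emph{complex} variety $\widetilde{V}=\ZZ(\widetilde{\mathcal{Q}},\C\la\eps\ra^N)$, because every point of $V$ is a $\lim_\eps$ of points of $\widetilde{V}$ (complex points cannot disappear under perturbation, unlike real ones) and $C$ is open -- this is Lemmas~\ref{lem:main:ordered:c':2}--\ref{lem:main:ordered:c':4}, and it is the place where counting components of $\RR(\sigma,\R^N)$ \emph{meeting} $V(\R)$, rather than components of $\RR(\sigma,V(\R))$, is essential. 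Then the numerology $D^2$, $2p$ arises not from a product $V\times V$ but from splitting each $\widetilde{Q}_j$ into real and imaginary parts, exhibiting $\widetilde{V}$ as the real points of a non-singular complete intersection $\widetilde{W}\subset\C\la\eps\ra[i]^{2N}$ of degree $D^2$ and dimension $2p$, to which Theorem~\ref{thm:main:ordered:c} is applied. (Your observation that the hypothesis $d\geq\deg$ of Theorem~\ref{thm:main:ordered:c} becomes $d\geq D^2$ is a fair but secondary point; the same issue is present in the paper's own application and does not touch the main difficulty above.) As it stands, your proposal does not establish the theorem, and no generic-perturbation-plus-specialization argument will, because the statement it relies on is false.
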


\begin{remark}
The exponent of $D$ in the above bound comes from our proof technique which involves treating
$V$ as real algebraic subset of $\R^{2N}$ by taking real and imaginary parts i.e. considering $V$ as the real part of a complex complete intersection of degree $D^2$. 

We then use Theorem~\ref{thm:main:ordered:c} which leads to an additional squaring. Improving this exponent will require some new idea. 
\end{remark}

\subsection{Sum of Betti numbers of realizations of sign conditions}
Using the result of Laszlo and Viterbo (Theorem~\ref{thmx:LV}) we prove the following theorem which extends Theorem~\ref{thmx:BPR} to the case when the ambient variety is more general than $\R^N$.

\begin{theorem}
\label{thm:BPR+}
Let $\overline{V} \subset \PP_\C^N$ be a non-singular real variety 
with $\deg V = D$ and $\dim V = p$. Let $V = \overline{V} \cap \C^N$ denote the corresponding affine variety. Let $\mathcal{P} \subset \R[X_1,\ldots,X_N]_{\leq d}$ be a finite set of polynomials. Then,
for each $i, 0 \leq i \leq p$,
\[
\sum_{\sigma \in \{0,1,-1\}^{\mathcal{P}}} b_i(\RR(\sigma,V(\R))) 
\leq
%%\sum_{j=1}^{p-i} 4^j \binom{s}{j} ((2 D d)^{p} + D^{p+1})
%%+ \sum_{j=1}^{p-i-1} 4^j \binom{s}{j} ((2 D d)^{p-1} + D^{p})
O(1)^p \cdot s^{p-i} \cdot D^p \cdot (D + d^p). 
\]
\end{theorem}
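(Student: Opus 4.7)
The plan is to follow the proof of Theorem~\ref{thmx:BPR} from \cite{BPR02} (see also \cite[Chapter 7]{BPRbook2}), with one key modification: the Oleinik-Petrovsky bound on Betti numbers of real algebraic sets in $\R^N$ (which produces the $O(d)^N$ factor in the BPR estimate) is replaced by the Laszlo-Viterbo bound (Theorem~\ref{thmx:LV}) sharpened to $D^{p+1}$ by Kharlamov (Remark~\ref{rem:LV}), which controls the Betti numbers of a non-singular real variety by its degree and intrinsic dimension alone. The hypothesis that $\overline{V}$ is non-singular is exactly what makes this substitution possible.

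First I would introduce an infinitesimal perturbation: for each $P \in \mathcal{P}$ and each sign $\epsilon \in \{+1,-1\}$, adjoin an infinitesimal $\eps_{P,\epsilon}$ and replace $P$ by the perturbed polynomial $P + \epsilon\, \eps_{P,\epsilon}$. Working over the real closed extension $\R\la\eps\ra$ and applying a Bertini-type genericity argument (possible because $\overline{V}$ is non-singular), each intersection of $\overline{V}$ with the zero set of a perturbed polynomial becomes a non-singular real subvariety of dimension $p-1$, and by the generalized Bezout inequality its degree is at most $Dd$. Kharlamov's bound then gives
\[
\sum_j b_j(V(\R)) \leq D^{p+1}, \qquad \sum_j b_j(V(\R) \cap \ZZ(P,\R^N)) \leq (Dd)^p.
\]

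The core of the BPR argument is an inequality, derived from a Mayer-Vietoris / spectral sequence analysis, that bounds $\sum_\sigma b_i(\RR(\sigma,V(\R)))$ by a combinatorial factor $\binom{s}{p-i} \leq s^{p-i}$ times the total Betti number of an appropriate algebraic subset of $V(\R)$ produced by the perturbed family. In the $V = \R^N$ setting, this recovers $s^{p-i} O(d)^N$ via Oleinik-Petrovsky. Substituting the Kharlamov-based bound for the same quantity yields $O(1)^p \cdot s^{p-i} \cdot (D^{p+1} + (Dd)^p) = O(1)^p \cdot s^{p-i} \cdot D^p(D + d^p)$, which is the claimed bound.

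The main obstacle is to verify that the BPR spectral sequence reduction, in the non-singular projective setting, really does reduce to the two ``simple'' algebraic sets $V$ and a single-hypersurface section $V \cap \ZZ(P,\R^N)$, rather than to deeper intersections $V \cap \ZZ(P_{j_1}) \cap \cdots \cap \ZZ(P_{j_{p-i}})$. A naive use of Bezout on such multi-fold intersections (giving degree at most $Dd^{p-i}$) followed by Kharlamov would contribute $(Dd^{p-i})^{i+1} = D^{i+1} d^{(p-i)(i+1)}$, whose exponent of $d$ can exceed $p$ for intermediate $i$ when $p \geq 3$, and would thus produce a bound weaker than claimed. Overcoming this will require careful tracking within the BPR argument of how the infinitesimal perturbation concentrates the contribution of the $p-i$ equalities into a single algebraic set of degree only $O(d)$ inside $V$, so that the Kharlamov bound is only ever applied to $V$ itself or a codimension-one section. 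This is the principal technical input needed beyond the direct combination of the BPR machinery with Kharlamov's inequality.
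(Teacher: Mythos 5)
Your overall strategy --- run the perturbation machinery behind Theorem~\ref{thmx:BPR} and substitute the Laszlo--Viterbo/Kharlamov bound $D^{p+1}$ for the Ole{\u\i}nik--Petrovski{\u\i} bound --- is indeed the route the paper takes, but your write-up stops exactly where the real work lies, and you concede as much in your final paragraph. The gap is genuine: the reduction (whether in the form of \cite{BPR02}, or as in the paper via Lefschetz duality on the compact non-singular manifold $\overline{V}(\R)$ followed by the Mayer--Vietoris spectral sequence) produces algebraic sets of the form $\overline{V}(\R)\cap\ZZ(\widetilde{P}_{j_1}^h)\cap\cdots\cap\ZZ(\widetilde{P}_{j_m}^h)$ with $m$ as large as $p-i$, and, as you yourself compute, Bezout plus Kharlamov applied directly to such an intersection gives $(Dd^{m})^{p-m+1}$, whose exponent of $d$ exceeds $p$ for intermediate values, so the claimed bound does not follow. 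In the classical ambient-space setting this is a non-issue because the multi-fold intersection is the zero set of the single degree-$2d$ polynomial $\sum_j\widetilde{P}_{j}^2$ and Ole{\u\i}nik--Petrovski{\u\i} only sees that degree; but the real zero locus of a sum of squares is highly singular, so Kharlamov's inequality cannot be applied to it, and the ``careful tracking'' you invoke is precisely the missing idea.

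The paper closes this gap with one further perturbation: setting $\widetilde{Q}=\sum_{\widetilde{P}\in\widetilde{\mathcal{P}}_0'}\widetilde{P}^2$ and choosing a generic non-negative form $H$ of the same degree, the set defined by $(1-\zeta)\widetilde{Q}-\zeta H\le 0$ on $\overline{V}(\R)$ is semi-algebraically homotopy equivalent to the intersection in question (\cite[Lemma 16.17]{BPRbook2}), and a Mayer--Vietoris argument bounds its Betti numbers by $b(\overline{V}(\R))$ plus those of the hypersurface section $\ZZ((1-\zeta)\widetilde{Q}-\zeta H,\cdot)$ of $\overline{V}$, which \emph{is} the real part of a non-singular projective variety of dimension $p-1$ and degree at most $2Dd$; Kharlamov plus the Thom--Smith inequality then gives $(2Dd)^{p}$ for every such term, independently of how many polynomials were intersected. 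The combinatorial factor $\sum_{0\le j\le p-i-1}\binom{s}{j+1}=O(1)^p s^{p-i}$ comes out of the spectral sequence after the Lefschetz-duality step (this is where non-singularity and compactness of $\overline{V}(\R)$ are actually used, not a Bertini argument), and summing the contributions, together with $b(\overline{V}(\R))\le D^{p+1}$, yields $O(1)^p\cdot s^{p-i}\cdot D^p\cdot(D+d^p)$. Without this sum-of-squares-plus-generic-perturbation device (or an equivalent one) your argument, as written, does not establish the stated exponent of $d$.
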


\begin{remark}
    \label{rem:BPR+} Notice that the hypothesis on the variety $V$ is much stronger than in Theorem~\ref{thmx:BPR} or Theorem~\ref{thm:main:ordered:a}. %%However, the unlike in Theorem~\ref{thm:main:ordered:a}, the bound obtained is on each individual Betti number summed over all realizations on $V(\R)$ of sign conditions on $\mathcal{P}$. 
    However, the bound  unlike Theorem~\ref{thmx:BPR} is independent of the ambient dimension.
\end{remark}

\section{Applications}
\label{sec:applications}

\subsection{Bound on the $\eps$-entropy of real varieties}
\label{subsec:entropy}
%%%%%%%%%%%%%%%%%%%%%%%%%%%%%%%%%%%%%%%%%%%%%%%%%%%%%%%%%%%%%%%%%%
In this section we assume that $\R = \mathbb{R}$.

A quantitative result from real (or more accurately \emph{metric}) algebraic geometry that is now of great interest in applications (in problems originating in machine learning) is a tight upper bound on the volume of an $\eps$-tube around a real algebraic variety $V$ intersected with a ball of a given radius. A recent result in this direction is \cite[Theorem 1.1]{Basu-Lerario2023}  which gives 
a tight upper bound on this volume depending on the degree $d$ of the polynomials defining $V$ and the codimension of $V$.

More recently, using very different techniques (originally due to Vitushkin \cite{Vitushkin1957}), Zhang and Kileel \cite{Zhang-Kileel2023} proved another bound on the volume of such tubes, which matches the bound in \cite{Basu-Lerario2023} in the regime large $N$ and
fixed $d$. 
They applied their results to prove new bounds in approximation properties of low rank tensors, on the required dimension for randomized sketching algorithms, and 
generalization error bounds for deep neural networks
with rational or ReLU activation functions.
Their technique depends crucially on a bound on the number of connected components of real algebraic varieties and any improvement
on this bound will automatically improve the bound in their paper.
In particular, inequality \eqref{eqn:thm:main:ordered:a'} is applicable in their context.

We follow the same notation as in \cite{Zhang-Kileel2023}. Let $V(\R)  \subset \mathbb{R}^N$ be a real variety, and for $\eps >0$ denote by $\mathcal{N}(V(\R),\eps)$ the minimum number of $\eps$-balls required to cover $V(\R) \cap B_N(0,1)$. The quantity $\log \mathcal{N}(V(\R),\eps)$ is referred to as the $\eps$-entropy of  $V(\R) \cap B_N(0,1)$ and an upper bound on $\mathcal{N}(V(\R),\eps)$ implies an upper bound on the volume of the 
$\eps$-tube around $V(\R) \cap B_N(0,1)$.

The following result appears in \cite[page 4, Table 1, entry on real varieties]{Zhang-Kileel2023}.

\begin{thmx}
\label{thmx:ZK}
Let $V(\R) \subset B_N(0,1)$ be a real variety defined by polynomials of degrees at most $d$ and $\dim V = p$. Then,

\begin{equation}
 \log \mathcal{N}(V(\R),\eps) \leq p \cdot \log(1/\eps) + O(N \cdot\log d + p\cdot\log N).
\end{equation}
\end{thmx}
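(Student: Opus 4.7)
The plan is to follow the Vitushkin-type covering technique employed by Zhang and Kileel. The starting point is Vitushkin's classical inequality, which bounds the $\eps$-covering number of a compact semi-algebraic set $S \subset B_N(\mathbf{0},1)$ by a weighted sum of its variations:
\[
\mathcal{N}(S, \eps) \leq \sum_{k=0}^{N} c_{k,N} \cdot V_k(S) \cdot \eps^{-k},
\]
where the $k$-th Vitushkin variation $V_k(S)$ is, up to normalizing constants, the integral over the affine Grassmannian of codimension-$k$ planes in $\R^N$ of the number $b_0(S \cap H)$ of semi-algebraically connected components of the slice.

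First I would observe that if $\dim V = p$, then a generic affine plane of codimension $k > p$ has empty intersection with $V(\R)$, so only the terms $0 \leq k \leq p$ contribute to the sum. Next, I would bound $V_k(V(\R))$ using the classical Ole{\u\i}nik-Petrovski{\u\i} estimate (\cite[Proposition 7.28]{BPRbook2}): the slice $V(\R) \cap H$, viewed as a real algebraic subset of $H \cong \R^{N-k}$ defined by polynomials of degree at most $d$, satisfies $b_0(V(\R) \cap H) \leq d(2d-1)^{N-k-1} = O(d)^{N-k}$. The normalization constants $c_{k,N}$ arising from integration over the Grassmannian contribute at most a factor of $O(N)^k$.

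Assembling these bounds, the term $k = p$ dominates for small $\eps$ and yields
\[
\mathcal{N}(V(\R), \eps) \leq O(N)^p \cdot O(d)^{N-p} \cdot \eps^{-p},
\]
after which taking logarithms gives $p \log(1/\eps) + (N-p) \log O(d) + p \log O(N)$, which absorbs into the claimed error term $O(N \log d + p \log N)$. The main obstacle I anticipate is carefully controlling the normalization constants in Vitushkin's inequality so that the Grassmannian volume factors do not overwhelm the algebraic contribution, and ensuring the transversality arguments guaranteeing $0$-dimensional generic slices at codimension $p$ go through for singular $V$. An alternative route would bypass the Vitushkin formalism entirely via a direct dyadic covering argument: partition $[-1,1]^N$ into sub-cubes of side length $\approx \eps / \sqrt{N}$ and inductively bound the number that meet $V(\R)$ by reducing to intersections of $V$ with grid hyperplanes; either route yields the same final bound once the dominant $k=p$ contribution is identified.
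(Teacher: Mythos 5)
Your proposal is sound, but note that the paper never proves Theorem~\ref{thmx:ZK} at all: it is quoted verbatim from \cite{Zhang-Kileel2023}, and when the paper needs this circle of ideas (in Theorem~\ref{thm:entropy}) it treats the Zhang--Kileel covering machinery as a black box, invoking their $(K,n)$-regular-set bound \eqref{eq:main-bound} and supplying only the new ingredient $K=(O(D))^{p+1}$ from Theorem~\ref{thm:main:ordered:a'}. What you have written is essentially a reconstruction of the content hidden inside that black box, i.e.\ of the Vitushkin-style argument of \cite{Vitushkin1957,Zhang-Kileel2023}: bound $\mathcal{N}(V(\R),\eps)$ by the weighted variations, kill the terms of codimension $k>p$ because a generic affine plane of codimension exceeding $\dim V(\R)\leq p$ misses $V(\R)$ (a pure dimension count, so no transversality or smoothness of $V$ is needed), bound each slice by the Ole{\u\i}nik--Petrovski{\u\i} estimate \cite{OP} as $O(d)^{N-k}$, and absorb the Grassmannian normalization into $O(N)^{k}$. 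Two small points of hygiene: rather than arguing that the $k=p$ term ``dominates for small $\eps$,'' simply bound every term with $0\leq k\leq p$ by $O(N)^{p}\,O(d)^{N}\,\eps^{-p}$ (valid for $\eps\leq 1$), so the factor $p+1$ and the discrepancy between $O(d)^{N-k}$ and $O(d)^{N-p}$ are harmlessly absorbed into $O(N\log d+p\log N)$; and the regularity needed to apply the variation inequality (measurability and uniform finiteness of $b_0(V(\R)\cap H)$) is automatic for semi-algebraic sets. With those adjustments your route gives exactly the stated bound, and it is in fact the same route as the cited source; what the paper's own contribution buys, by contrast, is the replacement of the $O(d)^{N}$ slice bound by a degree-and-dimension-only bound, which is what turns the $O(N\log d)$ term of Theorem~\ref{thmx:ZK} into the $O(p(\log\deg(V)+\log N))$ term of Theorem~\ref{thm:entropy}.
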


We prove the following theorem.
%%using the same technique as used in
%%\cite{Zhang-Kileel2023}.

\begin{theorem}
\label{thm:entropy}
Let $V \subset \C^N$ be a real variety with $\deg(V) = D, \dim V = p$, such that $V(\R) \subset B_N(0,1)$. 
Then,
\[
   \log \mathcal{N}(V(\mathbb{R})_p,\eps) \leq p \cdot \log(1/\eps) + O(p \cdot  (\log D + \log N))
  \]
%%where $V(\R)_p \subset V(\R)$ is the set of points
%%$x \in V(\R)$ such that $\dim_x V(\R) = p$. 
(see Notation~\ref{not:S_p}).

In particular, if every point of $V(\R)$ has local real dimension equal to $p$, then
\[
\log \mathcal{N}(V(\R),\eps) \leq p \cdot \log(1/\eps) + O(p \cdot  (\log D + \log N)).
\]
\end{theorem}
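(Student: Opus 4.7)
The plan is to carry out the standard grid-covering reduction from $\eps$-entropy to a count of sign patterns of axis-parallel linear polynomials, and then to feed this into our new sign-condition bound from Theorem~\ref{thm:main:ordered:a}. This directly replaces the Ole{\u\i}nik-Petrovski input used in \cite{Zhang-Kileel2023}, whose $O(d)^N$ factor is responsible for the $N\log d$ term in Theorem~\ref{thmx:ZK}; since our bound is independent of the ambient dimension $N$, we will pay only $O(p\log N)$ instead.

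More precisely, first cover $B_N(\mathbf{0},1)\subset\R^N$ by the uniform grid of closed axis-aligned cubes of side length $\delta=\eps/\sqrt{N}$, so that each cube has Euclidean diameter at most $\eps$. Then $\mathcal{N}(V(\R)_p,\eps)$ is bounded by the number of grid cubes that intersect $V(\R)_p$. These cubes are in bijection with the strict sign conditions of the set $\mathcal{P}$ of the $s=O(N^{3/2}/\eps)$ axis-parallel hyperplanes defining the grid (viewed as degree-one polynomials), and a cube meets $V(\R)_p$ iff the corresponding sign condition $\sigma$ satisfies $\RR(\sigma,V(\R)_p)\neq\emptyset$. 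Hence
\[
\mathcal{N}(V(\R)_p,\eps)\;\leq\;\sum_{\sigma\in\{-1,+1\}^{\mathcal{P}}} b_0\bigl(\RR(\sigma,V(\R)_p)\bigr).
\]
Applying Theorem~\ref{thm:main:ordered:a} with $d=1$ and $s=O(N^{3/2}/\eps)$ bounds this sum by
\[
O(D)^p\bigl((N^{3/2}/\eps)^p+D\bigr),
\]
and taking logarithms yields
\[
\log\mathcal{N}(V(\R)_p,\eps)\;\leq\;p\log(1/\eps)+O\bigl(p(\log D+\log N)\bigr),
\]
which is exactly the asserted bound. The ``in particular'' clause is then immediate because under the stated hypothesis $V(\R)_p=V(\R)$.

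The only non-routine step is the invocation of Theorem~\ref{thm:main:ordered:a}. As stated there, the theorem requires every complex irreducible component $V_i$ of $V$ to satisfy $\dim_x V_i(\R)=\dim V_i$ for every $x\in V_i(\R)$, which can fail for a general real variety (e.g.\ the Whitney umbrella). In the ``in particular'' case this hypothesis holds by assumption. For the general statement one needs the natural strengthening whose left-hand side is $\sum_\sigma b_0(\RR(\sigma,V(\R)_p))$ with no equi-dimensionality hypothesis on $V$; this is precisely the generalization that Theorem~\ref{thm:main:ordered:a'} already establishes in the special case $\mathcal{P}=\emptyset$, and I expect it to extend verbatim when polynomials are allowed by combining the argument of Theorem~\ref{thm:main:ordered:a'} (which isolates the top-dimensional locus) with the sign-condition machinery of Theorem~\ref{thm:main:ordered:a}. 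Verifying this compatibility is the main technical point.
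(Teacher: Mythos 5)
Your derivation is fine for the ``in particular'' clause, but for the main statement about $V(\R)_p$ there is a genuine gap exactly where you flag it: you invoke Theorem~\ref{thm:main:ordered:a}, whose hypothesis (each component $V_i$ satisfies $\dim_x V_i(\R)=\dim V_i$ at every real point) is not available for a general real variety, and the strengthening you would need --- a sign-condition bound whose left-hand side is $\sum_{\sigma} b_0(\RR(\sigma,V(\R)_p))$ with no equi-dimensionality hypothesis --- is only conjectured (``I expect it to extend verbatim''), not proved. That missing extension is the whole content of the first inequality, so as written the proof is incomplete. The paper takes a different route that sidesteps this entirely: it introduces no auxiliary family $\mathcal{P}$ of grid hyperplanes, but instead quotes the Zhang--Kileel covering theorem for $(K,n)$-regular sets (inequality \eqref{eq:main-bound}), which reduces the entropy estimate to bounding the number of connected components of intersections of the set with \emph{generic} affine planes of codimension at most $p$. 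For those slices only the $\mathcal{P}=\emptyset$ bound $b_0(V(\R)_p)\leq 2^{2p+3}D^{p+1}$ of Theorem~\ref{thm:main:ordered:a'} is needed, and that theorem carries no equi-dimensionality hypothesis; the $O(p\log N)$ term then comes from \eqref{eq:main-bound} itself rather than from counting grid hyperplanes. So your argument, once the deferred extension is actually carried out, would be a genuinely different proof in the equi-dimensional case, but it does not yet establish the theorem as stated.

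A smaller, repairable slip: the claimed ``iff'' between closed grid cubes meeting $V(\R)_p$ and realizable \emph{strict} sign conditions is false, since $V(\R)_p$ may meet a cube only in its boundary (it could even lie entirely inside a grid hyperplane), in which case no strict sign condition is realized although cubes are still needed to cover it. The fix is to use all sign conditions in $\{0,1,-1\}^{\mathcal{P}}$: the realization of any such sign condition is a single cell of the cubical complex, hence has diameter at most $\eps$ and is covered by one $\eps$-ball, giving $\mathcal{N}(V(\R)_p,\eps)\leq \card(\Sigma(\mathcal{P},V(\R)_p))$; since Theorem~\ref{thm:main:ordered:a} already sums over all of $\{0,1,-1\}^{\mathcal{P}}$, the resulting numerics are unchanged.
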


\begin{remark}
\label{rem:entropy1}
   Note that right-hand side of the inequality in Theorem~\ref{thm:entropy} depends very weakly on the embedding dimension $N$ ($\log N$ instead of $N$) compared to 
the bound in Theorem~\ref{thmx:ZK}. Notice also that while the bound in
Theorem~\ref{thmx:ZK} is in terms of the degrees of the real 
polynomials cutting out 
$V$, the bound in Theorem~\ref{thm:entropy} is in terms of the degree of $V$ (as a complex variety) which is a more intrinsic invariant. It is well known that any affine variety of degree $D$ in $\C^N$ can be cut out by at most 
$N+1$ polynomials of degree at most $D$. However, if we use this fact in conjunction with Theorem~\ref{thmx:ZK}, we will end up with a much weaker bound than the one in Theorem~\ref{thm:entropy} ($N$ instead of $\log N$ in the expression on the right-hand side). 
\end{remark}

\begin{remark}
    \label{rem:entropy2}
    Using Remark~\ref{rem:minimal-degree} the right hand side of both inequalities in Theorem~\ref{thm:entropy} can be replaced by 
    \[
    p \cdot \log(1/\eps) + O(p \cdot  (\log D + \log (D + p)).
    \]
%%    which is completely independent of $N$.
\end{remark}

%%%%%%%%%%%%%%%%%%%%%%%%%%%%%%%%%%%%%%%%%%%%%%%%%%%%%%%%%%%%%%%%%%%
\subsection{Lower bounds for algebraic computation trees}
\label{subsec:tree}
We briefly recall the definition of an algebraic computation tree (see for example \cite{GV2017} from which the following description is taken).
\begin{definition}
\label{def:tree}
An algebraic computation tree $T$ with input variables 
$X_1,\ldots,X_N$ is a tree having three kinds of vertices: 
\begin{enumerate}[(a)]
    \item computation vertices having outdegree $1$;
     \item branching vertices with  outdegree $3$, and the tree outgoing edges labelled by $0$, $1$ and $-1$;
     \item leaves with outdegree $0$ and labelled by ``YES'' or ``NO''.
\end{enumerate}
There is a variable $Y_v$ attached to each vertex
$v$ of $T$.Moreover, an expression 
$Y_v = a * b$, where
$* \in \{+,-,\times\}$ to each computation vertex $v$,
and $a,b$ are either real constants, or input variables, or variables
associated with predecessor vertices of $v$, or a combination of these.

At each branch vertex $v$, the variable $Y_v$ is assigned the value which is either a real
constant, or an input variable, or a variable associated with a predecessor vertex. The
three outgoing edges of a branch vertex $v$ labelled $0$, $1$ or $-1$, then correspond to signs 
$Y_v = 0$, $Y_v > 0$, $Y_v <0$ (respectively).

With each leaf $w$, 
we associate naturally a
a basic semi-algebraic set $S_w \subset \mathbb{R}^N$, defined
by a conjunction of inequalities of the type
$Y_v > 0$, $Y_v = 0$, $Y_v <0$,
where $v$ varies over all branch nodes on the path from from the root to the leaf $w$ and the sign of $Y_v$ corresponds to the sign of the branch
taken by the path. 
The semi-algebraic set $S_T$ associated to the tree $S$ is defined as the union
\[
S_T = \bigcup_{w \in \mathrm{Leaves}_1(T)} S_w,
\]
where $\mathrm{Leaves}_1(T)$ denotes the set of leaf nodes of $T$ 
labelled ``YES''.

We say that the algebraic computation tree $T$ tests membership in the semi-algebraic set $S$. The height of a tree is the length of the longest path from its root to a leaf node.
\end{definition}

The interpretation of this model of computation is as follows. On an input
\[
x = (x_1,\ldots,x_N) \in \mathbb{R}^N,
\]
the input variables $X_i$ get the corresponding values $x_i$,
the arithmetic operations are executed in computation vertices $v$, and 
$Y_v$ is assigned the corresponding real value.

At branch vertices, the outward branch is chosen corresponding to the assigned value of $Y_v$. 
The input $x$ belongs to the semi-algebraic set $S_w$ if this process ends in the leaf $w$. If $w$ is a leaf marked ``YES'', then $x$ is said to be \emph{accepted} by the tree $T$.

It was proved by Ben-Or \cite{Ben-Or} that for any semi-algebraic set
the height of an algebraic computation tree testing membership in a semi-algebraic set $S \subset \R^N$, is at least 
$c_1 \log b_0(S) - c_2 N$, where
and $c_2,c_2 > 0$ absolute constants. 
Yao \cite{Yao}
proves that for any locally closed and bounded semi-algebraic set
$S \subset \R^N$, the height of any algebraic computation tree testing membership in $S$ is bounded by
\[
c_1 \log b^{BM}(S) - c_2 N,
\]
where $b^{BM}(S)$ denotes the Borel-Moore Betti numbers of $S$ (they can also be defined as the dimensions of the compactly supported cohomology groups $\HH_c^*(S)$).
This result was extended recently  by Gabrielov and Vorobjov \cite{GV2017} who proved that the height of an algebraic computation tree testing membership in a semi-algebraic set $S \subset \R^N$ (not necessarily locally closed), is at least 
\[
c_1 \frac{\log b_m(S)}{m+1} - c_2 \frac{N}{m+1},
\]
where $c_1,c_2 > 0$ are absolute constants. 

We prove that the following theorem which in particular improves the original result of
Ben-Or in a direction that is independent of \cite{Yao} and \cite{GV2017}.

\begin{theorem}
\label{thm:lower-bound}
Let $S \subset \R^N$ be a semi-algebraic set contained in an real algebraic set $V$.
Suppose that $V$ is a union of 
non-singular real algebraic varieties,
then the height of an algebraic computation tree testing membership in $S$, is at least 
\[
\frac{1}{(1+ (\log \deg V))} (c_1 \log b_0(S) - c_2 \dim V),
\]
where
and $c_1, c_2 > 0$ absolute constants.
\end{theorem}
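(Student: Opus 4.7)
The plan is to follow Ben-Or's classical argument \cite{Ben-Or}, substituting the Milnor--Thom-type bound in the ambient space $\R^N$ used there by the new bound of Theorem~\ref{thm:main:ordered:a}, whose dependence on the ambient geometry is through $\deg V$ and $\dim V$ rather than through $N$.

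Let $T$ be an algebraic computation tree of height $h$ that tests membership in $S$, so $S_T = S$. Then $T$ has at most $3^h$ leaves and at most $O(3^h)$ branching vertices, and since every arithmetic operation at worst doubles the degree, each polynomial $Y_v \in \R[X_1,\ldots,X_N]$ attached to a branching vertex $v$ satisfies $\deg Y_v \leq 2^h$. Collecting these polynomials into a set $\mathcal{P}_T$, we have $s := \card(\mathcal{P}_T) \leq 3^h$ and $d := \max_{P \in \mathcal{P}_T}\deg P \leq 2^h$. The accepting leaves of $T$ correspond to disjoint sign conditions on $\mathcal{P}_T$ whose realizations in $V$ partition $S$, so
\[
b_0(S) \leq \sum_{\sigma \in \{0,1,-1\}^{\mathcal{P}_T}} b_0(\RR(\sigma, V)).
\]
The hypothesis that $V$ is a union of non-singular real varieties ensures the equi-dimensionality condition required by Theorem~\ref{thm:main:ordered:a} (cf.\ Remark~\ref{rem:thm:main:ordered:a}); applying it with $D := \deg V$ and $p := \dim V$ produces $b_0(S) \leq O(D)^p \bigl((sd)^p + D\bigr)$. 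Substituting the bounds on $s$ and $d$ and taking logarithms yields $\log b_0(S) \leq C \cdot p \cdot (1 + h + \log D)$ for some absolute constant $C > 0$.

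It remains to rearrange this inequality into the form stated in the theorem. Isolating $h$ yields $h \geq \log b_0(S)/(Cp) - (1 + \log D)$; a short case analysis separating the regimes $h \geq \log D$ and $h < \log D$ allows one to absorb the additive $(1+\log D)$ correction into the denominator $(1+\log \deg V)$, producing the claimed bound with suitable absolute constants $c_1, c_2 > 0$. I expect this final step---correctly balancing the cross-term $Cp\log D$ appearing in the upper bound on $\log b_0(S)$ between the $(1+\log \deg V)$ denominator and the subtracted $c_2 \dim V$ term---to be the main technical obstacle of the proof, and it is what fixes the exact values of the absolute constants.
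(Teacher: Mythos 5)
There is a genuine gap, and it is exactly at the step you flag as ``the main technical obstacle'': it is not a matter of bookkeeping but is insurmountable from the inequality you derive. By eliminating the intermediate computation variables you are forced to take $d \leq 2^{h}$, so after applying Theorem~\ref{thm:main:ordered:a} on $V$ itself and taking logarithms you obtain $\log b_0(S) \leq C\, p\,(1 + h + \log D)$, in which the height $h$ is \emph{multiplied by} $p = \dim V$. The best lower bound this can yield is $h \gtrsim \log b_0(S)/(Cp) - (1+\log D)$, and no case analysis can convert that into the stated bound $h \geq \frac{1}{1+\log \deg V}\bigl(c_1 \log b_0(S) - c_2 \dim V\bigr)$ with absolute constants: your inequality is genuinely consistent with $h \approx \log b_0(S)/p$. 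Concretely, take $V$ a union of affine subspaces, so $D = 1$; the theorem then recovers Ben-Or's bound $h \geq c_1 \log b_0(S) - c_2 \dim V$, which is linear in $\log b_0(S)$, whereas your inequality with $\log b_0(S) = p^2$ only forces $h \gtrsim p$, not $h \gtrsim p^2$. The loss is intrinsic to the reduction: a straight-line program of length $h$ really can compute polynomials of degree $2^h$, so once you eliminate the auxiliary variables the exponential degree growth cannot be avoided.

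The paper's proof keeps Ben-Or's key device, which is precisely designed to sidestep this. For each accepting leaf $w$ one retains the variables $Y_v$ introduced at the computation vertices on the path to $w$, so that $S_w$ is a basic semi-algebraic subset of $V(\R) \times \R^{t_w}$ (with $t_w \leq t$, the height) defined by at most $t$ polynomials of degree at most $2$. One then applies Theorem~\ref{thm:main:ordered:a} not to $V$ but to the product variety $V \times \C^{t_w}$, which still has degree $D$ but dimension $p + t_w \leq p + t$ (and still satisfies the local equi-dimensionality hypothesis, being a product of non-singular varieties with affine space). Since the degree of the test polynomials is now the constant $2$ and there are at most $t$ of them, this gives $b_0(S_w) \leq O(D)^{p+t}$; summing over the at most $2^{t}$ accepting leaves (projection to the original variables does not increase the number of connected components) yields $b_0(S) \leq O(D)^{p+t}$, i.e. $\log b_0(S) \leq C (p+t)(1 + \log D)$, from which the stated bound follows directly. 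So the additive trade-off ``dimension grows by $t$, degree stays bounded'' is what produces the division by $1 + \log\deg V$ and the subtracted $c_2 \dim V$; your multiplicative trade-off ``dimension stays $p$, degree grows like $2^h$'' cannot.
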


\begin{remark}
    Note that the above lower bounds are independent of the ambient dimension $N$.
    This is useful in situations where $\dim V$
    and $\log \deg V$ are very small compared to the dimension of the ambient space.

    \begin{example}
    For example, consider the Grassmannian variety $\Gr(m,N)$ of $m$-dimensional subspaces in $\C^N$, as a subvariety of $\PP_\C^{\binom{N}{m}-1}$.
    Let $V = V_{m,N}  \subset \C^{\binom{N}{m}}$ be the affine cone 
    over $\Gr(m,N)$. Then, it is well known 
    (see for example \cite[Corollary 3.2.14]{Manivel}) that
    \begin{equation}
        \label{eqn:deg:grassmannian}
        \deg (V_{m,N}) = \deg(\Gr_{m,N}) = \frac{0!1! \cdots (n-1)!}{(m+1)!\cdots (m+n-1)!}(mn)!,
    \end{equation}
    where $n = N - m$,
    while
    \[
    \dim V_{m,N} = m(N-m)+1.
    \]
    This implies that 
    $\deg (V_{m,N}) = O(1)^{m(N-m)}$, and hence
    $ \log(\deg (V_{m,N})) = O(m(N-m))$.

    Now suppose that $S \subset V_{m,N}(\R)$ is a semi-algebraic subset.
    Then, the lower bound in Theorem~\ref{thm:lower-bound} yields that the 
    depth of any algebraic computation tree testing membership in $S$ is 
    at least
\[
c_1 \frac{\log b_0(S)}{m(N-m)}  - c_2,
\]
while the result of Ben-Or implies a lower bound of 
\[
c_1\log b_0(S) -  c_2 \binom{N}{m}
\]
(with different constants $c_1,c_2$).

If $m >2$ and $b_0(S) < \frac{c_2}{c_1}\binom{N}{m}$, then the last inequality will not give a non-trivial lower bound,  while Theorem~\ref{thm:lower-bound} will yield a non-trivial bound.
\end{example}
\end{remark}

%%%%%%%%%%%%%%%%%%%%%%%%%%%%%%%%%%%%%%%%%%%%%%%%%%%%%%%%%%%%%%%%%%%%%%
%%%%%%%%%%%%%%%%%%%%%%%%%% Proofs %%%%%%%%%%%%%%%%%%%%%%%%%%%%%%%%%%%%

\section{Proofs}
\label{sec:proof}

\subsection{Proof of Theorem~\ref{thm:main:algebraic} 
%%and \ref{thm:main:algebraic:projective}
}

In our proof of Theorem~\ref{thm:main:algebraic} we will need to use the following generalized form of Bezout's theorem
(see for instance \cite[Example 8.4.6, page 148]{Fulton}).

\begin{thmx}[Generalized Bezout]
\label{thmx:bezout}
Let $V_1,\ldots,V_s$ be algebraic subsets of some projective space over $k$,
such that each $V_i$ is a union of varieties of the same dimension,
and 
$Z_1,\ldots,Z_t$ be the irreducible components of the intersection
$V_1\cap \cdots \cap V_s$. Then
$$
\displaylines{
\sum_{1 \leq i \leq t} \deg(Z_i) \leq \prod_{1 \leq i \leq s} \deg(V_i).
}
$$
\end{thmx}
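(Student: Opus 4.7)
The plan is to prove the inequality by induction on $s$, reducing to the case $s=2$, which is the refined Bezout inequality of Fulton--MacPherson for two pure-dimensional projective varieties.

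For the inductive step, assume the statement holds for $s-1$ factors. Let $W = V_1 \cap \cdots \cap V_{s-1}$ and decompose $W$ into its irreducible components $W_1, \ldots, W_m$. Each $W_j$ is a variety, hence trivially pure-dimensional. By the inductive hypothesis applied to $V_1,\ldots,V_{s-1}$, one has
\[
\sum_{j=1}^m \deg W_j \;\le\; \prod_{i=1}^{s-1} \deg V_i .
\]
Every irreducible component $Z_k$ of $V_1 \cap \cdots \cap V_s = W \cap V_s$ is contained in, and is an irreducible component of, some $W_j \cap V_s$. Applying the $s=2$ case to $W_j$ (irreducible, hence pure-dimensional) and $V_s$ (pure-dimensional by hypothesis) yields
\[
\sum_{Z_k \subseteq W_j} \deg Z_k \;\le\; \deg W_j \cdot \deg V_s.
\]
Summing over $j$ and using the inductive bound gives
\[
\sum_k \deg Z_k \;\le\; \Bigl(\sum_j \deg W_j\Bigr) \cdot \deg V_s \;\le\; \prod_{i=1}^s \deg V_i,
\]
completing the induction.

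It remains to treat the base case $s=2$ for pure-dimensional $V_1, V_2 \subseteq \mathbb{P}^N$. I would use the diagonal trick: identify $V_1 \cap V_2$ with $\Delta^{-1}(V_1 \times V_2)$ where $\Delta : \mathbb{P}^N \hookrightarrow \mathbb{P}^N \times \mathbb{P}^N$ is the diagonal embedding. Refined intersection theory produces a class $V_1 \cdot V_2 \in A_\ast(V_1 \cap V_2)$ whose pushforward to $A_\ast(\mathbb{P}^N)$ has total degree $\deg V_1 \cdot \deg V_2$ (this is the ``numerical'' Bezout statement, obtainable by degenerating $V_1$, $V_2$ to unions of generic linear subspaces of the same dimensions and degrees, where the intersection becomes proper and classical Bezout applies with equality). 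By the Fulton--MacPherson decomposition, this refined class can be written as
\[
V_1 \cdot V_2 \;=\; \sum_{k=1}^t j_k \,[Z_k] \;+\; R,
\]
where $Z_1,\ldots,Z_t$ are the irreducible components of $V_1 \cap V_2$, each component is distinguished (so $j_k$ is a positive integer, in particular $j_k \ge 1$), and $R$ is a nonnegative cycle supported on proper subvarieties of the $Z_k$. Taking degrees of the pushforward and using $j_k \ge 1$ together with $\deg R \ge 0$ gives $\sum_k \deg Z_k \le \deg V_1 \cdot \deg V_2$.

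The main obstacle is this base case in the presence of excess intersection, where $\dim(V_1 \cap V_2)$ strictly exceeds the expected value $\dim V_1 + \dim V_2 - N$. In the proper case, the bound follows from classical Bezout with intersection multiplicities counted once on the left. In the excess case one must show that every irreducible component of $V_1 \cap V_2$ ``uses up'' at least one unit of the $\deg V_1 \cdot \deg V_2$ budget, which is the distinguished-variety property and is the technical heart of the Fulton--MacPherson machinery (equivalently, a deformation to the normal cone of $V_1 \cap V_2$ inside $V_1 \times V_2$, together with the fact that the Segre class contribution of each irreducible component is at least one). Once this base case is granted, the inductive reduction above is elementary.
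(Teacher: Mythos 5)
The paper does not prove this statement at all: it is quoted directly from Fulton (Example 8.4.6), so there is no in-house argument to compare yours against. Your reduction is nevertheless worth assessing on its own terms. The induction on $s$ is sound: each irreducible component $Z_k$ of $W\cap V_s$ is indeed an irreducible component of $W_j\cap V_s$ for any $j$ with $Z_k\subseteq W_j$ (maximality in $W\cap V_s$ forces maximality in the smaller set), and since each $Z_k$ is counted at least once on the right, summing the two-factor bound over $j$ and invoking the inductive hypothesis gives the claim. This differs from Fulton's own route, which treats all $s$ factors at once via the ruled join $J(V_1,\ldots,V_s)\subseteq\mathbb{P}^{s(N+1)-1}$ (pure-dimensional of degree $\prod_i\deg V_i$) intersected with the diagonal linear subspace; your induction buys a reduction to the two-factor refined B\'ezout theorem at the cost of invoking it $m$ times per step, while Fulton's join trick reduces everything to intersections with linear spaces in one stroke.

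Two cautions on the base case, which you correctly identify as the technical heart. First, in the excess-dimensional situation the decomposition $V_1\cdot V_2=\sum_k j_k[Z_k]+R$ with integer coefficients does not typecheck: $V_1\cdot V_2$ lives in $A_{\dim V_1+\dim V_2-N}(V_1\cap V_2)$, whereas $[Z_k]$ lies in higher dimension. The correct statement (Fulton, Theorem 12.3) is that each component $Z_k$ is distinguished and contributes a \emph{positive class of the expected dimension supported on $Z_k$ whose degree is at least $\deg Z_k$}, the lower bound coming from positivity of Segre classes of cones over subvarieties of projective space. Second, and relatedly, your parenthetical that the Segre class contribution of each component is ``at least one'' is too weak: a contribution of degree $\geq 1$ per component would only bound the \emph{number} of components $t$ by $\deg V_1\cdot\deg V_2$, not the sum $\sum_k\deg Z_k$. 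You need the contribution of $Z_k$ to have degree at least $\deg Z_k$, which is what the cited positivity actually delivers. With that correction the outline is a legitimate proof modulo the Fulton--MacPherson machinery it defers to.
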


\subsubsection{Outline of the proof Theorem~\ref{thm:main:algebraic}}
The main idea (which is very similar to that used in \cite{Jeronimo-Sabia}, and also in \cite{BPR-tight}) is to first bound the number of zero-non-zero patterns whose realizations are zero dimensional using Theorem~\ref{thmx:bezout}. The main subtlety here is that the dimension of an algebraic set need not decrease upon intersection with another hypersurface. The problem of bounding the number of zero-nonzero patterns whose realizations have dimension $>0$ then can be reduced to the zero-dimensional case by taking generic hyperplane sections.  

The proof of Theorem~\ref{thm:main:algebraic} will now follow from the following two lemmas. 
%%It is clear that it suffices to
%%prove Proposition~\ref{prop:main} in the case 
We follow closely the proof of Lemma 7.3 in \cite{BPR-tight}.
Observe that it is sufficient to prove the theorem in case $V$ is irreducible of dimension $p$. 
Otherwise, we can take the irreducible decomposition of $V$ noting that their degrees add up to $D$.

\begin{notation}
For a finite set of polynomials $\mathcal{Q} \subset k[X_1,\ldots,X_N]$, 
and an algebraic set $W \subset k^N$,
we denote by $\ZZ(\mathcal{Q},W)$ the
set of common zeros of $\mathcal{Q}$ in $V$. If $\mathcal{Q} = \{Q\}$, we write $\ZZ(Q,W)$ for $\ZZ(\mathcal{Q},W)$.
\end{notation}

Let $V,\mathcal{P}$ be as in Theorem~\ref{thm:main:algebraic}.
For $\rho \in  \{0,1\}^{\mathcal{P}}$,
let ${\mathcal P}_\rho = \{P \in {\mathcal P} \mid \rho(P) = 0\}$ and
let $V_\rho = \ZZ({\mathcal P}_\rho,V)$. 
%%Let $d_\rho = \dim \RR(\rho,V)$. 
Let $V_\rho = V_{\rho,1} \cup \cdots \cup V_{\rho,n(\rho)}$ denote the
decomposition of $V_\rho$ into irreducible components.

We first prove a bound on the number of isolated points occurring 
as $\RR(\rho,V)$ for some $\rho \in \{0,1\}^{\mathcal{P}}$.

\begin{lemma}
\label{lem:isolatedpoints}
\[
\card(\{(\rho,i) \mid \dim V_{\rho,i} = 0\}) \leq \binom{s}{p} \cdot  D \cdot d^p.
\]
\end{lemma}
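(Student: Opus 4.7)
The plan is to combine two classical tools: Krull's principal ideal theorem, used to associate to each zero-dimensional component $V_{\rho,i}$ an appropriately sized ``witness'' subset of $\mathcal{P}_\rho$, and the Generalized Bezout inequality (Theorem~\ref{thmx:bezout}), used to control the total number of isolated points arising from any such subset. I reduce to the case that $V$ is irreducible of dimension $p$ (the general case follows since the degrees of the irreducible components of a reducible $V$ sum to $D$). A key preliminary observation is that whenever $V_{\rho,i} = \{v\}$ is a zero-dimensional irreducible component of $V_\rho$ that contributes to the Zariski closure of $\RR(\rho,V)$, the pattern $\rho$ must coincide with the actual zero-nonzero pattern at $v$, so $\rho$ is uniquely determined by $v$.

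The first main step is the witness construction. Given such a pair $(\rho, i)$ with $V_{\rho,i} = \{v\}$, I build a strictly descending chain
\[
V = W_0 \supsetneq W_1 \supsetneq \cdots \supsetneq W_p = \{v\}
\]
of irreducible subvarieties of $V$, all containing $v$, where each $W_{j+1}$ is taken to be an irreducible component through $v$ of $\ZZ(P_j, W_j)$ for some $P_j \in \mathcal{P}_\rho$ not identically zero on $W_j$. Such a $P_j$ must exist whenever $\dim W_j > 0$: otherwise every polynomial in $\mathcal{P}_\rho$ would vanish on $W_j$, giving $W_j \subseteq V_\rho$, and since $\{v\}$ is an isolated irreducible component of $V_\rho$ this would force the irreducible $W_j \ni v$ to equal $\{v\}$, contradicting $\dim W_j > 0$. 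Krull's principal ideal theorem guarantees that $\dim W_{j+1} = \dim W_j - 1$ at every step, so after exactly $p$ steps I obtain a subset $S_{\rho,i} = \{P_0, \ldots, P_{p-1}\} \subseteq \mathcal{P}_\rho$ of size $p$ such that $v$ is a zero-dimensional irreducible component of $\ZZ(S_{\rho,i}, V)$.

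For any fixed $S \subseteq \mathcal{P}$ with $|S| = p$, Theorem~\ref{thmx:bezout} (applied to $V$ together with the hypersurfaces cut out by the polynomials in $S$) bounds the sum of the degrees of the irreducible components of $\ZZ(S, V)$ by $\deg(V) \cdot \prod_{P \in S} \deg(P) \leq D \cdot d^p$; in particular $\ZZ(S, V)$ has at most $D \cdot d^p$ zero-dimensional components. The assignment $(\rho, i) \mapsto (S_{\rho,i}, v)$ is injective --- since $v$ determines $\rho$ and then $i$ --- so summing the previous estimate over the $\binom{s}{p}$ subsets $S \subseteq \mathcal{P}$ of cardinality $p$ yields the asserted bound $\binom{s}{p} \cdot D \cdot d^p$.

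I expect the witness-chain construction to be the main technical delicacy: it depends crucially on Krull's principal ideal theorem (to ensure the dimension drops by exactly one at each step) and on the hypothesis that $v$ is an isolated component of $V_\rho$ (which is precisely what guarantees that a suitable $P_j \in \mathcal{P}_\rho$ is always available while $\dim W_j > 0$).
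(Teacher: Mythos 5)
Your overall architecture is the same as the paper's (a length-$p$ witness chain through $v$ built from $\mathcal{P}_\rho$ using the fact that a hypersurface section of an irreducible variety either contains it or drops the dimension by exactly one, a Bezout bound of $D\cdot d^p$ per $p$-element subset, and the factor $\binom{s}{p}$), but there is a genuine gap where you pass from the chain to the simultaneous intersection. Your chain only guarantees that $\{v\}$ is an irreducible component of $\ZZ(P_{p-1},W_{p-1})$, where $W_{p-1}$ was itself a component chosen at the previous stage; it does \emph{not} follow that $\{v\}$ is an irreducible component of $\ZZ(S_{\rho,i},V)$, since the simultaneous intersection can have a positive-dimensional component through $v$ lying in a branch of the decomposition different from the one your chain followed. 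In fact no choice of $p$-element subset need work: take $V=k^2$ (so $p=2$, $D=1$), three distinct linear forms $L_1,L_2,L_3$ vanishing at the origin, and $\mathcal{P}_\rho=\{L_1L_2,\ L_2L_3,\ L_3L_1\}$. Then $V_\rho=\{0\}$ is a zero-dimensional component (and here even $\RR(\rho,V)=\{0\}$, so your realization restriction does not help), yet for every $2$-element subset $S\subset\mathcal{P}_\rho$ the set $\ZZ(S,k^2)$ is one of the lines $L_i$, which passes through $0$; so $\{0\}$ is never a zero-dimensional component of any $\ZZ(S,k^2)$. Hence your single application of Theorem~\ref{thmx:bezout} to $\ZZ(S,V)$ bounds a quantity (the zero-dimensional components of the simultaneous intersection) that does not account for the isolated points being counted, and the final summation over the $\binom{s}{p}$ subsets does not go through.

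The paper avoids exactly this by never returning to the simultaneous intersection: for a fixed increasing sequence $P_{i_1},\ldots,P_{i_p}$ it considers the whole tree of \emph{iterated} components $W_{\alpha_1,\ldots,\alpha_\ell}$ (the irreducible components of $W_{\alpha_1,\ldots,\alpha_{\ell-1}}\cap \ZZ(P_{i_\ell},V)$) and proves by induction, applying the generalized Bezout inequality at every stage, that $\sum_{\alpha_1,\ldots,\alpha_\ell}\deg W_{\alpha_1,\ldots,\alpha_\ell}\leq D\, d^{\ell}$. The endpoint of a witness chain is by construction one of the zero-dimensional iterated components at level $p$ for the sequence of polynomials used along the chain, so each of the $\binom{s}{p}$ sequences accounts for at most $D\,d^p$ of the points in question. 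Your chain construction itself (greedy choice of some $P_j\in\mathcal{P}_\rho$ not vanishing identically on $W_j$, with the dimension dropping by one at each step) is sound and essentially the paper's, so the repair is to replace your one-shot Bezout step by this iterated degree bound. A secondary remark: your injection $(\rho,i)\mapsto(S_{\rho,i},v)$ relies on $v\in\RR(\rho,V)$ to recover $\rho$ from $v$, a hypothesis not present in the statement of the lemma (which counts all pairs with $\dim V_{\rho,i}=0$); what the paper's argument really bounds is the number of (sequence, point) pairs, and the realization-restricted version you prove is what is actually used for Theorem~\ref{thm:main:algebraic}, so this is an interpretive point rather than the main defect — but be aware that, as written, your count only covers components meeting the realization.
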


\begin{proof}
Let ${\mathcal P} = \{P_1,\ldots,P_s\}$ (the implicit ordering of
the polynomials in ${\mathcal P}$ will play a role in the proof).

First consider a sequence of $p$ polynomials of ${\mathcal P}$, 
$P_{i_1},\ldots,P_{i_p},$ with
$1 \leq i_1 <  \cdots <  i_p \leq s$.
Let $\{W_{\alpha_1}\}_{\alpha_1 = 1,2..}$ denote the irreducible
components of $\ZZ(P_{i_1},V)$. Similarly, let
$\{W_{\alpha_1,\alpha_2}\}_{\alpha_2 = 1,2,\ldots}$ denote the 
irreducible components  of $W_{\alpha_1} \cap \ZZ(P_{i_2},V)$ and so on.
We prove by induction that for $1 \leq \ell \leq p$
\begin{equation}
\label{eqn:bound}
\sum_{\alpha_1,\alpha_2,\ldots,\alpha_\ell} \deg(W_{\alpha_1,\ldots,\alpha_\ell}) \leq D \cdot d^\ell.
\end{equation}

The claim is trivially true for $\ell=1$. Suppose it is true up to $\ell -1 $.
Then using Theorem \ref{thmx:bezout} we get that
\[
\sum_{\alpha_1,\ldots,\alpha_{\ell-1}}
 \deg(\ZZ(P_\ell,W_{\alpha_1,\ldots,\alpha_{\ell-1}}))  
 \leq D \cdot d^{\ell-1}\cdot d = D \cdot d^\ell.
\]

Now suppose that $\dim V_{\rho,i} = 0$ and let $V_{\rho,i} = \{x\}$.

We first claim that
there must exist $P_{i_1},\ldots,P_{i_p} \in {\mathcal P}_{\rho}$, with
$1 \leq i_1 < i_2 < \cdots < i_p \leq s$,  and
irreducible affine algebraic varieties 
$V_0, V_1,\ldots,V_p$ satisfying:
\begin{enumerate}
\item
$V= V_0 \supset V_1 \supset V_2 \supset \cdots \supset V_p = \{x\}$, and
\item
for each $j, \;1 \leq j \leq p$,
$V_{j}$ is an irreducible component of $\ZZ(P_{i_j},V_{j-1})$
which contains $x$.
\end{enumerate} 
Note that in the sequence, $V_0,V_1,\ldots,V_p$,
$\dim (V_i)$ is necessarily  equal to $p-i$.
To prove the claim, let 
\[
{\mathcal P}_\rho = \{P_{j_1},\ldots,P_{j_m}\},
\]
with $1 \leq j_1 < j_2 < \cdots < j_m \leq s$.
Let $W_0 = V$ and
for each $h, 1 \leq  h \leq m$ define inductively
$W_{h}$ to be an irreducible component of $\ZZ(P_{j_h},W_{h-1})$
containing the point $p$. By definition,
$V = W_0 \supset W_1 \supset \cdots \supset W_m = \{p\}$ and each
$W_h$ is irreducible. Hence, there must exist precisely
$p$ indices,
$1 \leq \alpha_1 < \cdots < \alpha_p \leq m$ such that
$\dim W_{\alpha_{i}} = \dim  W_{\alpha_{i-1}} - 1$.
If $\beta \not\in \{\alpha_1,\ldots,\alpha_p\}$, then
$W_{\beta} = W_{\beta - 1}$.
Now let $i_1 = j_{\alpha_1},\ldots, i_p = j_{\alpha_p}$,
and $V_j = W_{i_j}$ for $0 \leq j \leq p$. It is clear from construction
that the sequence $V_0,\ldots,V_p$ satisfies the properties stated above,
thus proving the claim.
 
The number of sequences of
polynomials of length $p$,
$P_{i_1},\ldots,P_{i_p},$ with
$1 \leq i_1 < \cdots < i_p \leq s$,  is $\binom{s}{p}$. The lemma
now follows from the bound in (\ref{eqn:bound}).
\end{proof} 

The following lemma is a generalization of the above lemma to positive
dimensions.

\begin{lemma}
\label{lem:positive}
For $0 \leq \ell \leq p$
$$
\displaylines{
\sum_{\rho \in \{0,1\}^{\mathcal{P}}} 
\sum_{1 \leq i \leq n(\rho), \dim V_{\rho,i} =\ell} 
\deg V_{\rho,i} \leq \binom{s}{p-\ell} \cdot D\cdot d^{p-\ell}.
}
$$
\end{lemma}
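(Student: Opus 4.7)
The strategy is to reduce Lemma~\ref{lem:positive} to the already-established Lemma~\ref{lem:isolatedpoints} by intersecting with a generic linear slice. Concretely, I would choose $\ell$ sufficiently generic affine linear forms $L_1,\ldots,L_\ell \in k[X_1,\ldots,X_N]$, set $L = \ZZ(\{L_1,\ldots,L_\ell\}, k^N)$, and work with the sliced variety $V' = V \cap L$. For a generic $L$ the following four properties hold simultaneously: (i) $\dim V' = p-\ell$ and, by Theorem~\ref{thmx:bezout}, $\deg V' \leq D$; (ii) for every $(\rho,i)$ with $\dim V_{\rho,i} = \ell$ the intersection $V_{\rho,i} \cap L$ consists of exactly $\deg V_{\rho,i}$ distinct points (this is just the definition of degree); (iii) for every $(\rho,i)$ with $\dim V_{\rho,i} < \ell$ the intersection $V_{\rho,i}\cap L$ is empty; and (iv) for each $\rho$ and each pair of distinct irreducible components $V_{\rho,i}, V_{\rho,i'}$ of $V_\rho$, the proper subvariety $V_{\rho,i} \cap V_{\rho,i'}$ (of dimension $<\ell$) is disjoint from $L$. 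Each condition cuts out a Zariski-open locus in the space of codimension-$\ell$ affine subspaces, and only finitely many are imposed (since $\mathcal{P}$, and hence the family of $\rho$'s and of components, is finite), so a simultaneously generic $L$ over the algebraically closed field $k$ exists.

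I would then exhibit, for each fixed $\rho$, a bijection between the set of pairs $(i,x)$ with $\dim V_{\rho,i} = \ell$ and $x \in V_{\rho,i} \cap L$ on one side, and the zero-dimensional irreducible components of $V'_\rho = V_\rho \cap L = \ZZ(\mathcal{P}_\rho, V')$ on the other. Given such a point $x \in V_{\rho,i} \cap L$, property (iv) guarantees that $x$ lies in no other dimension-$\ell$ component of $V_\rho$, while any higher-dimensional $V_{\rho,i'}$ meets $V_{\rho,i}$ in dimension $<\ell$ and so, by (iv) again, avoids $L$ in a neighborhood of $x$; hence near $x$ the set $V'_\rho$ consists solely of $\{x\}$, making $\{x\}$ an isolated zero-dimensional component of $V'_\rho$. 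Conversely, any zero-dimensional component of $V'_\rho$ arises from a dimension-$\ell$ component of $V_\rho$ by (iii) together with the fact that higher-dimensional components meet $L$ in positive dimension, and (iv) pins down a unique such component.

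Combining this bijection with Lemma~\ref{lem:isolatedpoints}, applied to each irreducible component of $V'$ (whose degrees sum to at most $D$ by (i) and whose ambient dimension is $p-\ell$), yields
\[
\sum_{\rho \in \{0,1\}^{\mathcal{P}}} \sum_{\substack{1 \leq i \leq n(\rho) \\ \dim V_{\rho,i} = \ell}} \deg V_{\rho,i} \;=\; \sum_\rho \#\{\text{$0$-dim components of }V'_\rho\} \;\leq\; \binom{s}{p-\ell} \cdot D \cdot d^{p-\ell},
\]
which is the claimed inequality. The main technical care lies in verifying the genericity conditions on $L$ — particularly condition (iv), which underlies both the assertion that each $\{x\}$ really is a zero-dimensional component of $V'_\rho$ and the injectivity side of the bijection. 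Since the ground field $k$ is algebraically closed, such a generic $L$ is guaranteed to exist.
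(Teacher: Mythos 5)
Your proof is correct and follows essentially the same route as the paper's: intersect with a generic codimension-$\ell$ affine subspace $L$ so that the points where the dimension-$\ell$ components of each $V_\rho$ meet $L$ become isolated (zero-dimensional) components of $V_\rho \cap L$, and then apply Lemma~\ref{lem:isolatedpoints} to $V \cap L$ (of dimension $p-\ell$ and degree at most $D$). Your genericity conditions (i)--(iv) merely spell out in more detail the simultaneous genericity and disjointness assumptions the paper states more tersely, so there is no substantive difference.
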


\begin{proof}
In order to bound the degree of $V_{\rho,i}$ with $\dim V_{\rho,i} =\ell$
it suffices to count the isolated zeros of the
intersection of $V_{\rho,i}$ with a generic affine subspace $L$ of dimension
$p-\ell$. Since there are only a finite number of $V_{\rho,i}$'s to consider
we can assume that
\begin{enumerate}[(a)]
\item  $L$ is generic for all of them,
\item
the sets $V_{\rho,i} \cap L$ are pairwise disjoint,
\item
for each $V_{\rho,i}$ with $\dim_{\C}V_{\rho,i} =\ell$,
$ \ZZ(P,L \cap V_{\rho,i}) = \emptyset$ for each 
$P \in {\mathcal P} \setminus {\mathcal P}_\rho$.
\end{enumerate}
Now restrict to the subspace $L$ and apply Lemma 
\ref{lem:isolatedpoints},
noting that an isolated point of
$V_{\rho,i} \cap L$ is an isolated point of the family
${\mathcal P}$ restricted to $L$.
\end{proof}

\begin{proof}[Proof of Theorem~\ref{thm:main:algebraic}]
Follows directly from Lemma~\ref{lem:positive}.
\end{proof}

\subsection{Proofs of Theorems~\ref{thm:main:ordered:a}, 
\ref{thm:main:ordered:a'}
and 
\ref{thm:main:ordered:a:projective}}

We first recall some preliminary definitions and results.
\subsubsection{Multiplicity and singular points of a variety}
\label{subsubsec:mult}
Given a variety $V$, and a point $x \in V$,
the multiplicity of $V$ at $x$, denoted $\mult_x(V)$,  
is defined as the degree
of the exceptional divisor of the blowup $\widetilde{V}$ of $V$ at $x$ 
\cite[Section 4.3, page 79]{Fulton}. This definition is equivalent to the definition of the Samuel multiplicity of the local ring $\mathcal{O}_{V,x}$ \cite[Example 4.3.1]{Fulton}.

For an algebraic set $V$ with irreducible components $V_1,\ldots,V_m$,
%%we define $\deg(V) = \sum_{i=1}^n \deg(V_i)$, 
and for $x \in V$, we define
$\mult_x(V) = \sum_{i=1}^{m} \mult_x(V_i)$ (where we define
$\mult_x(V_i) = 0$ if $x \not\in V_i$). 

Given a variety $V$,
and a point $x \in V$,
$\mult_x(V) = 1$ if and only if the local ring
$\mathcal{O}_{V,x}$ is regular \cite[Example 4.3.5 (d)]{Fulton}, and
this is equivalent to $V$ being \emph{non-singular} at $x$
\cite[Section B.2.7, Page 429]{Fulton}.

The following lemma follows immediately from the above which we record for future use.

\begin{lemma}
\label{lem:singular}
If $V$ is an algebraic set, and $x \in V$. Then, $x$ is a non-singular point of $V$ if and only if $\mult_x(V) = 1$.
\end{lemma}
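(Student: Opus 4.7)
The plan is to reduce the statement to the variety case which the paper has already cited, and then carefully handle the possibility of $x$ lying on multiple irreducible components of the algebraic set $V$. Write $V = V_1 \cup \cdots \cup V_m$ for the decomposition into irreducible components, and let $I(x) = \{i \mid x \in V_i\}$, so that by definition $\mult_x(V) = \sum_{i \in I(x)} \mult_x(V_i)$.

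For the forward direction, I will argue that if $x$ is non-singular in $V$, then $|I(x)| = 1$. The reason is that non-singularity of $x$ in $V$ means the local ring $\mathcal{O}_{V,x}$ is regular, and a regular local ring is in particular an integral domain, which rules out $x$ lying on two distinct irreducible components (as this would force $\mathcal{O}_{V,x}$ to have at least two distinct minimal primes). Say $I(x) = \{i\}$. Then $x$ is a non-singular point of the variety $V_i$, and by the cited equivalence (\cite[Example 4.3.5(d)]{Fulton} together with \cite[Section B.2.7]{Fulton}) this is equivalent to $\mult_x(V_i) = 1$, giving $\mult_x(V) = 1$.

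For the reverse direction, assume $\mult_x(V) = 1$. Each summand $\mult_x(V_i)$ for $i \in I(x)$ is a positive integer (the multiplicity of a point on an irreducible variety which contains it is at least one, since the exceptional divisor of the blowup of a variety at a point of it is a nonempty effective divisor). Hence $\mult_x(V) = \sum_{i \in I(x)} \mult_x(V_i) \geq |I(x)|$, forcing $|I(x)| = 1$ and $\mult_x(V_i) = 1$ for the unique $i \in I(x)$. Applying the cited equivalence in the reverse direction, $x$ is a non-singular point of $V_i$, and since it lies on no other component, $x$ is a non-singular point of $V$.

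The only step that requires any care is the observation that $\mult_x(V_i) \geq 1$ whenever $x \in V_i$, and the corresponding fact that a regular local ring is an integral domain; both are standard and purely local statements, so there is no real obstacle. The lemma is essentially a bookkeeping consequence of the definition of $\mult_x$ on reducible sets together with the cited variety-case characterization.
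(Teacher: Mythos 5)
Your proof is correct and follows essentially the same route as the paper, which records this lemma as an immediate consequence of the cited variety-case equivalence ($\mult_x = 1$ iff $\mathcal{O}_{V,x}$ regular iff non-singular) together with the definition of $\mult_x(V)$ as the sum over irreducible components. You simply spell out the bookkeeping (regular local rings are domains, and each component through $x$ contributes at least $1$) that the paper treats as immediate.
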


The set of points of $V$ which are not non-singular will be denoted by
$\Sing(V)$. It is a proper closed subset of $V$ (see \cite[Theorem 5.3]{Hartshorne}).
If $V$ is a variety, then $\dim \Sing(V) < \dim V$.

We say that $V$ is \emph{non-singular}  if $\Sing(V) = \emptyset$.

We also recall below  the notion of an extension of a real closed field $\R$ by the real closed field $\R\la\zeta\ra$ of algebraic Puiseux series in $\zeta$ (where $\zeta$ is an \emph{infinitesimal} i.e. $\zeta $ is positive but smaller than all positive elements of $\R$). These extensions are used for making infinitesimal perturbations of the given polynomials. %%Note that we perturb the polynomials in $\mathcal{P}$, but not the algebraic set $V$.

\subsubsection{Real closed extensions and Puiseux series}
\label{subsubsec:Puiseux}
We will need some
properties of Puiseux series with coefficients in a real closed field. We
refer the reader to \cite{BPRbook2} for further details.

\begin{notation}
  For $\R$ a real closed field we denote by $\R \left\langle \eps
  \right\rangle$ the real closed field of algebraic Puiseux series in $\eps$
  with coefficients in $\R$. We use the notation $\R \left\langle \eps_{1},
  \ldots, \eps_{m} \right\rangle$ to denote the real closed field $\R
  \left\langle \eps_{1} \right\rangle \left\langle \eps_{2} \right\rangle
  \cdots \left\langle \eps_{m} \right\rangle$. Note that in the unique
  ordering of the field $\R \left\langle \eps_{1}, \ldots, \eps_{m}
  \right\rangle$, $0< \eps_{m} \ll \eps_{m-1} \ll \cdots \ll \eps_{1} \ll 1$.
\end{notation}

\begin{notation}
\label{not:lim}
  For elements $x \in \R \left\langle \eps \right\rangle$ which are bounded
  over $\R$ we denote by $\lim_{\eps}  x$ to be the image in $\R$ under the
  usual map that sets $\eps$ to $0$ in the Puiseux series $x$.
\end{notation}

\begin{notation}
\label{not:extension}
  If $\R'$ is a real closed extension of a real closed field $\R$, and $S
  \subset \R^{k}$ is a semi-algebraic set defined by a first-order formula
  with coefficients in $\R$, then we will denote by $\E(S, \R') \subset \R'^{k}$ the semi-algebraic subset of $\R'^{k}$ defined by
  the same formula.
 It is well known that $\E(S, \R')$ does
  not depend on the choice of the formula defining $S$ 
  \cite[Proposition 2.87]{BPRbook2}.
\end{notation}

\subsubsection{Outline of the proof of Theorem~\ref{thm:main:ordered:a}}
\label{subsec:outline:proof:thm:main:ordered:a}
We first observe that it suffices to consider the case when $V$ is irreducible of 
degree $D$ and dimension $p$, satisfying that $\dim_x V(\R) = p$ for all
$x \in V(\R)$. 
We introduce an infinitesimal $\gamma_0$.
Using the conic structure theorem for semi-algebraic sets \cite[Proposition 5.49]{BPRbook2} we 
are able to restrict attention to realizations of sign conditions
which are contained in a ball of radius $(1/\gamma_0)^{1/2}$. This necessitates introduction of a new polynomial denoted by $P_0$ in the proof.
We then define infinitesimal perturbations (using $s+2$ additional infinitesimals
$\gamma_1,\ldots,\gamma_s,\eps,\delta$) of the 
polynomials in $\mathcal{P}$ and prove that it suffices to bound the number of
semi-algebraically connected components of the 
realizations 
on $\widetilde{S} := \E(V(\R),\widetilde{\R})$ (the extension of 
$V(\R)$ to the field $\widetilde{\R}$ obtained from $\R$ by adjoining
infinitesimal elements)
of a certain set of \emph{weak} sign conditions
on this new set of $4s+1$
polynomials (Lemma~\ref{lem:main:ordered:4}).
We then show that in order to bound these,  it suffices to bound the number of semi-algebraically connected components of
all the non-empty real algebraic sets 
defined by the perturbed set of $4s +1$ polynomials
which are contained in $\widetilde{S}$ (Lemma~\ref{lem:main:ordered:7}).

If we consider the real algebraic set defined as the intersection of $\widetilde{S}$ with $i, 0 \leq i \leq p$ of the perturbed polynomials, then we get a real algebraic set of dimension $p-i$ of degree bounded by $D d^i$. 
So it now suffices to bound the number of semi-algebraically connected components of each of these 
real algebraic sets. Taking a sum of squares we reduce to the case where the real algebraic set (say)  $\widetilde{Z}$, is defined as the intersection with
$\widetilde{S}$ with the $\ZZ(\widetilde{Q}, \widetilde{\R}^N)$
for a single polynomial $\widetilde{Q}$.

We bound $b_0(\widetilde{Z})$ in two steps.

\begin{enumerate}[(a)]
\item
We first prove (using the semi-algebraic curve selection lemma and 
the intermediate value theorem for real closed fields) that 
\[
%%sb
%%b_0(\widetilde{Z}) \leq b_0(\ZZ(\widetilde{Q} + \zeta,\E(\widetilde{S},\widetilde{R}\la\zeta\ra)) + b_0(\ZZ(\widetilde{Q} - \zeta, \E(\widetilde{S},\widetilde{R}\la\zeta\ra))
b_0(\widetilde{Z}) \leq  b_0(\ZZ(\widetilde{Q} - \zeta, \E(\widetilde{S},\widetilde{R}\la\zeta\ra))
\]
where $\zeta$ is a new infinitesimal
(Lemma~\ref{lem:main:ordered:8}).
\item 
The real algebraic set
\[
%%sb
%%\ZZ(\widetilde{Q} \pm \zeta, \E(\widetilde{S},\widetilde{R}\la\zeta\ra)
\ZZ(\widetilde{Q} - \zeta, \E(\widetilde{S},\widetilde{R}\la\zeta\ra)
\]
if non-empty, is of dimension $p-1$, and satisfy the local dimension condition and are of degree $2 D d^i$.
If
$\pi$ denotes a generic projection  to a real subspace of dimension $p$,
then 
%%sb
%%$\pi(\ZZ(\widetilde{Q} \pm \zeta, \E(\widetilde{S},\widetilde{R}\la\zeta\ra))$ 
$\pi(\ZZ(\widetilde{Q} - \zeta, \E(\widetilde{S},\widetilde{R}\la\zeta\ra))$ 
is a real hypersurface of degree $2Dd^i$ and is 
defined by a real polynomial of that degree.
Denoting 
%%sb
%%$\widetilde{Z}_{\pm \zeta} = \ZZ(\widetilde{Q} \pm \zeta, \E(\widetilde{S},\widetilde{R}\la\zeta\ra))$
$\widetilde{Z} = 
\ZZ(\widetilde{Q} - \zeta, \E(\widetilde{S},\widetilde{R}\la\zeta\ra))$
we prove that 
$b_0(\widetilde{Z})$
is bounded by $b_0(\pi(\widetilde{Z}) -  \Sing(\pi(\widetilde{Z})))$
(Lemma~\ref{lem:main:ordered:1}).
Since, $\Sing(\pi(\widetilde{Z})))$ is a real algebraic subset of 
$\pi(\widetilde{Z})$, we can define its real points by taking the sum of squares of all partial derivatives of the polynomial defining $\pi(\widetilde{Z})$.
We then use the Ole{\u\i}nik-Petrovski{\u\i} inequality \cite{OP} bounding the Betti numbers of real algebraic sets to bound the last quantity
(Lemma~\ref{lem:main:ordered:2}). 
\item We also need a bound on $b_0(V(\R))$ since this term appears, since this term appears in the case $i=0$ in the arguments described above.
\end{enumerate}

Theorem~\ref{thm:main:ordered:a} now reduces to bounding the contributions from the different algebraic sets mentioned above. 
%%(namely, \eqref{itemlabel:proof:thm:main:ordered:a:1},
%%\eqref{itemlabel:proof:thm:main:ordered:a:2} and %%\eqref{itemlabel:proof:thm:main:ordered:a:3} in the proof). 
We summarize this reduction step in Proposition~\ref{prop:master} below.

\subsubsection{Proof of Theorem~\ref{thm:main:ordered:a}}
Let 
\[
\mathcal{P} = \{P_1,\ldots,P_s\} \subset \R[X_1,\ldots,X_N].
\]

We denote 
\[
\widetilde{\mathcal{P}} = \bigcup_{i=1}^{s} \{P_i \pm \eps \gamma_i, P_i \pm \delta \gamma_i\} \subset \widetilde{\R}[X_1,\ldots,X_N]
\cup
\{P_0\},
\]
where
\[
P_0 = \gamma_0\left( \sum_{i=1}^N X_i^2 \right) - 1,
\]
and
\[
\widetilde{\R} = \R\la\gamma_0,\gamma_1,\ldots,\gamma_s,\eps,\delta\ra.
\]
%%%%%%%%%%%%%%%%%%%%%%%%%%%%%%%%%%%%%%%%%%%%%%%%%%%%%%%%%%%%%%%%%%%%%
%%sb Reviewer X:comment 3
%%sb inserts new lemma

For $I  \subset [1,s]$ and $\alpha \in \{\pm \eps, \pm \delta\}^I$
we denote
\[
Q_{I,\alpha} = \sum_{i \in I} \left(P_i + \alpha(i)\gamma_i\right)^2, 
\]
or
\[
Q_{I,\alpha,0} = \left(P_0 - \eps \gamma_0 \right)^2 + \sum_{i \in I} \left(P_i + \alpha(i)\gamma_i\right)^2.
\]

The following proposition is a key step in the proof of Theorem~\ref{thm:main:ordered:a}:
it reduces the problem of bounding
$\sum_{\sigma\in\{0,1,-1\}^{\mathcal P}} b_0\bigl(\RR(\sigma,V(\R))\bigr)$
to that of bounding the number of semi-algebraically connected components of certain
auxiliary real algebraic sets constructed from $\mathcal{P}$.

\begin{proposition}[Master bound]
\label{prop:master}
Let $V\subset \C^N$ be a real variety, with $\dim V = p$.
%%sb omits
%%and such that for each $x \in V(\R)$,  $\dim_x V(\R) = \dim V$.
Let $\mathcal P=\{P_1,\dots,P_s\}\subset \R[X_1,\dots,X_N]$, and let the notation
$\widetilde{\R}$, $P_0$, $Q_{I,\alpha}$, and $Q_{I,\alpha,0}$ be as above. 
Let $\zeta$ be an infinitesimal over $\widetilde{\R}$.
%%and write
%%$\E(V,\widetilde{\mathbb R}\langle\zeta\rangle)$ for the extension of $V$ to
%%$\widetilde{\R}\langle\zeta\rangle$.

%%For $\eta\in\{+,-\}$ define
Define
\begin{align}
\label{not:prop:master:A}
%%A_{\eta} 
A \;=\;
\sum_{\substack{I\subset[1,s]\\ 1\le |I|\leq p}}
\ \ \sum_{\alpha\in\{\pm\varepsilon,\pm\delta\}^I}
%%b_0\!\left(\ZZ\!\left(Q_{I,\alpha}+\eta\,\zeta,\ \E\!\left(V(\R),\widetilde{\R}\langle\zeta\rangle\right)\right)\right),\\[1mm]
b_0\!\left(\ZZ\!\left(Q_{I,\alpha} - \zeta,\ \E\!\left(V(\R),\widetilde{\R}\langle\zeta\rangle\right)\right)\right),\\[1mm]
\label{not:prop:master:B}
%%B_{\eta}
B
\;=\;
\sum_{\substack{I\subset[1,s]\\ 1\le |I|<p}}
\ \ \sum_{\alpha\in\{\pm\varepsilon,\pm\delta\}^I}
%%b_0\!\left(\ZZ\!\left(Q_{I,\alpha,0}+\eta\,\zeta,\ \E\!\left(V(\R),\widetilde{\R}\langle\zeta\rangle\right)\right)\right),
b_0\!\left(\ZZ\!\left(Q_{I,\alpha,0}-\zeta,\ \E\!\left(V(\R),\widetilde{\R}\langle\zeta\rangle\right)\right)\right),
\\[1mm]
\label{not:prop:master:C}
C \;=\; b_0\bigl(V(\R)\bigr).
\end{align}
Then
\begin{equation}
    \label{eqn:prop:master}
\sum_{\sigma\in\{0,1,-1\}^{\mathcal P}} b_0\bigl(\RR(\sigma,V(\R))\bigr)
\;\le\; 
%%A_{+}+A_{-}+B_{+}+B_{-}+C
%%C + \sum_{\eta \in \{+,-\}} (A_\eta+B_\eta).
A+B+C.
\end{equation}
\end{proposition}

Theorem~\ref{thm:main:ordered:a} will follow from Proposition~\ref{prop:master} and 
bounds on 
%%sb
%%$A_{\eta}$, $B_{\eta}$ 
$A$, $B$
and $C$ appearing on the right hand side of inequality~\eqref{eqn:prop:master} as given in the following two propositions.

\begin{proposition}[Bound on $C$]
\label{prop:master:C}
Let $V$ be as in Proposition~\ref{prop:master} with $\deg(V) = D$,
and such that for each $x \in V(\R)$,  $\dim_x V(\R) = \dim V$. Then
(following notation introduced in Proposition~\ref{prop:master} ~\eqref{not:prop:master:C})
\[
C \leq 14 \cdot D \cdot (4D-1)^{p}.
\]
\end{proposition}

\begin{proposition} [Bounds on $A_{\eta}$ and $B_{\eta}$]
\label{prop:master:AB}
    Let $V$ be as in Proposition~\ref{prop:master:C}, and
    $\mathcal{P} \subset \R[X_1,\ldots,X_N]_{\leq d}$.
    Suppose that $\deg(V) = D$ and and $\card(\mathcal{P}) = s$.
    %%sb
    %%and $\max_{P \in \mathcal{P}} \deg(P) \leq d$.
    Then 
    (following notation introduced in Proposition~\ref{prop:master} ~\eqref{not:prop:master:A} and \eqref{not:prop:master:B})
    \begin{eqnarray*}
        %%A_+,A_- 
        A & \leq & \sum_{j=1}^{p} 4^j \binom{s}{j} 14 \cdot 2Dd(8Dd -1)^{p-1}, \\
        %%B_+,B_- 
        B &
        \leq & \sum_{j=1}^{p-1} 4^j \binom{s}{j} 14 \cdot 2Dd(8Dd -1)^{p-1}.
    \end{eqnarray*}
\end{proposition}

We now prove Propositions~\ref{prop:master}, \ref{prop:master:C} and
\ref{prop:master:AB}.

We begin by proving Proposition~\ref{prop:master}.
We will use the following lemmas.

\begin{lemma}
\label{lem:main:ordered:4}
Let $\sigma \in \Sigma(\mathcal{P},V(\R))$ and $C
\in \Cc(\RR(\sigma,V(\R)))$.
Then there exists a unique element
$\widetilde{C}_\sigma \in \Cc(\RR(\widetilde{\phi}_\sigma,\E(V(\R),\widetilde{\R}))),
$
where 
\begin{align*}
\widetilde\phi_\sigma \;:=\;& (P_0\le 0)\ \wedge \\
&\bigwedge_{\substack{1\le i\le s\\ \sigma(P_i)=0}}
\bigl(-\delta\gamma_i \le P_i \le \delta\gamma_i\bigr)\ \wedge \\
&\bigwedge_{\substack{1\le i\le s\\ \sigma(P_i)=1}}
\bigl(P_i-\varepsilon\gamma_i \ge 0\bigr)\ \wedge \\
&\bigwedge_{\substack{1\le i\le s\\ \sigma(P_i)=-1}}
\bigl(P_i+\varepsilon\gamma_i \le 0\bigr),
\end{align*}
such that 
\[
\widetilde{C}_\sigma \cap V(\R) = C \cap V(\R).
\]
\end{lemma}

\begin{proof}
Let $x \in C$. Then, $x \in \RR(\widetilde{\phi}_\sigma,\E(V(\R),\widetilde{\R}))$, and
let $\widetilde{C}_\sigma$, be the semi-algebraically connected
component of $\RR(\widetilde{\phi}_\sigma,\E(V(\R),\widetilde{\R}))$
containing $x$.
Then 
$\widetilde{C}_\sigma \cap V(\R) = C \cap V(\R)$
(see \cite[Proposition 13.7]{BPRbook2}).
\end{proof}

\begin{lemma}
\label{lem:main:ordered:5}
    For $I \subset \{1,\ldots,s\}$ and
    $\alpha \in \{\pm \eps, \pm \delta\}^I$,
    
    \[
    \dim \left(V \cap \bigcap_{i \in I} \ZZ(P_i + \alpha(i)  \gamma_i,\C^N)\right) = p - \card(I).
    \]
    
\end{lemma}

\begin{proof}
    Follows from the fact that $\alpha(i)\gamma_i, i \in I$ are independent transcendentals over $\C$.
\end{proof}

\begin{remark}
    \label{rem:lem:main:ordered:5}
    Note that as an immediate consequence of Lemma~\ref{lem:main:ordered:5} we get that
    \[
    V \cap \bigcap_{i \in I} \ZZ(P_i + \alpha(i)  \gamma_i,\C^N) = \emptyset,
    \]
    whenever $\card(I) > p$.
\end{remark}
\begin{lemma}
\label{lem:main:ordered:6}
    Let $V \subset \R^N$ be a closed semi-algebraic set and $\mathcal{Q} \subset \R[X_1,\ldots,X_N]$ a finite subset. Let $C \in \Cc(S)$, where $S \subset V$  is the semi-algebraic set defined by
    \[
    S = 
    \{x \in V \mid \bigwedge_{Q \in \mathcal{Q}} (Q(x) \geq 0)
    \}. 
    \]
    Then there exists a subset $\mathcal{Q}' \subset \mathcal{Q}$, and
    $D \in \Cc(V \cap \ZZ(\mathcal{Q}',\R^N))$ such that $D \subset C$.
\end{lemma}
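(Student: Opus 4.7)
The plan is to choose $\mathcal{Q}'$ as the set of polynomials vanishing at a point of $C$ that maximizes the number of such vanishings, and then to show that the semi-algebraic connected component of the resulting real algebraic set through that point already lies in $C$. Concretely, for each $x \in C$ set $\mathcal{Q}(x) = \{Q \in \mathcal{Q} \mid Q(x) = 0\}$; since $\mathcal{Q}$ is finite the quantity $\card(\mathcal{Q}(x))$ attains a maximum on $C$, so I pick any $x_0 \in C$ realizing this maximum and define $\mathcal{Q}' = \mathcal{Q}(x_0)$. Let $D$ be the semi-algebraic connected component of $V \cap \ZZ(\mathcal{Q}',\R^N)$ containing $x_0$. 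The lemma then reduces to proving $D \subset C$.

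To do this I show that the non-empty set $D \cap C$ is both closed and open in $D$; since $D$ is semi-algebraically connected this forces $D \cap C = D$. Closedness is immediate: $S$ is a closed semi-algebraic subset of $V$ with only finitely many semi-algebraic connected components, so the component $C$ is closed in $V$, whence $D \cap C$ is closed in $D$.

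The crux is openness, and it is here that the maximality in the choice of $x_0$ is used decisively. Fix $y \in D \cap C$. Since $y \in D \subset \ZZ(\mathcal{Q}',\R^N)$ we have $\mathcal{Q}' \subset \mathcal{Q}(y)$, and since $y \in C$ the maximality of $\card(\mathcal{Q}(x_0))$ on $C$ forces $\card(\mathcal{Q}(y)) \leq \card(\mathcal{Q}')$; combining, $\mathcal{Q}(y) = \mathcal{Q}'$, so $Q(y) > 0$ for every $Q \in \mathcal{Q} \setminus \mathcal{Q}'$. By continuity, there is an open semi-algebraic neighborhood $U$ of $y$ in $V \cap \ZZ(\mathcal{Q}',\R^N)$ on which these strict positivities persist, so $U \subset S$. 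Let $U_y$ denote the semi-algebraic connected component of $U$ containing $y$; it is open in $U$, and being connected and containing $y \in D$ it is contained in $D$, while being a semi-algebraically connected subset of $S$ meeting the component $C$ it is contained in $C$. Hence $U_y$ is an open neighborhood of $y$ in $D$ contained in $D \cap C$, proving openness.

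The main obstacle is exactly this openness step: without the maximality in the choice of $x_0$ one cannot rule out the possibility that some $Q \in \mathcal{Q} \setminus \mathcal{Q}'$ vanishes at a point $y \in D \cap C$, in which case the propagation of strict positivity to a neighborhood of $y$ would fail. With the chosen $x_0$ the strict inequality $Q(y) > 0$ for $Q \notin \mathcal{Q}'$ holds automatically at every point of $D \cap C$, which is what makes the neighborhood argument go through.
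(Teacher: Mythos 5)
Your proof is correct and follows essentially the same route as the paper: the paper picks a maximal subset $\mathcal{Q}'\subset\mathcal{Q}$ whose zero set meets $C$ and then defers the verification to the cited Proposition 13.1 of \cite{BPRbook2}, whose standard argument is exactly the one you spell out (maximal vanishing set at a point of $C$, then an open--closed argument inside a semi-algebraically connected component of $V\cap\ZZ(\mathcal{Q}',\R^N)$). Your cardinality-maximal choice of $x_0$ is equivalent in effect to the paper's inclusion-maximal $\mathcal{Q}'$, so there is no substantive difference.
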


\begin{proof}
Let $\mathcal{Q}'$ be a maximal subset of $\mathcal{Q}$ with the property that  $\ZZ(\mathcal{Q}',\R^N) \cap C \neq \emptyset$. Then
$\mathcal{Q}'$ satisfies the required property in the lemma (see \cite[Proposition 13.1]{BPRbook2}). 
\end{proof}

\begin{lemma}
\label{lem:main:ordered:7}
    Let $\sigma \in \{0,1,-1\}^{\mathcal{P}}$ and  
    $C \in \Cc(\RR(\sigma,V(\R)))$.
    %%there exists $I \subset \{1,\ldots,s\}$, 
    %%$\alpha:I \rightarrow \{\pm \eps,\pm \delta\}$,
    %%nd
    %%s
    Then there exist
    $I \subset \{1,\ldots,s\}$, 
    $\alpha \in \{\pm \eps,\pm \delta\}^I$,
    and $D$ in the union of 
    \[
    \Cc\left(\bigcap_{i\in I}
    \ZZ(P_i + \alpha(i)\gamma_i,\E(V(\R),\widetilde{\R}))\right)
    \]
    and 
    \[ 
    \Cc\left(\ZZ(P_0 - \eps \gamma_0,\E(V(\R),\widetilde{\R}))
    \cap 
    \bigcap_{i\in I} \ZZ(P_i + \alpha(i)\gamma_i,\E(V(\R),\widetilde{\R}))\right),
    \]
    such that 
    \[
    \alpha(i) = \begin{cases}
                    \eps \text{ if } \sigma(P_i) = 1, \\
                    -\eps \text{  if  } \sigma(P_i) = -1, \\
                    \delta \text{ or } -\delta \text{ if } \sigma(P_i) = 0,
                \end{cases}
    \]
    and  
    \[
    D \subset \widetilde{C}_\sigma.
    \]
\end{lemma}

\begin{proof}
    Follows from Lemmas~\ref{lem:main:ordered:4} and
    \ref{lem:main:ordered:6}.
\end{proof}

%%sb adds new lemma.

\begin{lemma}
\label{lem:main:ordered:8}
    Let $S \subset \R^N$ be a closed and bounded semi-algebraic set and
    $Q \in \R[X_1,\ldots,X_N]$, such that at every point $x \in \ZZ(Q,S)$,
    $\dim_x \ZZ(Q,S) < \dim_x S$.
    Then,
    \[
    b_0(\ZZ(Q,S)) \leq b_0(\ZZ(Q-\zeta, \E(S,\R\la\zeta\ra))) + b_0(\ZZ(Q+\zeta, \E(S,\R\la\zeta\ra))).
    \]
    %%sb adds
    In particular, if $Q$ is non-negative over $S$, then 
    \[
    b_0(\ZZ(Q,S)) \leq b_0(\ZZ(Q-\zeta, \E(S,\R\la\zeta\ra))).
    \]
\end{lemma}

\begin{proof}
Let $\widetilde{S}$ be the intersection of $\E(S,\R\la\zeta\ra)$ with the set defined by $-\zeta \leq Q \leq \zeta$.
Let $C \in \Cc(\ZZ(Q,S))$ and $x \in C$. Then, it follows from the hypothesis that $\dim_x \ZZ(Q,C) < \dim_x S$. Hence, for all $r >0$,
denoting (here and elsewhere) by $B_N(x,r)$ the open Euclidean ball centered at $x$ of radius $r$,
$\ZZ(Q,C) \cap B_N(x,r)$ is non-empty and properly contained in
$S \cap B_N(x,r)$. So $S - \ZZ(Q,C) \neq \emptyset$ and $x \in \overline{S - \ZZ(Q,C)}$. Applying the semi-algebraic curve selection lemma (see for example \cite[Theorem 3.19]{BPRbook2}), there exists a semi-algebraic curve $\gamma:[0,1] \rightarrow S$, such that $\gamma(0) = x$, and 
$\gamma((0,1]) \subset S - \ZZ(Q,C)$. In particular, $Q(\gamma(1)) \neq 0$. 

Suppose without loss of generality that $Q(\gamma(1)) > 0$. Using the intermediate value property of the field $\R\la\zeta\ra$ (\cite[Proposition 4.3]{BPRbook2}), there exists 
$\widetilde{t} \in \R\la\zeta\ra, \widetilde{t} > 0$, such that 
$Q(\E(\gamma,\R\la\zeta\ra)(\widetilde{t})) = \zeta$, and let
$\widetilde{x} = \E(\gamma,\R\la\zeta\ra)(\widetilde{t})$. Then
$\widetilde{x} \in \ZZ(Q-\zeta, \E(S,\R\la\zeta\ra))$. Let $\widetilde{C} \in \Cc(\ZZ(Q-\zeta, \E(S,\R\la\zeta\ra)))$ with  $\widetilde{x} \in \widetilde{C}$.
Then, $\lim_\zeta \widetilde{C} \subset C$. 
Thus we have proved that for each $C \in \Cc(\ZZ(Q,S))$,  there exists 
$\widetilde{C} \in \Cc(\ZZ(Q-\zeta, \E(S,\R\la\zeta\ra))) \cup \Cc(\ZZ(Q+\zeta, \E(S,\R\la\zeta\ra)))$, with $\lim_\zeta \widetilde{C} \subset C$. The lemma follows.
\end{proof}

\begin{proof}[Proof of Proposition~\ref{prop:master}]
 Using Lemma~\ref{lem:main:ordered:7} we are reduced to bounding the number of semi-algebraically connected components of the real algebraic sets,
    \[
    \bigcap_{i\in I}
    \ZZ(P_i + \alpha(i)\gamma_i,\E(V(\R),\widetilde{\R})),
    \]
    and 
    \[
    \ZZ(P_0 - \eps \gamma_0,\E(V(\R),\widetilde{\R}))  \cap  \bigcap_{i\in I} \ZZ(P_i + \alpha(i)\gamma_i,\E(V(\R),\widetilde{\R})),
    \]
    where $I \subset [1,s]$, and $\alpha$ as defined in Lemma~\ref{lem:main:ordered:7}.
    Using Lemma
    \ref{lem:main:ordered:5} and Remark~\ref{rem:lem:main:ordered:5},
    we only need to consider $I$, with $\card(I) \leq p$
    for the algebraic sets
    $\bigcap_{i\in I}
    \ZZ(P_i + \alpha(i)\gamma_i,\E(V(\R),\widetilde{\R}))$
    and of cardinality at most $p-1$ for the algebraic sets
    $\ZZ(P_0 - \eps \gamma_0,\E(V(\R),\widetilde{\R}))  \cap  \bigcap_{i\in I} \ZZ(P_i + \alpha(i)\gamma_i,\E(V(\R),\widetilde{\R}))$.

Thus in order to bound 
$\sum_{\sigma \in \{0,1,-1\}^{\mathcal{P}}} b_0(\RR(\sigma,V(\R)))$ it suffices to bound (using Lemma~\ref{lem:main:ordered:8})
\begin{enumerate}[(a)]
\item 
\label{itemlabel:proof:thm:main:ordered:a:1}
%%sb
%%$b_0(\ZZ(Q_{I,\alpha} \pm \zeta, \E(V,\widetilde{\R}\la\zeta\ra)))$,
$b_0(\ZZ(Q_{I,\alpha} - \zeta, \E(V,\widetilde{\R}\la\zeta\ra)))$,
$I \subset [1,s], 1 \leq \card(I) \leq p$,
\item 
\label{itemlabel:proof:thm:main:ordered:a:2}
 %%sb
 %%$b_0(\ZZ(Q_{I,\alpha,0} \pm \zeta, \E(V,\widetilde{\R}\la\zeta\ra)))$,
 $b_0(\ZZ(Q_{I,\alpha,0} - \zeta, \E(V,\widetilde{\R}\la\zeta\ra)))$,
$I \subset [1,s], 1 \leq \card(I) < p$,
and
\item 
\label{itemlabel:proof:thm:main:ordered:a:3}
$b_0(V(\R))$.
\end{enumerate}

Item \eqref{itemlabel:proof:thm:main:ordered:a:1} correspond to
%%sb
%%$(A_+ +  A_-)$, 
$A$,
item \eqref{itemlabel:proof:thm:main:ordered:a:2} to
%%sb
%%$(B_+ +  B_-)$, and 
$B$, and 
item \eqref{itemlabel:proof:thm:main:ordered:a:3} to
$C$ in the right hand side of 
inequality~\eqref{eqn:prop:master}.
\end{proof}

We now prove Propositions~\ref{prop:master:C} and \ref{prop:master:AB}.

\begin{lemma}
\label{lem:main:ordered:0}
    Let $V \subset \C^N$ be a variety  with $\dim V = p$.
    Then for all generically chosen linear maps $\pi:\C^N \rightarrow \C^{p+1}$,
    and set of points $w \in W = \pi(V)$, such that
    $\card(\pi^{-1}(w) \cap V)=1$ is Zariski dense in $W$.
\end{lemma}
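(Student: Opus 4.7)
The plan is to perform a dimension count on an incidence correspondence parametrizing pairs of distinct points of $V$ that are collapsed by a linear projection. Let $\Omega = \Hom_\C(\C^N, \C^{p+1}) \cong \C^{(p+1)N}$ denote the affine space of linear maps, and consider
\[
\mathcal{Z} = \{(x, x', \pi) \in (V \times V \setminus \Delta_V) \times \Omega \mid \pi(x) = \pi(x')\},
\]
where $\Delta_V \subset V \times V$ is the diagonal.

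The first step is to compute $\dim \mathcal{Z}$ by fibering it over $V \times V \setminus \Delta_V$. For each pair $(x,x')$ with $x \neq x'$, the fiber is the kernel of the surjective evaluation map $\pi \mapsto \pi(x - x')$ from $\Omega$ to $\C^{p+1}$, which is a linear subspace of codimension $p+1$ in $\Omega$. Since $V$ is irreducible of dimension $p$, this yields
\[
\dim \mathcal{Z} = 2p + (p+1)N - (p+1) = \dim \Omega + (p - 1).
\]

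Next I would analyze the second projection $\rho \colon \mathcal{Z} \to \Omega$. By upper-semicontinuity of fiber dimension, the locus $B = \{\pi \in \Omega \mid \dim \rho^{-1}(\pi) \geq p\}$ is Zariski closed in $\Omega$; and since $\dim \mathcal{Z} = \dim \Omega + (p - 1)$, either $\rho$ is dominant and the generic fiber has dimension exactly $p - 1$, or $\rho$ is not dominant and generic fibers are empty. Either way, $B$ is a proper closed subset of $\Omega$, so its complement contains a Zariski dense open subset of linear maps.

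Finally, fixing any $\pi \in \Omega \setminus B$, the fiber $\rho^{-1}(\pi)$ has dimension at most $p - 1$, so its image under projection to the first factor is contained in a proper closed subvariety $B_\pi \subsetneq V$. For $x$ in the Zariski dense open set $U = V \setminus B_\pi$, no distinct $x' \in V$ satisfies $\pi(x) = \pi(x')$, so $\pi^{-1}(\pi(x)) \cap V = \{x\}$. Since $W \setminus \pi(U) \subseteq \pi(B_\pi)$ has dimension at most $p - 1 < \dim W = p$, the image $\pi(U)$, which is contained in $\{w \in W \mid \card(\pi^{-1}(w) \cap V) = 1\}$, is Zariski dense in $W$, proving the lemma. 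The only point requiring mild care is passing from the global dimension identity $\dim \mathcal{Z} = \dim \Omega + (p-1)$ to a fiberwise bound via upper-semicontinuity; the degenerate case $p = 0$, where $V$ is a single point, is trivial.
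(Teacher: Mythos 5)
Your argument is correct, and it reaches the same conclusion by a noticeably more direct route than the paper. The paper first fixes a generic rank-$p$ projection $P_0$ (via Noether normalization, so that its fibers meet $V$ in finite sets), then forms an incidence variety $\Gamma$ of pairs $(\ell,Q)$ where $\ell$ ranges over the image of $P_0$ and $Q$ over rank-$(p+1)$ projections containing $\mathrm{Im}(P_0)$ on which $Q$ fails to be injective on the finite fiber $V_\ell$; a dimension count on $\Gamma$ then produces the low-dimensional bad locus $W'$. You instead parametrize failures directly by the incidence correspondence $\mathcal{Z}\subset (V\times V\setminus\Delta_V)\times\Omega$ over \emph{pairs of distinct points} of $V$ and all linear maps, compute $\dim\mathcal{Z}=\dim\Omega+p-1$ from the surjectivity of $\pi\mapsto\pi(x-x')$, and conclude by the fiber-dimension theorem for $\rho:\mathcal{Z}\to\Omega$. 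This avoids the auxiliary projection to $\C^p$ altogether and is the cleaner, more standard "secant-style" count; the paper's two-step setup buys nothing extra here beyond fitting the notation it reuses in Lemma~\ref{lem:main:ordered:1}. Two small points of precision: the set $B=\{\pi\mid\dim\rho^{-1}(\pi)\geq p\}$ need not be Zariski closed (upper semicontinuity of fiber dimension lives on the source, and $\rho$ is not proper since the diagonal was removed); what you actually need, and what is true, is that $B$ lies in a proper closed subset of $\Omega$ — either $\rho$ is not dominant, or the generic fiber has dimension exactly $p-1$ by the theorem on fibers of a dominant morphism from the irreducible $\mathcal{Z}$. Likewise $\dim W=p$ is asserted rather than proved, but it follows immediately from your own construction, since $\pi$ is injective on the dense open set $U=V\setminus B_\pi$, so $\dim\pi(U)=p$. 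With these observations supplied, the proof is complete.
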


\begin{proof}
It follows from the
Noether normalization theorem
(see for example \cite[Theorem 2.2.9]{deJong-Pfister-book})
that for a generic linear projection map $P:\C^{N} \rightarrow \C^p$
of rank $p$,
%%to a subspace $U$ of dimension $p$,
$V_\ell := P^{-1}(\ell) \cap V$ is (non-empty and) finite for all $\ell \in L = \mathrm{Im}(P) \subset \C^N$.

%%Fix a subspace $U \subset \C^N$ of dimension $p$ and let $\pi_U:C\C^N \rightarrow U$ a generic projection.
Let $P_0: \C^N \rightarrow \C^N$ be a generic projection of rank $p$, and
denote $L_0 = \mathrm{Im}(P_0) \subset \C^N$.

The set $\mathcal{Q}$ of 
linear projections $Q:\C^N \rightarrow \C^N$ of rank $p+1$ such
that $L_0 \subset \mathrm{Im}(Q) $
is a subvariety of the variety of all projections of rank $p+1$.

Let
\[
\Gamma = \{\ell,Q) \in L_0 \times \mathcal{Q} 
\mid Q|_{V_\ell} \text{is not injective}\}.
\]
Notice that $\dim \Gamma < p + \dim \mathcal{Q}$.

Let $\pi_1: \Gamma \rightarrow U$, $\pi_2: \Gamma \rightarrow \mathcal{Q}$ be the two projection maps.

For a generic choice of $Q_0 \in \mathcal{Q}$, 
\begin{equation}
\label{eqn:proof:lem:main:ordered:0}
\dim \pi_2^{-1}(Q_0) < p,
\end{equation}
since otherwise
$\dim \Gamma = p + \dim \mathcal{Q}$.

Let $W' = Q_0(P_0^{-1}(\pi_1(\pi_2^{-1}(Q_0))))$.
Then $\dim W' < p$ (using \eqref{eqn:proof:lem:main:ordered:0} and the fact that $P_0|_V$ is a finite map).

Now define $\pi: \C^{N} \rightarrow \mathrm{Im}(Q_0) \cong \C^{p+1}$, by
$\pi(x) = Q_0(x)$. Then, with $W = \pi(V)$, 
$W' \subset W$, with $\dim(W') < p = \dim(W)$.
Hence, we have for each $w \in W - W'$,
$\card(\pi^{-1}(w) \cap V) = 1$. This proves the lemma.
\end{proof}

\begin{lemma}
\label{lem:main:ordered:1}
    Let $V \subset \C^N$ be a variety with $\dim V = p$.
    Then for all generically chosen linear maps $\pi:\C^N \rightarrow \C^{p+1}$,
    $W = \pi(V) \subset \C^{p+1}$ is a hypersurface, and the subset $W' \subset W$, defined by 
    $W' = \{w \in W  \mid \card(\pi^{-1}(w) \cap V) > 1\}$ is contained
    in $\Sing(W)$.
\end{lemma}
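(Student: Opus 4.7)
My approach is in three parts: show $W$ is a hypersurface, use Lemma~\ref{lem:main:ordered:0} to deduce that $\pi|_V$ is birational, and then apply Zariski's Main Theorem after passing to projective closures. First, I would observe that a generic linear projection of $V$ to $\C^{p+1}$ has image of dimension $p$: choose $\pi$ so that its composition with a further generic linear projection $\C^{p+1} \to \C^p$ is a Noether normalization of $V$; then $\pi|_V$ has zero-dimensional generic fibers, forcing $\dim \pi(V) = p$. Being irreducible of dimension $p$ inside $\C^{p+1}$, the Zariski closure $W$ of $\pi(V)$ is a hypersurface.

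Second, Lemma~\ref{lem:main:ordered:0} provides a Zariski-dense set of points $w \in W$ with $\card(\pi^{-1}(w) \cap V) = 1$, so $\pi|_V : V \to W$ is a birational morphism. To handle the fibers over smooth points, I would pass to projective closures: let $\overline V \subset \PP^N_\C$ be the projective closure of $V$, and view $\pi$ as a rational map from $\PP^N_\C$ to $\PP^{p+1}_\C$ whose indeterminacy locus $L$ is a linear subspace of dimension $N-p-2$. For generic $\pi$, dimension counting gives $L \cap \overline V = \emptyset$, so the rational extension $\overline\pi$ restricts to a regular morphism on $\overline V$; properness of $\overline V$ together with zero-dimensional generic fibers then makes this restriction finite. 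Setting $\overline W = \overline\pi(\overline V) \subset \PP^{p+1}_\C$, we obtain a projective hypersurface containing $W$ as a Zariski open subset, together with a finite birational morphism $\overline\pi|_{\overline V} : \overline V \to \overline W$.

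Finally, at any smooth point $w \in W \subseteq \overline W$, the local ring $\mathcal{O}_{\overline W, w}$ is regular and hence normal. Applying Zariski's Main Theorem in the form ``a finite birational morphism to a normal target is an isomorphism,'' localized at $w$, one concludes that $\overline\pi^{-1}(w) \cap \overline V$ consists of a single point, and therefore so does $\pi^{-1}(w) \cap V$ (nonempty since $w \in W = \pi(V)$). The main technical step to get right is arranging all the genericity conditions on $\pi$ simultaneously---the image having dimension $p$, the indeterminacy locus missing $\overline V$, and the conclusion of Lemma~\ref{lem:main:ordered:0}---but each of these is a Zariski open condition on the space of linear projections, so their intersection is nonempty.
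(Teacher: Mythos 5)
Your proposal is correct, but it takes a genuinely different route from the paper's. The paper argues purely locally via multiplicity: using Lemma~\ref{lem:main:ordered:0} it finds, for $w \in W$, disjoint neighborhoods of the points of $\pi^{-1}(w)\cap V$ on which $\pi$ is injective, deduces $\mult_w(W) \geq \card(\pi^{-1}(w)\cap V)$, and then invokes Lemma~\ref{lem:singular} ($\mult_w(W)=1$ if and only if $w$ is a non-singular point of $W$) --- no compactification, no finiteness of the projection, no Zariski's Main Theorem. You instead globalize: birationality of $\pi|_V$ from Lemma~\ref{lem:main:ordered:0}, passage to the projective closure with the center of projection $L$ arranged to miss $\overline{V}$, a finite birational morphism onto $\overline{W}=\overline{\pi}(\overline{V})$, and ZMT at a smooth (hence normal) point. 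Your route is more standard in its ingredients and gives slightly more (an isomorphism over a neighborhood of each smooth point of $W$, and in passing the closedness of $\pi(V)$), at the price of the extra genericity condition and the projective bookkeeping; the paper's multiplicity argument is shorter and local, but rests on the asserted inequality $\mult_w(W)\geq \card(\pi^{-1}(w)\cap V)$ rather than on a textbook theorem.

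One justification in your write-up needs repair: properness together with zero-dimensional \emph{generic} fibers does not imply finiteness (a blow-up is proper and birational with generic fiber a point, yet not finite); quasi-finiteness must hold at \emph{every} point. In your setup it does, and for the reason you have already set up: the fiber of the projection over a point of $\PP_{\C}^{p+1}$ is $\overline{V}\cap\Lambda$, where $\Lambda\cong\PP_{\C}^{N-p-1}$ is the span of $L$ and that point, and $L$ is a hyperplane of $\Lambda$; since $\overline{V}\cap L=\emptyset$, a positive-dimensional component of $\overline{V}\cap\Lambda$ would have to meet $L$, a contradiction. With that substitution (and the easy check that the projective extension maps points of $\overline{V}$ at infinity to points at infinity, so that $W=\overline{W}\cap\C^{p+1}$ is open in $\overline{W}$ and the affine and projective fibers over $w$ agree), your argument goes through.
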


\begin{proof}
    It follows from Noether normalization theorem
    (see for example \cite[Theorem 2.2.9]{deJong-Pfister-book})
    that for $\pi$ generic,
    $\pi(V)$ is a hypersurface in $\C^{p+1}$.
    
Let $w \in W = \pi(V)$, and let $V_w = \pi^{-1}(w) \cap V$. 
It follows from Lemma~\ref{lem:main:ordered:0}, that 
there exists a neighborhood $U_w \subset W$ of $w$ in $W$, such that
for each $z \in V_w$ has a neighborhood
$U_{z}$ in  $\pi^{-1}(U_w) \cap V$, such that
$\pi|_{U_z}$ is injective, and moreover 
for $z \neq z' \in V_w$, the neighborhoods 
$U_{z}$ and $U_{z'}$ are disjoint, and moreover it follows from
Lemma~\ref{lem:main:ordered:0} that $\pi(U_z) \neq  \pi(U_{z'})$.

Let $\widetilde{\pi}: \widetilde{U} \rightarrow U_w$ (shrinking $U_w$ if necessary) be a
blowup of $U_w$ at $w$. By the universal property of the blow-up morphism (see for example \cite[Chapter II, Proposition 7.14]{Hartshorne}) there exists a unique morphism $\pi': \pi^{-1}(U_w) \rightarrow \widetilde{U}$, such that
$\pi|_{\pi^{-1}{U_w}} = \widetilde{\pi} \circ \pi'$. It follows from the fact that 
$\pi(U_z) \neq  \pi(U_{z'})$, that $\pi'(z) \neq \pi'(z')$. But $\pi'(z), \pi'(z')$ belong to the exceptional divisor of the blow-up $\widetilde{\pi}$. It now follows from the definition of multiplicity (see Section~\ref{subsubsec:mult})
that 
$\mult_w(W) >1$, and 
it follows from Lemma~\ref{lem:singular} that in this case
$w \in \Sing(W)$.
\end{proof}
%%referee_X
%%sb
We are now going to use the equi-dimensionality property of $V$ 
in the hypothesis of Proposition~\ref{prop:master:C} (and also in Theorem~\ref{thm:main:ordered:a})  i.e.
the local real dimension at all the real points of $V$ equals $\dim V$.

\begin{lemma}
\label{lem:main:ordered:2}
   Let $V \subset \C^N$ be  
   a real variety such that $\dim_x V(\R) = p$ for all $x \in V(\R)$.
   Then, for all generically chosen linear maps
   $\pi:\R^N \rightarrow \R^{p+1}$, 
   \[
        b_0(V(\R)) 
        \leq b_0(\pi_\C(V)(\R) - \Sing(\pi_\C(V))),
   \]
   where $\pi_\C: \C^N \rightarrow \C^{p+1}$ obtained from $\pi$ by 
   %%tensoring with $\C$. 
   extending the base field to $C$.
\end{lemma}

\begin{proof}
      It follows from Lemma~\ref{lem:main:ordered:1}, that for any
    $z \in \pi_\C(V) - \Sing(\pi_\C(V))$, 
    \[
    \card(\pi_\C^{-1}(z) \cap V) = 1.
    \]
    
    Now, since $V$ is defined over $\R$, for $x \in \pi(V(\R))$,
    $\card(\pi_\C^{-1}(x) \cap V) = 1$ implies that 
    $\pi_\C^{-1}(x) \cap V \subset V(\R)$, since otherwise $\pi_\C^{-1}(x) \cap V$ will contain at least one pair of conjugate points $z \neq \bar{z}, z,\bar{z} \in \pi_\C^{-1}(x) \cap V$.

    Taken together the previous two statements imply: 
    \[
    \pi_\C^{-1}(\pi_\C(V)(\R) - \Sing(\pi_\C(V))) \cap V \subset V(\R). 
    \]

    We now prove that for each $C \in \Cc(V(\R))$, there exists 
    $D \in \Cc(\pi_\C(V)(\R) - \Sing(\pi_\C(V)))$, such that 
    $\pi^{-1}(D) \subset C$. It would follow that

    \[
    b_0(V(\R)) \leq b_0(\pi_\C(V)(\R) - \Sing(\pi_\C(V))).
    \]

    Let $C \in \Cc(V(\R))$. Since $\pi(V(\R)) \subset \pi_\C(V)(\R)$there exists $D' \in \Cc(\pi_\C(V)(\R))$
    such that $\pi(C) \subset D'$. 

Since the local real dimension of $V(\R)$ is equal to $p$ at all points of $V(\R)$,
$\dim_\R C = \dim_\R D' = p$. (This is the place where the equi-dimensionality property of $V$ plays a role.)
Moreover, $\dim_\R D' \cap \Sing(\pi_C(V)) < p$, and hence
    $D' - \Sing(\pi_C(V)) \neq \emptyset$.
    Now let 
    \[
    D \in \Cc(D' - \Sing(\pi_C(V))) \subset \Cc(\pi_\C(V)(\R) - \Sing(\pi_\C(V))).
    \]
Then, $\pi^{-1}(D) \subset C$ and we are done.
\end{proof}

\begin{lemma}
\label{lem:main:ordered:3}
    Let $V = \ZZ(P,\C^{m})$ and $P \in \R[X_1,\ldots,X_m]_{\leq d}$.
    Then
    \[
        b_0(V(\R) - \Sing(V)) \leq 14 \cdot d \cdot (4d-1)^{m-1}.
    \]
\end{lemma}

\begin{proof}
    Let $Q = \sum_{i=1}^m \left(\frac{\partial P}{\partial X_i}\right)^2$. Then, there exists $R > 0$, such that 
    \[
    b_0(V(\R) - \Sing(V)) = b_0((V(\R) - \Sing(V)) \cap B_m(0,R)),
    \]
    and
    \[
     b_0((V(\R) - \Sing(V)) \cap B_m(0,R)) = b_0(S)   
    \]
    where $S \subset \R\la\eps\ra^N$ is defined by 
    \[
    (P = 0) \wedge  (Q \geq \eps) \wedge (\sum_{i=1}^m X_i^2 -R \leq 0).
    \]

    It follows from Lemma~\ref{lem:main:ordered:6} that $b_0(S)$ is at most 
    the number of semi-algebraically connected components of
    the  $\binom{3}{1} + \binom{3}{2} + \binom{3}{3} = 7$
    algebraic sets
    defined by taking one, two or three polynomials 
    $P,Q - \eps, \sum_{i=1}^m X_i^2 -R$ at a time. 
    
    The number of semi-algebraically connected components of each one them is bounded by 
    \[
    2d(4d -1)^{m-1},
    \]
    using the basic inequality on the Betti numbers of real algebraic sets due to Ole{\u\i}nik and Petrovski{\u\i} \cite{OP}
    (see for example \cite[Proposition 7.28]{BPRbook2}).
    The lemma follows.
\end{proof}

\begin{proof}[Proof of Proposition~\ref{prop:master:C}]
    The proposition follows from Lemmas~\ref{lem:main:ordered:0}, 
    \ref{lem:main:ordered:1}, \ref{lem:main:ordered:2} and \ref{lem:main:ordered:3}. 
\end{proof}

\begin{proof}[Proof of Proposition~\ref{prop:master:AB}]
Note that for any $I \subset [1,s]$
$\ZZ(Q_{I,\alpha,0} - \zeta, \E(V,\widetilde{\R}\la\zeta\ra))$ and
$\ZZ(Q_{I,\alpha} -  \zeta, \E(V,\widetilde{\R}\la\zeta\ra))$ 
are real algebraic sets in  $\widetilde{R}\la\zeta\ra[i]^N$ of dimension $p - 1$ if non-empty,
satisfying the property of constancy of local real dimension, and
degree bounded by $2 D d$.

The number of summands appearing in the definition of 
$A$
(Eqn. \eqref{not:prop:master:A}) is equal to 
$\sum_{1 \leq j \leq p} 4^j \binom{s}{j}$, and the 
number of summands appearing in the definition of  
$B$
(Eqn. \eqref{not:prop:master:B})
$\sum_{1 \leq j \leq p-1}4^j \binom{s}{j}$.

  The proposition follows from applying Lemmas~\ref{lem:main:ordered:2} and \ref{lem:main:ordered:3} to the algebraic sets 
$\ZZ(Q_{I,\alpha} -  \zeta, \E(V(\R),\widetilde{\R}\la\zeta\ra))$ and
$\ZZ(Q_{I,\alpha,0} - \zeta, \E(V(\R),\widetilde{\R}\la\zeta\ra))$.
and the combinatorial count on the number of terms appearing in right hand side of the definitions of 
$A$ and $B$
stated in the previous paragraph.
\end{proof}

\begin{proof}[Proof of Theorem~\ref{thm:main:ordered:a}]
    First of all observe that using the fact that 
    \[
    \deg(V) = \sum_{i=1}^m \deg(V_i),
    \]
    and for each $\sigma \in \{0,1,-1\}^{\mathcal{P}}$,
    \[
    b_0(\RR(\sigma,V(\R))) \leq \sum_{i=1}^m b_0(\RR(\sigma,V_i(\R))),
    \]
    we can reduce to the case $m=1$ and assume that $V = V_1$.
    
   The theorem now follows directly from Propositions~\ref{prop:master}, \ref{prop:master:C} and \ref{prop:master:AB}. 
\end{proof}

We now prove Theorem~\ref{thm:main:ordered:a'} by modifying the proof of Theorem~\ref{thm:main:ordered:a} in the special case when
$\mathcal{P} = \emptyset$. The constancy of the local dimension of  $V$ is used in the proof of Theorem~\ref{thm:main:ordered:a} only in Lemma~\ref{lem:main:ordered:2}. The following lemma replaces
Lemma~\ref{lem:main:ordered:2} in the proof of Theorem~\ref{thm:main:ordered:a'}.
Note that as in Lemma~\ref{lem:main:ordered:2}, the projection step in the following lemma removes the dependency on $N$ from the bound, and the subsequent bound (Lemma~\ref{lem:main:ordered:3}) is for a \emph{hypersurface} in $\R^{p+1}$ of controlled degree.

\begin{lemma}
\label{lem:main:ordered:2'}
   Let $V \subset \C^N$ be an algebraic subset of dimension $p$ defined over $\R$. Then, for all generically chosen linear maps
   $\pi:\R^N \rightarrow \R^{p+1}$, 
   \[
        b_0(V(\R)_p) \leq  b_0(\pi_\C(V)(\R) - \Sing(\pi_\C(V))),
   \]
   where $\pi_\C: \C^N \rightarrow \C^{p+1}$ obtained from $\pi$ by 
   extending the base field to $\C$.
\end{lemma}

\begin{proof}
We follow closely the proof of Lemma~\ref{lem:main:ordered:2}.
Using the same argument as in the proof of Lemma~\ref{lem:main:ordered:2},
we have 
    \[
    \pi_\C^{-1}(\pi_\C(V)(\R) - \Sing(\pi_\C(V))) \cap V \subset V(\R). 
    \]

    We now prove that for each $C \in \Cc(V(\R)_p)$, there exists 
    $D \in \Cc(\pi_\C(V)(\R) - \Sing(\pi_\C(V)))$, such that 
    $\pi^{-1}(D) \subset C$. It would follow that

    \[
    b_0(V(\R)_p) \leq b_0(\pi_\C(V)(\R) - \Sing(\pi_\C(V))).
    \]

    Let $C \in \Cc(V(\R)_p)$. Since $\pi(V(\R)) \subset \pi_\C(V)(\R)$ there exists 
     $D' \in \Cc(\pi_\C(V)(\R))$
    such that $\pi(C) \subset D'$. 

    Since the local real dimension of $V(\R)_p$ is equal to $p$ at all points of $V(\R)_p$,
    $\dim_\R C = \dim_\R D' = p$.
    Moreover, since $\dim_\R (D' \cap \Sing(\pi_C(V))) < p$, 
    $D' - \Sing(\pi_C(V)) \neq \emptyset$.
    Now let 
    \[
    D \in \Cc(D' - \Sing(\pi_C(V))) \subset \Cc(\pi_\C(V)(\R) - \Sing(\pi_\C(V))).
    \]
    Then, $\pi^{-1}(D) \subset C$ and we are done.
\end{proof}

\begin{proof}[Proof of Theorem~\ref{thm:main:ordered:a'}]
%%The proof is identical to that of Theorem~\ref{thm:main:ordered:a}
%%using Lemma~\ref{lem:main:ordered:2'} in the place of 
%%Lemma~\ref{lem:main:ordered:2}.
The theorem follows from Lemma~\ref{lem:main:ordered:2'} and 
Lemma~\ref{lem:main:ordered:3}.
\end{proof}

\begin{proof}[Proof of Theorem~\ref{thm:main:ordered:a:projective}]
Let $H\subset \PP^N_{\C}$ be a \emph{generic} real hyperplane.  Then
\begin{align*}
\sum_{\pi\in\{0,1\}^{\mathcal P}} b_0\bigl(\RR(\pi,V(\R))\bigr)
\;\le\;&
\sum_{\pi \in\{0,1\}^{\mathcal P}} b_0\bigl(\RR(\pi,\,V(\R)\setminus H)\bigr)
\;+\; \\
&
\sum_{\pi\in\{0,1\}^{\mathcal P}} b_0\bigl(\RR(\pi,\,(V\cap H)(\R))\bigr).
\end{align*}
Indeed, for each $\pi \in \{0,1\}^{\mathcal{P}}$ 
the semi-algebraic set $\RR(\pi,V(\R))$ is the disjoint union of
$\RR(\pi,V(\R)\setminus H)$ and $\RR(\pi,(V\cap H)(\R))$, and every semi-algebraically
connected component of $\RR(\pi,V(\R))$ contains a semi-algebraically connected component
of one of these two sets.

The first sum can be bounded by the affine estimate (Theorem~\ref{thm:main:ordered:a}).
For the second sum, note that the hypothesis that the local real dimension is constant on
each $V_i(\R)$ is preserved after intersecting with a generic real hyperplane; hence
$V\cap H$ satisfies the same regularity assumption.  Since $\dim(V\cap H)=p-1$, we may apply
the inductive hypothesis to $V\cap H$ to bound
$\sum_{\pi\in\{0,1\}^{\mathcal P}} b_0\bigl(\RR(\pi,\,(V\cap H)(\R))\bigr)$,
thereby completing the induction step.

%%%%%%%%%%%%%%%%%%%%%%%%%%%%%%%%%%%%%%%%%%%%%%%%%%%%%%%%%%%%%%%%%%%%%%%%%%%%%%%%%%%

\end{proof}

\subsection{Proofs of Theorems~\ref{thm:main:ordered:c} and \ref{thm:main:ordered:c'}}

We can assume without loss of generality that $V$ is a complete intersection variety having degree $D$ and dimension $p$. If the $V$ is a union of complete intersection varieties, $V_1,\ldots,V_m$, then the sum of the degrees of the 
of $V_i$ is bounded by $D$, and 

\[
\sum_{\sigma \in \{0,1,-1\}^{\mathcal{P}}} b_0(\RR(\sigma,V(\R)))
\leq 
\sum_{i=1}^{m}
\sum_{\sigma \in \{0,1,-1\}^{\mathcal{P}}}b_0(\RR(\sigma,V_i(\R))).
\]

Thus, it suffices to prove the inequality in the proposition for each $V_i$ and then take the sum of the upper bounds.

We will use the following lemma.

\begin{lemma}
\label{lem:CI}
Let $V$ is a complete intersection variety in $\C^N$ of degree $D$ and dimension $p$. 
Then $V$ is contained in an affine subspace in $\C^N$ of dimension $\leq p + \log_2 D$.
\end{lemma}

\begin{proof}
There exist polynomials,
$Q_1,\ldots,Q_{N-p} \in \C[Z_1,\ldots,Z_N]$ such that $V$ is the set of common zeros in $\C^N$ of $Q_1,\ldots, Q_{N-p}$,
and $D = d_1 \cdots d_{n-p}$, where $d_i = \deg(Q_i), 1 \leq i \leq N-p$. We can also assume without loss of generality that $d_1 \geq \cdots \geq d_{N-p}$. Let $q, 1 \leq q \leq N-p$ the largest integer such that
$d_q > 1$. Then $d_1,\ldots,d_q \geq 2$ and $d_{q+1} = \cdots = d_{n-p} = 1$. 
So $D = d_1 \cdots d_q \geq 2^q$. Hence, $q \leq \log_2 D$.
Thus, $V$ is contained in an affine subspace in $\C^N$ of dimension $N - (N - p - q) = p+q \leq p + \log_2 D$.  
\end{proof}

\subsubsection{Outline of the proof of Theorem~\ref{thm:main:ordered:c}}
 There are several new ingredients. Firstly, as in the proof of Theorem~\ref{thm:main:ordered:a}, adding a new inequality (that defining a ball of large radius) we can restrict our attention to the realizations of sign conditions which are bounded.
 Then using infinitesimal deformations
of the kind already used in the proof of Theorem~\ref{thm:main:ordered:a},
we go to a situation where the perturbed polynomials are in general
position (meaning that their zeros in $\mathbb{P}^N_\C$ are non-singular complete intersections), and for each semi-algebraically connected component $C$ of the realization on $V(\R)$ of any realizable sign condition on the original family, there exists a semi-algebraically connected component $\widetilde{C}$ of the realization on the extension of $V(\R)$ of 
a \emph{strict}  sign condition on the perturbed family such that
$\widetilde{C}$ is semi-algebraically homotopy equivalent to the extension of $C$ (Lemma~\ref{lem:main:ordered:c:1}). This is stronger than just requiring $\widetilde{C} \cap \R^N = C$ (as in the proof of Theorem~\ref{thm:main:ordered:a}), but we need the stronger property in this proof.

We then observe that each bounded semi-algebraically connected component $C$ of a realizable sign condition of $\mathcal{P}$ on $V(\R)$ is semi-algebraically homotopically equivalent to 
a semi-algebraically connected component of the complement in
$\overline{V}(\R)$ of the union of the set of zeros of the homogenizations of the perturbed family (Lemma~\ref{lem:main:ordered:c:2}).

Now using the fact that $\overline{V}(\R)$ is non-singular (and hence a compact semi-algebraic manifold), and 
Lefschetz duality, 
in order to bound the sum of all the Betti numbers of 
the realizations on $V(\R)$ of all realizable sign conditions on the original family, it suffices to bound the sum of the Betti numbers of 
$V$ and the those of the union of the algebraic sets defined by the perturbed family (Lemma~\ref{lem:main:ordered:c:3}). 

At this point we use a standard reduction via the Mayer-Vietoris spectral sequence to reduce this latter problem to bounding the Betti numbers of 
the algebraic sets on $\overline{V}(\R)$ defined by the perturbed family
(Lemma~\ref{lem:main:ordered:c:4}).

These intersections are non-singular complete intersections. For the corresponding algebraic sets over $\C$ which are projective non-singular complete intersections, one can bound the Betti numbers using a classical formula coming from Chern class computation of the tangent bundle (Lemma~\ref{lem:main:ordered:c:6}). This gives us an upper bound on the Betti numbers of the real parts of these varieties using the Thom-Smith inequalities (Lemma~\ref{lem:main:ordered:c:5}). The theorem follows by aggregrating the contributions to the Betti numbers from the different algebraic sets. 

\subsubsection{Proof of Theorem~\ref{thm:main:ordered:c}}
We will need the following notation and lemmas.

\begin{notation}
    For any closed and bounded semi-algebraic set $S$ we denote by
    $b(S) = \sum_{i \geq 0} b_i(S)$ the sum of the $\Z/2\Z$-Betti numbers of $S$.
\end{notation}

\begin{notation}
\label{not:lem:main:ordered:c:1}
Let $\mathcal{P} = \{P_1,\ldots,P_s\}  \subset \R[X_1,\ldots,X_N]$ be a finite set.
Let $H_1,\ldots,H_s \in \R[X_1,\ldots,X_N]$ with
$H_1(x),\ldots, H_N(x) > 0$, for all $x \in S$,
and 
\[
\widetilde{\mathcal{P}} = \bigcup_{i=1}^{s} \{P_i \pm \eps H_i, P_i \pm \delta H_i\} \subset \R\la\eps,\delta\ra[X_1,\ldots,X_N]. 
\]

For $\sigma \in \sigma \in \{0,1,-1\}^{\mathcal{P}}$ define
    $\widetilde{\sigma} \in \{0,1,-1\}^{\widetilde{\mathcal{P}}}$ as follows. For each $i, 1 \leq i \leq s$,
    If $\sigma(P_i) = 0$:
    \begin{eqnarray*}
    \widetilde{\sigma}(P_i + \eps H_i) &=& +1, \\
    \widetilde{\sigma}(P_i - \eps H_i) &=&  -1,\\
    \widetilde{\sigma}(P_i + \delta H_i) &=& +1, \\
    \widetilde{\sigma}(P_i -  \delta\ H_i) &=& -1. 
    \end{eqnarray*}
    If $\sigma(P_i) = 1$:
    \begin{eqnarray*}
    \widetilde{\sigma}(P_i + \eps H_i) &=& 1, \\
    \widetilde{\sigma}(P_i - \eps H_i) &=&  1,\\
    \widetilde{\sigma}(P_i + \delta H_i) &=& 1, \\
    \widetilde{\sigma}(P_i -  \delta\ H_i) &=& 1. 
    \end{eqnarray*}

    If $\sigma(P_i) = -1$:
    \begin{eqnarray*}
    \widetilde{\sigma}(P_i + \eps H_i) &=& -1, \\
    \widetilde{\sigma}(P_i - \eps H_i) &=&  -1,\\
    \widetilde{\sigma}(P_i + \delta H_i) &=& -1, \\
    \widetilde{\sigma}(P_i -  \delta\ H_i) &=& -1. 
    \end{eqnarray*}
\end{notation}

\begin{lemma}
\label{lem:main:ordered:c:1}
Let $S \subset \R^N$ be a closed and bounded semi-algebraic set.  
Then using the notation in Notation~\ref{not:lem:main:ordered:c:1},
for each $i \geq 0$,
\[
\sum_{\sigma \in \{0,1,-1\}^{\mathcal{P}}} b_i(\RR(\sigma,S)) 
\leq 
\sum_{\widetilde{\sigma} \in \{1,-1\}^{\widetilde{\mathcal{P}}}} b_i(\RR(\widetilde{\sigma},\E(S,\R\la\eps,\delta\ra))). 
\]
\end{lemma}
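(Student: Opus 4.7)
The plan is to establish two ingredients: first, that the assignment $\sigma \mapsto \widetilde{\sigma}$ from $\{0,1,-1\}^{\mathcal{P}}$ into $\{1,-1\}^{\widetilde{\mathcal{P}}}$ is injective, and second, that for every $\sigma \in \Sigma(\mathcal{P}, S)$ and every $i \geq 0$ one has
\[
b_i(\RR(\sigma, S)) = b_i(\RR(\widetilde{\sigma}, \E(S, \R\la\eps,\delta\ra))).
\]
Granting both, summing over $\sigma$ and then enlarging the range on the right-hand side to all of $\{1,-1\}^{\widetilde{\mathcal{P}}}$ (which only adds non-negative terms) will yield the claimed inequality. Injectivity is immediate: from the pair $(\widetilde{\sigma}(P_i + \eps H_i), \widetilde{\sigma}(P_i - \eps H_i))$ one recovers $\sigma(P_i)$ (opposite signs iff $\sigma(P_i) = 0$; both $+1$ iff $\sigma(P_i) = +1$; both $-1$ iff $\sigma(P_i) = -1$). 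So the content lies in the Betti-number equality.

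To prove this equality I would fix $\sigma \in \Sigma(\mathcal{P}, S)$, set $\R' = \R\la\eps,\delta\ra$, and use the ordering $0 < \delta \ll \eps \ll 1$ in $\R'$ to observe that the open semi-algebraic set $U_\sigma := \RR(\widetilde{\sigma}, \E(S, \R'))$ is cut out inside $\E(S, \R')$ by the binding strict inequalities
\[
\begin{cases}
-\delta H_i < P_i < \delta H_i & \text{if } \sigma(P_i) = 0,\\
\sigma(P_i)\, P_i > \eps H_i & \text{if } \sigma(P_i) \neq 0.
\end{cases}
\]
Let $\overline{U}_\sigma \subset \E(S, \R')$ be the closed semi-algebraic subset defined by the corresponding non-strict inequalities. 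The plan is then to prove (a) that the inclusion $U_\sigma \hookrightarrow \overline{U}_\sigma$ is a semi-algebraic homotopy equivalence, by retracting $\overline{U}_\sigma$ slightly inward along a semi-algebraic collar of its boundary (the positivity of each $H_i$ on $S$, combined with the infinitesimal nature of $\eps,\delta$, keeps the bounding walls transverse), and (b) that $\overline{U}_\sigma$ semi-algebraically deformation retracts onto $\E(\RR(\sigma, S), \R')$, by considering the one-parameter family $\{\overline{U}_\sigma(t)\}_{t \in [0,1]}$ obtained by replacing $\eps, \delta$ by $t\eps, t\delta$, applying Hardt's semi-algebraic triviality theorem to show the family is trivial over $(0,1]$, and passing to the limit $t \to 0^+$ where $\overline{U}_\sigma(t)$ collapses onto $\overline{U}_\sigma(0) = \E(\RR(\sigma, S), \R')$. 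Combining (a) and (b) with the invariance of Betti numbers of closed bounded semi-algebraic sets under base extension (as in \cite[Proposition 7.41]{BPRbook2}) then yields $b_i(U_\sigma) = b_i(\RR(\sigma, S))$ for every $i \geq 0$.

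The hard part will be step (b): constructing a semi-algebraically continuous deformation retraction of $\overline{U}_\sigma$ onto $\E(\RR(\sigma, S), \R')$. Each wall $P_i \pm \eps H_i = 0$ (resp.\ $P_i \pm \delta H_i = 0$) collapses into $P_i = 0$ as $t \to 0$, so care is needed that the retraction be well-defined and semi-algebraic; the strict positivity of the $H_i$ on $S$ is what rules out degenerations, while Hardt's triviality theorem applied to the family over $[0,1]$ supplies the required continuous structure. Step (a), by contrast, is a routine transversality/collaring argument using the same positivity of the $H_i$. Once (a) and (b) are in place, the lemma follows by summing the per-$\sigma$ equality and enlarging the summation range on the right, as outlined.
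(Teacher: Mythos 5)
Your overall reduction is the right one (and matches the paper's): injectivity of $\sigma\mapsto\widetilde{\sigma}$ plus the per-$\sigma$ statement that $\E(\RR(\sigma,S),\R\la\eps,\delta\ra)$ and $\RR(\widetilde{\sigma},\E(S,\R\la\eps,\delta\ra))$ have the same Betti numbers, which the paper asserts as a semi-algebraic homotopy equivalence using closedness and boundedness of $S$. The gap is in your step (b), which carries all the weight. Setting $t=0$ in your family $\overline{U}_\sigma(t)$ turns the constraints into $P_i=0$ (for $\sigma(P_i)=0$) and $\sigma(P_i)P_i\ge 0$ (for $\sigma(P_i)\ne 0$); this is the realization of the \emph{weak} relaxation of $\sigma$, not $\E(\RR(\sigma,S),\R\la\eps,\delta\ra)$, and the two can have genuinely different topology: for $S=\{x^2\ge y^2,\;x^2+y^2\le 1\}$ and $\sigma(Y)=1$ the strict realization has two connected components while the weak one is connected, and for $S$ the closed disk with $P=X_1^2+X_2^2$, $\sigma(P)=1$, the strict realization is homotopy equivalent to a circle while the weak one is contractible. (Interpreting $\overline{U}_\sigma(0)$ instead as the limit of the fibers as $t\to 0^+$ does not help: in the first example that limit is the closure of the strict realization, which is again connected.) Moreover, $\E(\RR(\sigma,S),\R\la\eps,\delta\ra)$ is not even a subset of $\overline{U}_\sigma$ — it contains points at which $\sigma(P_i)P_i$ is positive but infinitesimal compared with $\eps H_i$ — so ``$\overline{U}_\sigma$ deformation retracts onto $\E(\RR(\sigma,S),\R\la\eps,\delta\ra)$'' is ill-posed as stated. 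Finally, the family $\overline{U}_\sigma(t)$ is not monotone in $t$ (as $t\downarrow 0$ the walls $|P_i|\le t\delta H_i$ shrink while the walls $\sigma(P_i)P_i\ge t\eps H_i$ expand), so Hardt triviality over $(0,1]$ plus ``passing to the limit'' does not yield a retraction onto the $t=0$ fiber.

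The fix is to use the two infinitesimals asymmetrically and one at a time, which is exactly why the order $\delta\ll\eps$ appears: first compare $\RR(\sigma,S)$ with the set $\{x\in S:\ P_i=0\ (\sigma(P_i)=0),\ \sigma(P_i)P_i\ge \eps H_i\ (\sigma(P_i)\ne 0)\}$, using that these closed bounded sets increase to $\RR(\sigma,S)$ as the parameter decreases, so that semi-algebraic triviality in that parameter makes the inclusion a homotopy equivalence for infinitesimal $\eps$; then, with $\eps$ fixed, thicken only the equations, using that the sets $\{|P_i|\le \delta H_i,\ \sigma(P_i)P_i\ge \eps H_i\}$ decrease to the previous set as $\delta\downarrow 0$, so that it is a deformation retract for infinitesimal $\delta$ (a Lemma~16.17-type statement from \cite{BPRbook2}); lastly pass between the closed and open relaxations. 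Your step (a) is also weaker than you suggest — for an arbitrary closed bounded semi-algebraic $S$ and $H_i$ merely positive on $S$ there is no transversality or collar to appeal to — but it can be bypassed by running the same two one-parameter arguments directly with the strict inequalities defining $\RR(\widetilde{\sigma},\E(S,\R\la\eps,\delta\ra))$. As written, the proposal does not prove the lemma.
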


\begin{proof}
Using the fact that $S$ is closed and bounded it 
%%is easy to show 
is immediate that
that 
  \[
  \E(\RR(\sigma,S),\R\la\eps,\delta\ra) \sim
  \RR(\widetilde{\sigma},\E(S,\R\la\eps,\delta\ra)),
  \]
  where $\sim$ denotes semi-algebraic homotopy equivalence.
\end{proof}

\begin{lemma}
\label{lem:main:ordered:c:2}
    Let $\overline{V} \subset \PP_\C^N$ and 
    $V(\R)  = \overline{V} \cap \R^N$, where
    $\overline{V}$ 
    %%is a non-singular complete intersection 
    variety. Let
    $\mathcal{P} = \{P_1,\ldots,P_s\} \subset \R[X_1,\ldots,X_N]$ be a finite subset, and let
    $P_0 = \sum_{i=1}^N X_i^2 - R$ with $R \in \R, R>0$.
    Let for $i=0,\ldots,s$, $H_i \in \R[X_1,\ldots,X_N]$ be generic polynomials with $H_i(x) > 0$ for all $x \in \R^N$, and $\deg(H_i) = \deg(P_i)$.
    Let $\widetilde{\mathcal{P}} = \bigcup_{i=0}^{s} \{P_i \pm \eps H_i, P_i \pm \delta H_i\} \subset \R\la\eps,\delta\ra[X_1,\ldots,X_N]$.

    Then, for all large $R > 0$, and $i \geq 0$,
\[
\sum_{\sigma \in \{0,1,-1\}^{\mathcal{P}}} b_i(\RR(\sigma,V(\R))) 
\leq 
\sum_{\widetilde{\sigma} \in \{1,-1\}^{\widetilde{\mathcal{P}}}} b_i(\RR(\widetilde{\sigma},\E(V(\R),\R\la\eps,\delta\ra))). 
\]
\end{lemma}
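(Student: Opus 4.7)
The plan is to reduce Lemma~\ref{lem:main:ordered:c:2} to the bounded case, Lemma~\ref{lem:main:ordered:c:1}, by truncating the unbounded set $V(\R)$ using the polynomial $P_0$. First I fix $r>0$ standard-large enough that the conic structure of semi-algebraic sets at infinity applies to each realization $\RR(\sigma, V(\R))$ for $\sigma \in \{0,1,-1\}^{\mathcal{P}}$; this comes from Hardt's semi-algebraic triviality theorem applied to the proper function $\phi = \sum X_j^2$ on $V(\R)$ stratified by the sign conditions of $\mathcal{P}$. Setting $\widehat{S} := V(\R) \cap \overline{B}_N(\mathbf{0}, r)$, this yields a semi-algebraic deformation retraction of $\RR(\sigma, V(\R))$ onto $\RR(\sigma, \widehat{S})$, so $b_i(\RR(\sigma, V(\R))) = b_i(\RR(\sigma, \widehat{S}))$ for every $\sigma$ and $i$. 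Now I pick $R > r^2$, which forces $P_0 = \sum X_j^2 - R \leq r^2 - R < 0$ strictly on all of $\widehat{S}$.

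Next I apply Lemma~\ref{lem:main:ordered:c:1} to the closed bounded set $\widehat{S}$ with the extended family $\mathcal{P} \cup \{P_0\}$, whose perturbed version coincides exactly with $\widetilde{\mathcal{P}}$ and for which the positivity hypothesis on $H_0,\ldots,H_s$ is automatic (being positive on all of $\R^N$). Since $P_0 < 0$ on $\widehat{S}$, only the extension $\sigma'(P_0) = -1$ gives a non-empty realization, so
\[
\sum_{\sigma' \in \{0,1,-1\}^{\mathcal{P} \cup \{P_0\}}} b_i(\RR(\sigma', \widehat{S})) = \sum_{\sigma \in \{0,1,-1\}^{\mathcal{P}}} b_i(\RR(\sigma, \widehat{S})),
\]
and Lemma~\ref{lem:main:ordered:c:1} then delivers
\[
\sum_{\sigma \in \{0,1,-1\}^{\mathcal{P}}} b_i(\RR(\sigma, \widehat{S})) \leq \sum_{\widetilde{\sigma} \in \{1,-1\}^{\widetilde{\mathcal{P}}}} b_i(\RR(\widetilde{\sigma}, \E(\widehat{S}, \R\la\eps,\delta\ra))).
\]

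To complete the argument, I must show that for each strict $\widetilde{\sigma}$, one has $b_i(\RR(\widetilde{\sigma}, \E(\widehat{S}, \cdot))) = b_i(\RR(\widetilde{\sigma}, \E(V(\R), \cdot)))$. Since $\RR(\widetilde{\sigma}, \E(\widehat{S}, \cdot)) = \RR(\widetilde{\sigma}, \E(V(\R), \cdot)) \cap \{\sum X_j^2 \leq r^2\}$, this follows by applying the conic structure at infinity over the extended field $\R\la\eps,\delta\ra$ to each of the finitely many realizations $\RR(\widetilde{\sigma}, \E(V(\R), \cdot))$ using the same truncation radius $r$ fixed in Step 1. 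Summing over $\widetilde{\sigma}$ and chaining with the previous inequalities yields the claimed bound.

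The main technical obstacle is justifying that a single standard-real radius $r$ continues to work for the conic structure argument over $\R\la\eps,\delta\ra$ in the last step, uniformly in $\widetilde{\sigma}$. This amounts to checking that the behavior at infinity of each perturbed realization $\RR(\widetilde{\sigma}, \E(V(\R), \cdot))$ is controlled by some standard realization $\RR(\sigma, V(\R))$: the leading coefficients of $P_i \pm \eps H_i$ and $P_i \pm \delta H_i$ are infinitesimal perturbations of those of $P_i$, so their signs agree with those of $P_i$ at large $|x|$. Consequently, at large $|x|$ only strict $\widetilde{\sigma}$ whose sign pattern on the perturbations of each $P_i$ is uniform can be realized, and such patterns correspond to a well-defined $\sigma \in \{0,1,-1\}^{\mathcal{P}}$ with $\sigma(P_i) \neq 0$; for $\widetilde{\sigma}$ with a straddling pattern (corresponding to $\sigma(P_i) = 0$), the realization is confined to a neighborhood of $\{P_i = 0\} \cap V(\R)$, a lower-dimensional real algebraic set whose own conic structure radius we absorb into $r$ by choosing $r$ large enough at the outset.
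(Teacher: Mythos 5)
Your first two steps track the paper's own argument (truncate to a closed and bounded piece of $V(\R)$, adjoin $P_0$, apply Lemma~\ref{lem:main:ordered:c:1}), but your final step contains a genuine gap, and it is exactly the step your ordering of constants creates. You fix the truncation radius $r$ first and then take $R>r^2$, so the ball $\{P_0\le 0\}$ (radius $\sqrt R$) strictly \emph{contains} your ambient set $\widehat S=V(\R)\cap \overline{B}_N(0,r)$. Consequently the strict sign conditions $\widetilde\sigma$ produced by Lemma~\ref{lem:main:ordered:c:1} (those with all four perturbations of $P_0$ negative) have realizations on $\E(V(\R),\R\la\eps,\delta\ra)$ that live in the ball of radius roughly $\sqrt R>r$, not in $\E(\widehat S,\cdot)$, and you are forced to compare $b_i(\RR(\widetilde\sigma,\E(\widehat S,\cdot)))$ with $b_i(\RR(\widetilde\sigma,\E(V(\R),\cdot)))$. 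The claimed termwise equality is not justified: these realizations are semi-algebraic sets defined over $\R\la\eps,\delta\ra$, so their conic-structure radius is a priori an element of $\R\la\eps,\delta\ra$ which may be infinitely large over $\R$ (of order $1/\eps$ or $1/\delta$), and a radius $r$ chosen for the unperturbed family need not work; the heuristic about leading coefficients does not control topology of the perturbed realizations occurring at nonstandard scales (a component of $\RR(\widetilde\sigma,\E(V(\R),\cdot))$ can bend or close up outside every standard ball, so truncating at $r$ can even \emph{increase} $b_0$, making the inequality you need fail termwise), and Betti numbers are not monotone under the inclusion $\RR(\widetilde\sigma,\E(\widehat S,\cdot))\subset \RR(\widetilde\sigma,\E(V(\R),\cdot))$. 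The ``straddling pattern'' remark is likewise not a proof: an infinitesimal neighborhood of $\ZZ(P_i,\R^N)\cap V(\R)$ intersected with the other strict conditions is again a set defined over the extension field whose behavior at nonstandard distances is not absorbed by a standard conic radius.

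The paper's proof arranges the constants the other way, which makes this last comparison vacuous: it takes the closed and bounded set to be $S=V(\R)\cap\overline{B_N(0,2R)}$, i.e.\ a ball \emph{strictly larger} than $\{P_0\le 0\}$. Then for every sign condition $\sigma_0$ with $\sigma_0(P_0)=-1$ one has $\RR(\sigma_0,V(\R))=\RR(\sigma_0,S)$ on the nose, and after perturbation the corresponding strict conditions still force $P_0<0$ up to an infinitesimal, hence confine the realization strictly inside the $2R$-ball; so $\RR(\widetilde\sigma,\E(S,\R\la\eps,\delta\ra))=\RR(\widetilde\sigma,\E(V(\R),\R\la\eps,\delta\ra))$ \emph{as sets} for all the relevant $\widetilde\sigma$, the remaining $\widetilde\sigma$ only enlarge the right-hand side, and no second conic-structure argument over $\R\la\eps,\delta\ra$ is needed. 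To repair your write-up, replace $\widehat S$ by a ball of radius larger than $\sqrt R$ (the one and only use of conic structure being your Step~1 identification of $b_i(\RR(\sigma,V(\R)))$ with $b_i(\RR(\sigma_0,V(\R)))$ for large $R$), and drop Step~3 entirely.
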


\begin{proof}
   It follows from the conic structure at infinity of semi-algebraic sets 
   \cite{BCR} that for all large enough $R> 0$, for each $\sigma \in \{0,1,-1\}^{\mathcal{P}}$, $\RR(\sigma, V(\R))$ is semi-algebraically homeomorphic to  $\RR(\sigma, V(\R)\cap B_N(0,R))$. Let $\sigma_0$ denote the extension of the sign condition $\sigma$ to
   $\mathcal{P}_0 = \{P_0\} \cup \mathcal{P}$, by setting $\sigma_0(P_0) = -1$.
   Then, for all large enough $R > 0$, $\RR(\sigma,V(R))$ is semi-algebraically homeomorphic to $\RR(\sigma_0,V(\R))$. Now let $S = V(\R) \cap \overline{B_N(0,2R)}$. Notice that $S$ is a closed and 
   bounded semi-algebraic set, and $\RR(\sigma_0,V(\R)) =
   \RR(\sigma_0,S)$. The lemma now follows from Lemma~\ref{lem:main:ordered:c:1}.
\end{proof}

In the following lemma we will consider a projective variety in $\PP^N_\C$ with homogeneous coordinates
$[X_0:\cdots:X_N]$. We will identify $\C^N$ (resp. $\R^N$) with the affine chart in $\PP^N_\C$ (resp. $\PP^N_\R$) defined by $X_0 \neq 0$. For any polynomial
$Q \in \C[X_1,\ldots,X_N]$, we will denote by $Q^h \in \C[X_0,\ldots,X_N]$ its homogenization.

Finally, for any semi-algebraic set $S \subset \R^N$, we denote by $\Cc_b(S)$, the subset of $\Cc(S)$, consisting of the semi-algebraically connected components of $S$ which are bounded.

\begin{lemma}
\label{lem:main:ordered:c:3}
    Let $\overline{V} \subset \PP_\C^N$ and 
    $V(\R)  = \overline{V} \cap \R^N$, where
    $\overline{V}$ is a non-singular real variety. Let
    $\mathcal{Q} \subset \R[X_1,\ldots,X_N]$ be a finite subset.

    Then, for each $i, 0 \leq i \leq \dim \overline{V}$,
    \[
        \sum_{\sigma \in \{1,-1\}^{\mathcal{Q}}} 
        \sum_{C \in \Cc_b(\RR(\sigma,V(\R))} b_i(C) \leq
        b_i(\overline{V}(\R)) + b^{\dim \overline{V}- i -1}(\bigcup_{Q \in \mathcal{Q}} \ZZ(Q^h,\overline{V}(\R)).
    \]
\end{lemma}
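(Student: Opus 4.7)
The plan is to realize the left-hand side as bounded by $b_i(U)$ for the open semi-algebraic set $U := M \setminus Z$, where $M := \overline{V}(\R)$ is the compact non-singular real manifold of real dimension $p := \dim \overline{V}$ and $Z := \bigcup_{Q \in \mathcal{Q}} \ZZ(Q^h, M)$, and then apply Poincar\'{e} duality on $M$ to bound $b_i(U)$.

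First I will verify the inequality
\[
\sum_{\sigma \in \{1,-1\}^{\mathcal{Q}}} \sum_{C \in \Cc_b(\RR(\sigma,V(\R)))} b_i(C) \leq b_i(U).
\]
Identifying $\R^N$ with the affine chart $\{X_0 \neq 0\}$ of $\PP_\R^N$, one has $U \cap \R^N = V(\R) \setminus \bigcup_{Q \in \mathcal{Q}} \ZZ(Q,V(\R)) = \bigsqcup_\sigma \RR(\sigma,V(\R))$. If $C$ is a bounded semi-algebraically connected component of some $\RR(\sigma,V(\R))$, then $C$ is open in $U$; boundedness of $C$ forces its closure in $M$ to be contained in $\R^N$, and every boundary point of $C$ lies on some $\ZZ(Q,V(\R)) \subset Z$, so $C$ is also closed in $U$. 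Hence $C$ is a connected component of $U$, and as $(\sigma,C)$ varies the resulting components are pairwise disjoint, yielding the inequality.

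Second I will bound $b_i(U)$ using duality on $M$. With $\Z/2\Z$ coefficients (so no orientability issues), Poincar\'{e} duality for the open manifold $U$ gives $H_i(U;\Z/2\Z) \cong H^{p-i}_c(U;\Z/2\Z)$. The decomposition $M = U \sqcup Z$ with $Z$ closed produces the long exact sequence in compactly supported cohomology
\[
\cdots \to H^{p-i-1}(Z) \to H^{p-i}_c(U) \to H^{p-i}(M) \to H^{p-i}(Z) \to \cdots,
\]
where compactness of $M$ and $Z$ lets us suppress the subscript $c$ on the outer terms. Exactness gives $\dim H^{p-i}_c(U) \leq \dim H^{p-i}(M) + \dim H^{p-i-1}(Z)$, and Poincar\'{e} duality on the compact manifold $M$ identifies $\dim H^{p-i}(M)$ with $b_i(\overline{V}(\R))$. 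Combining these steps yields $b_i(U) \leq b_i(\overline{V}(\R)) + b^{p-i-1}(Z)$, which together with the first inequality completes the proof.

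The only subtle point is that we need these duality and exact-sequence statements in the semi-algebraic category over an arbitrary real closed field $\R$, not just $\mathbb{R}$. This is standard: via semi-algebraic triangulation (see \cite{BPRbook2}) and the Tarski--Seidenberg transfer principle, the relevant Betti numbers are invariant under real closed extension, so one reduces to the archimedean case where the cited topological tools are classical.
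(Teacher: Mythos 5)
Your proof is correct and follows essentially the same route as the paper: the paper identifies $b_i(\overline{V}(\R)-K)$ with $\dim \HH^{\dim\overline{V}-i}(\overline{V}(\R),K)$ via Lefschetz duality and then uses the long exact sequence of the pair, which is just the relative-cohomology formulation of your Poincar\'e duality with compact supports plus the compactly supported long exact sequence for $M = U \sqcup Z$. Your additional details (the verification that each bounded component $C$ is a connected component of $U$, and the explicit use of $\Z/2\Z$ Poincar\'e duality on $M$ to convert $b^{\dim\overline{V}-i}(\overline{V}(\R))$ into $b_i(\overline{V}(\R))$) are steps the paper leaves implicit, and they are handled correctly.
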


\begin{proof}
    Let $K = \bigcup_{Q \in \mathcal{Q}} \ZZ(Q^h,\overline{V}(\R))$.
   Since $\overline{V}(\R)$ is a compact (real) manifold (orientable over $\mathbb{Z}/2\mathbb{Z}$), 
   and $K$ is a closed and bounded semi-algebraic subset, we obtain from 
   Lefschetz duality theorem (see for example \cite[page 297, Theorem 18]{Spanier}),
   %%\footnote{By a standard application of the Tarski-Seidenberg transfer principle we can assume that it holds over a general real closed field $\R$.}
   that for each $i$,
   %%sb
   %%$\HH_i(\overline{V}(\R) - K) \cong \HH^{\dim \overline{V} - i}(\overline{V}(\R),K)$.
   $b_i(\overline{V}(\R) - K) = b_{\dim \overline{V} - i}(\overline{V}(\R),K)$.
   \footnote{By a standard application of the Tarski-Seidenberg transfer principle we can assume that this equality holds over a general real closed field $\R$.}
   Moreover, using the cohomology long exact sequence one has
   \[
    \dim \HH^{\dim \overline{V} - i}(\overline{V}(\R),K) \leq 
    \dim \HH^{\dim \overline{V} - i}(\overline{V}(\R)) + \dim \HH^{\dim \overline{V} - i-1}(K).
   \]
   
   Thus,  we get
   \begin{equation}
   \label{eqn:proof:lem:main:ordered:c:3:a}
     b_i(\overline{V}(\R) - K) \leq  b_{\dim \overline{V} - i}(\overline{V}(\R)) + b_{\dim \overline{V} - i-1}(K).  
   \end{equation}

   Now suppose that $C \in \Cc_b(\RR(\sigma,V(\R))$ for some 
   $\sigma \in \{1,-1\}^{\mathcal{Q}}$. 
   Then, 
   $C \in \Cc(\overline{V}(\R) - \bigcup_{Q \in \mathcal{Q}} \ZZ(Q^h,\overline{V}(\R)))$.

    This implies that for each $i \geq 0$,
    \begin{equation}
    \label{eqn:proof:lem:main:ordered:c:3:b}
\sum_{\sigma \in \{1,-1\}^{\mathcal{Q}}} 
        \sum_{C \in \Cc_b(\RR(\sigma,V(\R))} b_i(C) 
        \leq 
        b_i(\overline{V}(\R) - \bigcup_{Q \in \mathcal{Q}} \ZZ(Q^h,\overline{V}(\R))).
    \end{equation}

   The lemma now follows from inequalities \eqref{eqn:proof:lem:main:ordered:c:3:a} and
   \eqref{eqn:proof:lem:main:ordered:c:3:b}.
\end{proof}

\begin{lemma}
    \label{lem:main:ordered:c:4}
    Let $\overline{V} \subset \PP^N_\C$ be a real variety and 
    $\mathcal{Q} = \{Q_1,\ldots,Q_s\} \subset \R[X_0,\ldots,X_N]$, be a finite set of  homogeneous polynomials. Then, for each $n \geq 0$,
    \begin{eqnarray*}
    b_n(\overline{V}(\R) \cap \bigcup_{i \in [1,s]} \ZZ(Q_i,\PP^N_\R)) &\leq& 
    \sum_{i=0}^{n} \sum_{H \subset [1,s], \card(H) = i+1} b_{n-i}(\overline{V}(\R) \cap \bigcap_{h \in H} \ZZ(Q_{h},\PP^N_\R)) \\
    &\leq&
    \sum_{i=0}^{n} \sum_{H \subset [1,s], \card(H) = i+1} b(\overline{V} \cap \bigcap_{h \in H} \ZZ(Q_{h},\PP^N_\C)).
    \end{eqnarray*}
\end{lemma}
\begin{proof}
    The first inequality follows from the Mayer-Vietoris spectral sequence associated to the cover of the closed semi-algebraic set 
    $S = \overline{V}(\R) \cap \bigcup_{i \in I} \ZZ(Q_i,\PP^N_\R)$ by the closed semi-algebraic  subsets $S_i = \overline{V}(\R) \cap \ZZ(Q_i,\PP^N_\R)$. The $E_1$-page of this spectral sequence is given by
    \[
    E_1^{i,j} = \bigoplus_{H \subset [1,s], \card(H) = i+1} \HH^{j}(\overline{V}(\R) \cap \bigcap_{h \in H} \ZZ(Q_{h},\PP^N_\R)),
    \]
    with
    \[
    \HH^n(S) = \bigoplus_{i+j=n} E_\infty^{i,j}.
    \]
    The first inequality in the lemma is an immediate consequence.
\end{proof}

\begin{lemma}[Thom-Smith inequality]
\label{lem:main:ordered:c:5}
    Let $\overline{V} \subset \PP^N_\C$ be a real variety and $Q \in \R[X_0,\ldots,X_N]$ a homogeneous polynomial. Then,
    \[
    b(\overline{V}(\R) \cap \ZZ(Q,\PP^N_\R)) \leq b(\overline{V} \cap \ZZ(Q,\PP^N_\C)).
    \]
\end{lemma}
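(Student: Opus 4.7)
The plan is to recognize this as an instance of the classical Thom-Smith (or Smith-Floyd) inequality for a $\Z/2\Z$-action, applied to the involution on a complex variety given by complex conjugation.

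First, I would set $X = \overline{V} \cap \ZZ(Q,\PP^N_\C)$. Because both $\overline{V}$ and $Q$ are defined over $\R$ (that is, invariant under conjugation of coefficients, which is the meaning of $\overline{V}$ being a real variety), the set $X$ is invariant under the involution
\[
\tau : \PP^N_\C \longrightarrow \PP^N_\C, \qquad [x_0:\cdots:x_N] \longmapsto [\bar x_0:\cdots:\bar x_N].
\]
Moreover, the fixed point set of $\tau|_X$ is exactly the set of $\R$-points:
\[
X^{\tau} = \overline{V}(\R) \cap \ZZ(Q,\PP^N_\R).
\]

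Next I would invoke the Smith inequality for $\Z/2\Z$-spaces: if a finite-dimensional locally compact $\Z/2\Z$-CW complex $Y$ carries a continuous involution $\tau$ with fixed point set $Y^\tau$, then
\[
\sum_i \dim_{\Z/2\Z} \HH_i(Y^\tau;\Z/2\Z) \;\leq\; \sum_i \dim_{\Z/2\Z} \HH_i(Y;\Z/2\Z)
\]
(see, e.g., Borel's seminar on transformation groups). The complex variety $X$ admits a semi-algebraic triangulation compatible with the $\tau$-action, so this applies. Taking $Y = X(\C)$ with involution $\tau$ gives
\[
b\bigl(\overline{V}(\R) \cap \ZZ(Q,\PP^N_\R)\bigr) \;=\; b(X^\tau) \;\leq\; b(X) \;=\; b\bigl(\overline{V} \cap \ZZ(Q,\PP^N_\C)\bigr),
\]
which is exactly the desired inequality.

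The one point that requires a little care is that $\R$ is an arbitrary real closed field, possibly non-archimedean, so Smith theory (which is a statement in algebraic topology over the reals) is not directly available. I would handle this by the Tarski--Seidenberg transfer principle: the quantities $b_i(\overline{V}(\R) \cap \ZZ(Q,\PP^N_\R))$ and $b_i(\overline{V} \cap \ZZ(Q,\PP^N_\C))$ are defined via semi-algebraic (co)homology \cite[Chapter 6]{BPRbook2}, and both can be computed from a finite semi-algebraic triangulation whose combinatorial type is first-order definable from the coefficients of the polynomials cutting out $\overline{V}$ and $Q$. Since the inequality $b(\cdot) \leq b(\cdot)$ is a first-order statement about the (bounded) Betti numbers, transfer reduces the assertion to the archimedean case $\R = \mathbb{R}$, where the Smith inequality quoted above applies. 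I expect the main (mild) obstacle to be writing down this transfer cleanly; the topological content itself is entirely classical.
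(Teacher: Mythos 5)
Your proposal is correct and follows essentially the same route as the paper: the paper also deduces the inequality from the Smith exact sequence (citing Bredon) applied to the complex-conjugation involution, whose fixed-point set is the real locus, and it handles a general real closed field by the same Tarski--Seidenberg transfer you describe (the paper relegates this to a footnote). Your write-up is just a slightly more explicit version of the paper's argument, so nothing further is needed.
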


\begin{proof}
    If $X$ is a compact space  (or a regular complex)  equipped with an involution map $c:X \rightarrow S$, and 
$\mathrm{Fix}(c) \subset X$ denotes the subspace of fixed points of $X$. The Smith exact sequence
(see for example 
\cite[page 126]{Bredon-book})
then implies that
\begin{eqnarray}
\label{eqn:Smith}
b(\mathrm{Fix}(c),\Z/2\Z) &\leq& b(X,\Z/2\Z).
\end{eqnarray}
To prove the lemma, take the involution $c$ to be the complex conjugation. 
\end{proof}
\begin{lemma}
\label{lem:main:ordered:c:6}
    Let $\overline{V} \subset \PP^N_\C$ be a complete intersection
    non-singular variety with $\dim \overline{V} = p, \deg(\overline{V}) = D$.
    Let $P_1,\ldots,P_q \in \C[X_0,\ldots,X_N]$ be generic homogeneous polynomials with $\deg(P_i) \leq d$, $D \leq d$, $q \leq p$.
    Then,
    \[
        b(\overline{V} \cap \bigcap_{i}\ZZ(P_i,\PP^N_\C)) \leq 2^{N+1} \cdot D \cdot  d^p.
    \]
\end{lemma}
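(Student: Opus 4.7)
The plan is as follows. First, I would use the genericity hypothesis on $P_1,\ldots,P_q$, combined with the assumption that $\overline{V}$ is a non-singular complete intersection in $\PP^N_\C$, to conclude via a standard Bertini-type argument that $W := \overline{V}\cap \bigcap_{i=1}^q \ZZ(P_i,\PP^N_\C)$ is itself a non-singular complete intersection, of dimension $n=p-q$. Writing $d_1,\ldots,d_r$ for the multidegree of $\overline{V}$ (so $\prod_i d_i = D$, $r=N-p$, and each $d_i \leq D \leq d$) and $e_1,\ldots,e_q$ for the degrees of the $P_j$ (each $\leq d$), the generalized Bezout inequality (Theorem~\ref{thmx:bezout}) gives $\deg W = D\prod_j e_j \leq D\cdot d^q$.

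Second, I would apply the Lefschetz hyperplane theorem iteratively along the chain of smooth complete intersections descending to $W$: this yields $H^k(W,\Q) \cong H^k(\PP^N_\C,\Q)$ for $k<n$, and by Poincar\'e duality on the smooth $W$, $b_k(W) = b_{2n-k}(\PP^N_\C)$ for $k>n$. Therefore $\sum_{k\neq n} b_k(W) \leq n+1 \leq p+1$, and decomposing $\chi(W) = \sum_{k\neq n,\,k\,\text{even}} 1 + (-1)^n b_n(W)$ shows $b_n(W) \leq |\chi(W)| + (p+1)$, so that
\[
b(W) \leq |\chi(W)| + 2(p+1).
\]

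Third, I would compute $\chi(W)$ via Chern classes. The normal-bundle sequence $0 \to T_W \to T_{\PP^N}|_W \to N_{W/\PP^N} \to 0$ together with the identification $N_{W/\PP^N} = \bigoplus_{i=1}^r \mathcal{O}(d_i)|_W \oplus \bigoplus_{j=1}^q \mathcal{O}(e_j)|_W$ gives the total Chern class $c(T_W) = (1+h)^{N+1}/\bigl(\prod_i (1+d_i h)\prod_j (1+e_j h)\bigr)|_W$, where $h$ is the restriction of the hyperplane class. Pairing with the fundamental class of $W$ yields
\[
\chi(W) \;=\; \deg(W) \cdot [h^n]\, \frac{(1+h)^{N+1}}{\prod_{i=1}^r (1+d_i h)\prod_{j=1}^q (1+e_j h)}.
\]
I would then bound the absolute value of this coefficient by $2^{N+1}\cdot d^n$ — using a Cauchy integral estimate on a small circle avoiding the poles $-1/d_i,-1/e_j$ (all of which lie at distance $\geq 1/d$ from the origin), or via direct combinatorial bookkeeping of the alternating sum $\sum_{k_0+\ldots+k_{r+q}=n} \binom{N+1}{k_0}\prod_i(-d_i)^{k_i}\prod_j(-e_j)^{k_j}$. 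Combined with $\deg W \leq D d^q$, this yields $|\chi(W)| \leq 2^{N+1}\cdot D \cdot d^p$, and since $2(p+1) \leq 2^{N+1} \leq 2^{N+1}Dd^p$ the claimed bound on $b(W)$ follows.

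The main technical obstacle is this last step: a direct Cauchy estimate on the contour $|h|=1/(2d)$ produces the somewhat weaker factor $(3/2)^{N+1}\cdot 2^N$, so to recover the stated constant $2^{N+1}$ I would exploit the cancellations in the alternating sum — these are substantial unless all $d_i,e_j$ are close to $d$ — either through a case analysis comparing the small-degree and large-degree regimes, or through an optimized saddle-point choice of contour tuned to the concrete tuple $(d_1,\ldots,d_r,e_1,\ldots,e_q)$.
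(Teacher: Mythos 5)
Your route is in substance the paper's route. The paper also reduces to the observation that the generic intersection $W=\overline{V}\cap\bigcap_i\ZZ(P_i,\PP^N_\C)$ is a non-singular complete intersection of multidegree $(d_1,\ldots,d_r,e_1,\ldots,e_q)$ with $\prod_i d_i=D$, and then quotes the classical Chern-class formula (citing Eisenbud--Harris) for the total Betti number of a smooth complete intersection -- which is exactly the quantity you reassemble by hand from the Lefschetz hyperplane theorem, Poincar\'e duality and $\chi(W)=\int_W c_{\mathrm{top}}(T_W)$. So up to the last step the only difference is that you re-derive the formula instead of citing it (plus the cosmetic point that the lemma is about $\Z/2\Z$-Betti numbers, which agree with the rational ones here since smooth complete intersections have torsion-free cohomology).

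The step you leave open -- extracting the constant $2^{N+1}$ from the coefficient $[h^{p-q}]\,(1+h)^{N+1}/\bigl(\prod_i(1+d_ih)\prod_j(1+e_jh)\bigr)$ -- is handled in the paper by a one-line triangle inequality on the symmetric-function form of that coefficient: writing it (with $\ell=N-p+q$) as $\sum_{j=0}^{p-q}(-1)^{p-q-j}\binom{N+1}{j+\ell+1}h_j(d_1,\ldots,e_q)$, the paper bounds it by $\bigl(\sum_j\binom{N+1}{j+\ell+1}\bigr)h_{p-q}\le 2^{N+1}h_{p-q}$ and then asserts $h_{p-q}\le d^{p-q}$. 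Your hesitation is well placed: that last assertion ignores the number of monomials of $h_{p-q}$ (which is $\binom{N-1}{\ell-1}$), so the paper's written estimate, taken literally, only yields a constant of order $4^{N}$ -- essentially the same thing your naive bookkeeping or crude contour estimate gives. The saving grace, and the reason your unexecuted case analysis or saddle-point contour is not really needed, is that the exact constant is not load-bearing: in the proof of Theorem~\ref{thm:main:ordered:c} the ambient dimension has already been reduced so that $N\le p+\log_2 D$, hence any bound of the form $C^{N}\cdot D\cdot d^{p}$ with $C=O(1)$ produces the same shape of final estimate, $D^{O(1)}(O(d))^{p}$, with at worst a slightly larger power of $D$. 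So your argument is correct modulo the constant; to recover $2^{N+1}$ verbatim one must use the alternation (for large degrees the sum is dominated by the single term $h_{p-q}\le 2^{N-1}d^{p-q}$, which already fits under $2^{N+1}d^{p-q}$), or simply restate the lemma with a harmless larger constant, which is all the application requires.
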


\begin{proof}
If $\overline{V} \subset \PP^N_\C$ is a non-singular complete intersection variety
with $\dim \overline{V} = N - \ell$, and defined as the intersection of non-singular hypersurfaces of degrees $e_1,\ldots,e_{\ell}$, then
$b(\overline{V})$ is given by the following classical formula
(see for example \cite[\S 5.7.2-3]{Eisenbud-Harris})
\begin{eqnarray}
\nonumber
b(\overline{V}) &=& (1 + (-1)^{N-\ell+1})\cdot (N-\ell+1) + \\
\nonumber
&& e_1 \cdot e_2\cdots e_\ell\cdot \left(\sum_{i=0}^{k-\ell}(-1)^{i}\cdot\binom{N+1}{i} \cdot h_{N-\ell -i}(e_1,\ldots,e_\ell)  \right) \\
\label{eqn:proof:lem:main:ordered:c:6:1}
&=& (1 + (-1)^{N-\ell+1})\cdot (N-\ell+1) + \\
\nonumber
&& e_1 \cdot e_2\cdots e_\ell\cdot \left(\sum_{j=0}^{N-\ell}(-1)^{N-\ell -j}\cdot\binom{N+1}{j+\ell+1} \cdot h_{j}(e_1,\ldots,e_\ell)  \right),
\end{eqnarray}
where
$h_j(e_1,\ldots,e_\ell)$
is the complete homogeneous symmetric polynomial of degree $j$ in $(e_1,\ldots, e_\ell)$.

In our case, we can assume that the degree sequence of the polynomials defining $\overline{V}$ is of length $\ell= N-p+q$. Suppose that the degree sequence is
$e_1,\ldots,e_{N-p+q}$. We can assume without loss of generality that
$e_1 \cdots e_{N-p} =D$, and $e_{N-p+1} \leq  d,\ldots,e_{N-p+q} \leq d$.

Thus, 
\begin{equation}
\label{eqn:proof:lem:main:ordered:c:6:2}
e_1 \cdots e_\ell \leq  D \cdot d^q,
\end{equation}
and
\begin{eqnarray}
\nonumber
 \left(\sum_{j=0}^{N-\ell}(-1)^{N-\ell -j}\cdot\binom{N+1}{j+\ell+1} \cdot h_{j}(e_1,\ldots,e_\ell)  \right) 
&\leq& 
2^{N+1} h_{N-\ell}(e_1,\ldots,e_\ell) \\
\label{eqn:proof:lem:main:ordered:c:6:3}
&\leq &
2^{N+1} d^{N-\ell} \\
\nonumber
&=& 2^{N+1} d^{p-q}. 
\end{eqnarray}

The lemma now follows from the equality
\eqref{eqn:proof:lem:main:ordered:c:6:1}, and 
the inequalities 
\eqref{eqn:proof:lem:main:ordered:c:6:2}
and
\eqref{eqn:proof:lem:main:ordered:c:6:3}.
\end{proof}

\begin{proof}[Proof of Theorem~\ref{thm:main:ordered:c}]
   Let $\overline{V} \subset \PP_\C^N$ be a non-singular complete intersection real variety 
with $\deg(V) = D$ and $\dim(V) = p$, and 
$V = \overline{V} \cap \C^N$.

  Let
    $\mathcal{P} \subset \R[X_1,\ldots,X_N]_{\leq d}$ be a finite set of polynomials with $d \geq D$ and  $\card(\mathcal{P}) = s$.

First observe that it follows from the theorem of conic structure at infinity \cite[Proposition 5.49]{BPRbook2}, that for all large enough $R > 0$, 
\[
\sum_{\sigma \in \{0,1,-1\}^{\mathcal{P}}} b_0(\RR(\sigma,V(\R))) 
\leq
\sum_{\sigma \in \{0,1,-1\}^{\mathcal{P}}} b_0(\RR(\sigma,V(\R) \cap \overline{B_N(0,R)})).
\]

Now let $\mathcal{P}_0 = \mathcal{P} \cup \{\sum_{i=1}^N X_i^2 - R\}$.
We have that 
\begin{eqnarray*}
\sum_{\sigma \in \{0,1,-1\}^{\mathcal{P}}} b_i(\RR(\sigma,V(\R)))
&\leq&
\sum_{\sigma \in \{0,1,-1\}^{\mathcal{P}}} b_i(\RR(\sigma,V(\R) \cap \overline{B_N(0,R)})) \\
&\leq&
\sum_{\sigma' \in \{0,1,-1\}^{\mathcal{P}_0}, 
C \in \Cc_b(\RR(\sigma',V(\R)))} b_i(C) \\
&\leq& 
\sum_{\widetilde{\sigma} \in \{1,-1\}^{\widetilde{\mathcal{P}}}} b_i(\RR(\widetilde{\sigma},V(\R\la\eps,\delta\ra)))  \text{ (using Lemma~\ref{lem:main:ordered:c:2}) }\\
&\leq& 
b(\overline{V}(\R)) + b_{p-i-1}(\bigcup_{\widetilde{P} \in \widetilde{\mathcal{P}}} \ZZ(\widetilde{P}^h,\overline{V}(\R\la\eps,\delta\ra))) \\
&&\text{ (using Lemma~\ref{lem:main:ordered:c:3})}.
\end{eqnarray*}

The theorem now follows from Lemmas~\ref{lem:main:ordered:c:4},
\ref{lem:main:ordered:c:5} and \ref{lem:main:ordered:c:6}.
\end{proof}

\subsubsection{Outline of the proof of Theorem~\ref{thm:main:ordered:c'}}
In Theorem~\ref{thm:main:ordered:c'} (unlike in Theorem~\ref{thm:main:ordered:c}) we do not assume that the given complete intersection is
non-singular. Our conclusion is also weaker, both from the point of view of the bound and the quantity being bounded. 

The main idea is that
after making an initial perturbation we can restrict our attention to strict conditions on the perturbed family of polynomials $\widetilde{P}$
(Lemma~\ref{lem:main:ordered:c':1}).

We now make an infinitesimal perturbation of the polynomials defining the complete intersection $V$ to obtain a non-singular complete intersection $\widetilde{V} \subset \widetilde{\C}^N$ 
(Lemma~\ref{lem:main:ordered:c':2})
We cannot ensure that
if a semi-algebraically connected component $C$ of $\mathcal{P}$ meets $V(\R)$, then
the corresponding semi-algebraically connected component $\widetilde{C}$ of the realization of a strict sign condition on $\widetilde{P}$ will meet $\widetilde{V}(\R)$. Nevertheless, identifying $\C^N$ with
$\R^{2N}$, and the interpreting the polynomials in $\mathcal{P}$ and $\widetilde{\mathcal{P}}$ as polynomials in $2N$ variables, it is true that 
the variety $\widetilde{V} \subset \C^N = \R^{2N}$ will meet 
$\widetilde{C} \times \widetilde{\R}^N$
(Lemma~\ref{lem:main:ordered:c':4}).

Moreover, separating the real and complex parts of the polynomials defining the variety $\widetilde{V}$ one obtains a complex non-singular complete intersection in $\PP^{2 N}_\C$, of degree 
$D^2$ and dimension $2p$. We can now apply Theorem~\ref{thm:main:ordered:c}.

\subsubsection{Proof of Theorem~\ref{thm:main:ordered:c'}}
We need the following lemmas.
\begin{lemma}
\label{lem:main:ordered:c':1}
    Let $\mathcal{P} = \{P_1,\ldots,P_s\} \subset \R[X_1,Y_1,\ldots,X_N,Y_N]$ be a finite set, and
    $P_0 = \sum_{i=1}^{N} (X_i^2 + Y_i^2) - R$, where $R >0, R\in \R$.
    For each $i,0 \leq i \leq s$, let $H_i \in \R[X_1,Y_1,\ldots,X_N,Y_N]$ be a generic polynomial with, $H_i(x) > 0$ for all $x \in \overline{B_{2N}(0,R)}$, and $\deg(H_i) = \deg(P_i)$. 
    
    For each $\sigma \in \{0,1,-1\}^{\mathcal{P}}$, and 
    $C \in \Cc(\RR(\sigma, \overline{B_{2 N}(0,R)})$, there exists
    a unique $\widetilde{\sigma} \in \{-1,1\}^{\widetilde{\mathcal{P}}}$ and 
    a unique 
    %%semi-algebraically connected component of
    %%$\widetilde{C}$ of $\RR(\widetilde{\sigma},\overline{B_{2N}(0,R)})$, 
    $\widetilde{C} \in \Cc(\RR(\widetilde{\sigma},\overline{B_{2N}(0,R)}))$,
    such that $\widetilde{C} \cap \R^{2N} = C$, 
    where 
    \[
    \widetilde{P} = \bigcup_{0 \leq i \leq s} \{P_i\pm \delta_i H_i, P_i \pm \eta_i H_i\}.
    \]
\end{lemma}

\begin{proof}
    Similar to the proof of Lemma~\ref{lem:main:ordered:c:1} and omitted.
\end{proof}

\begin{lemma}
\label{lem:main:ordered:c':2}
    Let $V \subset \C^N$ be a complete intersection variety of dimension $p$ defined by polynomials 
    $\mathcal{Q} = \{Q_1,\ldots,Q_{N-p}\} \subset \C[Z_1,\ldots,Z_N]$.
    
    Then there exists polynomials 
    $\widetilde{\mathcal{Q}} = \widetilde{Q}_{1},\ldots,\widetilde{Q}_{N-p} \subset \C\la\eps\ra[Z_1,\ldots,Z_N]$ such that:
    \begin{enumerate}[(a)]
    \item 
    \label{itemlabel:lem:main:ordered:c:1:a}
    for each $i, 1\leq i \leq N-p$, $\deg(\widetilde{Q}_i) = \deg(Q_i)$;
    \item 
    \label{itemlabel:lem:main:ordered:c:1:b}
    the homogenizations $\widetilde{Q}_1^h,\ldots,\widetilde{Q}_{N-p}^h$ define a non-singular complete intersection in $\PP_{\C\la\eps\ra}^{N}$;
    \item 
    \label{itemlabel:lem:main:ordered:c:1:c}
    for all $R \in \R, R > 0$,  
    $\lim_\eps \ZZ(\widetilde{\mathcal{Q}},B_{2N}(0,R+1)) \cap \overline{B_{2N}(0,R)} = \ZZ(\mathcal{Q},\overline{B_{2N}(0,R)})$,
    where
    $B_{2N}(0,R) \subset \C^N$ is the open ball of radius $R$ in $\C^N = \R^{2 N}$.
    \end{enumerate}
\end{lemma}

\begin{proof}
For $1 \leq i \leq N-p$,  let $H_1,\ldots,H_{N-p}$ be  
generically chosen polynomials with $\deg(H_i) = \deg(Q_i)$. Now let
$\widetilde{Q}_i = (1-\eps)Q_i + \eps H_i$. Then properties 
\eqref{itemlabel:lem:main:ordered:c:1:a} and \eqref{itemlabel:lem:main:ordered:c:1:b} follows from the fact that
same holds for the polynomials $H_1,\ldots,H_{N-p}$ and a standard 
transfer argument.

For property \eqref{itemlabel:lem:main:ordered:c:1:c} first observe that clearly 
\[
\lim_\eps \ZZ(\widetilde{\mathcal{Q}},B_{2N}(0,R+1)) \subset \ZZ(\mathcal{Q},\overline{B_{2N}(0,R+1)})
\]
from which it follows that 
\[
\lim_\eps \ZZ(\widetilde{\mathcal{Q}},B_{2N}(0,R+1)) \cap \overline{B_{2N}(0,R)} = \ZZ(\mathcal{Q},\overline{B_{2N}(0,R)}).
\]

To prove the reverse inclusion let $x \in (\mathcal{Q},\overline{B_{2N}(0,R)})$. We prove that there exists $\widetilde{x} \in \ZZ(\widetilde{\mathcal{Q}},B_{2N} (0,R+1))$ such that 
$\lim_\eps \widetilde{x} = x$.

Let $\PP^{N-p}_\C \subset \PP^{N}_\C$ be a generic $(N-p)$-dimensional plane through $x$. Then, using Bezout's theorem,  
$\ZZ(\mathcal{Q}^h,\PP^{N-p}_\C)$ contains $x$ and has 
$d_1 \cdots d_{N-p}$ points counted with multiplicities. 
But $\ZZ(\widetilde{\mathcal{Q}}^h,\PP^{N-p}_\C)$ also contains
$d_1 \cdots d_{N-p}$ points each of multiplicity one, and moreover
$\lim_\eps \ZZ(\widetilde{\mathcal{Q}}^h,\PP^{N-p}_\C) = \ZZ(\widetilde{\mathcal{Q}}^h,\PP^{N-p}_\C)$. From this it follows that
there must exists $\widetilde{x} \in \ZZ(\widetilde{\mathcal{Q}}^h,\PP^{N-p}_\C)$ such that $\lim_\eps \widetilde{x} = x$, and clearly
$\widetilde{x} \in \ZZ(\widetilde{\mathcal{Q}},B_{2N}(0,R+1))$.
\end{proof}

Now let $V \subset \C^N$ be a complete intersection variety of dimension $p$ defined by polynomials  
\[
\mathcal{Q} = \{Q_1,\ldots,Q_{N-p}\} \subset \C[Z_1,\ldots,Z_N],
\]
and let $\widetilde{\mathcal{Q}}$ be the set of corresponding polynomials
in Lemma~\ref{lem:main:ordered:c':2}.

\begin{lemma}
\label{lem:main:ordered:c':3}
    Let $\widetilde{P} \subset \R[X_1,Y_1,\ldots,X_N,Y_N]$ be a finite set, $\sigma \in 
    \{-1,1\}^{\widetilde{\mathcal{P}}}$, $C \in \Cc(\RR(\sigma,\C^N))$ such that $C \cap V \neq \emptyset$. Then, 
    $\E(C,\C\la\eps\ra) \cap \widetilde{V} \neq \emptyset$,
    where $\widetilde{V} = \ZZ(\widetilde{\mathcal{Q}},\C\la\eps\ra^N)$.
\end{lemma}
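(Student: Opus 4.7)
The plan is to exploit the fact that $\sigma$ is a \emph{strict} sign condition (no zero entries) together with the approximation property \eqref{itemlabel:lem:main:ordered:c:1:c} from Lemma~\ref{lem:main:ordered:c':2}. Pick any $x \in C \cap V$. Because every polynomial in $\widetilde{\mathcal{P}}$ is required by $\sigma$ to be strictly positive or strictly negative, the set $\RR(\sigma, \C^N) = \RR(\sigma, \R^{2N})$ is defined by strict polynomial inequalities, hence is an open semi-algebraic subset of $\R^{2N}$. Consequently $C$ is open, so there exists $r \in \R$, $r>0$, such that the open Euclidean ball $B_{2N}(x, r) \subset C$.

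Next I would produce a point of $\widetilde{V}$ infinitesimally close to $x$. Choose $R \in \R$ large enough that $x \in \overline{B_N(0,R)}$. By part \eqref{itemlabel:lem:main:ordered:c:1:c} of Lemma~\ref{lem:main:ordered:c':2},
\[
\ZZ(\mathcal{Q}, \overline{B_N(0,R)}) = \lim_\eps \ZZ(\widetilde{\mathcal{Q}}, B_N(0,R+1)) \cap \overline{B_N(0,R)},
\]
and since $x$ lies in the left-hand side, there exists
\[
\widetilde{x} \in \ZZ(\widetilde{\mathcal{Q}}, B_N(0,R+1)) = \widetilde{V} \cap B_N(0,R+1)
\]
with $\lim_\eps \widetilde{x} = x$.

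It remains to verify that $\widetilde{x} \in \E(C, \C\la\eps\ra)$. Because $\lim_\eps \widetilde{x} = x$, the quantity $|\widetilde{x} - x|^2 \in \R\la\eps\ra$ is infinitesimal, and in particular satisfies $|\widetilde{x} - x|^2 < r^2$. Hence $\widetilde{x}$ belongs to the extension $\E(B_{2N}(x, r), \R\la\eps\ra)$ of the open ball, viewed in $\R\la\eps\ra^{2N} = \C\la\eps\ra^N$. By the Tarski--Seidenberg transfer principle applied to the inclusion $B_{2N}(x, r) \subset C$, we have $\E(B_{2N}(x, r), \R\la\eps\ra) \subset \E(C, \C\la\eps\ra)$, so $\widetilde{x} \in \E(C, \C\la\eps\ra) \cap \widetilde{V}$, as required.

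The argument is essentially routine; the only mild subtlety is the reliance on strictness of $\sigma$ to ensure that $C$ is open (so that infinitesimal perturbations of a point of $C$ still lie in $\E(C,\C\la\eps\ra)$). If $\sigma$ were allowed to contain equalities this openness would fail, and this is exactly why Lemma~\ref{lem:main:ordered:c':1} was used upstream to reduce to the strict case.
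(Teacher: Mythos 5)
Your proof is correct and follows essentially the same route as the paper: the paper's own (one-line) argument likewise combines the openness of $\RR(\sigma,\C^N)$ for the strict sign condition $\sigma$ with the approximation property of Lemma~\ref{lem:main:ordered:c':2} (cited in the paper, apparently by a typo, as Lemma~\ref{lem:main:ordered:c':1}), so you have simply written out the details of picking $x \in C\cap V$, a ball inside $C$, and an infinitesimally close point $\widetilde{x}\in\widetilde{V}$.
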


\begin{proof}
This follows immediately from  Lemma~\ref{lem:main:ordered:c':1} noting that $\RR(\sigma,\C^N)$ is an open semi-algebraic subset.  
\end{proof}

%%With the same notation as in Lemma~\ref{lem:main:ordered:c:3}
\begin{lemma}[Reduction from weak to strict sign conditions]
\label{lem:main:ordered:c':4}
Let $V,\widetilde{V}$ be as in Lemma~\ref{lem:main:ordered:c':3} and
$\mathcal{P},\widetilde{\mathcal{P}}$ be as in Lemma~\ref{lem:main:ordered:c':1}.
Then
\begin{align*}
\sum_{\sigma \in \{0,1,-1\}^{\mathcal{P}}} \card(\Cc(\RR(\sigma),V(\R)))&\leq& 
\sum_{\widetilde{\sigma} \in \{1,-1\}^{\widetilde{\mathcal{P}}}} 
\card(\Cc(\RR(\widetilde{\sigma}),\widetilde{V})) \\
&\leq&
\sum_{\widetilde{\sigma} \in \{1,-1\}^{\widetilde{\mathcal{P}}}} 
b_0(\RR(\widetilde{\sigma},\widetilde{V})).
\end{align*}
(Note that in the above inequality $\widetilde{V}$ is being considered as a semi-algebraic subset of $\widetilde{\C}^N = \widetilde{\R}^{2 N}$.)
\end{lemma}

\begin{proof}
    Follows immediately from (Lemma~\ref{lem:main:ordered:c':3}).
\end{proof}

\begin{proof}[Proof of Theorem~\ref{thm:main:ordered:c'}]
 Using Lemma~\ref{lem:main:ordered:c':4}, it suffices to bound   
\[
\sum_{\widetilde{\sigma} \in \{1,-1\}^{\widetilde{\mathcal{P}}}} 
b_0(\RR(\widetilde{\sigma},\widetilde{V})).
\]

Now let for $j=1,\ldots,N-p$, let 
\[
\widetilde{Q}_j = \widetilde{Q}_{j,0} + \sqrt{-1}\cdot \widetilde{Q}_{j,1},
\]
where $\widetilde{Q}_{j,0}, \widetilde{Q}_{j,1} \in \R[X_1,Y_1,\ldots,X_n,Y_n]$ have generic coefficients.

Thus, 
\[
\widetilde{Q}_{1,0},\widetilde{Q}_{1,1},\ldots,
\widetilde{Q}_{N-p,0},\widetilde{Q}_{N-p,1}
\]
define a non-singular complete intersection in $\widetilde{W} \subset \C^{2N}$, with $\deg(\widetilde{W}) = D^2$, and $\dim \widetilde{W} = 2p$
and with $\widetilde{W}(\R) = \widetilde{V}$.

Now apply Theorem~\ref{thm:main:ordered:c} with non-singular complete intersection $\widetilde{W}$ and the family of polynomials
$\widetilde{\mathcal{P}}$ to obtain
\begin{eqnarray*}
\sum_{\widetilde{\sigma} \in \{1,-1\}^{\widetilde{\mathcal{P}}}} 
b_0(\RR(\widetilde{\sigma},\widetilde{V}))
&\leq&
\sum_{\sigma \in \{0,1,-1\}^{\widetilde{\mathcal{P}}}} 
b_0(\RR(\widetilde{\sigma},\widetilde{W}(\R))) \\
&\leq&
(D^2)^2 \cdot (O(s d))^{2 p} \\
&=&
D^4 \cdot (O(s d))^{2 p}.
\end{eqnarray*}
\end{proof}

\subsection{Proof of Theorem~\ref{thm:BPR+}}

\subsubsection{Outline of the proof of Theorem~\ref{thm:BPR+}}
The proof borrows from the proof of Theorem~\ref{thm:main:ordered:c}
the main technique of making perturbations of the given family $\mathcal{P}$, and using the inequalities arising from the Mayer-Vietoris spectral sequence to reduce to the case of bounding the Betti numbers of certain non-singular projective varieties. 
Instead of using the Thom-Smith inqualities and the Chern class formula
as we do in the proof of Theorem~\ref{thm:main:ordered:c}, we make a further reduction using the same technique as in the proof of Theorem~\ref{thm:main:ordered:a} to reduce to bounding the Betti numbers of non-singular real projective varieties of dimensions $p-1$ (and also 
bounding that of $V(\R)$ which is of dimension $p$). We use in this case
the bound provided by Theorem~\ref{thmx:LV} and the adjoining Remark~\ref{rem:LV} to bound these.

\subsubsection{Proof of Theorem~\ref{thm:BPR+}}
We already have the necessary ingredients to start the proof without having to prove additional lemmas.

\begin{proof}[Proof of Theorem~\ref{thm:BPR+}]
    Using the conic structure at infinity for semi-algebraic sets
    (see for example \cite[Proposition 5.49]{BPRbook2}) choose
    $R > 0, R \in \R$, such that
    for each $\sigma \in \{0,1,-1\}^{\mathcal{P}}$, 
    $\RR(\sigma,V(\R))$ is semi-algebraically homeomorphic to 
    $\RR(\sigma,V(\R) \cap B_N(0,R))$.
    We denote $\mathcal{P}_0 = \mathcal{P} \cup P_0$,
    where $P_0 = \sum_{i=1}^N X_i^2 - R$.
    
    Applying Lemma~\ref{lem:main:ordered:c:2} (using the same notation as in Lemma~\ref{lem:main:ordered:c:2}) we have
    $$\displaylines{
    \sum_{\sigma \in \{0,1-1\}^{\mathcal{P}}} b_i(\RR(\sigma,V(\R))) 
    \leq 
    \sum_{\widetilde{\sigma} \in \{1,-1\}^{\widetilde{P}_0}} b_i(\RR(\widetilde{\sigma}),\E(V(\R),\widetilde{\R}),
    }
    $$
    and finally using Lemma~\ref{lem:main:ordered:c:3} that,
$\sum_{\widetilde{\sigma} \in \{1,-1\}^{\widetilde{P}_0}} b_i(\RR(\widetilde{\sigma}),\E(V(\R),\widetilde{\R}))$
is bounded by 
\[
b_{p-i-1}\left(\bigcup_{\widetilde{P} \in \widetilde{\mathcal{P}}_0} \ZZ(\widetilde{P}^h,
  \E(\overline{V}(\R),\widetilde{\R}))  
  \cup
  \ZZ(X_0, \E(\overline{V}(\R),\widetilde{\R}))
\right)
+ b_{p-i}( \E(\overline{V}(\R),\widetilde{\R})).
\]

We now use the Mayer-Vietoris spectral sequence to observe that 
the right hand side of the previous equation is bounded by the 
sum of three terms (we use the convenient notation $A \subset_n B$ to denote $A \subset B$ and $\card(A) = n$):
\begin{enumerate}[(a)]
    \item 
    \label{itemlabel:proof:thm:BPR+:1}
    \[
    \sum_{0 \leq j \leq p-i-1} \sum_{\widetilde{\mathcal{P}}_0' \subset_{j+1} \widetilde{\mathcal{P}}_0} b_{p-i-1-j}(\ZZ((\widetilde{\mathcal{P}}_0')^h, \E(\overline{V}(\R),\widetilde{\R}))),
    \]
    \item 
    \label{itemlabel:proof:thm:BPR+:2}
    \[
    \sum_{0 \leq j \leq p-i-2} \sum_{\widetilde{\mathcal{P}}_0' \subset_{j+1} \widetilde{\mathcal{P}}_0} b_{p-i-1-j}(\ZZ(\{X_0\} \cup (\widetilde{\mathcal{P}}_0')^h, \E(\overline{V}(\R),\widetilde{\R}))),
    \]
    
    \item 
    \label{itemlabel:proof:thm:BPR+:3}
    \[
    b( \E(\overline{V}(\R),\widetilde{\R})).
    \]
    
\end{enumerate}

We are now going to bound from above each one of the three items.
Firstly, observe that using Remark~\ref{rem:LV} 
and the fact that
$V$ is non-singular we can bound the quantity in Part~\eqref{itemlabel:proof:thm:BPR+:3} by $D^{p+1}$.

We now bound for each 
$\widetilde{\mathcal{P}}_0' \subset_{j+1} \widetilde{\mathcal{P}}_0$, the Betti number
\[
b_{p-i-1-j}(\ZZ((\widetilde{\mathcal{P}}_0')^h, \E(\overline{V}(R),\widetilde{\R}))).
\]

Note that $\ZZ((\widetilde{\mathcal{P}}_0')^h, \E(\overline{V}(\R),\widetilde{\R}))$ is the real part of a non-singular projective hypersurface with degree bounded by $D d^{j+1}$ and dimension $p-j -1$.
Using Remark~\ref{rem:LV} and the Thom-Smith inequality will give us
\[
b_{p-i-1-j}(\ZZ((\widetilde{\mathcal{P}}_0')^h, \E(\overline{V}(\R),\widetilde{\R}))) \leq (D d^{j+1})^{p-j} = D^{p-j} d^{(j+1)(p-j)}.
\]

Using perturbation techniques we prove a slightly better bound as follows.

Let $S = \ZZ((\widetilde{\mathcal{P}}_0')^h, \E(\overline{V}(\R),\widetilde{\R}))$.

First let 
\[
\widetilde{Q} = \sum_{\widetilde{P} \in \widetilde{\mathcal{P}}_0'} \widetilde{P}^2. 
\]

Let $H \in \widetilde{\R}[X_0,\ldots,X_N]$ be a generic homogeneous polynomial
of the same degree as $\widetilde{Q}$ and which in non-negative in $\R^{N+1}$.
Note that in $\PP_\R^N$ the sign of a homogeneous polynomial of even degree is well defined. It follows from \cite[Lemma 16.17]{BPRbook2} that
the semi-algebraic set $\widetilde{S} \subset \E(\overline{V}(R),\widetilde{R}\la\zeta\ra))$ defined as the intersection of 
$\E(\overline{V}(R),\widetilde{R}\la\zeta\ra))$ with the semi-algebraic set defined by $(1-\zeta)\widetilde{Q} - \zeta H \leq 0$ is semi-algebraically homotopy equivalent to $ \E(S,\widetilde{\R}\la\zeta\ra)$.

Moreover, it follows from the Mayer-Vietoris exact sequence that 
\[
b(\widetilde{S}) \leq b(\ZZ((1-\zeta)\widetilde{Q} - \zeta H, \E(\overline{V}(\R),\widetilde{\R}\la\zeta\ra))) + b(V(\R)).
\]
Now observe that $\ZZ((1-\zeta)\widetilde{Q} - \zeta H, \E(\overline{V}(\R),\widetilde{\R}\la\zeta\ra))$ is the real part of a non-singular projective variety of degree bounded by $2 D d$ and dimension $p-1$.
It follows (using Remark~\ref{rem:LV} and the Thom-Smith inequality again) that 
\[
b(\ZZ((1-\zeta)\widetilde{Q} - \zeta H, \E(\overline{V}(\R),\widetilde{\R}\la\zeta\ra))) \leq (2 D d)^{p}.
\]
Thus, the sum in Part~\eqref{itemlabel:proof:thm:BPR+:1} is bounded by
\[
\left(\sum_{j=0}^{p-i-1} \binom{s}{j+1}\right)(D^p(2d)^{p} + D).
\]

We use the same method to bound the sum in Part~\eqref{itemlabel:proof:thm:BPR+:2}, noting that all subvarieties are
contained in the $\PP_\R^{N-1}$ defined by $X_0=0$. This yields that the
sum in Part~\eqref{itemlabel:proof:thm:BPR+:2} is bounded by 
\[
\left(\sum_{j=0}^{p-i-2} \binom{s}{j+1}\right)(D^{p-1}(2d)^{p-1} + D).
\]

Finally, summing the three parts together we obtain that:
\begin{eqnarray*}
\sum_{\sigma \in \{0,1-1\}^{\mathcal{P}}} b_i(\RR(\sigma,V(\R))) &\leq& 
 \left(\sum_{j=0}^{p-i-1} \binom{s}{j+1}\right)(D^p(2d)^{p} + D) + \\
&& \left(\sum_{j=0}^{p-i-2} \binom{s}{j+1}\right)(D^{p-1}(2d)^{p-1} + D) + \\
&&
D^{p+1} \\
&\leq& O(1)^p s^{p-i} D^p(d^p + D).
\end{eqnarray*}
\end{proof}

\subsection{Proof of Theorem~\ref{thm:entropy}}
%%%%%%%%%%%%%%%%%%%%%%%%%%%%%%%%%%%%%%%%%%%%%%%%%%%%%%%%%%%%%%%%%%%%%%%
We first recall a notion from \cite{Zhang-Kileel2023} that plays a key role.

\begin{definition}
\label{def:reg_set_informal}
    A set $V \subset \R^N$ is called a $(K, n)$-\emph{regular set} if
    \begin{enumerate}[(a)]
        \item For almost all affine planes $L$ of codimension $n' \leq n$ in $\R^N$, $V \cap L$ has at most $K$ path connected components. 
        \item For almost all affine planes $L$ of codimension $n' \geq n+1$ in $\R^N$, $V \cap L$ is empty.
    \end{enumerate}
\end{definition}

The main theorem in \cite{Zhang-Kileel2023} is the following theorem.

\begin{thmx}
Let $V$ be $(K, n)$-regular set in the cube $[-1, 1]^N$, the covering number is bounded by
\begin{equation}\label{eq:main-bound}
    \log\mathcal{N}(V, \varepsilon) \leq n \log(1/\varepsilon) + \log K + C n \log N,
\end{equation}
for some absolute constant $C$.
\end{thmx}

\begin{proof}[Proof of Theorem~\ref{thm:entropy}]
We begin by observing that in this proof we work over $\R=\mathbb R$, and that
\[
V(\R)\cap[-1,1]^N = V(\R),
\]
since, by hypothesis, $V(\R)$ is contained in the unit ball.

We claim that $V(\R)_p$ is a $(K,p)$-regular set with
\[
K = 2^{2p+4}D^{p+1}.
\]
Let $L\subset \R^N$ be a generic affine subspace of codimension $q\le p$.
By genericity,
\[
V(\R)_p\cap L = \bigl(V(\R)\cap L\bigr)_{p-q}.
\]
Applying \eqref{eqn:thm:main:ordered:a'} from Theorem~\ref{thm:main:ordered:a'} to the variety
$V\cap L$, we obtain
\begin{align*}
b_0\bigl(V(\R)_p\cap L\bigr)
&= b_0\bigl((V(\R)\cap L)_{p-q}\bigr) \\
&\le 2^{2(p-q)+4}\,D^{p-q+1}
\;\le\; 2^{2p+4}\,D^{p+1}.
\end{align*}
If instead $\mathrm{codim}(L)>p$, then $V(\R)\cap L=\emptyset$, and hence
$b_0(V(\R)_p\cap L)=0$.  This verifies the $(K,p)$-regularity of $V(\R)_p$ with the above
choice of $K$.  The theorem now follows immediately from \eqref{eq:main-bound}.
\end{proof}

\subsection{Proof of Theorem~\ref{thm:lower-bound}}

\begin{proof}[Proof of Theorem~\ref{thm:lower-bound}]
    First suppose that $S \subset V(\R)$, and $V$ is a union of non-singular real varieties, with $\dim V = p$ and $\deg V = D$. Suppose there exists an algebraic computation tree of height $t$ testing membership in $S$. Then, the number of leaves of the tree (in particular the number of leaves labelled YES) is bounded by $2^t$. Let $w$ denote a leaf of the tree labelled YES. Then the path from the root to $w$ defines a basic semi-algebraic set $S_w \subset V(\R) \times \R^{t_w}$,
    where $t_w \leq t$ is the number of variables introduced in the path to $w$ (i.e. number of computation vertices on this path, see Definition~\ref{def:tree}), and $S_w$ is defined by a set $\mathcal{P}_w$ of at most $t$ polynomials of degree at most $2$.

    Then, $b_0(S_w)$ is bounded by 
    $2^t O(D)^{p+t_w} = O(D)^{p+t}$  using Theorem~\ref{thm:main:ordered:a}.

    This implies that 
    \[
    b_0(S) \leq O(D)^{p+t}.
    \]
    
    Taking logarithm of both sides we obtain the desired lower bound on $t$.
\end{proof}

%%%%%%%%%%%%%%%%%%%%%%%%%%%%%%%%%%%%%%%%%%%%%%%%%%%%%%%%%%%%%%%%%%%%%%%%%%%%%%%%%
\section{Relative ranks in vector spaces and algebras and quantum circuit complexity}
\label{subsec:rank}
The notion of rank of tensors play a very important role
in many applications -- from understanding the complexity of matrix multiplication, to machine learning and  quantum information theory
(see for example \cite{Landsberg2019} for an overview). We introduce here 
a notion of relative rank for finite dimensional vector spaces and algebras (relative to an arbitrary algebraic subset $\Delta$ of the vector space or algebra) which generalizes the notion of rank for tensors. We prove lower bounds on the relative rank for algebraically defined subsets $W$ of the vector space or algebra,  in terms of the dimension and degree of $\Delta$ and the cardinality (or the number of connected components) of the set (Theorems~\ref{thm:rank:vector:algebraic}, \ref{thm:rank:vector:ordered},
\ref{thm:rank:algebra:algebraic} and \ref{thm:rank:algebra:ordered} below).

Note that while proving upper bounds on the classical rank of specific tensors, such as the matrix multiplication tensor (see for example \cite{Landsberg2017,Landsberg2019} is an important and active area of research, proving lower bounds on the rank of sets of 
tensors (particularly in the relative setting) has not been equally well studied. We believe the results stated below will prove useful in applications in addition to quantum complexity theory that we discuss here. 

\subsection{Relative rank in vector spaces}
\begin{definition}[Relative rank in vector spaces]
\label{def:rank:vector}
Let $V$ be a $k$-vector space, and $\Delta \subset V$. For $v \in V$, we define
\[
\rank_{V,\Delta}(v)
\]
as the smallest number $t$, if it exists, such that there exists $v_1,\ldots,v_t \in \Delta$ such that $v = v_1 + \cdots + v_t$,
and $\infty$ otherwise. We will call
$\rank_{V,\Delta}(v)$ the \emph{rank of $v$ relative to $\Delta$}.

For any subset $W \subset V$, we will denote 
\[
\rank_{V,\Delta}(W) = \max_{w \in W} \rank_\Delta(w).
\]
\end{definition}
\begin{remark}
\label{rem:rank:vector:monotone}
    Note that if $\Delta \subset \Delta'$, then $\rank_{V,\Delta}(v) \geq \rank_{V,\Delta'}(v)$ for all $v \in V$, and 
    $\rank_{V,\Delta}(W) \geq \rank_{V,\Delta'}(W)$ for all subsets 
    $W \subset V$.
\end{remark}

\begin{remark}
    Notice that Definition~\ref{def:rank:vector} generalizes the standard notion of rank of a tensor. For example, if $V = V_1 \otimes \cdots \otimes V_m$, and $\Delta = \{v_1 \otimes \cdots \otimes v_m \mid v_i \in V_i, 1 \leq i \leq m\}$ the subset of tensors of rank at most one, then for all $v \in V$, $\rank_{V,\Delta}(v)$ is the usual rank of $v$ as an element of the tensor product  $V_1 \otimes \cdots \otimes V_m$.
\end{remark}

\begin{theorem}
\label{thm:rank:vector:algebraic}
Let $k$ be an algebraically closed field and $V$ a finite dimensional
vector space.
Let $\Delta \subset V$ be an algebraic subset with $\dim(\Delta) = p$ and $\deg(\Delta)  = D$, and 
$\mathcal{P} \subset k[V]_{\leq \delta}$ with $\card(\mathcal{P}) = s$.
Then, for any finite $\mathcal{P}$-constructible subset $W \subset V$ 
\[
\rank_{V,\Delta} (W) \in  \Omega \left(\frac{\log(\card(W))}{p(\log \delta + \log s) + \log D }\right)
\]
(where the constant in the $\Omega(\cdot)$ is an absolute constant).
\end{theorem}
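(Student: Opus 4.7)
The plan is to reduce the theorem to an application of Theorem~\ref{thm:main:algebraic} applied to the $t$-fold product $\Delta^t$, where $t = \rank_{V,\Delta}(W)$. Suppose every $w \in W$ admits a decomposition $w = v_1^w + \cdots + v_t^w$ with $v_i^w \in \Delta$. Let $\phi : V^t \to V$ be the linear summation map $(v_1,\ldots,v_t) \mapsto v_1 + \cdots + v_t$, and let $\mathcal{Q} = \{\phi^*P \mid P \in \mathcal{P}\}\subset k[V^t]_{\le \delta}$ (degrees are preserved because $\phi$ is linear). Then $W \subset \phi(\Delta^t)$, and $\Delta^t$ is an algebraic subset of $V^t$ with $\dim \Delta^t = tp$ and $\deg \Delta^t \le D^t$ by multiplicativity of degrees under products of affine varieties.

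Since $W$ is $\mathcal{P}$-constructible and finite, it decomposes as a disjoint union $W = \bigsqcup_{\pi \in \Pi_W} \RR(\pi, V)$, where $\Pi_W \subseteq \{0,1\}^{\mathcal{P}}$ indexes the patterns realized in $W$ and each $\RR(\pi, V)$ is finite of cardinality $n_\pi$, so $\card(W) = \sum_{\pi \in \Pi_W} n_\pi$.

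The key geometric step is to show $\deg \overline{\RR(\pi, \Delta^t)} \ge n_\pi$ for each $\pi \in \Pi_W$, where the realization is taken with respect to $\mathcal{Q}$. Indeed, $\RR(\pi, \Delta^t) = \phi^{-1}(\RR(\pi, V)) \cap \Delta^t$, and $\phi^{-1}(\RR(\pi, V))$ is the disjoint union of the $n_\pi$ parallel affine cosets $\phi^{-1}(w)$, one per point $w \in \RR(\pi, V)$. Any irreducible component of $\overline{\RR(\pi, \Delta^t)}$ has irreducible image under $\phi$ lying in the finite set $\RR(\pi, V)$, so maps to a single point; and because $W \subset \phi(\Delta^t)$, the fiber $\phi^{-1}(w) \cap \Delta^t$ is nonempty for every $w \in \RR(\pi, V)$, forcing at least one component of $\overline{\RR(\pi, \Delta^t)}$ over each such $w$. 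Thus $\overline{\RR(\pi, \Delta^t)}$ has at least $n_\pi$ irreducible components, each of degree $\ge 1$, and the claimed inequality follows.

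Summing and invoking Theorem~\ref{thm:main:algebraic} applied to the ambient set $\Delta^t$ with the family $\mathcal{Q}$ yields
\[
\card(W) \;\le\; \sum_{\pi \in \{0,1\}^{\mathcal{Q}}} \deg \overline{\RR(\pi, \Delta^t)} \;\le\; D^t \sum_{i=0}^{tp} \binom{s}{i} \delta^i \;\le\; D^t (s\delta + 1)^{tp}.
\]
Taking logarithms gives $\log \card(W) \le t\bigl(\log D + p(\log s + \log \delta)\bigr) + O(\log t)$, which rearranges to the stated $\Omega$-lower bound on $t = \rank_{V,\Delta}(W)$. The main technical subtlety is the component-counting argument justifying $\deg \overline{\RR(\pi, \Delta^t)} \ge n_\pi$; once this is in place, the conclusion is a direct application of Theorem~\ref{thm:main:algebraic}.
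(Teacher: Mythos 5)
Your overall strategy --- pull $\mathcal{P}$ back along the linear summation map to a product of copies of $\Delta$ and invoke Theorem~\ref{thm:main:algebraic} with the product as the ambient algebraic set, counting the points of $W$ through degrees of the closures of the fibers --- is essentially the paper's own strategy (the paper phrases the fiber count via $b_0$, you phrase it via irreducible components mapping to single points; your version is, if anything, the more careful one over an arbitrary algebraically closed field). But there is one genuine gap, at the very first step: you assume every $w \in W$ can be written as a sum of \emph{exactly} $t$ elements of $\Delta$, i.e.\ that $W \subset \phi(\Delta^t)$ for $t = \rank_{V,\Delta}(W)$. The definition of $\rank_{V,\Delta}(W)$ only gives each $w$ as a sum of \emph{at most} $t$ elements, and padding a shorter decomposition to length exactly $t$ is impossible in general because $0$ need not lie in $\Delta$ (unlike in the tensor-rank special case). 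For example, if $\Delta = \{v_0\}$ is a single nonzero point and $W = \{v_0, 2v_0\}$, then $t = 2$ but $v_0 \notin \phi(\Delta^2) = \{2v_0\}$; so the covering claim, and with it the nonemptiness of the fiber over every point of $\RR(\pi,V)$ that your component count relies on, fails.

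The paper circumvents exactly this issue by working in the direct sum $V' = \bigoplus_{1 \leq t' \leq t_0} V^{t'}$ with $\Delta' = \bigcup_{t'} i_{t'}(\Delta^{t'})$, which accommodates decompositions of every length $\leq t_0$ at once (at the harmless price $\dim \Delta' \leq p t_0$, $\deg \Delta' \leq D^{t_0+1}$). Your argument is easily repaired in the same spirit without the direct sum: stratify $W = \bigcup_{1 \leq t' \leq t} W_{t'}$ by exact rank $t'$ and run your fiber/degree count for each $t'$ with the map $\phi_{t'} : V^{t'} \to V$ (only nonemptiness of the fibers over the points of $W_{t'}$ is needed, so it does not matter that $W_{t'}$ need not be $\mathcal{P}$-constructible); this gives $\card(W_{t'}) \leq D^{t'}(1+s\delta)^{t'p}$, and summing over $t'$ costs only a factor $t$, i.e.\ an additive $\log t$ after taking logarithms, which is absorbed into the $\Omega(\cdot)$. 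With that correction the proof is complete, and it in fact delivers the stated bound without the extra $\log\log\card(W)$ term that shows up at the end of the paper's own computation.
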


In the real closed case we have a similar lower bound (with
an added non-singularity assumption on $V$). 

\begin{theorem}
\label{thm:rank:vector:ordered}
Let $\R$ be an real closed field and $V$ a finite dimensional
$\R$-vector space.
Let $\Delta \subset V$ be an algebraic set,
which is a union of 
non-singular real varieties,
with $\dim(\Delta) = p$ and $\deg(\Delta)  = D$, and 
$\mathcal{P} \subset \R[V]_{\leq \delta}$ with $\card(\mathcal{P}) = s$.
Then, for any finite $\mathcal{P}$-semi-algebraic subset $W \subset V$ 
\[
\rank_{V,\Delta} (W) \in  \Omega \left(\frac{\log(\card(W))}{p(\log \delta + \log s) + \log D }\right)
\]
(where the constant in the $\Omega(\cdot)$ is an absolute constant).
\end{theorem}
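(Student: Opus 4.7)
The plan is to mirror the proof of the algebraic counterpart Theorem~\ref{thm:rank:vector:algebraic}, substituting the algebraic sign-pattern bound (Theorem~\ref{thm:main:algebraic}) by its ordered analogue (Theorem~\ref{thm:main:ordered:a}). Set $r := \rank_{V,\Delta}(W)$ and, for each $w \in W$, fix a decomposition $w = v_1^{(w)} + \cdots + v_r^{(w)}$ with $v_i^{(w)} \in \Delta(\R)$. Form the summing morphism $\tau \colon \Delta^r \to V$, $\tau(u_1,\ldots,u_r) := u_1 + \cdots + u_r$, and for each $P \in \mathcal{P}$ the pullback $\tilde P := P \circ \tau$, a polynomial of degree at most $\delta$ on $V^r$; put $\tilde{\mathcal{P}} := \{\tilde P : P \in \mathcal{P}\}$.

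I first check that $\Delta^r$ satisfies the hypotheses of Theorem~\ref{thm:main:ordered:a}: writing $\Delta = \Delta_1 \cup \cdots \cup \Delta_m$ with each $\Delta_i$ a non-singular real variety, $\Delta^r$ is a union of the $m^r$ non-singular real varieties $\Delta_{i_1} \times \cdots \times \Delta_{i_r}$, each of complex dimension $rp$ and of the same local real dimension at every real point; multiplicativity of degree under Cartesian products of projective closures gives $\deg(\Delta^r) = D^r$. I then lift $W$: set $\widetilde W := \tau^{-1}(W) \cap \Delta^r(\R)$, which is $\tilde{\mathcal{P}}$-semi-algebraic since $\tilde P = P \circ \tau$. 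The preimages $\tau^{-1}(w) \cap \Delta^r(\R)$ for $w \in W$ are nonempty (by the rank decomposition) and pairwise disjoint, so $b_0(\widetilde W) \geq |W|$. Decomposing $\widetilde W$ as the disjoint union of its sign-condition realizations $\RR(\tilde\sigma, \Delta^r(\R))$ and using additivity of $b_0$ on disjoint unions yields
\[
|W| \;\le\; b_0(\widetilde W) \;\le\; \sum_{\tilde\sigma} b_0\bigl(\RR(\tilde\sigma, \Delta^r(\R))\bigr),
\]
and applying Theorem~\ref{thm:main:ordered:a} to $\Delta^r$ with the family $\tilde{\mathcal{P}}$ bounds the right-hand side in terms of $r, p, s, \delta, D$. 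Taking logarithms and solving for $r$ then yields the claimed $\Omega(\cdot)$-bound.

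The main obstacle is obtaining the exact form of the denominator $p(\log\delta + \log s) + \log D$ in the resulting lower bound on $r$. A direct substitution $\deg(\Delta^r)=D^r$, $\dim\Delta^r=rp$ into the $O(D)^p\bigl((sd)^p + D\bigr)$ bound of Theorem~\ref{thm:main:ordered:a} contributes a term of size $O(D)^{r^2p}$, producing only a denominator of order $rp\log D$ rather than the sharper $\log D$. To recover the advertised form, the argument must be transferred — just as in the proof of Theorem~\ref{thm:rank:vector:algebraic} — to the image $\overline{\tau(\Delta^r)}\subset V$, a real algebraic subset of dimension at most $rp$ and of degree at most $D^r$ (the latter by the standard fact that linear projection does not increase degree). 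The delicate point is to apply a sign-condition bound on this possibly-singular image that keeps $\log D$ multiplied only by $O(r)$; establishing this is the main technical step beyond the routine lifting above.
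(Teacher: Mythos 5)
Your lifting step is essentially the first half of the paper's argument, but as proposed the proof has two gaps, one small and one essential. The small one: $r=\rank_{V,\Delta}(W)$ only guarantees that each $w\in W$ is a sum of \emph{at most} $r$ elements of $\Delta$; since $0$ need not lie in $\Delta$ you cannot pad a shorter decomposition to one with exactly $r$ summands, so for elements of rank $<r$ the fiber $\tau^{-1}(w)\cap\Delta^r$ may be empty and the inequality $b_0(\widetilde W)\ge\card(W)$ can fail. The paper avoids this by working in the direct sum $V'=\bigoplus_{1\le t\le r}V^{t}$ with $\Delta'=\bigcup_{t}i_t(\Delta^{t})$ (still of dimension at most $rp$ and degree at most $D^{r+1}$) and the linear map $\phi$ that sums each block, so that every $w\in W$ has a non-empty fiber in $\Delta'$.

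The essential gap is the one you flag yourself. Applying Theorem~\ref{thm:main:ordered:a} upstairs produces the factor $O(D^{r})^{rp}$ and hence only a bound in which $\log D$ is multiplied by $rp$, which does not give the stated denominator; and your proposed repair --- transferring to the image $\overline{\tau(\Delta^{r})}\subset V$ --- is left unproved and cannot be carried out with the tools of the paper: that image is in general singular, need not have constant local real dimension, and need not be a complete intersection, so none of Theorems~\ref{thm:main:ordered:a}, \ref{thm:main:ordered:c} or \ref{thm:main:ordered:c'} applies to it. The paper never goes downstairs. It stays with $\Delta'$ and applies Theorem~\ref{thm:main:ordered:c}, whose dependence on the degree of the ambient variety is only quadratic (the bound $D^{2}\cdot(O(sd))^{p}$ rather than $O(D)^{p}(sd)^{p}$), to $\Delta'$ together with the pulled-back family $\phi^{*}\mathcal{P}$, which still consists of $s$ polynomials of degree at most $\delta$ because $\phi$ is linear. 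This yields $\card(W)\le (D^{r+1})^{2}\,(O(s\delta))^{pr}$, whose logarithm is $O\bigl(r\,(\log D+p(\log s+\log\delta))\bigr)$, and the stated lower bound on $r$ follows at once. (Note that this use of Theorem~\ref{thm:main:ordered:c} is also why the paper's proof treats $\Delta'$ as a union of non-singular complete intersections, in line with the hypothesis of the algebra version, Theorem~\ref{thm:rank:algebra:ordered}; the decisive ingredient missing from your argument is precisely a sign-condition bound in which $\deg\Delta'$ enters polynomially rather than raised to the power $\dim\Delta'$.)
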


\begin{remark}
\label{rem:thm:rank:vector:ordered}
Note that we cannot use 
Theorem~\ref{thm:rank:vector:algebraic} with $k = C = \R[i]$
to deduce Theorem~\ref{thm:rank:vector:ordered} as a special case.
A finite $\mathcal{P}$-semi-algebraic set need not be realizable
as a finite $\mathcal{Q}$-constructible set over $\C$, where
the cardinality and the degrees of the polynomials in $\mathcal{Q}$ are 
proportional to those of $\mathcal{P}$.
\end{remark}

\subsection{Relative rank in algebras}
Let $k$ be a field and $A$ a finite-dimensional $k$-algebra. The algebra $A$ is allowed to be non-commutative or non-associative and we do not require $A$ to be unital.  For example, $A$ could be a finite-dimensional Lie algebra, the algebra of endomorphisms of a finite dimensional $k$-vector space, the group algebra
$k[G]$ for some finite group $G$ (or any sub-algebra of $k[G]$). In each of these cases, it is meaningful to ask
given some subset $\Delta$ of $A$ and an element $f\in A$, how many elements of $\Delta$ does one need to express $f$ as a product of elements of $\Delta$.

\begin{definition}[Relative rank in algebras]
\label{def:rank:algebra}
Let $A$ be a $k$-algebra and $\Delta \subset A$. 
For $\alpha \in A$, we define
\[
\rank_{A,\Delta}(\alpha)
\]
as the smallest number $t$, if it exists, such that there exists $\alpha_1,\ldots,\alpha_t \in \Delta$ such that $\alpha = \alpha_1 \cdots \alpha_t$, and $\infty$ otherwise.
We will call
$\rank_{A,\Delta}(\alpha)$ the rank of $\alpha$ relative to $\Delta$.

For any subset $W \subset A$, we will denote 
\[
\rank_{A,\Delta}(W) = \max_{\alpha\in W} \rank_{A,\Delta}(\alpha).
\]
\end{definition}

\begin{remark}
\label{rem:rank:algebra:monotone}
    Note that if $\Delta \subset \Delta'$, then $\rank_{A,\Delta}(\alpha) \geq \rank_{A,\Delta'}(\alpha)$ for all $\alpha \in A$, and 
    $\rank_{A,\Delta}(W) \geq \rank_{A,\Delta'}(W)$ for all subsets 
    $W \subset A$.
\end{remark}

\begin{definition}[Relative max-min rank in real algebras]
\label{def:mmrank}
More generally, let
$W$ be a semi-algebraic subset of $A$ (not necessarily finite). 
We define the max-min-rank, $\mmrank_{A,\Delta}(W)$ of 
$W$ relative to $\Delta$ by
\[
\mmrank_{A,\Delta}(W) = \max_{C \in \Cc(W)} 
\min_{\alpha \in C} \rank_{A,\Delta}(\alpha).
\]
(Note that if $W$ is finite 
$\mmrank_{A,\Delta}(W) = \rank_{A,\Delta}(W)$.)
\end{definition}

Notice that a finite dimensional $k$-algebra $A$ has the structure of a vector space (by forgetting the 
multiplication) and it makes sense to consider algebraic subsets  of $A$. We denote the ring of polynomials on 
$A$ by $k[A]$ and the subspace of polynomials in $k[A]$ of degree at most $\delta$ by $k[A]_{\leq \delta}$.

\begin{theorem}
\label{thm:rank:algebra:algebraic}
Let $k$ be an algebraically closed field and $A$ a finite dimensional $k$-algebra.
Let $\Delta \subset A$ be an algebraic subset with $\dim(\Delta) = p$ and $\deg(\Delta)  = D$, and 
$\mathcal{P} \subset k[A]_{\leq \delta}$ with $\card(\mathcal{P}) = s$.
Then, for any finite $\mathcal{P}$-constructible subset $W \subset A$ 
\[
\rank_{A,\Delta} (W) \in  \Omega \left(\frac{\log(\card(W))}{p(\log \delta + \log s + \log \log \card(W)) + \log D }\right)
\]
(where the constant in the $\Omega(\cdot)$ is an absolute constant).
\end{theorem}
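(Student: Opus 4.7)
The plan is to adapt the strategy of Theorem~\ref{thm:rank:vector:algebraic} from the additive to the multiplicative setting; the essential new ingredient is a degree bound for the iterated multiplication maps $\mu_i: A^i \to A$, which are polynomial rather than linear. Suppose $\rank_{A,\Delta}(W) \le t$, so that $W \subset \Delta^{(\le t)} := \bigcup_{i=1}^{t} \Delta^{(i)}$, where $\Delta^{(i)} = \{\alpha_1 \cdots \alpha_i \mid \alpha_j \in \Delta\} = \mu_i(\Delta^i)$. Let $V_t = \bigcup_{i=1}^{t} \overline{\Delta^{(i)}} \subset A$ (viewing $A$ as $k^N$ with $N = \dim_k A$). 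The goal is to apply Theorem~\ref{thm:main:algebraic} to $V_t$ and $\mathcal{P}$ and convert the resulting upper bound on $\card(W)$ into a lower bound on $t$.

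The technical heart is the degree estimate for $V_t$. Because multiplication in $A$ is bilinear, each coordinate of $\mu_i$ is a polynomial of degree $i$ in the coordinates of $A^i$. The product $\Delta^i \subset A^i$ has dimension $ip$ and degree at most $D^i$. A standard Bezout-type argument, in the spirit of Theorem~\ref{thmx:bezout}, bounds the degree of the image: intersecting $\overline{\Delta^{(i)}}$ with a generic affine subspace $L$ of codimension $e_i := \dim \overline{\Delta^{(i)}} \le ip$ in $A$ and pulling back through $\mu_i$ produces $\Delta^i$ cut by $e_i$ hypersurfaces of degree $i$, hence
\[
\deg \overline{\Delta^{(i)}} \;\le\; D^i \cdot i^{ip}.
\]
Summing over $i$ yields $\dim V_t \le tp$ and $\deg V_t \le \sum_{i=1}^{t} D^i \cdot i^{ip} \le t \cdot D^t \cdot t^{tp}$.

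Now $W$, being a finite $\mathcal{P}$-constructible subset of $V_t$, decomposes as a disjoint union of nonempty realizations $\RR(\pi, V_t)$ for those $\pi \in \{0,1\}^{\mathcal{P}}$ satisfying the defining Boolean formula of $W$; since $W$ is finite, the degree of the Zariski closure of each such realization equals its cardinality. Hence by Theorem~\ref{thm:main:algebraic} applied to $V_t$,
\[
\card(W) \;\le\; \sum_\pi \deg \overline{\RR(\pi, V_t)} \;\le\; \deg(V_t) \cdot \sum_{j=0}^{tp} \binom{s}{j} \delta^j \;\le\; t \cdot D^t \cdot (2 s \delta t)^{tp}.
\]
Taking logarithms yields $\log \card(W) = O\bigl(t \log D + t p (\log s + \log \delta + \log t)\bigr)$. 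The conclusion is vacuous when $t > \log \card(W)$, so we may assume $t \le \log \card(W)$, which gives $\log t \le \log \log \card(W)$, and rearranging produces the claimed lower bound on $\rank_{A,\Delta}(W) = t$.

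The main obstacle is the factor $i^{ip}$ in the degree bound for $\overline{\Delta^{(i)}}$: unlike the additive case (Theorem~\ref{thm:rank:vector:algebraic}), where the $i$-fold sum is the image under a \emph{linear} map and one directly obtains degree at most $D^i$, the bilinearity of multiplication in $A$ forces $\mu_i$ to have total degree $i$. The resulting $\log t$ contribution is precisely what produces the $\log \log \card(W)$ term appearing in the denominator of the stated bound, and controlling it carefully (rather than sacrificing a polynomial factor) is what requires the separate analysis in the algebra setting.
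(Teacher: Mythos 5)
Your proof is correct, but it takes a genuinely different route from the paper's. You push the products forward into $A$: you bound the degree of each $\overline{\Delta^{(i)}} = \overline{\mu_i(\Delta^i)}$ by a generic-section-plus-Bezout argument, assemble $V_t = \bigcup_{i \leq t} \overline{\Delta^{(i)}}$ with $\dim V_t \leq tp$ and $\deg V_t \leq t D^t t^{tp}$, and apply Theorem~\ref{thm:main:algebraic} inside $A$ with the original family $\mathcal{P}$ of degree $\delta$. The paper instead stays upstream: it forms the direct sum $V' = \bigoplus_{t \leq t_0} V^t$ and the algebraic set $\Delta' = \bigcup_t i_t(\Delta^t)$ (dimension $\leq p t_0$, degree $\leq D^{t_0+1}$, with no image-degree estimate needed), pulls $\mathcal{P}$ back through the multiplication map $\phi$ to get at most $s t_0$ polynomials of degree $\leq t_0\delta$ on $\Delta'$, uses the disjointness and nonemptiness of the fibers $\phi^{-1}(w) \cap \Delta'$, $w \in W$, to get $\card(W) \leq b_0(\phi^{-1}(W)\cap\Delta')$, and then applies Theorem~\ref{thm:main:algebraic} in the huge ambient space $V'$ -- exploiting precisely the ambient-dimension independence that is the paper's main theme. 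The extra multiplicative degree $t$ enters your argument through the factor $t^{tp}$ in $\deg V_t$, and the paper's through the degree $t_0\delta$ of the pulled-back polynomials; either way it produces the same $\log\log\card(W)$ term. What your route buys is that you never leave $A$ and keep the polynomials of degree $\delta$; what it costs is the extra fact $\deg\overline{f(X)} \leq \deg(X)\cdot(\deg f)^{\dim X}$ for polynomial images, which the paper nowhere states and which your one-line sketch handles slightly too casually: since the paper's degree is the sum over \emph{all} irreducible components (possibly of different dimensions), the generic linear section argument must be run component by component of $\Delta^i$ (each component $Y$ contributing $\deg(Y)\cdot i^{\dim Y}$, summing to at most $D^i i^{ip}$), and one must note that a generic section of each component of the closure meets only points of the constructible image so that the pulled-back fibers are nonempty and disjoint, contributing distinct components to the Bezout count. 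With that refinement your degree bound, and hence the whole argument, goes through, and the final bookkeeping (including the explicit reduction to $t \leq \log\card(W)$, which the paper leaves implicit) matches the stated bound.
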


We have the following theorem in the real closed case.

\begin{theorem}
\label{thm:rank:algebra:ordered}
Let $\R$ be a real closed field and $A$ a finite dimensional $\R$-algebra.
Let $\Delta \subset A$ be an algebraic subset,
with $\dim \Delta = p$, $\deg \Delta = D$,
and suppose that $\Delta$ is a 
finite union of non-singular complete intersection real algebraic varieties. 
Let
$\mathcal{P} \subset \R[A]_{\leq \delta}$ with $\card(\mathcal{P}) = s$.
Let $W \subset A$ be a $\mathcal{P}$-semi-algebraic  subset.
Then, 
%%for any finite $\mathcal{P}$-semi-algebraic subset $W \subset A$ 
\[
\mmrank_{A,\Delta} (W) \in  \Omega \left(\frac{\log(b_0(W))}{p(\log \delta + \log s + \log \log b_0(W)) + \log D }\right)
\]
(where the constant in the $\Omega(\cdot)$ is an absolute constant).
\end{theorem}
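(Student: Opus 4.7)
The plan is to lift the problem to sign conditions on $\Delta^k \subset A^k$ via the iterated multiplication map, and then invoke Theorem~\ref{thm:main:ordered:c}. Fix $t = \mmrank_{A,\Delta}(W)$. For each $1 \leq k \leq t$, let $\mu_k \colon A^k \to A$ be the $k$-fold multiplication map $(\alpha_1, \ldots, \alpha_k) \mapsto \alpha_1 \cdots \alpha_k$. Since multiplication in $A$ is bilinear, the coordinates of $\mu_k$ are polynomials of total degree $k$ in the variables of $A^k$, so each $P \in \mathcal{P}$ pulls back to $\tilde P_k = P \circ \mu_k \in \R[A^k]_{\leq k \delta}$, producing a family $\tilde{\mathcal{P}}_k$ of $s$ polynomials of degree at most $k \delta$ on $A^k$. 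The subset $\Delta^k \subset A^k$ is a finite union of products $\Delta_{i_1} \times \cdots \times \Delta_{i_k}$ of non-singular complete intersection real varieties, hence itself a finite union of such, with dimension $pk$ and total degree $D^k$.

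For the counting step, each component $C \in \Cc(W)$ contains a point $\alpha_C$ with $\rank_{A,\Delta}(\alpha_C) = k_C \leq t$, and we fix a witnessing decomposition $\alpha_C = \beta_{C,1} \cdots \beta_{C,k_C}$ with $\beta_{C,i} \in \Delta$. Let $\sigma_C \in \{0,1,-1\}^{\mathcal{P}}$ be the sign condition realized by $\alpha_C$ (equivalently by all of $C$), and let $\tilde \sigma_C$ be the corresponding sign condition on $\tilde{\mathcal{P}}_{k_C}$. The tuple $(\beta_{C,1}, \ldots, \beta_{C,k_C})$ then lies in some semi-algebraically connected component $\tilde C$ of $\RR(\tilde \sigma_C, \Delta^{k_C}(\R))$. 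The key injectivity observation is that if two distinct components $C, C'$ of $W$ with $k_C = k_{C'} = k$ were to land in the same $\tilde C$, then $\mu_k(\tilde C)$ would be a connected subset of $W$ meeting both $\alpha_C$ and $\alpha_{C'}$: every point of $\tilde C$ realizes $\sigma_C$, which appears in the Boolean formula defining the $\mathcal{P}$-semi-algebraic set $W$, so $\mu_k(\tilde C) \subset W$. This contradicts that $\alpha_C, \alpha_{C'}$ lie in different components of $W$, so stratifying the components of $W$ by $k_C$ gives the master inequality
\[
b_0(W) \;\leq\; \sum_{k=1}^{t} \sum_{\sigma \in \{0,1,-1\}^{\tilde{\mathcal{P}}_k}} b_0\bigl(\RR(\sigma, \Delta^k(\R))\bigr).
\]

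To bound the right-hand side, I would apply Theorem~\ref{thm:main:ordered:c} component-by-component to $\Delta^k$ with the degree parameter $d_k = \max(k \delta, D^k)$ to meet the hypothesis $d \geq \deg(V)$. Summing the component-wise bounds using $\sum_j (\deg V_j)^2 \leq \bigl(\sum_j \deg V_j\bigr)^2 = D^{2k}$ yields
\[
\sum_{\sigma} b_0\bigl(\RR(\sigma, \Delta^k(\R))\bigr) \;\leq\; D^{2k} \cdot (O(s d_k))^{pk}.
\]
Summing over $k \leq t$ and taking logarithms, in the principal regime $k \delta \geq D^k$ one arrives at
\[
\log b_0(W) \;\leq\; O\bigl(t \log D + p t (\log s + \log \delta + \log t)\bigr),
\]
after which substituting the a priori bound $\log t \leq \log \log b_0(W)$ (valid whenever the lower bound on $t$ is nontrivial) and solving for $t$ produces the stated inequality.

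The step I expect to be the hardest is handling the complementary regime $D^k > k\delta$, where the constraint $d \geq D$ of Theorem~\ref{thm:main:ordered:c} forces $d_k = D^k$ and contributes a $D^{p k^2}$ factor that is too large to match the linear-in-$t$ dependence of the claimed denominator. Overcoming this obstacle likely requires either exploiting the product structure of $\Delta^k$ more carefully (for instance, combining the multiplicativity of $b_0(\Delta(\R))^k$ under products with a Milnor--Thom style count of sign conditions on the product) or running a finer induction on $k$ rather than applying the sign-condition bound as a black box to the whole of $\Delta^k$.
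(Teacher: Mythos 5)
Your lifting and counting argument is essentially the paper's own proof. The paper packages your maps $\mu_k$ into a single map $\phi=\sum_{t\le t_0}\phi_t\circ\pi_t$ on the direct sum $V'=\bigoplus_{1\le t\le t_0}V^t$ and sets $\Delta'=\bigcup_t i_t(\Delta^t)$, then argues exactly as you do: each semi-algebraically connected component $C$ of $W$ contains a point of rank at most $t_0$, the sign condition $\sigma_C$ realized there satisfies $\RR(\sigma_C,A)\subset W$ because $W$ is $\mathcal{P}$-semi-algebraic, and distinct components of $W$ receive distinct components of $\phi^{-1}(\RR(\sigma))\cap\Delta'$, giving $b_0(W)\le\sum_\sigma b_0(\phi^{-1}(\RR(\sigma))\cap\Delta')$ --- your master inequality with the values of $k$ aggregated into one space. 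The pullback polynomials have degree at most $t_0\delta$, $\dim\Delta'\le p t_0$, $\deg\Delta'\le D^{t_0+1}$, and the paper then quotes Theorem~\ref{thm:main:ordered:c} to get the bound $D^{2(t_0+1)}(t_0 s\delta)^{O(p t_0)}$ and takes logarithms, just as you do.

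Concerning the step you single out as hardest: the paper does nothing that you are missing --- it invokes Theorem~\ref{thm:main:ordered:c} with $d=t_0\delta$ without verifying the hypothesis $d\ge\deg$, so your scruple points at looseness in the source rather than at an idea you failed to find. Two remarks. First, your worst-case fear of a $D^{pk^2}$ factor is unfounded: the hypothesis $d\ge D$ enters the proof of Theorem~\ref{thm:main:ordered:c} only through Lemma~\ref{lem:main:ordered:c:6}, where it is used solely to bound each entry of the degree sequence of the complete intersection by $d$; for a component of $\Delta^k$ that degree sequence is the one for $\Delta$ repeated across the $k$ factors, so each entry is at most $D$, not $D^k$, and the choice $d_k=\max(k\delta,D)$ already suffices. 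Second, with that choice the denominator you obtain is $O\bigl(p(\log s+\log\delta+\log\log b_0(W)+\log D)\bigr)$, i.e.\ the stated bound except that $\log D$ may acquire a factor $p$; recovering the statement exactly as printed, with an unmultiplied $\log D$ in all regimes, is precisely what the paper's unchecked application of Theorem~\ref{thm:main:ordered:c} glosses over, and it is not resolved there either.
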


\subsection{Quantum circuit lower bounds}
\label{subsec:quantum}
\subsubsection{Shannon's lower bound for classical circuits}
Boolean circuit complexity measures complexity of Boolean functions $f: \{0,1\}^n \rightarrow \{0,1\}$ by the size of the smallest circuit computing  $f$.
We assume that the gates of a circuit come from some fixed finite universal set of gates.
We refer the reader to \cite[Page 7, Definition 3.1]{Wegener} for the precise definition of a circuit.
Notice that in this case the set of circuits of bounded size is finite. 

Shannon \cite{Shannon} showed using a counting argument (using the fact that as noted before that the set of circuits of bounded size is finite)  very early that almost all Boolean functions need circuits of size $\Omega({2^n}/{n})$ (see \cite[Page 90, Theorem 2.1]{Wegener}.
Here almost all refers to the fact that the number of Boolean functions that require circuits of size $\Omega({2^n}/{n})$ is bounded from below by $2^{2^n}(1 - o(1))$. It is also known (see for instance \cite[Page 92, Theorem 2.2]{Wegener}) that every Boolean
function $f: \{0,1\}^n \rightarrow \{0,1\}$ can be computed by a circuit of size $O({2^n}/{n})$.

\begin{remark}
\label{rem:arity-classical}
In the theorems cited above the set of gates used is assumed to be all possible gates 
of arity two (i.e. the finite set of $2^{2^2} = 16$ gates computing all possible Boolean functions 
$g:\{0,1\}^2 \rightarrow \{0,1\}$). The arity two is not important. The same asymptotic results will
hold even if we allow all gates of some bounded arity $q \geq 2$ 
(the constants will depend on the arity) .
\end{remark}

\subsection{Analog of Shannon's lower bound for quantum circuits}
We begin with a brief review of quantum complexity since it motivates our definition.
Recall that in quantum complexity theory a qubit refers to a two-dimensional complex Hermitian 
vector space $V$ with a fixed orthonormal basis $\mathcal{B} = \{|0\ra, |1\ra\}$. A $n$-qubit register refers to
the $n$-fold tensor power, $V^{\otimes n}$ of $V$, with an induced Hermitian structure in which the vectors
$\mathcal{B}^{\otimes n} = \{|j\ra | 0 \leq j \leq N-1\}$ form an orthonormal basis
(here $N = 2^n$, and we identify a $0$-$1$ string $x_{n-1}\cdots x_{0}$ of length $n$, with the integer
\[
x = \sum_{k=0}^{n-1} x_k \cdot 2^k,
\]
and denote $|x_{n-1}\ra\otimes \cdots \otimes |x_0\ra$ simply by $|\mathbf{x}\ra$).

A $q$-ary quantum gate $g$ is an element $U_g \in \mathbf{U}(V^{\otimes q})$. 
In a quantum circuit $C$,  with input $n$-qubits, such a gate operates on a subset of $q$ of
the $n$-qubits (say labelled by the tuple $\mathbf{i} = (i_1,\ldots,i_q), 1 \leq i_1 <\cdots <i_q \leq N $),
which gives rise to an element $U_{g,\mathbf{i}}$ of  $\mathbf{U}(V^{\otimes n})$ which can be identified with  
$U_g \otimes \mathrm{Id}_{V^{\otimes (n-q)}}$.
(after rearranging the order of the copies of $V$ in the tensor power $V^{\otimes n}$
bringing the ones indexed by $(i_1,\ldots,i_q)$ to the the positions $(1,\ldots,q)$).

A quantum circuit $C$ with $n$ input qubits labelled by $1,\ldots,n$ having $r$  
$q$-ary gates is determined by the following data:
\begin{enumerate}[1.]
    \item an ordering of the $r$ gates 
(lets suppose the ordered tuple of gates is $g_1,\ldots,g_r$,  and
\item 
for each $i, 1 \leq i \leq r$,  an ordered  choice, $\mathbf{i} = (i_1,\ldots,i_q)$ of $q$ elements
from amongst $1,\ldots,n$.
\end{enumerate}
We denote 
\[
U(C) = \prod_{1 \leq i \leq r} U_{g_i,\mathbf{i}} \in \mathbf{U}(V^{ \otimes n})
\]
the unitary transformation implemented by $C$ (where $N = 2^{n}$), and we denote $\size(C) = r$.

For $q \geq 1$, 
we will denote by $\mathcal{C}_q$, the set of quantum circuits %%\cite[Section 3.3]{Benenti-et-al} 
using $q$-ary quantum gates.

\begin{definition}[Quantum circuit complexity of a unitary transformation]
\label{def:quantum-complexity}
    The $q$-ary quantum circuit complexity of a unitary transformation $\mathbf{U}(V^{\otimes n})$ is defined 
    as 
    \[
        \min_{C \in \mathcal{C}_q, U(C) = U} \size(C).
    \]
\end{definition}
In other words, if we denote  
\[
\Delta = \Delta_{n,q} = \{ U_{g,\mathbf{i}} \mid U_g \in \mathbf{U}(V^{\otimes q}), 
\mathbf{i} = (i_1,\ldots,i_q), 1 \leq i_1 < \cdots < i_q \leq n\},
\]
then the $q$-ary quantum circuit complexity  of a unitary transformation $U \in \mathbf{U}(V^{\otimes n})$ 
%%can be rephrased to be the smallest 
%%$t > 0$, such that there exists $U_1,\ldots,U_t \in \Sigma$ such that  $U = U_1 \cdots U_t$. 
is precisely (using Definition~\ref{def:rank:algebra})
\[
\rank_{\End(V^{\otimes n}),\Delta}(U)
\]
considering $U$ as an element of $\End(V^{\otimes n})$.
This motivates the following definition:
\begin{definition}[Relative quantum circuit complexity]
 \label{def:quantum-complexity-relative} 
For arbitrary $\Delta \subset \End(V^{\otimes n})$ we will call 
\[
\rank_{\End(V^{\otimes n}),\Delta}(U)
\]
the \emph{quantum complexity of $U$ relative to $\Delta$}.
\end{definition}

\begin{remark}
    \label{rem:oracle} 
    In analogy with classical complexity, $\Delta$ could be thought of as an (quantum) oracle  -- with unit cost for each application of an element of $\Delta$ in an quantum circuit. Notice though we do not assume that $\Delta$ is contained in the unitary group
    $\mathbf{U}(V^{\otimes n})$.
    %%, but it is allowed to be.
\end{remark}

\begin{remark}[Quantum complexity classes with oracles]
    We note here that complexity results relative to oracles play an important role in classical as well as quantum complexity. For example,
    it has been shown that relative to an oracle $\mathbf{BQP}$ is separated from the classical polynomial hierarchy $\mathbf{PH}$ \cite{Raz-Tal2022}.
    In another direction Aaronson \cite{Aaronson} showed that by extending the set of gates from unitary to arbitrary linear maps the corresponding ``quantum'' complexity class that one obtains which is analogous to the well-known quantum complexity class $\mathbf{BQP}$ is equal to the classical complexity class Probabilistic Polynomial Time $\mathbf{PP}$. Extending the set of gates corresponds in the notation introduced in this paper (Definition~\ref{def:quantum-complexity}) letting 
    \[
\Delta' = \Delta'_{n,q} = \{ L_{g,\mathbf{i}} \mid L_g \in \End(V^{\otimes q}), 
\mathbf{i} = (i_1,\ldots,i_q), 1 \leq i_1 < \cdots < i_q \leq n\},
\]
and defining the complexity of $U \in \mathbf{U}(V^{\otimes n})$ to be
\[
\rank_{\End(V^{\otimes n}),\Delta'}(U).
\]
Thus, clearly a small increase in the set $\Delta$ in Definition~\ref{def:quantum-complexity} can lead to a substantial increase in the computational power of quantum circuits (assuming the widely believed hypothesis that $\mathbf{BQP}$ is properly contained in $\mathbf{PP}$).

The main theorems in this section (Theorems~\ref{thm:quantum1} and \ref{thm:quantum2}) can be interpreted as pointing in the opposite direction (from the point of view Shannon's lower bound theorem). They imply that 
Shannon's lower bound theorem holds true for quantum circuits even 
relative to oracles much larger than $\Delta$ in Definition~\ref{def:quantum-complexity}. We can allow $\Delta$ to be arbitrary unions of non-singular real complete intersections,
of dimension bounded by some constant (independent of $n$), and degree bounded by  $O(1)^n$ (exponential in $n$).
Such a $\Delta$ can also allow for quantum gates that are \emph{non-local} (i.e. acting on a non-constant number of qubits).
In comparison, $\Delta_{n,q}$ in Definition~\ref{def:quantum-complexity}, is contained in a union of real complete intersections of dimension $\leq q^2 = O_q(1)$ and degree $\sum_{i=1}^{q} \binom{n}{q} = O(n)^q$ (polynomial in $n$) and each gate acts on at most $q$ qubits. 
\end{remark}

We now explain (following \cite{Lipton-Regan}) what is meant by a quantum circuit computing a Boolean function. 
For ease of understanding,  we start with a provisional (very stringent) definition 
and then give a more general (much less stringent)  definition afterwards. 

Let $f: \{0,1\}^n \rightarrow \{0,1\}$ be a Boolean function. We associate a 
unitary transformation $U_f \in \mathbf{U}(2^{n+1})$ to $f$ which takes for
$x_0,\ldots,x_{n-1},y \in \{0,1\}$, the separable state 
$|\mathbf{x} \ra \otimes |y\ra$ to the state 
$|\mathbf{x} \ra \otimes |f(x) \oplus y\ra$, where $\oplus$  denotes ``exclusive-or''.
Note that using the measurement postulate of quantum mechanics, if we 
set the input qubits to $|\mathbf{x}\ra \otimes |0\ra$ and measure
the ancillary bit in the output, we will be left in the state $|1\ra$ if $f(x) = 1$ and 
in the state $|0\ra$ if $f(x) = 0$ \emph{with probability $1$}.

\begin{definition}
\label{def:quantum-complexity-Boolean}
We will call the quantum circuit complexity of $U_f$
as the stringent quantum circuit complexity of $f$, and the 
 quantum complexity of $U_f$ relative to $\Delta$ as
 stringent quantum complexity of $f$ relative to $\Delta$.
\end{definition}

We prove the following theorem.

\begin{theorem}
    \label{thm:quantum1}
    There exists $C > 0$, such that for 
    all $n > 0$, $\Delta \subset \End(V^{\otimes (n+1)})$ algebraic subset  with $\dim \Delta = p$ and $\deg \Delta = D$,
there exists Boolean functions $f:\{0,1\}^n \rightarrow \{0,1\}$, such that the stringent quantum complexity of 
$f$ relative to $\Delta$, is at least 
\[
C \cdot \frac{2^n}{p\cdot n + \log D}.
\]
\end{theorem}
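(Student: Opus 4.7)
The plan is to reduce the claim to an application of Theorem~\ref{thm:rank:algebra:algebraic} with $A = \End(V^{\otimes (n+1)})$ (a $\mathbb{C}$-algebra of complex dimension $4^{n+1}$) and $W = \{U_f : f : \{0,1\}^n \to \{0,1\}\}$, the set of $2^{2^n}$ unitaries associated with the Boolean functions on $n$ bits. Since $\rank_{A,\Delta}(W) = \max_{U \in W} \rank_{A,\Delta}(U)$ by Definition~\ref{def:rank:algebra}, any lower bound on this quantity immediately produces an $f$ whose stringent quantum complexity relative to $\Delta$ (i.e.\ $\rank_{A,\Delta}(U_f)$, by Definition~\ref{def:quantum-complexity-Boolean}) is at least that bound.

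The key technical step is to exhibit $W$ as a $\mathcal{P}$-constructible subset of $A$ with tight control on both $s = \card(\mathcal{P})$ and the common degree bound $\delta$. I would use the explicit formula
\[
U_f = \sum_{\mathbf{x} \in \{0,1\}^n} |\mathbf{x}\rangle\langle\mathbf{x}| \otimes X^{f(\mathbf{x})},
\]
where $X$ is the Pauli-$X$ operator, so that every $U_f$ is block-diagonal with respect to the computational basis of the first $n$ qubits, with $2^n$ diagonal blocks of size $2 \times 2$, each equal to $I$ or $X$. A matrix $M \in A$ lies in $W$ if and only if all of its off-block entries vanish (at most $4^{n+1}$ linear equations on the matrix entries) and each diagonal $2 \times 2$ block $M_{\mathbf{x}}$ lies in $\{I, X\}$, which can be cut out by a constant number of linear and degree-two equations per block (for example $M_{00}^2 = M_{00}$, $M_{00} + M_{01} = 1$, $M_{00} = M_{11}$, $M_{01} = M_{10}$). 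Altogether this produces a $\mathcal{P}$-constructible description of $W$ with $s = O(4^n)$ and $\delta \leq 2$.

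Plugging $\card(W) = 2^{2^n}$, $s = O(4^n)$, $\delta = 2$, $\dim\Delta = p$, $\deg\Delta = D$ into Theorem~\ref{thm:rank:algebra:algebraic}, the denominator $p(\log\delta + \log s + \log\log\card(W)) + \log D$ simplifies to $p \cdot O(n) + \log D$, since $\log\delta = O(1)$, $\log s = O(n)$ and $\log\log\card(W) = \log(2^n \log 2) = O(n)$. The numerator is $\log\card(W) = \Theta(2^n)$, giving
\[
\rank_{A,\Delta}(W) \in \Omega\!\left(\frac{2^n}{p \cdot n + \log D}\right),
\]
which is the claimed lower bound.

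The main obstacle is precisely the first technical step: one needs a polynomial description of $W$ with $s$ only $O(4^n)$ and $\delta = O(1)$, because Theorem~\ref{thm:rank:algebra:algebraic} is sensitive to $\log s$ inside the denominator. A naive description (for instance, listing all $2^{4^{n+1}}$ possible $0/1$ matrices and then excluding non-members of $W$) would blow up $\log s$ to $\Theta(4^n)$ and destroy the bound. Once the explicit tensor-product structure of $U_f$ is exploited as above, the remainder is routine bookkeeping.
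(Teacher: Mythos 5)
Your proposal is correct and follows essentially the same route as the paper: the paper likewise sets $W = \{U_f\}$, notes $\card(W) = 2^{2^n}$ and that $W$ is $\mathcal{P}$-constructible with $s = 2^{2(n+1)}$ polynomials of degree at most $1$ (entry-wise linear equations, rather than your block-wise description with a few degree-$2$ equations), and then applies Theorem~\ref{thm:rank:algebra:algebraic} with exactly the same bookkeeping $\log s = O(n)$, $\log\log\card(W) = O(n)$. Your more explicit verification that $W$ admits a description with $s = O(4^n)$ and $\delta = O(1)$ is a harmless refinement of a step the paper simply asserts.
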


We can deduce as a corollary (setting
$p = 2^q \times 2^q)$ and $D = \binom{n}{q}$ in Theorem~\ref{thm:quantum1}) the following lower bound 
which is the quantum analog of Shannon's lower bound for classical circuits.

\begin{corollary}
\label{cor:quantum1}
For each $q \geq 1$, there exists $C=C_q,  0 < C < 1$ (depending only on $q$), such that for each $n >0$ 
there exists Boolean functions $f:\{0,1\}^n \rightarrow \{0,1\}$, such that the stringent quantum circuit complexity of
$f$ is least $C \cdot \frac{2^n}{n}$. 
\end{corollary}
\begin{remark}
It follows that the fraction of Boolean functions that need quantum circuits in $\mathcal{C}_q$ of size greater than $C \cdot \frac{2^n}{n}$ is $1- 2^{-2^{n-1}} = 1 - o(1)$.  
\end{remark}

\begin{remark}
Notice that since the set of gates $\mathcal{U}_q$ is infinite (uncountably so), and hence the number of 
distinct quantum circuits of any bounded size is also uncountably infinite. So a priori there is no reason for such circuits not being able to compute all Boolean functions on $n$ variables.
\end{remark}

As mentioned before we will work with a more general notion of what it means for a 
quantum circuit to compute a Boolean function. We relax our prior definition in two
ways. First, instead of ending up in the right state depending on the value of the function $f$ 
with probability $1$, we are satisfied if we end at the right state with probability $> 1/2$.
We will also allow more than one ancillary bits. The precise  definition is as follows.

\begin{definition}[Unitary transformation associated to a Boolean function]
\label{def:quantum-boolean}
We denote the $n$ input qubits by $x_0, \ldots,x_{n-1}$
and $t+1$ ancillary qubits 
denoted by $y,z_1,\ldots,z_t$.

Given a Boolean function $f:\{0,1\}^n \rightarrow \{0,1\}$, 
we denote by $U_{f,t}^+$ the set of all unitary transfomations
$U$ of $V^{\otimes (n+t+1)}$
satisfying
for each $x,x', 0 \leq x,x' \leq 2^{n}-1$, $y \in \{0,1\}$, 
\begin{eqnarray}
\label{eqn:def:quantum-boolean}
\nonumber
|\la \mathbf{x}' \;y \; \mathbf{0}  \; |\; U \;|\; \mathbf{x} \; 0 \; \mathbf{0} \ra|^2 &>& 1/2 \text{ if $x' = x$ and $y\oplus f(x_0,\ldots,x_{n-1}) = 0$},\\  
&<&  1/2 \text{ otherwise}
\end{eqnarray}
($|\mathbf{x} \ra \otimes |y \ra \otimes |\mathbf{0}\ra$ is abbreviated as $|\mathbf{x} \; y\; \mathbf{0}\ra$).
\end{definition}

\begin{remark}
    \label{rem:approximate-implies-exact}
    Note that  $U_f \in U_{f,1}^+$.
     Also note that the inequality appearing
    in \eqref{eqn:def:quantum-boolean} is the most relaxed possible in as much as we do not insist
    on any positive gap between the accepting and rejecting probabilities. 
    %%This point will be important 
    %%later when we discuss an alternative possible method for obtaining lower bounds using the Solovay-Kitaev approximation theorem (see Section~\ref{sec:alternative}).
\end{remark}

\begin{definition}
\label{def:quantum-complexity-Boolean-relaxed}
We will call the 
minimum of the 
quantum circuit complexities of $U \in U_{f,t}^+$
as the quantum circuit complexity of $f$ with $t+1$ ancillary bits, 
and the 
minimum of 
quantum complexities of $U \in U_{f,t}^+$ relative to $\Delta$ as
the quantum complexity of $f$ with $t+1$ ancillary bits relative to $\Delta$.
\end{definition}

In the following theorem we consider $A = \End(V^{\otimes (n+t+1)})$ as a 
real subspace of the  $4^{n+t+1}  \times 4^{n+t+1}$ dimensional real vector space 
$A_\mathbb{R} = \End(V_{\mathbb{R}}^{\otimes (n+t+1)})$
considering $V$ as a $4$-dimensional $\mathbb{R}$-vector space denoted $V_{\mathbb{R}}$.

\begin{theorem}
    \label{thm:quantum2}
    There exists $C > 0$, such that for 
    all $n > 0$, $\Delta \subset A %%\End(V^{\otimes n})
    $,
    such that $\Delta$ is a real algebraic subset of  $A_\mathbb{R}$, with $\dim \Delta = p$ and $\deg \Delta = D$,
    and suppose $\Delta$ is a union of real non-singular complete intersection varieties.
    Then, 
there exists Boolean functions $f:\{0,1\}^n \rightarrow \{0,1\}$, such that the quantum complexity of $f$ with $t+1$ ancillary bits relative to $\Delta$, is at least 
\[
C \cdot \frac{2^n}{p\cdot (n + t) + \log D}.
\]
\end{theorem}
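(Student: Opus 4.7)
The argument is a counting (Shannon-style) lower bound, following the same general template as the proof of Theorem~\ref{thm:quantum1} but using the full strength of Theorem~\ref{thm:rank:algebra:ordered} to accommodate the relaxed definition in Definition~\ref{def:quantum-complexity-Boolean-relaxed}. First I encode all the Boolean functions semi-algebraically: each $U_{f,t}^+$ is the subset of $A_\mathbb{R}$ cut out by a common family $\mathcal{P}$ of $s = O(4^{n+t+1})$ real polynomials of degree $\delta = 2$ --- namely the defining equations of $U U^\dagger = \mathrm{Id}$ together with the $2\cdot 4^n$ polynomials appearing in Definition~\ref{def:quantum-boolean} --- by the sign pattern of $\mathcal{P}$ dictated by $f$. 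Consequently distinct Boolean functions $f$ yield disjoint $U_{f,t}^+$, and each is nonempty (it contains $U_f \otimes \mathrm{Id}_{V^{\otimes t}}$ in the notation of Definition~\ref{def:quantum-complexity-Boolean}), so the number of sign patterns of $\mathcal{P}$ realized on $W := \bigsqcup_f U_{f,t}^+$ is at least $2^{2^n}$.

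Now suppose for contradiction that every Boolean function $f : \{0,1\}^n \to \{0,1\}$ has quantum complexity at most $r$ relative to $\Delta$ with $t+1$ ancillary bits. Then for each $f$ there exists $U_f \in U_{f,t}^+$ lying in $R_r := \bigcup_{r'=0}^{r} \mu_{r'}(\Delta^{r'})$, where $\mu_{r'} : \Delta^{r'} \to A$ denotes the $r'$-fold multiplication map; it follows that all $2^{2^n}$ distinct sign patterns identified above are realized on $R_r$. To bound the number of sign patterns of $\mathcal{P}$ realized on $R_r$, I pull $\mathcal{P}$ back through each $\mu_{r'}$, obtaining polynomials of degree at most $\delta r' \leq 2 r'$ on $\Delta^{r'}$; the latter, since $\Delta$ is a finite union of non-singular complete intersection real varieties, is itself a finite union of non-singular complete intersection real varieties of dimension $p r'$ and degree at most $D^{r'}$. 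The sign-condition counting argument underlying Theorem~\ref{thm:rank:algebra:ordered}, together with the summation over $r' \leq r$, then yields an upper bound whose logarithm is $O\bigl( r \log D + p r (\log s + \log \delta + \log r) \bigr)$.

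Substituting $\log s = O(n+t)$, $\log \delta = O(1)$ and $\log r = O(n)$, and requiring this bound to be less than $2^n$, gives the desired $r \geq C \cdot 2^n / \bigl( p(n+t) + \log D \bigr)$, producing the claimed Boolean function $f$. The principal technical hurdle is the third step: a naive application of Theorem~\ref{thm:main:ordered:a} to $\Delta^{r'}$ would introduce a factor of $D^{O(p r'^2)}$ coming from the $O(D^{r'})^{p r'}$ prefactor, and would only yield the substantially weaker lower bound $r = \Omega\bigl(\sqrt{2^n/(p \log D)}\bigr)$ once $\log D$ is large. Obtaining the linear-in-$r$ dependence on $\log D$ --- which is what makes the denominator in the conclusion linear rather than quadratic in $r$ --- is precisely the content of Theorem~\ref{thm:rank:algebra:ordered} in the real closed setting, and this is the principal input beyond the setup above.
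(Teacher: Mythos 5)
Your proposal is correct and follows essentially the same route as the paper: the paper likewise encodes the Boolean functions by the pairwise disjoint, nonempty $\mathcal{P}$-semi-algebraic sets $U_{f,t}^+$ (with $\card(\mathcal{P}) = O(2^{2(n+t)})$ and bounded degree) and then invokes the real relative-rank bound of Theorem~\ref{thm:rank:algebra:ordered}, whose proof is precisely your step of pulling $\mathcal{P}$ back along the multiplication maps to powers of $\Delta$ and applying the sign-condition bound of Theorem~\ref{thm:main:ordered:c}. The only difference is that you unwind that theorem into a direct count of realizable sign conditions on the rank-at-most-$r$ locus instead of citing the $\mmrank$ statement applied to $W^+ = \bigcup_f U_{f,t}^+$ as the paper does, which is if anything slightly cleaner, since the complexity of $f$ is a minimum over all of $U_{f,t}^+$ rather than over a single semi-algebraically connected component.
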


The following corollary of Theorem~\ref{thm:quantum2}  (without the oracle $\Delta)$  already appeared in \cite{Roychowdhury-et-al}, and we include it here for completeness.

\begin{corollary}\cite[Theorem 7.1]{Roychowdhury-et-al}
\label{cor:quantum2}
For each $q \geq 1$, there exists $C = C_q, 0 < c <1$ (depending only on $q$), such that for each $n >0$ 
there exists Boolean functions $f:\{0,1\}^n \rightarrow \{0,1\}$, such that the quantum circuit complexity of $f$ with $t+1$ ancillary bits is at least $C \cdot \frac{2^n}{n+t}$. 
\end{corollary}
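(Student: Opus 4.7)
The plan is to derive the corollary directly from Theorem~\ref{thm:quantum2} by choosing $\Delta$ to be the standard set $\Delta_{n+t+1,q}$ of $q$-ary quantum gates acting on any $q$ of the $n+t+1$ qubits. With this choice, Definition~\ref{def:quantum-complexity} identifies $\rank_{A,\Delta}(U)$ with the ordinary $q$-ary quantum circuit size of $U$, so the quantum complexity of $f$ with $t+1$ ancillary bits relative to this $\Delta$ coincides with the quantum circuit complexity of $f$ with $t+1$ ancillary bits appearing in the corollary.

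Next I would verify the hypotheses of Theorem~\ref{thm:quantum2} and compute the relevant invariants. By construction, $\Delta$ is a finite union, indexed by the $\binom{n+t+1}{q}$ ordered $q$-subsets $\mathbf{i} = (i_1,\ldots,i_q)$, of linear images of $\mathbf{U}(V^{\otimes q})$ inside $A$ via the embedding $U_g \mapsto U_g \otimes \mathrm{Id}_{V^{\otimes(n+t+1-q)}}$ (after a permutation of tensor factors). Inside the real vector space $A_{\mathbb{R}}$, the unitary group $\mathbf{U}(V^{\otimes q}) \subset M_{2^q}(\mathbb{C})$ is cut out by the $(2^q)^2$ independent real Hermiticity equations coming from $UU^* = I$ in the real $2 \cdot 4^q$-dimensional ambient space; this realizes each component of $\Delta$ as a non-singular real complete intersection variety of real dimension $4^q$ and degree at most $2^{4^q}$ (the latter by B\'ezout applied to the quadratic defining equations).

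I would then read off $p$ and $D$. The dimension satisfies $p = \dim \Delta = 4^q = O_q(1)$, and the degree satisfies
\[
\deg \Delta \;\leq\; \binom{n+t+1}{q} \cdot 2^{4^q} \;=\; O_q\bigl((n+t)^q\bigr),
\]
so $\log D = O_q(\log(n+t))$. Plugging into Theorem~\ref{thm:quantum2} yields a Boolean function $f:\{0,1\}^n \to \{0,1\}$ whose quantum circuit complexity with $t+1$ ancillary bits is bounded below by
\[
C \cdot \frac{2^n}{p(n+t) + \log D} \;\geq\; C \cdot \frac{2^n}{4^q(n+t) + O_q(\log(n+t))} \;\geq\; C_q \cdot \frac{2^n}{n+t},
\]
for an appropriate constant $C_q > 0$ depending only on $q$, which is precisely the claim.

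The only substantive thing to check is the algebraic-geometric description of $\Delta$: that the unitary group is genuinely a non-singular real complete intersection (which is standard, as the Hermiticity condition contributes exactly $4^q$ independent real equations and $\mathbf{U}(2^q)$ is a Lie group of the matching dimension), and that the constant degree bound for each component propagates to the union. No new ideas beyond Theorem~\ref{thm:quantum2} itself are needed; the heavy lifting is already encapsulated in that theorem, and the corollary is essentially a specialization to the canonical oracle $\Delta_{n+t+1,q}$ together with the observation that this oracle has constant dimension and only polynomially large degree in $n+t$.
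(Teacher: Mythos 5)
Your reduction to Theorem~\ref{thm:quantum2} is the right overall strategy, and your final arithmetic (constant dimension, $\log D = O_q(\log(n+t))$) would indeed yield the stated bound; but the step you dismiss as ``standard'' is exactly where the gap lies, and it is also where your route diverges from the paper's. You apply Theorem~\ref{thm:quantum2} directly to the genuine gate set $\Delta_{n+t+1,q}$, i.e.\ to a union of embedded copies of $\mathbf{U}(V^{\otimes q})=\mathbf{U}(2^q)$, and you justify the hypothesis ``union of non-singular real complete intersection varieties'' by a Lie-group dimension count (number of real Hermiticity equations equals the real codimension). That argument only gives smoothness and the correct codimension of the real points; it does not establish the algebro-geometric complete intersection condition that the machinery behind Theorem~\ref{thm:quantum2} actually consumes. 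Tracing the proof, Theorem~\ref{thm:quantum2} rests on Theorem~\ref{thm:rank:algebra:ordered}, which in turn invokes Theorem~\ref{thm:main:ordered:c}, whose hypothesis is that the \emph{projective closure} $\overline{V}\subset\PP_\C^N$ is a non-singular complete intersection. For the complexification of $\mathbf{U}(m)$ (which, after a linear change of coordinates, is the affine variety $\{(A,B): AB^T=I\}\cong\GL_m(\C)$) the affine picture is fine --- it is a smooth affine complete intersection of $m^2$ quadrics --- but its projective closure in $\PP^{2m^2}_\C$ is not cut out by the homogenized quadrics (the scheme they define has a component at infinity of strictly larger dimension, roughly $\tfrac54 m^2$, coming from $\{AB^T=0\}$), and there is no reason for the closure to be a non-singular projective complete intersection; at best this would require a separate, delicate argument that you have not supplied, and it is likely false for $m\geq 2$. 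The same issue would recur for the products $\Delta^t$ formed inside the proof of Theorem~\ref{thm:rank:algebra:ordered}.

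The paper sidesteps this entirely: instead of the unitary gate set it takes the \emph{larger} oracle $\Delta=\bigcup_{\mathbf{i}} A_{\mathbf{i}}$ with $A_{\mathbf{i}}=\End(V^{\otimes q})\otimes \mathrm{Id}$, a union of $\binom{n+t+1}{q}$ \emph{linear} subspaces of $A_\mathbb{R}$. Linear subspaces are trivially non-singular complete intersections both affinely and projectively, each of degree $1$, so $\dim\Delta=O_q(1)$ and $\deg\Delta=\binom{n+t+1}{q}$ with no verification needed, and Theorem~\ref{thm:quantum2} applies. Since $\Delta_{n+t+1,q}\subset\Delta$, the monotonicity of relative rank (Remark~\ref{rem:rank:algebra:monotone}) transfers the lower bound for the enlarged oracle to a lower bound on the true quantum circuit complexity with $t+1$ ancillary bits, which is the corollary. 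So the fix for your proposal is either to genuinely prove the projective complete-intersection property for the embedded unitary groups (hard, probably false), or to enlarge $\Delta$ to the linear slices and add the one-line monotonicity argument, as the paper does.
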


\begin{remark}
A more traditional approach would be to consider circuits built out of 
only Clifford and Toffoli gates  (using results form \cite{Maslov-et-al,Selinger}) 
that approximate the required unitary matrix sufficiently well. 
Since the number of circuits of bounded size built out of a finite set of quantum gates 
is finite one can then hope to reproduce the original counting argument of Shannon in the classical case. 
This fails because of  the following reason.

Our very generous definition of what it means for quantum circuit to compute a Boolean function
 gives no room for any approximation, as any error could mean that the new circuit $C'$ does not compute $f$.  
If we strengthen the definition (thus make proving lower bounds easier) by insisting that there be a gap $\delta$ between the probabilities of acceptance/rejection i.e. we replace the inequality in Definition~\ref{def:quantum-boolean}  by 
\begin{eqnarray}
\label{eqn:def:quantum-boolean:2}
\nonumber
|\la \mathbf{x}' \;y \; \mathbf{0}  \; |\; U(C) \;|\; \mathbf{x} \; 0 \; \mathbf{0} \ra|^2 &>& 1/2+\delta \text{ if $x' = x$ and $y\oplus f(x_0,\ldots,x_{n-1}) = 1$},\\  
&<&  1/2 - \delta \text{ otherwise}
\end{eqnarray}
for some fixed $\delta, 0 \leq  \delta  < 1/2 $,
then one can carry through the program sketched earlier using approximation and counting (see for example \cite{Knill95,Maslov-et-al}).
In this case one would need to replace $C$ by a circuit $C'$ using Clifford and Toffoli gates which approximates $U(C)$ within $\delta/2$ in max-norm. 
This produces the result that almost all Boolean functions need quantum circuits of size
\[
\Omega( \min(2^n/(n \log n), 2^n/((\log(1/\delta) \log n)))).
\]
This lower bound 
%%is worse than the one we prove in Theorem~\ref{thm:quantum2} in term of dependence on $n$,
%%and moreover 
goes to $0$ as $\delta \rightarrow 0$. So it does not produce any meaningful lower bound for $\delta =0$ (which is the case in Definition~\ref{def:quantum-boolean}).
Note that having a positive $\delta$ or not in the definition of 
    acceptance for a probabilistic complexity class
    (cf. Eqn. \eqref{eqn:def:quantum-boolean:2})
    is a very important distinction. In classical complexity theory this distinction differentiates the complexity class $\mathbf{BPP}$ (with  $\delta > 0$) 
    and the class $\mathbf{PP}$ (where $\delta = 0$). As is well known,
    \[
    \mathbf{BPP} \subset \mathbf{NP} \subset \mathbf{PP},
    \]
    and both inclusions are believed to be proper.

There is recent work in constructing explicit Boolean functions with exponential quantum circuit complexity using sophisticated
tools from transcendental number theory \cite{Yifan-et-al}. Our rank argument does not produce such functions.
\end{remark}

\subsection{Proofs of Theorems~\ref{thm:rank:vector:algebraic} and \ref{thm:rank:vector:ordered}}

\begin{proof}[Proof of Theorem~\ref{thm:rank:vector:algebraic}]
If $\rank_{V,\Delta}(W) = \infty $, then there is nothing to prove. Otherwise, let
$\rank_{V,\Delta}(W) = t_0$.

For $1 \leq t \leq t_0$, let 
\[
V^t = V \underbrace{\times \cdots \times}_{\text{$t$ times}} V,
\]
and let 
\[
V' = \bigoplus_{1 \leq t \leq t_0} V^t
\]
denote the direct sum and 
\[
i_t:V^t \rightarrow V'
\]
the canonical injection. 

Note that $\Delta^t = \Delta \underbrace{\times \cdots \times}_{\text{$t$ times}} \Delta$ is an algebraic subset of $V^t$ with
$\dim(\Delta^t) \leq  p t$ and $\deg \Delta^t \leq D^t$.

Let
\[
\Delta' =  \bigcup_{1 \leq t \leq t_0} i_t(\Delta^t).
\]

Notice that $\Delta'$ is an algebraic subset of $V'$ with 
$\dim \Delta' \leq p t_0$ and $\deg \Delta' \leq D + D^2 + \cdots + D^{t_0} \leq D^{t_0+1}$.
Finally, let $\phi:V' \rightarrow V$ denote the map induced (by extending linearly) the maps $\phi_t:V^t \rightarrow V$, sending $(v_1,\ldots,v_t)$ to $v_1 +\cdots + v_t$ 
(i.e. $\phi = \sum_{1 \leq t \leq t_0} \phi_t \circ \pi_t$, where $\pi_t:V' \rightarrow V_t$ is the canonically defined projection).

Then, $\phi_t^{-1}(W) \cap \Delta^t$ is an algebraic subset  of $\Delta^t$ cut out by at most $s$
polynomials of degrees bounded by $\delta$, and so 
 $\phi^{-1}(W) \cap \Delta'$ is an algebraic subset of $\Delta'$ cut out by at most $s t_0$
polynomials of degrees bounded by $ \delta$. 

Since for $u,v\in V, u\neq v \neq 0$, $\phi^{-1}(u) \cap \Delta'$ and  $\phi^{-1}(v) \cap \Delta'$ are 
disjoint, 

$b_0((\phi^{-1}(u) \cup \phi^{-1}(v)) \cap \Delta') =  b_0(\phi^{-1}(u) \cap \Delta') + b_0(\phi^{-1}(v) \cap \Delta')$.

More generally,
\[
b_0(\phi^{-1}(W) \cap \Delta') =  \sum_{v \in W} b_0(\phi^{-1}(v) \cap \Delta').
\]
Moreover, since
\[
\{v \in W \; | \;  \phi^{-1}(W) \cap \Delta' \neq \emptyset\} = W 
\]
we have 
\begin{eqnarray*}
b_0(\phi^{-1}(W) \cap \Delta') &=&
\sum_{v \in W} b_0(\phi^{-1}(v) \cap \Delta') \\
&\geq& \card (\{v \in W \; | \;  \phi^{-1}(W) \cap \Delta' \neq \emptyset\}) \\
&=& 
\card(W).
\end{eqnarray*}

It follows from Theorem~\ref{thm:main:algebraic} that
\begin{eqnarray*}
\deg \overline{\phi^{-1}(W) \cap \Delta')} &\leq&
b_0(\phi^{-1}(W) \cap \Delta') \\
&\leq&  (p t_0 +1) \cdot (t_0 s)^{p t_0} D^{t_0+1} \cdot  (\delta)^{p t_0}.
\end{eqnarray*}

Hence,
\[
\card(W) \leq  (p t_0 +1) \cdot (t_0 s)^{p t_0} \cdot D^{t_0 + 1} \cdot (t_0)^{pt_0}.
\]

Taking logarithm of both sides this implies,
\[
t_0 \geq \frac{\log \card(W)}{ O(p (\log s  + \log \delta + \log \log \card(W)) + \log D )},
\]
which finishes the proof.
\end{proof}

\begin{proof}[Proof of Theorem~\ref{thm:rank:vector:ordered}]
We follow the proof of Theorem~\ref{thm:rank:vector:algebraic}.
  If $\mmrank_{V,\Delta}(W) = \infty $, then there is nothing to prove. Otherwise, let
$\mmrank_{V,\Delta}(W) = t_0$. 

For $1 \leq t \leq t_0$,  let 
\[
V' = \bigoplus_{1 \leq t \leq t_0} V^t
\]
denote the direct sum and $i_t:V^t \rightarrow V$ the canonical injection. Let
\[
\Delta' =  \bigcup_{1 \leq t \leq t_0} i_t(\Delta^t),
\]
noting that $\Delta^t \subset V^t$ is a real algebraic subset of $V^t$.

Notice that $\Delta'$ is a union of non-singular complete intersection subvarieties of dimension bounded by $p t_0$ and degree 
bounded by $D + D^2 + \cdots + D^{t_0} \leq D^{t_0+1}$.
Finally, as before let $\phi:V' \rightarrow V$ denote the map induced (by extending linearly) the maps $\phi_t:V^t \rightarrow V$, sending $(v_1,\ldots,v_t)$ to $v_1 + \cdots + v_t$ 
(i.e. $\phi = \sum_{1 \leq t \leq t_0} \phi_t \circ \pi_t$, where $\pi_t:V'\rightarrow V^t$ is the canonically defined projection).

%%Let $W' = \cup_{W \in \mathcal{W}} \phi^{-1}(W)$.

%%For each $\sigma \in \{0,1,-1\}^{\mathcal{P}}$,
%%$\phi_t^{-1}(\RR(\sigma)) \cap \Sigma^t$ is an affine subvariety of $\Sigma^t$ cut out by at most $s$
%%polynomials of degrees bounded by $t \delta$, and so 
%% $\phi^{-1}(W) \cap \Sigma'$ is an affine subvariety of $\Sigma'$ cut out by at most $s t_0$
%%polynomials of degrees bounded by $t_0 \delta$, and so 
%%Now $\Sigma^t \subset A^t$ is an affine subvariety of dimension bounded by $p t$ and 
%%degree bounded by $D^t$.
%%Let $\phi_t: A^t \rightarrow A$ be the map sending $(f_1,\ldots,f_t)$ to $f_1\cdots f_t$.
%%Then, $\phi^{-1}(W) \cap \Sigma^t$ is an affine subvariety of $\Sigma^t$ cut out by at most $s$
%%polynomials of degrees bounded by $t \delta$.

Since for $\sigma, \sigma'\in \{0,1,-1\}^{\mathcal{P}}$, $\phi^{-1}(\RR(\sigma)) \cap \Delta'$ and  $\phi^{-1}(\RR(\sigma')) \cap \Delta'$ are 
disjoint, 
\[
b_0((\phi^{-1}(\RR(\sigma)) \cup \phi^{-1}(\RR(\sigma')) \cap \Delta') \leq  b_0(\phi^{-1}(\RR(\sigma)) \cap \Delta') + b_0(\phi^{-1}(\RR(\sigma')) \cap \Delta').
\]

%%More generally,
%%\[
%%\sum_{\deg(\phi^{-1}(A) \cap \Sigma') =  \sum_{f \in W} \deg(\phi^{-1}(f) \cap \Sigma').
%%\]

We have 
\begin{eqnarray*}
\sum_{\sigma \in \{0,1,-1\}^{\mathcal{P}}}  b_0((\phi^{-1}(\RR(\sigma)) \cap \Delta')
%%\sum_{f \in W} \deg(\phi^{-1}(f) \cap \Sigma') \\
%%&\geq& \card (\{f \in W \; | \;  \phi^{-1}(W) \cap \Sigma' \neq \emptyset\}) \\
&\geq& 
%%\card(\mathcal{W}).
b_0(W).
\end{eqnarray*}

It follows from Theorem~\ref{thm:main:ordered:c} that
\[
\sum_{\sigma \in \{0,1,-1\}^{\mathcal{P}}} b_0(\phi^{-1}(\RR(\sigma)) \cap \Delta')
 \leq  
 D^2 (t_0 s \delta)^{O(p t_0)}.
 %%(D^{t_0+1})^{p t_0} (D^{t_0+1} + ((s t_0)(\delta  t_0))^{p t_0}).
\]

Hence,
\[
b_0(W) \leq D^2 (t_0 s \delta)^{O(p t_0)}.
\]

Taking logarithm of both sides this implies,
\[
t_0 \geq \frac{\log b_0(W)}{ O(p (\log s  + p \log \delta + \log \log b_0(W)) + \log D )},
\]
which finishes the proof.  
\end{proof}

\subsection{Proofs of Theorems~\ref{thm:rank:algebra:algebraic} and 
\ref{thm:rank:algebra:ordered}}

\begin{proof}[Proof of Theorem~\ref{thm:rank:algebra:algebraic}]
If $\rank_{A,\Delta}(W) = \infty $, then there is nothing to prove. Otherwise, let
$\rank_{A,\Delta}(W) = t_0$.

For clarity we denote by $V$ the underlying vector space of $A$.

For $1 \leq t \leq t_0$, let 
\[
V^t = V \underbrace{\times \cdots \times}_{\text{$t$ times}} V,
\]
and let 
\[
V' = \bigoplus_{1 \leq t \leq t_0} V^t
\]
denote the direct sum and 
$i_t:V_t \rightarrow V'$ the canonical injection. Let
\[
\Delta' =  \cup_{1 \leq t \leq t_0} i_t(\Delta^t),
\]
noting that $\Delta^t \subset V_t$ is an algebraic subset of $V_t$.

Notice that $\Delta'$ is an algebraic subset of dimension bounded by $p t_0$ and degree 
bounded by $D + D^2 + \cdots + D^{t_0} \leq D^{t_0+1}$.
Finally, let $\phi:V' \rightarrow V$ denote the map induced (by extending linearly) the maps $\phi_t:V_t \rightarrow V$, sending $(f_1,\ldots,f_t)$ to $f_1 \cdots f_t$ 
(i.e. $\phi = \sum_{1 \leq t \leq t_0} \phi_t \circ \pi_t$, where $\pi_t:V' \rightarrow V_t$ is the canonically defined projection).

Then, $\phi_t^{-1}(W) \cap \Delta^t$ is an algebraic subset  of $\Delta^t$ cut out by at most $s$
polynomials of degrees bounded by $t \delta$, and so 
 $\phi^{-1}(W) \cap \Delta'$ is an algebraic subset of $\Delta'$ cut out by at most $s t_0$
polynomials of degrees bounded by $t_0 \delta$. 

Since for $f,g\in A, f\neq g \neq 0$, $\phi^{-1}(f) \cap \Delta'$ and  $\phi^{-1}(g) \cap \Delta'$ are 
disjoint, 

$b_0((\phi^{-1}(f) \cup \phi^{-1}(g)) \cap \Delta') =  b_0(\phi^{-1}(f) \cap \Delta') + b_0(\phi^{-1}(g) \cap \Delta')$.

More generally,
\[
b_0(\phi^{-1}(W) \cap \Delta') =  \sum_{f \in W} b_0(\phi^{-1}(f) \cap \Delta').
\]
Moreover, since
\[
\{f \in W \; | \;  \phi^{-1}(W) \cap \Delta' \neq \emptyset\} = W 
\]
we have 
\begin{eqnarray*}
b_0(\phi^{-1}(W) \cap \Delta') &=&
\sum_{f \in W} b_0(\phi^{-1}(f) \cap \Delta') \\
&\geq& \card (\{f \in W \; | \;  \phi^{-1}(W) \cap \Delta' \neq \emptyset\}) \\
&=& 
\card(W).
\end{eqnarray*}

It follows from Theorem~\ref{thm:main:algebraic} that
\[
b_0(\phi^{-1}(W) \cap \Delta') \leq (p t_0 +1) \cdot (t_0 s)^{p t_0} D^{t_0+1} \cdot  (t_0 \delta)^{p t_0}.
\]

Hence,
\[
\card(W) \leq  (p t_0 +1) \cdot (t_0 s)^{p t_0} \cdot D^{t_0 + 1} \cdot (t_0 \delta)^{pt_0}.
\]

Taking logarithm of both sides this implies,
\[
t_0 \geq \frac{\log \card(W)}{ O(p (\log s  + \log \delta + \log \log \card(W)) + \log D )},
\]
which finishes the proof.
\end{proof}

\begin{proof}[Proof of Theorem~\ref{thm:rank:algebra:ordered}]
We follow the proof of Theorem~\ref{thm:rank:algebra:algebraic} and follow the notation inroduced there.
  If $\mmrank_{A,\Delta}(W) = \infty $, then there is nothing to prove. Otherwise, let
$\mmrank_{A,\Delta}(W) = t_0$. 

Notice that in this case $\Delta'$ (see definition in proof of Theorem~\ref{thm:rank:algebra:algebraic}) is a union of non-singular complete intersection subvarieties of dimension bounded by $p t_0$ and degree 
bounded by $D + D^2 + \cdots + D^{t_0} \leq D^{t_0+1}$.
Finally, as before let $\phi:V \rightarrow A$ denote the map induced (by extending linearly) the maps $\phi_t:V_t \rightarrow A$, sending $(f_1,\ldots,f_t)$ to $f_1 \cdots f_t$ 
(i.e. $\phi = \sum_{1 \leq t \leq t_0} \phi_t \circ \pi_t$, where $\pi_t:V \rightarrow V_t$ is the canonically defined projection).

%%Let $W' = \cup_{W \in \mathcal{W}} \phi^{-1}(W)$.

%%For each $\sigma \in \{0,1,-1\}^{\mathcal{P}}$,
%%$\phi_t^{-1}(\RR(\sigma)) \cap \Sigma^t$ is an affine subvariety of $\Sigma^t$ cut out by at most $s$
%%polynomials of degrees bounded by $t \delta$, and so 
%% $\phi^{-1}(W) \cap \Sigma'$ is an affine subvariety of $\Sigma'$ cut out by at most $s t_0$
%%polynomials of degrees bounded by $t_0 \delta$, and so 
%%Now $\Sigma^t \subset A^t$ is an affine subvariety of dimension bounded by $p t$ and 
%%degree bounded by $D^t$.
%%Let $\phi_t: A^t \rightarrow A$ be the map sending $(f_1,\ldots,f_t)$ to $f_1\cdots f_t$.
%%Then, $\phi^{-1}(W) \cap \Sigma^t$ is an affine subvariety of $\Sigma^t$ cut out by at most $s$
%%polynomials of degrees bounded by $t \delta$.

Since for $\sigma, \sigma'\in \{0,1,-1\}^{\mathcal{P}}$, $\phi^{-1}(\RR(\sigma)) \cap \Delta'$ and  $\phi^{-1}(\RR(\sigma')) \cap \Delta'$ are 
disjoint, 
\[
b_0((\phi^{-1}(\RR(\sigma)) \cup \phi^{-1}(\RR(\sigma')) \cap \Delta') \leq  b_0(\phi^{-1}(\RR(\sigma)) \cap \Delta') + b_0(\phi^{-1}(\RR(\sigma')) \cap \Delta').
\]

%%More generally,
%%\[
%%\sum_{\deg(\phi^{-1}(A) \cap \Sigma') =  \sum_{f \in W} \deg(\phi^{-1}(f) \cap \Sigma').
%%\]

We have 
\begin{eqnarray*}
\sum_{\sigma \in \{0,1,-1\}^{\mathcal{P}}}  b_0((\phi^{-1}(\RR(\sigma)) \cap \Delta')
%%\sum_{f \in W} \deg(\phi^{-1}(f) \cap \Sigma') \\
%%&\geq& \card (\{f \in W \; | \;  \phi^{-1}(W) \cap \Sigma' \neq \emptyset\}) \\
&\geq& 
%%\card(\mathcal{W}).
b_0(W).
\end{eqnarray*}

It follows from Theorem~\ref{thm:main:ordered:c} that
\[
\sum_{\sigma \in \{0,1,-1\}^{\mathcal{P}}} b_0(\phi^{-1}(\RR(\sigma)) \cap \Delta')
 \leq  
 D^{2(t_0+1)} (t_0 s \delta)^{O(p t_0)}.
 %%(D^{t_0+1})^{p t_0} (D^{t_0+1} + ((s t_0)(\delta  t_0))^{p t_0}).
\]

Hence,
\[
b_0(W) \leq D^{2(t_0+1)}(t_0 s \delta)^{O(p t_0)}.
\]

Taking logarithm of both sides this implies,
\[
t_0 \geq \frac{\log b_0(W)}{ O(p (\log s  + p \log \delta + \log \log b_0(W)) + \log D},
\]
which finishes the proof.  
\end{proof}

\subsection{Proofs of Theorems~\ref{thm:quantum1}
and \ref{thm:quantum2}}

\begin{proof}[Proof of Theorem~\ref{thm:quantum1}]
    Recall that for any Boolean function $f \in \{0,1\}^n \rightarrow \{0,1\}$, the stringent quantum circuit complexity of $f$ relative to $\Delta$ is by definition 
    \[
    \rank_{\End(V^{\otimes (n+1)},\Delta}(U_f),
    \]
    where $V$ is a two-dimensional complex inner product space.
    Let $W = \{U_f \;\mid\; f: \{0,1\}^n \rightarrow \{0,1\}\} \subset \End(V^{\otimes (n+1)}$.
    It is easy to see that $\card(W) = 2^{2^n}$, and $W$ is $\mathcal{P}$-constructible where $\mathcal{P}$ is a set of $2^{2(n+1)}$ polynomials
    of degrees bounded by $1$. Now apply Theorem~\ref{thm:rank:algebra:algebraic}.
\end{proof}

\begin{proof}[Proof of Corollary~\ref{cor:quantum1}]
For $\mathbf{i} = (i_1,\ldots,i_q), 1 \leq i_1 <\cdots <i_q \leq n+1$,
let 
$A_{\mathbf{i}} \subset  \End(V^{\otimes (n+1)})$ 
be the sub-algebra obtained by tensoring $\End(V)$ in the $i_j$-th slots
$1 \leq i_1 < \cdots < i_{q} \leq n+1$ with $1_V$ in the remaining. 
Thus, 
\[
A_{(1,2,\ldots,q)} = \underbrace{\End(V) \otimes \cdots \otimes \End(V)}_{q} \otimes \overbrace{\mathrm{Id}_V \otimes \cdots \otimes \mathrm{Id}_V}^{n+1-q}.
\]

Let 
\[
\Delta = \bigcup_{\mathbf{i} = (i_1,\ldots,i_q), 1 \leq i_1 <\cdots <i_q \leq n+1} A_{\mathbf{i}}.
\]

Then, $\Delta$ is a union of $\binom{n+1}{q}$ subvarieties of 
dimension $p = 2^q \times 2^q$, and $D = \deg(\Delta) = \binom{n+1}{q}$.

Moreover, the subset of $A$ consisting of unitary transformations of the form $U_{g,\mathbf{i}}$, where 
$g$ is a $q$-ary quantum gate and $\mathbf{i} = (i_1,\ldots,i_q)$ is a $k$-tuple of distinct indices with
$1 \leq i_j \leq 2^{n+1}$ is contained in $\Delta$ defined above,
and using Remark~\ref{rem:rank:algebra:monotone}, it suffices to bound from below 
\[
\rank_{\End (V^{\otimes(n+1)}),\Delta}(W),
\]
where $W = \{U_f \;\mid\; f: \{0,1\}^n \rightarrow \{0,1\}\} \subset \End(V^{\otimes (n+1)})$.
The corollary now follows immediately from Theorem~\ref{thm:quantum1}.
\end{proof}

\begin{proof}[Proof of Theorem~\ref{thm:quantum2}]
Let
    \[
    W^+ = \bigcup_{f: \{0,1\}^n \rightarrow \{0,1\}} U_f^+.
    \]
    Note that $b_0(W^+) = 2^{2^n}$, since $U_f^+ \cap U_{g}^+ = \emptyset$
    iff $f \neq g$.
    
    It is easy to set that $W^+$ is a $\mathcal{P}$-semi-algebraic subset of $A$, for some $\mathcal{P}$ with $\card(\mathcal{P}) = O(2^{2(n+t)})$ and $\deg(P) \leq 1$.

    The rest of the proof is the same as in the proof of Theorem~\ref{cor:quantum1}, except we use Theorem~\ref{thm:rank:algebra:ordered} instead of Theorem~\ref{thm:rank:algebra:algebraic} to obtain the lower bound
    in the last step.
\end{proof}

\begin{proof}[Proof of Corollary~\ref{cor:quantum2}]
    The subset $\Delta \subset A$ defined in the proof of
    Corollary~\ref{cor:quantum1} is a union of real algebraic subsets of $A_\mathbb{R}$ which are non-singular complete intersections with $\dim \Delta =   4^q \times 4^q$,
    $\deg \Delta = \binom{n+t+1}{q}$.

    Now apply Theorem~\ref{thm:quantum2}.
\end{proof}

\section{Open problems and future work}
\label{sec:conclusion}
We end with some open problems and suggestions for future work.
\begin{enumerate}[1.]
    \item Close the gap between the upper and lower bounds in Theorem~\ref{thm:main:ordered:a}.
    
    \item Is it possible to eliminate the restriction of local constancy of the real dimension in the hypothesis of Theorem~\ref{thm:main:ordered:a} ? 
    \item Is it possible to extend Theorem~\ref{thm:main:algebraic} to a bound on the sum of the Betti numbers of the realizations of all realizable zero-nonzero patterns ? 
    %%The right hand side will definitely have to be larger. 
    For example, the following result is proved in \cite{BPR-tight}.

\begin{thmx}
\label{thmx:BPRtight}
Let ${\mathcal P} \subset \C[X_1,\ldots,X_N]$ be a family of polynomials
with $\#{\mathcal P} = s$ and $\deg(P) \leq d$ for all $P \in {\mathcal P}$.
Then
\begin{eqnarray*}
\sum_{\pi \in \{0,1\}^{\mathcal{P}}}
b^{BM}(\RR(\pi,\C^N))  & \leq &  
\sum_{0 \leq \ell \leq N}
\binom{s}{N-\ell}\binom{s}{\ell}d^N 
 + \\
 &&\sum_{1 \leq i \leq \ell} \binom{s}{N - \ell}\binom{s}{\ell - i}d^{O(N^2)}
,
%%\\
%%&=& 
%%\sum_{0 \leq \ell \leq N} {s \choose k - \ell}{s  \choose \ell}d^k + 
%%O(s^{k-1})
\end{eqnarray*}
where $b^{BM}(\cdot)$ denotes the sum of the Borel-Moore Betti numbers.
\end{thmx}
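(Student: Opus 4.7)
The plan is to reduce to a family in general position via an infinitesimal perturbation, analyze the perturbed situation using the long exact sequence of Borel--Moore homology together with classical bounds on Betti numbers of smooth complete intersections, and finally pay a controlled degree cost to return to the original family. This ``perturb--analyze--specialize'' paradigm is the standard route for tight Betti bounds and fits with the structure of the right-hand side, where the term $\binom{s}{N-\ell}\binom{s}{\ell}d^N$ encodes the clean general-position contribution and $\binom{s}{N-\ell}\binom{s}{\ell-i}d^{O(N^2)}$ captures degenerating strata.

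\textbf{Step 1 (Perturbation).} Introduce one infinitesimal $\eps_P$ for each $P \in \mathcal{P}$ and replace $\mathcal{P}$ with $\widetilde{\mathcal{P}} = \{P + \eps_P \mid P \in \mathcal{P}\}$ over $\C\la\eps_P \mid P \in \mathcal{P}\ra$. For generic Puiseux parameters, $\widetilde{\mathcal{P}}$ is in general position: for every $J \subset \widetilde{\mathcal{P}}$ with $\card(J) \leq N$, the set $V_J = \bigcap_{P \in J} \ZZ(P,\C^N)$ is a smooth complete intersection of dimension $N - \card(J)$, and $V_J = \emptyset$ for $\card(J) > N$. A semicontinuity argument (as in \cite{BPRbook2}) shows that Borel--Moore Betti numbers of realizations of patterns of $\mathcal{P}$ are bounded by those of $\widetilde{\mathcal{P}}$, plus additional contributions coming from strata that collapse under specialization $\eps_P \mapsto 0$.

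\textbf{Step 2 (Bounding each realization).} For a pattern $\pi$ on $\widetilde{\mathcal{P}}$ with $J_\pi = \{P : \pi(P)=0\}$ of size $N-\ell$, one has
\[
\RR(\pi,\C^N) = V_{J_\pi} \setminus \bigcup_{P \notin J_\pi} \ZZ(P,V_{J_\pi}),
\]
and the Borel--Moore homology long exact sequence applied iteratively (equivalently, the Mayer--Vietoris spectral sequence for the union) gives
\[
b^{BM}(\RR(\pi,\C^N)) \leq \sum_{K \subset \widetilde{\mathcal{P}} \setminus J_\pi} b^{BM}(V_{J_\pi \cup K}).
\]
Each $V_{J_\pi \cup K}$ is a smooth complete intersection of dimension $\ell - \card(K)$, so its Betti numbers are controlled by the classical Chern class computation (compare Lemma~\ref{lem:main:ordered:c:6}); in particular $b^{BM}(V_{J_\pi \cup K}) \leq O(d)^N$ when $\card(K)=0$, and smaller for larger $K$.

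\textbf{Step 3 (Summing over patterns).} The number of $J_\pi$ of size $N-\ell$ is $\binom{s}{N-\ell}$; the relevant subsets $K$ of size $i$ range over a set of size $\binom{s-(N-\ell)}{i} \leq \binom{s}{\ell - i}$ (the non-trivial bound $\binom{s}{\ell}$ appears when $i=0$). Combining this count with the complete-intersection Betti bound and multiplying by an extra factor $d^{O(N^2)}$ for the specialization contributions (see Step 4) produces the two terms of the claimed inequality after summation over $\ell$.

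\textbf{Main obstacle (Step 4).} The delicate part is passing back from $\widetilde{\mathcal{P}}$ to $\mathcal{P}$: under $\eps_P \to 0$, a realization may acquire additional topology from strata that were empty in the perturbed picture but become positive-dimensional in the limit. To bound their contribution, one needs effective versions of the Nullstellensatz (and real counterparts) to write defining equations of the degenerating strata with degrees bounded by $d^{O(N^2)}$, which is where this exponent in the correction term originates. Controlling the bookkeeping so that the binomial coefficients contract to $\binom{s}{N-\ell}\binom{s}{\ell-i}$ rather than something exponentially larger, and ensuring that the Chern class bounds apply uniformly across all strata, is the most subtle aspect; this is where the argument meaningfully diverges from the proof of Theorem~\ref{thm:main:algebraic} (which counted only realizations, not Betti numbers) and requires the sharper Mayer--Vietoris bookkeeping rather than simple generic hyperplane slicing.
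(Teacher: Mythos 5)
This statement is not actually proved in the paper: it is quoted in the ``Open problems and future work'' section from \cite{BPR-tight}, so there is no internal proof to compare your argument against, and your proposal has to stand on its own. As written, it does not.

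The decisive gap is the passage between the original family $\mathcal{P}$ and the perturbed family $\widetilde{\mathcal{P}}$ (your Steps 1 and 4). There is no ``semicontinuity'' of Borel--Moore Betti numbers under the specialization $\eps_P \mapsto 0$ that you can simply invoke: the realization $\RR(\pi,\C^N)$ for $\mathcal{P}$ is a locally closed set which is in no natural homological correspondence with realizations of patterns of $\widetilde{\mathcal{P}}$ (over $\C$ one cannot even sandwich the condition $P=0$ between perturbed conditions the way sign conditions are sandwiched over a real closed field, which is how the paper's own perturbation arguments work), and $b^{BM}$ can jump in either direction under specialization. Everything that is genuinely hard in this theorem --- in particular the entire correction term with $d^{O(N^2)}$ --- lives exactly in this step, and your Step 4 replaces an argument by the phrase ``effective Nullstellensatz'': you never produce an inequality relating $\sum_\pi b^{BM}(\RR(\pi,\C^N))$ to data of $\widetilde{\mathcal{P}}$ plus controlled degenerate strata, nor a mechanism converting a degree bound on equations of those strata into a Betti-number bound for the original locally closed realizations. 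In addition, the bookkeeping in Step 3 is incorrect as stated: the subsets $K$ of size $i$ range over a set of cardinality $\binom{s-(N-\ell)}{i}$, and the inequality $\binom{s-(N-\ell)}{i} \leq \binom{s}{\ell-i}$ is false in general (take $s=10$, $N=\ell=5$, $i=3$: $120 \not\leq 45$); your counts increase with $i$ while the coefficients in the statement decrease, so your sum does not contract to the claimed form. Finally, the complete-intersection Betti bound you cite (the paper's Lemma~\ref{lem:main:ordered:c:6}) concerns projective complete intersections, whereas your $V_{J_\pi \cup K}$ are affine; an $O(d)^N$ bound there is true but requires separate justification. In short, the ``generic main term plus degenerate corrections'' shape is plausible, but the two ingredients that make the theorem nontrivial --- the homological transfer from the perturbed family back to the original one, and the $d^{O(N^2)}$ control of the degenerating strata --- are missing.
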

Is it possible to prove a similar bound for all realizable zero-nonzero patterns restricted to an algebraic set but where the bound is independent of the ambient dimension $N$ and depends only on the degree and the dimension of the algebraic set ?

%%\item We believe that the ambient dimension free upper bounds proved in this paper should have other applications (especially in computational complexity theory) than those explored in this paper. We would like to explore such applications.
\end{enumerate}

\bibliographystyle{amsplain}
\bibliography{master}
\end{document}